\DeclareFontFamily{OT1}{pzc}{}
\DeclareFontShape{OT1}{pzc}{m}{it}{<-> s * [1.20] pzcmi7t}{}
\DeclareMathAlphabet{\mathpzc}{OT1}{pzc}{m}{it}
\newtheorem{theorem}{Theorem}[section]
\newtheorem*{theorem*}{Theorem}
\newtheorem{lemma}[theorem]{Lemma}
\newtheorem{proposition}[theorem]{Proposition}
\newtheorem{corollary}[theorem]{Corollary}
\newtheorem{definition}[theorem]{Definition}
\newcommand{\brk}[1]{\left(#1\right)}          
\newcommand{\Brk}[1]{\left[#1\right]}          
\newcommand{\BRK}[1]{\left\{#1\right\}}        
\newcommand{\mymat}[1]{\begin{pmatrix} #1 \end{pmatrix}}
\newcommand{\Cases}[1]{\begin{cases} #1 \end{cases}}
\newcommand{\bra}{\langle}
\newcommand{\ket}{\rangle}
\newcommand{\secref}[1]{Section~\ref{#1}}
\newcommand{\thmref}[1]{Theorem~\ref{#1}}
\newcommand{\defref}[1]{Definition~\ref{#1}}
\newcommand{\propref}[1]{Proposition~\ref{#1}}
\newcommand{\lemref}[1]{Lemma~\ref{#1}}
\newcommand{\corrref}[1]{Corollary~\ref{#1}}
\newcommand{\beq}{\begin{equation}}
\newcommand{\eeq}{\end{equation}}
\newcommand{\frakA}{\mathfrak{A}}
\newcommand{\frakG}{\mathfrak{G}}
\newcommand{\frakS}{\mathfrak{S}}
\newcommand{\frakT}{\mathfrak{T}}
\newcommand{\frakX}{\mathfrak{X}}
\newcommand{\frake}{\mathfrak{e}}
\newcommand{\frakn}{\mathfrak{n}}
\newcommand{\frakt}{\mathfrak{t}}
\newcommand{\bbC}{{\mathbb C}}
\newcommand{\bbE}{{\mathbb E}}
\newcommand{\bbF}{{\mathbb F}}
\newcommand{\bbG}{{\mathbb G}}
\newcommand{\bbJ}{{\mathbb J}}
\newcommand{\bbM}{{\mathbb M}}
\newcommand{\bbN}{{\mathbb N}}
\newcommand{\bbP}{{\mathbb P}}
\newcommand{\bbR}{{\mathbb R}}
\newcommand{\bbS}{{\mathbb S}}
\newcommand{\bbU}{{\mathbb U}}
\newcommand{\bbZ}{{\mathbb Z}}
\newcommand{\scrB}{\mathscr{B}}
\newcommand{\scrC}{\mathscr{C}}
\newcommand{\scrH}{\mathscr{H}}
\newcommand{\scrN}{\mathscr{N}}
\newcommand{\scrR}{\mathscr{R}}
\newcommand{\scrS}{\mathscr{S}}
\newcommand{\calA}{{\mathcal{A}}}
\newcommand{\calD}{{\mathcal{D}}}
\newcommand{\calE}{{\mathcal{E}}}
\newcommand{\calG}{{\mathcal{G}}}
\newcommand{\calL}{{\mathcal{L}}}
\newcommand{\calP}{{\mathcal{P}}}
\newcommand{\calQ}{{\mathcal{Q}}}
\newcommand{\calR}{{\mathcal{R}}}
\newcommand{\calS}{{\mathcal{S}}}
\newcommand{\boldd}{\boldsymbol{d}}
\providecommand{\R}{\bbR}
\newcommand{\VF}{\frakX}
\newcommand{\bP}{\calP}
\newcommand{\bS}{\calS}
\newcommand{\D}{\calD}
\newcommand{\euc}{\frake}
\newcommand{\module}{\scrH}
\newcommand{\bQ}{\calQ}
\newcommand{\bA}{\calA}
\newcommand{\trace}{\operatorname{tr}}
\newcommand{\image}{\operatorname{Im}}
\newcommand{\Rm}{\operatorname{Rm}}
\newcommand{\ord}{\operatorname{ord}}
\newcommand{\supp}{\operatorname{supp}}
\newcommand{\End}{{\operatorname{End}}}
\newcommand{\id}{{\operatorname{Id}}}
\newcommand{\textand}{\quad\text{ and }\quad}
\newcommand{\Textand}{\qquad\text{ and }\qquad}
\newcommand{\overbar}[1]{\mkern 1.5mu\overline{\mkern-1.5mu#1\mkern-1.5mu}\mkern 1.5mu}
\newcommand{\tM}{\tilde{M}}
\newcommand{\dM}{{\partial M}}
\newcommand{\Nzero}{{\bbN_0}}
\newcommand{\limn}{\lim_{n\to\infty}}
\newcommand{\tE}{\tilde{\E}}
\newcommand{\tF}{\tilde{\bbF}}
\newcommand{\tg}{\tilde{g}}
\newcommand{\E}{\bbE}
\newcommand{\W}{\Omega}
\newcommand{\g}{g}
\newcommand{\PtD}{\bbP^{\frakt}}
\newcommand{\PnD}{\bbP^{\frakn}}
\newcommand{\PttD}{\bbP^{\frakt\frakt}}
\newcommand{\PtnD}{\bbP^{\frakt\frakn}}
\newcommand{\PntD}{\bbP^{\frakn\frakt}}
\newcommand{\PnnD}{\bbP^{\frakn\frakn}}
\newcommand{\nabg}{\nabla}
\newcommand{\nabU}{\nabla}
\newcommand{\OP}{\mathrm{OP}}
\newcommand{\OPAm}{\OP(\frakA^m)}
\newcommand{\OPAi}{\OP(\frakA^i)}
\newcommand{\OPA}{\OP(\frakA)}
\newcommand{\OPT}{\OP(\frakT)}
\newcommand{\OPTmr}{\OP(\frakT^{m,r})}
\newcommand{\OPSm}{\OP(\frakS^{m,r})}
\newcommand{\OPS}{\OP(\frakS)}
\newcommand{\OPSi}{\OP(\frakS^{-\infty,r})}
\newcommand{\OPSii}{\OP(\frakS^{-\infty})}
\newcommand{\Bmodule}[2]{\scrB^{#1}_{#2}(M,g)}
\newcommand{\PseudoComplex}{(A_\bullet)}
\newcommand{\Complex}{(\bA_\bullet)}
\newcommand{\dU}{d^{\nabU}}
\newcommand{\bHg}{\mathbf{H}}
\newcommand{\dg}{d^{\nabg}}
\newcommand{\deltag}{\delta^{\nabg}}
\newcommand{\dgV}{d^{\nabg}_V}
\newcommand{\deltagV}{\delta^{\nabg}_V}
\newcommand{\Volume}{\text{Vol}}
\newcommand{\pzcd}{\boldd}
\newcommand{\pzcdel}{\text{\textit{\textbf{\textdelta}}}}
\newcommand{\starG}{\star_g}
\newcommand{\starGV}{\starG^V}
\newcommand{\G}{\frakG}
\newcommand{\GV}{\frakG_V}
\newcommand{\dr}{\partial_r}
\newcommand{\idr}{i_{\partial_r}}
\newcommand{\PttG}{\bbP^{\frakt\frakt}_\calG}
\newcommand{\PtnG}{\bbP^{\frakt\frakn}_\calG}
\newcommand{\PntG}{\bbP^{\frakn\frakt}_\calG}
\newcommand{\PnnG}{\bbP^{\frakn\frakn}_\calG}
\newcommand{\LkmAll}{\Lambda_M}
\newcommand{\GkmAll}{\calG_M}
\newcommand{\WkmAll}{\W_M}
\newcommand{\CkmAll}{\scrC_M}
\newcommand{\plLkmAll}{\Lambda_{\dM}}
\newcommand{\plWkmAll}{\W_{\dM}}
\newcommand{\plCkmAll}{\scrC_{\dM}}
\newcommand{\Lkm}[2]{\LkmAll^{#1,#2}}
\newcommand{\Gkm}[2]{\GkmAll^{#1,#2}}
\newcommand{\Wkm}[2]{\WkmAll^{#1,#2}}
\newcommand{\Ckm}[2]{\CkmAll^{#1,#2}}
\newcommand{\ixi}{i_{\xi^{\sharp}}}
\newcommand{\ixiV}{i^V_{\xi^{\sharp}}}
\newcommand{\plLkm}[2]{\plLkmAll^{#1,#2}}
\newcommand{\plWkm}[2]{\plWkmAll^{#1,#2}}
\newcommand{\plCkm}[2]{\plCkmAll^{#1,#2}}
\newcommand{\dBianchi}{d^\calG}
\newcommand{\delBianchi}{\delta^\calG}
\newcommand{\dBianchiV}{d^\calG_V}
\newcommand{\delBianchiV}{\delta^\calG_V}
\newcommand{\DBianchi}{\pzcd^\calG}
\newcommand{\DelBianchi}{\pzcdel^\calG}
\newcommand{\calPG}{\calP_\calG}
\numberwithin{equation}{section}
\begin{document}

\title{
Elliptic Pre-Complexes, Hodge-like Decompositions and Overdetermined Boundary-Value Problems   
}

\author{
Raz Kupferman and
Roee Leder  \footnote{
raz@math.huji.ac.il,
roee.leder@mail.huji.ac.il
}
\\
\\
Institute of Mathematics \\
The Hebrew University \\
Jerusalem 9190401 Israel
}
\maketitle

\begin{abstract}
We solve a problem posed by Calabi more than 60 years ago, known as the \emph{Saint-Venant compatibility problem}: Given a compact Riemannian manifold, generally with boundary, find a compatibility operator for Lie derivatives of the metric tensor. This problem is related to other compatibility problems in mathematical physics, and to their inherent gauge freedom. 
To this end, we develop a framework generalizing the theory of elliptic complexes for sequences of linear differential operators $\PseudoComplex$  between sections of vector bundles. 
We call such a sequence an \emph{elliptic pre-complex} if the operators satisfy overdetermined ellipticity conditions, and the order of $A_{k+1}A_k$ does not exceed the order of $A_k$. 
We show that every elliptic pre-complex $\PseudoComplex$ can be ``corrected" into a complex $\Complex$ of pseudodifferential operators, where $\bA_k - A_k$ is a zero-order correction within this class. The induced complex $\Complex$ yields Hodge-like decompositions, which in turn lead to explicit integrability conditions for overdetermined boundary-value problems, with uniqueness and gauge freedom clauses. 
We apply the theory on elliptic pre-complexes of exterior covariant derivatives of vector-valued forms and double forms satisfying generalized algebraic Bianchi identities, thus resolving a set of compatibility and gauge problems, among which one is the Saint-Venant problem. 
\end{abstract}

{
\footnotesize
\tableofcontents
}
\pagebreak
\section{Introduction, main results and applications}
\label{sec:intro}

\subsection{Statement of the problem}
This paper was originally motivated by a study initiated by Calabi more than sixty years ago \cite{Cal61}. At the center of his study is the following problem: 

\begin{quote}
Let $(M,g)$ be a compact Riemannian manifold (generally with boundary). 
What are necessary and sufficient conditions for a symmetric $(2,0)$-tensor on $M$ to be a Lie derivative of the metric? 
\end{quote}

Restating the question in local coordinates, given a tensor field $(\sigma_{ij})_{i,j=1}^d$ satisfying $\sigma_{ij} = \sigma_{ji}$, what are necessary and sufficient conditions for the existence of a vector field $(Y^i)_{i=1}^d$, such that
\beq
\sigma_{ij} = (\calL_Yg)_{ij} = \nabg_i\omega_j + \nabg_j\omega_i ,
\label{eq:Saint_Venant_Problem} 
\eeq
where $\omega_i = g_{ij}Y^j$ and $\nabg$ is the covariant derivative?

The operator $Y\mapsto \calL_{Y}g$ from the space of vector fields over $M$ to the space of symmetric tensor fields is also commonly known as the \emph{Killing operator}, or sometimes the \emph{deformation operator}. Thus, in other words, the aim of the problem is to characterize the range of this operator. 

Calabi provided an answer for $(M,g)$ closed, simply-connected, and having constant sectional curvature. In particular, he commented at the end of his paper:

\begin{quotation}
``In a subsequent article... (the) theorem will be supplemented by an analogue of Hodge's theorem... satisfying globally certain elliptic systems of equations."
\end{quotation}

As far as we know, such an article has never been published. 

\subsection{\bfseries History of the Saint-Venant compatibility problem}
The Euclidean version of the above problem arises in the theory of linear elasticity under the name ``Saint-Venant compatibility": Let $\Omega$ be a Euclidean domain modeling an elastic bulk of material under strain; the stress field $\sigma$, which represents internal forces, is a symmetric $(2,0)$-tensor field satisfying the so-called \emph{Saint-Venant compatibility condition} \cite{Gur72}, 
\beq
\nabla\times\nabla\times\sigma=0,
\label{eq:curlcurlIntro}
\eeq
where $\nabla\times\nabla\times$ is the curl-curl operator, mapping symmetric $(2,0)$-tensors into $(4,0)$-tensors satisfying the Bianchi symmetries of algebraic curvatures. In Euclidean coordinates,
\[
(\nabla\times\nabla\times\sigma)_{ijkl}=\partial_{ik}\sigma_{jl}-\partial_{jk}\sigma_{il}-\partial_{il}\sigma_{jk}+\partial_{jl}\sigma_{ik}. 
\]
It is a classical result that when $\W$ is a simply-connected domain, $\sigma=\calL_{Y}\frake$ for some $Y\in\frakX(\Omega)$ if and only if $\nabla\times\nabla\times\sigma=0$, where $\frakX(\Omega)$ denotes the space of vector fields on $\W$, and $\euc$ is the Euclidean metric. Thus, for $\W$ Euclidean and simply-connected, the Saint-Venant compatibility condition \eqref{eq:curlcurlIntro} is a necessary and sufficient condition for a symmetric $(2,0)$-tensor to be the symmetrized gradient of a vector field.

Over the years, several authors generalized the Saint-Venant compatibility condition to the Riemannian setting, by relaxing the assumptions on either the topology, the geometry or the regularity of the fields in question. Specifically:
\begin{enumerate}[itemsep=0pt,label=(\alph*)]
\item Calabi found a compatibility condition for $(M,g)$ closed,  simply-connected, and having constant sectional curvature \cite[Prop.~3]{Cal61}. 

\item Gasqui and Goldschmidt  improved Calabi's result by extending it to closed, simply-connected symmetric spaces \cite{GG88A,GG88B}.

\item Several authors \cite{Kha19,Pom22,CELM21,CELM23} have recently extended this condition to certain classes of locally-symmetric spaces without boundary, and to certain Lorentzian manifolds arising in general relativity (Minkowski, Schwarzschild and Kerr).

\item In a series of works, Ciarlet, Geymonat and co-workers addressed the Saint-Venant problem for 
three-dimensional Euclidean domains having Lipschitz boundary and $\sigma$ having $L^2$-regularity,  obtaining weak versions of condition \eqref{eq:curlcurlIntro} (\cite{CCGK07,GK09} and references therein).

\item Yavari and Angoshtari showed how similar results can be obtained in a locally-flat setting by using the Hodge decomposition for scalar differential forms \cite{Yav13,YA16}.

\item The authors of the present paper obtained compatibility conditions for compact manifolds with boundary having constant sectional curvature and arbitrary topology, and connected this analysis to 
an elliptic theory and other problems in elasticity, such as the representation of stresses by stress potentials \cite{KL21b,KL21a}.
\end{enumerate}

As is apparent from this brief survey, no one has managed to relieve the assumption that the underlying Riemannian manifold has, at the very least, a parallel curvature tensor (the Lorentzian cases involve symmetry assumptions as well). All aforementioned work recognizes that at the heart of the Saint-Venant problem is a search for a second-order linear differential operator acting on symmetric tensor fields, which annihilates the image of $U\mapsto\calL_Ug$. In other words, all attempts to resolve the Saint-Venant problem funneled towards a search for a complex, 
\beq
\begin{tikzcd}
\frakX(M) \arrow[rr, "U\mapsto\calL_Ug"] &  & \Ckm11  \arrow[rr, "\text{?}"] &  & \Ckm22,
\end{tikzcd}
\label{eq:saint_venant_complex_mission}
\eeq
where $\Ckm11$ denotes the space of symmetric $(2,0)$-tensor fields and $\Ckm22$ denotes the space of $(4,0)$-tensor fields satisfying algebraic Bianchi symmetries (these notations will be clarified below).
The difficulty in constructing such a complex in general Riemannian geometries is essentially what has been limiting the progress on the Saint-Venant problem. Such a complex, in cases where it has been constructed, has become known in the literature as the \emph{Calabi complex} \cite{GG88A,Eas00}.

From the point of view of the theory of partial differential equations, the Saint-Venant problem \eqref{eq:Saint_Venant_Problem} is an overdetermined system of partial differential equations (\cite{Gol67,Spe69,BE69}; see also the encyclopedic entry \cite{DS96} and references therein). 
In such systems, a diagram of the form \eqref{eq:saint_venant_complex_mission} is typical: Given an overdetermined differential operator $A_0:\Gamma(\bbE_0)\to \Gamma(\bbE_1)$ between sections of vector bundles, one seeks for a differential operator $A_1:\Gamma(\bbE_1)\to \Gamma(\bbE_2)$ such that  $A_1\sigma=0$ if and only if $\sigma\in\operatorname{Range}A_0$.  The operator $A_1$ is called a \emph{compatibility operator} for $A_0$. The complex 
\[
\begin{tikzcd}
0\arrow[r, "0"]  &  \Gamma(\bbE_0) \arrow[r, "A_0"] &  \Gamma(\bbE_1)  \arrow[r, "A_1"] & \Gamma(\bbE_2) \arrow[r, "A_2"]  & \cdots
\end{tikzcd}
\]
is then called a \emph{compatibility complex} for $A_0$. 

There are known local methods for verifying the existence of a compatibility complex for a given operator $A_0$, applicable only under quite restrictive conditions on its coefficients \cite{Gol67,Spe69,DS96}. 
For example, a differential operator with constant coefficients always admits a compatibility complex; in the context of the Saint-Venant problem, the Killing operator has constant coefficients in Euclidean space. 
In fact, such a procedure was applied in \cite{GG88A,GG88B} to obtain a compatibility condition in symmetric spaces. The Killing operator, however, does not satisfy the required conditions in a general Riemannian manifold, and in particular, in one having a boundary and a non-trivial topology, an area of study having scarce literature. 

Thus, the resolution of the Saint-Venant problem is expected to impact the more general area of overdetermined systems (and their dual counterpart, underdetermined systems), which is a subject gaining a renewed interest not just from an abstract perspective but also motivated by applications in elasticity and general relativity \cite{Kha19,KL21b,Pom22,Van23,Hin23,CELM23}. Specifically, it is expected to be applicable in gauge fixing in various potential theories.

As a classical example, consider the theory of electromagnetism, where the electromagnetic field $F\in \Omega^2(M)$ satisfies the system,
\[
\delta F = J
\textand
dF = 0.
\]
The equation $dF=0$ turns into a compatibility condition for the existence of a potential $A\in \Omega^1(M)$, which satisfies the system,
\[
\delta A = 0 
\textand
dA = F.
\]
The choice of a gauge $\delta A=0$ results in an elliptic system $\Delta A=J$, which can then be solved by standard methods. 
The exploitation of similar gauge freedoms has long been sought in elasticity and in general relativity, and has been recognized to be related to the Saint-Venant problem. 

\subsection{\bfseries A non-local, zero order correction} As Calabi first found out, for $(M,g)$ having constant sectional curvature $\kappa\in\R$, the following linear $\Ckm11\to\Ckm22$ operator,
\beq
\label{eq:CalabisIntro}
\begin{split}
\sigma\mapsto\brk{H\sigma}_{ijkl}- \kappa\brk{g_{ik}\sigma_{jl} - g_{jk}\sigma_{il}  - g_{il}\sigma_{jk}  + g_{jl}\sigma_{ik}}, 
\end{split}
\eeq
annihilates the range of the Killing operator, where
\beq
\begin{split}
\label{eq:HgIntro}
(H\sigma)_{ijkl} &=
\tfrac12(\nabg_{ik}\sigma_{jl} - \nabg_{jk}\sigma_{il}  - \nabg_{il}\sigma_{jk}  + \nabg_{jl}\sigma_{ik}) \\
&+
\tfrac12(\nabg_{jl}\sigma_{ik} - \nabg_{il}\sigma_{jk}  - \nabg_{jk}\sigma_{il}  + \nabg_{ik}\sigma_{jl}).
\end{split}
\eeq
The second-order differential operator $H:\Ckm{1}{1}\rightarrow\Ckm{2}{2}$ is the covariant
generalization of the curl-curl operator, with the Euclidean derivatives replaced by covariant derivatives and imposing symmetry preservation. 

Steps toward extending Calabi's construction to a general Riemannian setting were made in \cite{KL21b}, observing that \eqref{eq:CalabisIntro} arises from the variation formula of the $(2,2)$ curvature operator $g\mapsto\calR_g$ about a metric having constant sectional curvature \cite[pp.~559--560]{Tay11b}. A candidate for generalizing the Calabi operator is obtained from the variation formula about a general metric, resulting in
\[
\sigma\mapsto (H+D)\sigma,
\]
where $D:\Ckm11\to\Ckm22$ is the tensorial map
\[
D\sigma=\frac12\brk{\trace_g(\Rm_g\wedge\sigma)-\trace_g\Rm_g\wedge\sigma-\Rm_g\wedge\trace_g\sigma},
\]
and $\Rm_g$ is the $(4,0)$ Riemann curvature tensor. However, for every $U\in \frakX(M)$, 
\beq
(H+D)\calL_Ug=2\,\calL_U\Rm_g +2\,D\calL_Ug.
\label{eq:order_reducing_intro}
\eeq
Thus, unless $\Rm_g$ satisfies restrictive symmetries, this operator does not annihilate the range of the Killing operator.

Yet, in certain instances, the operator $H$ may be ``corrected" by lower-order terms, to annihilate the range of the Killing operator (or subspaces of that range). Moreover,  
while in general $H\calL_Ug\ne 0$, the right-hand side of \eqref{eq:order_reducing_intro} differentiates the vector field $U$ only once, even though the left-hand side differentiates it (formally) three times. 
Thus, even though the sequence
\[
\begin{tikzcd}
\frakX(M) \arrow[rr, "U\mapsto\calL_Ug"] &  & \Ckm11  \arrow[rr, "H"] &  & \Ckm22
\end{tikzcd}
\]
does not form an exact sequence, it satisfies a weaker property: the order of $H$ composed with the Killing operator does not exceed the order of the latter.
This phenomenon  occurs in several other situations in geometry. For example, let $\bbU\to M$ be a Riemannian vector bundle  endowed with a connection $\nabU$; the sequence of exterior covariant derivatives acting on $\bbU$-valued forms \cite[pp.~362--363]{Pet16},
\[
\begin{tikzcd}
\cdots \arrow[r]   & \Omega^k(M;\bbU) \arrow[r, "\dU"]  & \Omega^{k+1}(M;\bbU) \arrow[r, "\dU"] & \Omega^{k+2}(M;\bbU) \arrow[r]  & \cdots
\end{tikzcd}
\]
satisfies $\dU\dU=R^{\nabU}\wedge$, where $R^{\nabU}\in \Omega^2(M;\End(\bbU))$ is the curvature endomorphism of the connection $\nabU$. Hence the order of $\dU\dU$ does not exceed the order of $\dU$. Note that unless $R^{\nabU}=0$, $\dU$ does not fall into the theory of compatibility complexes discussed above.

Motivated by the study of compatibility conditions for overdetermined systems, yet retaining $H$ as an initial guess for a compatibility operator, we combine the above insights into the following question: is there a lower-order correction operator $G:\Ckm{1}{1}\to \Ckm{2}{2}$, such that
\[
(H + G)\calL_Ug = 0
\qquad
\text{for every $U\in\VF(M)$}.
\]
A comparison with \eqref{eq:order_reducing_intro} implies that such a $G$ must satisfy the operator-valued equation
\beq
G\calL_Ug = - 2\calL_U\Rm_g - D\calL_Ug
\qquad
\text{for every $U\in\VF(M)$}.
\label{eq:OpEqsIntro}
\eeq
Since the right-hand side differentiates $U$ once, the correction operator $G$ is expected to be of order zero. The only differential operators of order zero are tensorial; there are however no tensorial operations satisfying the above identity, unless $\Rm_g$ satisfies restrictive symmetries. Thus, a solution $G$ to \eqref{eq:OpEqsIntro}, if it exists, must be non-tensorial, hence non-local. 

\subsection{\bfseries Elliptic pre-complexes} 

We recall the definition of an elliptic complex on a compact manifold with boundary (our exposition is based on \cite[Ch.~12.A]{Tay11b}). Consider the following diagram:
\[
\begin{xy}
(-20,0)*+{0}="Em1";
(-20,-20)*+{0}="Gm1";
(0,0)*+{\Gamma(\E_0)}="E0";
(30,0)*+{\Gamma(\E_1)}="E1";
(60,0)*+{\Gamma(\E_2)}="E2";
(90,0)*+{\Gamma(\E_3)}="E3";
(100,0)*+{\cdots}="E4";
(0,-20)*+{\Gamma(\bbG_0)}="G0";
(30,-20)*+{\Gamma(\bbG_1)}="G1";
(60,-20)*+{\Gamma(\bbG_2)}="G2";
(90,-20)*+{\Gamma(\bbG_3)}="G3";
(100,-20)*+{\cdots}="G4";
{\ar@{->}@/^{1pc}/^{A_0}"E0";"E1"};
{\ar@{->}@/^{1pc}/^{A_0^*}"E1";"E0"};
{\ar@{->}@/^{1pc}/^{A_1}"E1";"E2"};
{\ar@{->}@/^{1pc}/^{A_1^*}"E2";"E1"};
{\ar@{->}@/^{1pc}/^{A_2}"E2";"E3"};
{\ar@{->}@/^{1pc}/^{A_2^*}"E3";"E2"};
{\ar@{->}@/^{1pc}/^0"Em1";"E0"};
{\ar@{->}@/^{1pc}/^0"E0";"Em1"};
{\ar@{->}@/_{0pc}/^{B_0}"E0";"G0"};
{\ar@{->}@/_{0pc}/^{B_1}"E1";"G1"};
{\ar@{->}@/_{0pc}/^{B_2}"E2";"G2"};
{\ar@{->}@/^{1pc}/^{B_0^*}"E1";"G0"};
{\ar@{->}@/^{1pc}/^{B_1^*}"E2";"G1"};
{\ar@{->}@/^{1pc}/^0"E0";"Gm1"};
{\ar@{->}@/^{1pc}/^{B_2^*}"E3";"G2"};
{\ar@{->}@/_{0pc}/^{B_3}"E3";"G3"};
\end{xy}
\]
where $\E_k\to M$ and $\bbG_k\to\dM$ are sequences of vector bundles, $\Gamma$ stands for the global sections functor \cite[Ch.~10]{Lee12},  $\PseudoComplex = (A_k)_{k\in\Nzero}$ is a sequence of first-order differential operators, $A_k:\Gamma(\E_k)\to \Gamma(\E_{k+1})$, and  $B_k:\Gamma(\E_k)\to \Gamma(\bbG_k)$ and $B_k^*:\Gamma(\E_{k+1})\to \Gamma(\bbG_k)$ are differential boundary operators, such that the following analog of Green's formula hold,
\[
\bra A_k\psi,\eta\ket = \bra\psi,A_k^*\eta\ket  + \bra B_k\psi, B_k^*\eta\ket 
\qquad
\text{for every $\psi\in \Gamma(\E_k)$ and $\eta\in \Gamma(\E_{k+1})$},
\]
where $\bra\cdot,\cdot\ket$ denotes the $L^2$-pairing of both interior and boundary sections.
We call the sequence $\PseudoComplex$ an \emph{elliptic complex} if:

\begin{enumerate}[itemsep=0pt,label=(\alph*)]
\item 
The system $(D_k^*D_k,T_k)$ is elliptic, where 
\[
D_k = A_{k-1}^* \oplus A_k 
\Textand
T_k= B_{k-1}^* \oplus B_k^*A_k.
\]  
Alternatively, the system $(D_k^*D_k,T_k^*)$ is elliptic, where $T_k^* = B_{k-1}A_k^*\oplus B_k$.
\item $A_k A_{k-1}=0$.
\end{enumerate}

In the literature, the common definition for an elliptic complex uses the notion of \emph{exactness} of the corresponding sequence of \emph{symbols} of $(A_{\bullet})$. However, the ellipticity condition above together with $A_{k}A_{k-1}=0$ contain this exactness as a byproduct.   

The prime example of an elliptic complex is the \emph{de Rham complex},
\[
\begin{xy}
(-20,0)*+{\cdots}="Em1";
(-20,-20)*+{\cdots}="Gm1";
(0,0)*+{\W^{k-1}(M)}="E0";
(30,0)*+{\W^k(M)}="E1";
(60,0)*+{\W^{k+1}(M)}="E2";
(80,0)*+{\cdots}="E3";
(0,-20)*+{\W^{k-1}(\dM)}="G0";
(30,-20)*+{\W^k(\dM)}="G1";
(60,-20)*+{\W^{k+1}(\dM)}="G2";
{\ar@{->}@/^{1pc}/^d"E0";"E1"};
{\ar@{->}@/^{1pc}/^{\delta}"E1";"E0"};
{\ar@{->}@/^{1pc}/^d"E1";"E2"};
{\ar@{->}@/^{1pc}/^{\delta}"E2";"E1"};
{\ar@{->}@/^{1pc}/^{}"E2";"E3"};
{\ar@{->}@/^{1pc}/^{}"E3";"E2"};
{\ar@{->}@/^{1pc}/^{}"Em1";"E0"};
{\ar@{->}@/^{1pc}/^{}"E0";"Em1"};
{\ar@{->}@/_{0pc}/^{\PtD}"E0";"G0"};
{\ar@{->}@/_{0pc}/^{\PtD}"E1";"G1"};
{\ar@{->}@/_{0pc}/^{\PtD}"E2";"G2"};
{\ar@{->}@/^{1pc}/^{\PnD}"E1";"G0"};
{\ar@{->}@/^{1pc}/^{\PnD}"E2";"G1"};
{\ar@{->}@/^{1pc}/^{}"E0";"Gm1"};
{\ar@{->}@/^{1pc}/^{}"E3";"G2"};
\end{xy}
\]
where  $D_k^*D_k=d\delta+\delta d$ is the Hodge Laplacian, and the boundary conditions $T_k$ and $T_k^*$ are Neumann and Dirichlet boundary operators, respectively \cite{Tay11b}. The central result concerning this sequence is the Hodge decomposition theorem (of either Dirichlet or Neumann type), which among other things identifies the cohomology groups of the complexes $\PseudoComplex=(d)$ and $(A_{\bullet}^*)=(\delta)$. These classical results can be generalized to every elliptic complex \cite[p.~463]{Tay11b}. We note that a sequence of operators $(A_{\bullet})$ satisfying (b) cannot always be supplemented with boundary operators that turn it into an elliptic complex \cite{Tay11b,SS19}. In our study, however, such boundary operators arise naturally.

Motivated by the observation that composition with $H$ does not annihilate the Killing operator, but yet, does not raise the order, we generalize the notion of an elliptic complex, and define a new notion of \emph{elliptic pre-complex}, by loosening the following assumptions in the above diagram:
\begin{enumerate}[itemsep=0pt,label=(\alph*)]
\item The operators $A_k$ may be of arbitrary and varying orders $m_k$
(this has already been considered in the literature of elliptic complexes, in the context of Douglis-Nirenberg systems \cite{RS82,SS19}).
\item The ellipticity of the boundary-value problem $(D_k^*D_k,T_k)$ is replaced by a notion of \emph{overdetermined ellipticity} of the boundary-value problem $(A_k\oplus A_{k-1}^*,B_{k-1}^*)$, along with the surjectivity of the boundary operators $B_k$, $B_k^*$. Overdetermined ellipticity amounts to its symbol satisfying an injectivity condition, instead of a bijectivity condition in elliptic systems. 
\item The condition $A_{k+1}A_k=0$ is replaced by the weaker condition that the order of $A_{k+1}A_k$ does not exceed the order of $A_k$.
\end{enumerate} 
In particular, every elliptic complex of first-order operators is an elliptic pre-complex.

\subsection{\bfseries Main results}
Our main result is that every elliptic pre-complex $\PseudoComplex$ can be corrected into a complex, which we denote by $\Complex$, such that $G_k=\bA_k-A_k$ is a linear operator of \emph{order zero} (a notion which is elaborated below).

The procedure turning an elliptic pre-complex into an elliptic complex involves some substantial delicacies. First, as discussed above, the correction $G_k=\bA_k - A_k$ is generally not tensorial. It is a linear operator belonging to a class of so-called \emph{Green operators} \cite{RS82,Gru96}, arising as solution operators for \emph{pseudodifferential boundary-value problems}. Green operator are nonlocal, which on a manifold with boundary implies that they are characterized both by an order  (loosely speaking, accounting for the maximal number of derivatives) and a class (loosely speaking, accounting for the number of normal derivatives at the boundary). Green operators form their own algebra \cite{Bou71}, with notions of adjoints and inverses, which are complicated by the need to track both orders and classes.  

The fact that the correction $G_k$ is of order zero is an indispensable element of the theory. First and foremost, it guarantees the existence of an adjoint, $\bA_k^*$, which is also a Green operator differing from $A_k^*$ by an operator of order zero. The sequence of operators $\Complex$ inherit the integration by parts of the original sequence,
\[
\bra \bA_k\psi,\eta\ket = \bra\psi,\bA_k^*\eta\ket + \bra B_k\psi, B_k^*\eta\ket,
\]
with unaltered boundary operators. 

Our main theorem is:

\begin{theorem*}[Induced elliptic complex]
Every elliptic pre-complex $\PseudoComplex$ induces a complex of Green operators $\Complex$, uniquely characterized by the following properties: 
\begin{enumerate}[itemsep=0pt,label=(\alph*)]
\item $\bA_{k+1}\bA_k=0$.
\item
$\bA_{k+1} = A_{k+1}$ on $\scrN(\bA_k^*,B_k^*)  = \ker(\bA_k^*\oplus B_k^*)$.
\end{enumerate}
\end{theorem*}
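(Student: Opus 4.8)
The plan is to construct the corrected operators $\bA_k$ inductively in $k$, at each stage using the overdetermined ellipticity of the boundary-value problem $(A_k\oplus A_{k-1}^*, B_{k-1}^*)$ to produce a Hodge-like decomposition of $\Gamma(\E_k)$, and then defining $\bA_{k+1}$ so as to kill the ``defect'' $\bA_{k+1}\bA_k$ by subtracting its harmonic/exact correction. Concretely, suppose $\bA_0,\dots,\bA_k$ have been built as Green operators with $\bA_j - A_j$ of order zero, with the integration-by-parts formula intact (so that $\bA_j^*$ exists and $\bA_j^* - A_j^*$ is of order zero), and with $\bA_j\bA_{j-1}=0$ for $j\le k$. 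The overdetermined ellipticity hypothesis, together with surjectivity of $B_k, B_k^*$, should guarantee that the boundary-value problem attached to $\bA_k\oplus \bA_{k-1}^*$ with boundary operator $B_{k-1}^*$ is overdetermined-elliptic as well (since the correction is order zero, it does not affect the principal symbol), and hence Fredholm on the relevant Sobolev scale. From this I expect to extract a finite-dimensional space of ``harmonic fields'' $\scrH_k = \scrN(\bA_k, B\text{-type conditions})\cap\scrN(\bA_{k-1}^*, B_{k-1}^*)$ and an $L^2$-orthogonal decomposition
\[
\Gamma(\E_k) = \range(\bA_{k-1}) \oplus \scrH_k \oplus \scrN(\bA_{k-1}^*, B_{k-1}^*)^{\perp\text{-part}},
\]
or more precisely a decomposition in which $\scrN(\bA_{k-1}^*,B_{k-1}^*)$ is complemented by $\range(\bA_{k-1})$, with an associated Green operator (the partial inverse of the Laplacian-type operator $\bA_{k-1}\bA_{k-1}^* \oplus \ldots$) that is pseudodifferential of order $-m$ and hence, after composition, yields order-zero corrections.

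\textbf{The inductive step.} Granting this decomposition at level $k$, define $\bA_{k+1}$ on the summand $\scrN(\bA_k^*, B_k^*)$ to agree with $A_{k+1}$ — this is forced by property (b) — and extend it to all of $\Gamma(\E_{k+1})$ by composing with the projection $\Pi_k$ onto $\scrN(\bA_k^*,B_k^*)$ along $\range(\bA_k)$: set $\bA_{k+1} := A_{k+1}\circ \Pi_k$. Since $\Pi_k = \id - \bA_k \bG_k \bA_k^*$ for an appropriate Green resolvent $\bG_k$ (schematically), and $\bG_k\bA_k^*$ lowers order by $m_k = \ord A_k$, the operator $A_{k+1}\bA_k\bG_k\bA_k^*$ has order $\le \ord A_{k+1} + \ord A_k - m_k$; the pre-complex hypothesis (c), namely $\ord(A_{k+1}A_k)\le \ord A_k$, is exactly what forces $A_{k+1}\bA_k$ — up to order-zero terms coming from earlier corrections — to have order $\le \ord A_k = m_k$, so that $G_{k+1} := \bA_{k+1} - A_{k+1} = -A_{k+1}\bA_k\bG_k\bA_k^*$ comes out of order zero. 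Then $\bA_{k+1}\bA_k = A_{k+1}\Pi_k\bA_k = 0$ because $\range(\bA_k)$ is precisely the kernel of $\Pi_k$, giving property (a). One then checks that $\bA_{k+1}$ is a Green operator of the right order and class (closed under the boundary calculus), verifies that the integration-by-parts identity persists with the same $B_{k+1}, B_{k+1}^*$, and that the overdetermined ellipticity needed to continue the induction at level $k+1$ holds — again because only the principal symbol matters and corrections are order zero. Uniqueness follows because (b) pins down $\bA_{k+1}$ on $\scrN(\bA_k^*,B_k^*)$, (a) forces it to vanish on $\range(\bA_k)$, and these two subspaces span $\Gamma(\E_{k+1})$ by the decomposition; since the decomposition itself depends only on $\bA_k,\bA_k^*$ (already determined inductively), $\bA_{k+1}$ is determined.

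\textbf{Main obstacle.} The technical heart — and the step I expect to be hardest — is establishing the Hodge-like decomposition and the existence of the Green resolvent $\bG_k$ \emph{within the class of Green operators of the correct order and class}, carefully tracking how orders and classes behave under composition, adjunction, and parametrix construction in the Boutet de Monvel calculus. In particular one must confirm: (i) that overdetermined ellipticity (an injectivity, not bijectivity, condition on symbols) still yields a Fredholm boundary-value problem and a parametrix modulo smoothing; (ii) that the resolvent and projections land in the calculus with order $-m_k$ resp. $0$; and (iii) that replacing $A_k$ by $\bA_k$ (order-zero perturbation) genuinely preserves all the ellipticity conditions needed to run the next step — this requires knowing the correction does not disturb the relevant symbol-level injectivity/surjectivity, including the boundary (Shapiro–Lopatinskii-type) conditions. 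A secondary subtlety is the bookkeeping showing $\ord(A_{k+1}\bA_k)\le m_k$: one must propagate the pre-complex order bound through the earlier order-zero corrections $G_1,\dots,G_k$, using that composing with an order-zero operator cannot raise the order. Once these calculus facts are in place, the construction and the verification of (a), (b), and uniqueness are essentially formal.
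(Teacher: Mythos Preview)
Your inductive strategy is essentially the paper's: at stage $k$ one establishes the auxiliary decomposition $\Gamma(\E_{k+1}) = \scrR(\bA_k)\oplus\scrN(\bA_k^*,B_k^*)$, defines $\bA_{k+1} = A_{k+1}\circ\Pi_k$ with $\Pi_k$ the projection onto the second summand, and checks that $G_{k+1}=\bA_{k+1}-A_{k+1}$ lands in $\frakG^0$. The paper writes $\Pi_k = \id - \bA_k\calP_k$ for an operator $\calP_k\in\OP(\frakS^{-m_k,0})$ playing the role of your $\bG_k\bA_k^*$; producing $\calP_k$ inside the Boutet de Monvel calculus with the correct order \emph{and class zero} is indeed the technical heart (your obstacle (ii)), and the paper does it not via a Laplacian resolvent but by first obtaining the decomposition through closed-range arguments, then showing that $\calP_k$ satisfies an algebraic identity involving an injective OD system of order and class $2m_k$, and finally invoking the $L^p$-continuity criterion (\propref{prop:L2_continuity}) to force class zero.

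There is, however, a real gap in your order bookkeeping for $G_{k+1}$. You need $\ord(A_{k+1}\bA_k)\le m_k$, and you propose to get this from the pre-complex bound on $A_{k+1}A_k$ ``using that composing with an order-zero operator cannot raise the order.'' But $A_{k+1}\bA_k = A_{k+1}A_k + A_{k+1}G_k$, and the second term has order $\le m_{k+1}$ a priori --- composing $A_{k+1}$ with the order-zero $G_k$ gives nothing better, and if $m_{k+1}>m_k$ you are stuck. What actually saves the argument is the \emph{recursive factorisation} of the correction: by induction $G_k = -A_k\bA_{k-1}\calP_{k-1}$ (equivalently, formula \eqref{eq:recursive_correction}), so that $A_{k+1}G_k = -(A_{k+1}A_k)(\bA_{k-1}\calP_{k-1})$, and now the pre-complex bound $\ord(A_{k+1}A_k)\le m_k$ together with $\bA_{k-1}\calP_{k-1}\in\frakG^0$ yields $\ord(A_{k+1}G_k)\le m_k$. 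It is essential that $G_k$ factor through $A_k$ on the left, not merely that it be order zero; this is exactly why the paper carries the explicit recursion for $G_k$ as part of the induction hypothesis rather than just the qualitative statement $G_k\in\frakG^0$.
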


In particular, since $\bA_{-1}^*=0$ and $B_{-1}^*=0$, it holds trivially that $\bA_0=A_0$, i.e., the first operator in the sequence in unaltered. Moreover, since every elliptic complex is an elliptic pre-complex, by the uniqueness clause every elliptic complex is its own induced complex.

The fact that $\Complex$ differs from $\PseudoComplex$ by terms of order zero also implies that
$(\bA_{k-1}^*\oplus\bA_k,B_{k-1}^*)$ is an overdetermined elliptic system within the calculus of Green operators. As a result:

\begin{theorem*}[Hodge-like decomposition]
For every $k\in\Nzero$, there exists an $L^2$-orthogonal, topologically-direct decomposition of Fréchet spaces,
\beq
\Gamma(\E_k) = \scrR(\bA_{k-1})\oplus\scrR(\bA_k^*;B_k^*)\oplus \module^k\Complex,
\label{eq:Hodge_like_intro}
\eeq
where $\scrR(\bA_{k-1})$ is the range of the map $\bA_{k-1}$, $\scrR(\bA_k^*;B_k^*)$ is the range of $\bA_k^*|_{\ker B_k^*}$, and $\module^k\Complex = \ker(\bA_{k-1}^*\oplus\bA_k\oplus B_{k-1}^*)$ is finite-dimensional.  
\end{theorem*}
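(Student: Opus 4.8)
The plan is to deduce the Hodge-like decomposition from the overdetermined ellipticity of the corrected system together with the complex property of $\Complex$, mimicking the classical argument for elliptic complexes but carried out inside the calculus of Green operators. First I would record the key consequence of the Main Theorem: since $\bA_k - A_k$ and $\bA_k^* - A_k^*$ are Green operators of order zero, the boundary-value problem $(\bA_{k-1}^*\oplus\bA_k,\,B_{k-1}^*)$ inherits overdetermined ellipticity from $(A_{k-1}^*\oplus A_k,\,B_{k-1}^*)$ --- the principal symbol and the boundary symbol are unchanged by zero-order perturbations. I would then invoke the standard a priori estimates and Fredholm theory for overdetermined elliptic pseudodifferential boundary-value problems (in the Boutet de Monvel calculus; cf. \cite{RS82,Gru96}) to conclude that the operator $\Delta_k := \bA_{k-1}\bA_{k-1}^* + \bA_k^*\bA_k$ with the boundary conditions $B_{k-1}^*$ and $B_k^*$ (acting on the appropriate domain) is a Fredholm operator on the Sobolev-space scale, has finite-dimensional kernel $\module^k\Complex = \ker(\bA_{k-1}^*\oplus\bA_k\oplus B_{k-1}^*)$ consisting of smooth sections, and satisfies the usual elliptic regularity, so that the decomposition descends to the Fréchet space $\Gamma(\E_k)$ of smooth sections and is topologically direct.

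Next I would establish the three-fold orthogonal splitting itself. The $L^2$-orthogonality of $\scrR(\bA_{k-1})$, $\scrR(\bA_k^*;B_k^*)$ and $\module^k\Complex$ follows formally from the Green's formula $\bra \bA_k\psi,\eta\ket = \bra\psi,\bA_k^*\eta\ket + \bra B_k\psi, B_k^*\eta\ket$ together with $\bA_{k+1}\bA_k = 0$ (property (a) of the Main Theorem) and $\bA_k^*\bA_{k-1}^* = 0$ (the adjoint identity), exactly as in the de Rham case: a section in the range of $\bA_{k-1}$ pairs to zero against anything in $\ker(\bA_{k-1}^*\oplus B_{k-1}^*)$, and against $\bA_k^*\eta$ with $B_k^*\eta = 0$ the pairing is $\bra\bA_k\bA_{k-1}(\cdot),\eta\ket + \text{(boundary term that vanishes)} = 0$. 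For surjectivity --- that these three summands span $\Gamma(\E_k)$ --- I would solve, for a given $\psi\in\Gamma(\E_k)$, the boundary-value problem $\Delta_k u = \psi - P u$ where $P$ is the finite-rank $L^2$-projection onto $\module^k\Complex$, subject to the natural boundary conditions; overdetermined ellipticity of the corrected system guarantees this is solvable modulo the cokernel, which by the self-adjointness of the setup is identified with $\module^k\Complex$, and then $\psi - Pu = \bA_{k-1}(\bA_{k-1}^* u) + \bA_k^*(\bA_k u)$ with $\bA_k u$ satisfying the requisite boundary condition by construction.

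The main obstacle I anticipate is not the algebra of the decomposition but the functional-analytic bookkeeping inside the Green-operator calculus: one must verify that $\Delta_k$ with its boundary conditions is a genuine element of the Boutet de Monvel algebra to which the Fredholm and regularity theorems apply, and in particular that the order/class of the various Green-operator corrections does not spoil the ellipticity of the boundary symbol (this is precisely where the order-zero property of $G_k = \bA_k - A_k$ is indispensable, since a positive-order or high-class correction would change the boundary symbol). I would handle this by appealing to the structure theory developed earlier in the paper for the corrected operators, checking the symbol conditions summand-by-summand, and then citing the general parametrix construction in \cite{Gru96,RS82} to obtain the Fredholm property and smoothness of harmonic sections; the passage from the $L^2$/Sobolev decomposition to a topologically-direct decomposition of Fréchet spaces is then routine, using that the projections are pseudodifferential (hence continuous on all Sobolev scales and on $C^\infty$) and elliptic regularity to intersect over all Sobolev indices.
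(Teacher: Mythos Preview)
Your approach via the ``Hodge Laplacian'' $\Delta_k = \bA_{k-1}\bA_{k-1}^* + \bA_k^*\bA_k$ has a genuine gap in the varying-order setting that is central to this paper. When $m_{k-1} \ne m_k$ --- as happens, e.g., at the level $\Ckm{m}{m}$ of the Bianchi complex, where $\delBianchi$ has order $1$ and $H$ has order $2$ --- the two summands have different orders, so only the higher-order one contributes to the principal symbol of $\Delta_k$. The hypothesis is merely that $\sigma_{A_{k-1}^*}\oplus\sigma_{A_k}$ is injective, not that either factor is injective alone; in the Bianchi example $\sigma_H(x,\xi)\psi = \xi\wedge\xi_V\wedge\psi$ kills any $\psi$ already divisible by $\xi$, so $\Delta_k$ is not interior-elliptic and the Fredholm/parametrix machinery you invoke does not apply. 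The ``main obstacle'' you anticipate is therefore not a bookkeeping matter but a hard obstruction: the symbol check you propose to carry out summand-by-summand would fail. Your identification of the cokernel via ``self-adjointness of the setup'' is likewise unjustified, since $\Delta_k$ with these mismatched orders is not self-adjoint in any useful sense.

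The paper never forms $\Delta_k$ as a sum. Instead it relies on the \emph{auxiliary decomposition} $\Gamma(\E_k) = \scrR(\bA_{k-1})\oplus\scrN(\bA_{k-1}^*,B_{k-1}^*)$, which is produced during the inductive construction of the corrected complex and comes with a projection $\bA_{k-1}\calP_{k-1}\in\frakG^0$. The Hodge-like decomposition is then obtained by refining the second summand: one splits off the finite-dimensional $\module^k\Complex$ and proves that the remainder $\scrN_\bot(\bA_{k-1}^*,B_{k-1}^*)$ coincides with $\scrR(\bA_k^*;B_k^*)$. The closed-range and regularity inputs come from the overdetermined elliptic system $(\bA_{k-1}^*\oplus\bA_k^*\bA_k,\,B_{k-1}^*\oplus B_k^*\bA_k)$ --- a \emph{direct sum} of operators of different orders, handled by order reduction --- and from the replacement of $\bA_{k-1}^*$ by the order-zero projection $\bA_{k-1}\calP_{k-1}$, which removes the order mismatch without any competition at the symbol level. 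Your orthogonality paragraph is essentially correct (modulo the typo $\bA_k^*\bA_{k-1}^*$, and the need to impose $B_k^*\eta=0$ when taking adjoints), but the surjectivity step has to be redone along these lines.
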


These decompositions extend to $W^{s,p}$-Sobolev regularity for every $s\in\Nzero$ and $1<p<\infty$.  

The Hodge-like decompositions imply that the finite-dimensional modules $\module^k\Complex$ are the cohomology modules of the complex $\Complex$, analogous to the harmonic modules in classical Hodge theory \cite{Sch95b}. 
Denoting $\scrN(\bA_k)=\ker(\bA_k)$, we obtain the compound decompositions:
\[
\Gamma(\E_k) = 
\lefteqn{\overbrace{\phantom{\scrR(\bA_{k-1})\oplus \module^k\Complex}}^{\scrN(\bA_k)}} \scrR(\bA_{k-1}) \oplus \underbrace{\module^k\Complex\oplus \scrR(\bA_k^*;B_k^*)}_{\scrN(\bA_{k-1}^*,B_{k-1}^*)}.
\]

\begin{theorem*}[Cohomology groups]
The finite-dimensional modules $\module^k\Complex$ are both the cohomology groups of the complex $\Complex$, 
and the cohomology groups of the complex $(\bA^{*}_\bullet|_{\ker B_\bullet^*})$, i.e.,
\[
\begin{aligned}
&\psi\in\scrR(\bA_{k-1})  
&\text{if and only if} 
&\qquad&
&\bA_k\psi=0 \textand \psi\,\bot\,\module^k\Complex,
\\
& \psi\in\scrR(\bA_k^*;B_k^*) 
&\text{if and only if} 
&\qquad&
&\bA_{k-1}^*|_{\ker B_{k-1}^*}\psi=0 \textand \psi\,\bot\,\module^k\Complex.
\end{aligned}
\]
\end{theorem*}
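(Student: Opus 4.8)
The plan is to deduce the cohomology statement directly from the Hodge-like decomposition \eqref{eq:Hodge_like_intro} together with clause (a) of the main theorem ($\bA_{k+1}\bA_k=0$). First I would establish the two inclusions that identify $\scrN(\bA_k)$ and $\scrN(\bA_{k-1}^*,B_{k-1}^*)$ in terms of the three summands. The inclusion $\scrR(\bA_{k-1})\subseteq\scrN(\bA_k)$ is immediate from $\bA_k\bA_{k-1}=0$; the inclusion $\module^k\Complex\subseteq\scrN(\bA_k)$ is immediate from the definition $\module^k\Complex=\ker(\bA_{k-1}^*\oplus\bA_k\oplus B_{k-1}^*)$. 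For the reverse inclusion, I would take $\psi\in\scrN(\bA_k)$, decompose it via \eqref{eq:Hodge_like_intro} as $\psi=\bA_{k-1}\alpha+\bA_k^*\beta+h$ with $\beta\in\ker B_k^*$ and $h\in\module^k\Complex$, apply $\bA_k$ and use $\bA_k\bA_{k-1}=0$ and $\bA_k h=0$ to get $\bA_k\bA_k^*\beta=0$, then pair with $\beta$ and use the Green formula $\bra\bA_k\psi,\eta\ket=\bra\psi,\bA_k^*\eta\ket+\bra B_k\psi,B_k^*\eta\ket$ (with $\beta\in\ker B_k^*$) to conclude $\|\bA_k^*\beta\|^2=0$, hence $\bA_k^*\beta=0$ and $\psi=\bA_{k-1}\alpha+h\in\scrR(\bA_{k-1})\oplus\module^k\Complex$. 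The symmetric argument, pairing against the test element in $\scrR(\bA_{k-1})$ and using the Green formula in the other direction, gives $\scrN(\bA_{k-1}^*,B_{k-1}^*)=\module^k\Complex\oplus\scrR(\bA_k^*;B_k^*)$; this is exactly the compound decomposition displayed just before the theorem.

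Next I would convert these two compound decompositions into the ``if and only if'' statements. Since the decomposition \eqref{eq:Hodge_like_intro} is $L^2$-orthogonal, the condition $\psi\,\bot\,\module^k\Complex$ for $\psi\in\scrN(\bA_k)=\scrR(\bA_{k-1})\oplus\module^k\Complex$ is equivalent to the vanishing of the $\module^k\Complex$-component, i.e., to $\psi\in\scrR(\bA_{k-1})$; this proves the first equivalence, reading $\bA_k\psi=0$ as membership in $\scrN(\bA_k)$. For the second equivalence I note that $\bA_{k-1}^*|_{\ker B_{k-1}^*}\psi=0$ is, by definition, membership in $\scrN(\bA_{k-1}^*,B_{k-1}^*)=\ker(\bA_{k-1}^*\oplus B_{k-1}^*)$, which by the second compound decomposition equals $\module^k\Complex\oplus\scrR(\bA_k^*;B_k^*)$; again orthogonality to $\module^k\Complex$ kills the harmonic component and leaves $\psi\in\scrR(\bA_k^*;B_k^*)$. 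Finally, that $\module^k\Complex$ is the cohomology of $\Complex$ at spot $k$ is the statement $\scrN(\bA_k)/\scrR(\bA_{k-1})\cong\module^k\Complex$, which follows from the first compound decomposition by quotienting out $\scrR(\bA_{k-1})$; likewise $\module^k\Complex$ is the cohomology of $(\bA^*_\bullet|_{\ker B_\bullet^*})$ from the second compound decomposition, after checking that $\scrR(\bA_k^*;B_k^*)$ plays the role of the image and $\scrN(\bA_{k-1}^*,B_{k-1}^*)$ the role of the kernel in that dual complex — which is a matter of unwinding definitions and using $\bA_k^*\bA_{k+1}^*=0$ (the adjoint of clause (a), valid because the $\bA_k$ are Green operators of order zero away from $A_k$, so adjoints exist).

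The only genuine subtlety, rather than an obstacle, is bookkeeping in the Green-operator calculus: one must make sure the Green formula in the main theorem holds for the test elements $\beta\in\ker B_k^*$ and $\alpha$ appearing in the decomposition (this is exactly why the decomposition was built with $\scrR(\bA_k^*;B_k^*)=\scrR(\bA_k^*|_{\ker B_k^*})$), and that $\bA_k^*$ is a genuine adjoint so that $\bra\bA_k\bA_k^*\beta,\beta\ket=\|\bA_k^*\beta\|^2+\bra\bA_k\bA_k^*\beta$ boundary terms$\ket$ collapses correctly — but both facts are supplied by the discussion preceding the main theorem, where $\bA_k-A_k$ is shown to be an order-zero Green operator admitting an adjoint with unaltered boundary operators. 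With those in hand, everything reduces to finite-dimensional linear algebra inside the orthogonal splitting, and no new analysis is needed beyond the three theorems already stated.
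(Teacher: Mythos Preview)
Your proposal is correct and follows essentially the same route as the paper. The paper packages the key orthogonality fact $\scrN(\bA_k)\perp\scrR(\bA_k^*;B_k^*)$ as a separate lemma (proved directly from the definition of the weak kernel), whereas you derive it inline by the pairing computation $0=\bra\bA_k\bA_k^*\beta,\beta\ket=\|\bA_k^*\beta\|^2$; the two arguments are the same integration-by-parts step in different clothing. Note also that your ``symmetric argument'' re-establishes the identity $\scrN(\bA_{k-1}^*,B_{k-1}^*)=\module^k\Complex\oplus\scrR(\bA_k^*;B_k^*)$, which in the paper is already contained in the statement of the Hodge-like decomposition theorem itself (equation~\eqref{eq:aux_refinmenetSmooth}), so that portion of your argument is redundant but harmless.
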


The Hodge-like decompositions yield explicit existence and uniqueness results for the corresponding overdetermined boundary-value problems:

\begin{theorem*}[Overdetermined boundary-value problem]
Consider the list of data,
\[
\chi\in \Gamma(\E_{k+1}) 
\qquad 
\xi\in \Gamma(\E_{k-1})
\textand
\phi\in \Gamma(\E_k).
\] 
There exists a solution $\psi\in \Gamma(\E_k)$ to the boundary-value problem
\[
\Cases{
\bA_{k-1}^*\psi = \xi
\textand
\bA_k\psi = \chi
& \text{in $M$} \\
B_{k-1}^*\psi = B_{k-1}^*\phi
& \text{on $\dM$},
}
\]
if and only if the following integrability conditions are satisfied:
\[
\begin{gathered}
\bA_{k+1}\chi=0 \textand \bra\chi,\zeta\ket=0 \qquad \text{for every }\zeta\in\module^{k+1}\Complex
\\
\xi - \bA_{k-1}^*\phi\in \scrN(\bA_{k-2}^*,B_{k-2}^*)
\\
\bra \xi,\nu\ket =-\bra B_{k-1}^*\phi,B_{k-1}\nu\ket \qquad  \text{for every }\nu\in\module^{k-1}\Complex.
\end{gathered}
\]
The solution is unique modulo an arbitrary element in the finite-dimensional module $\module^k\Complex$. 
\end{theorem*}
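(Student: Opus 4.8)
The plan is to reduce the entire statement to the Hodge-like decomposition and the cohomology-groups theorem, applied at levels $k$ and $k-1$, by tracking how the three summands of $\Gamma(\E_k)=\scrR(\bA_{k-1})\oplus\scrR(\bA_k^*;B_k^*)\oplus\module^k\Complex$ interact with the three conditions of the problem. First I would record the facts I need from the induced complex $\Complex$: $\bA_k\bA_{k-1}=0$; $\bA_k$, $\bA_{k-1}^*$ and $B_{k-1}^*$ all annihilate $\module^k\Complex$ (by definition of $\module^k\Complex=\ker(\bA_{k-1}^*\oplus\bA_k\oplus B_{k-1}^*)$); and $\scrR(\bA_k^*;B_k^*)\subseteq\ker\bA_{k-1}^*\cap\ker B_{k-1}^*$ (the first containment being needed already for $(\bA^*_\bullet|_{\ker B^*_\bullet})$ to be a complex, the second following since that composition vanishes). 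Consequently, writing $\psi=p+\beta+h$ with $p\in\scrR(\bA_{k-1})$, $\beta\in\scrR(\bA_k^*;B_k^*)$, $h\in\module^k\Complex$, one gets $\bA_k\psi=\bA_k\beta$ (using $\bA_k\bA_{k-1}=0$), while $\bA_{k-1}^*\psi=\bA_{k-1}^*p$ and $B_{k-1}^*\psi=B_{k-1}^*p$. Thus the $\beta$-component is governed only by $\bA_k\psi=\chi$, the $p$-component only by the pair $(\bA_{k-1}^*\psi=\xi,\ B_{k-1}^*\psi=B_{k-1}^*\phi)$, and $h$ is free; in particular the difference of two solutions lies in $\ker(\bA_{k-1}^*\oplus\bA_k\oplus B_{k-1}^*)=\module^k\Complex$, which gives the uniqueness clause immediately.

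For the $\beta$-component: by the cohomology-groups theorem (with $k$ replaced by $k+1$), $\chi\in\scrR(\bA_k)$ if and only if $\bA_{k+1}\chi=0$ and $\chi\perp\module^{k+1}\Complex$, i.e. exactly the first integrability condition. When it holds, the decomposition shows $\scrR(\bA_k)=\bA_k\bigl(\scrR(\bA_k^*;B_k^*)\bigr)$ and that $\bA_k$ is injective on $\scrR(\bA_k^*;B_k^*)$ (its kernel there lies in $\scrN(\bA_k)=\scrR(\bA_{k-1})\oplus\module^k\Complex$, hence is $0$ by directness), so there is a unique $\beta\in\scrR(\bA_k^*;B_k^*)$ with $\bA_k\beta=\chi$.

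For the $p$-component: I would Hodge-decompose $\phi$ as well, $\phi=p_\phi+\beta_\phi+h_\phi$, and note $\bA_{k-1}^*\phi=\bA_{k-1}^*p_\phi$ and $B_{k-1}^*\phi=B_{k-1}^*p_\phi$. So solving $\bA_{k-1}^*p=\xi$, $B_{k-1}^*p=B_{k-1}^*\phi$ with $p\in\scrR(\bA_{k-1})$ is the same as finding $q:=p-p_\phi\in\scrR(\bA_{k-1})\cap\ker B_{k-1}^*$ with $\bA_{k-1}^*q=\xi-\bA_{k-1}^*\phi$. Restricting the level-$k$ decomposition to $\ker B_{k-1}^*$ gives $\ker B_{k-1}^*=\bigl(\scrR(\bA_{k-1})\cap\ker B_{k-1}^*\bigr)\oplus\scrR(\bA_k^*;B_k^*)\oplus\module^k\Complex$, and $\bA_{k-1}^*$ kills the last two summands, whence $\bA_{k-1}^*\bigl(\scrR(\bA_{k-1})\cap\ker B_{k-1}^*\bigr)=\bA_{k-1}^*(\ker B_{k-1}^*)=\scrR(\bA_{k-1}^*;B_{k-1}^*)$. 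So such a $q$ exists iff $\xi-\bA_{k-1}^*\phi\in\scrR(\bA_{k-1}^*;B_{k-1}^*)$. The cohomology-groups theorem at level $k-1$ identifies $\scrR(\bA_{k-1}^*;B_{k-1}^*)=\{\eta:\eta\in\scrN(\bA_{k-2}^*,B_{k-2}^*)\textand\eta\perp\module^{k-1}\Complex\}$; this yields the second integrability condition directly, and for the orthogonality I would use Green's formula with $\nu\in\module^{k-1}\Complex$ (so $\bA_{k-1}\nu=0$), giving $\bra\bA_{k-1}^*\phi,\nu\ket=-\bra B_{k-1}^*\phi,B_{k-1}\nu\ket$, which converts $\xi-\bA_{k-1}^*\phi\perp\module^{k-1}\Complex$ into $\bra\xi,\nu\ket=-\bra B_{k-1}^*\phi,B_{k-1}\nu\ket$, the third condition. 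Setting $\psi=\beta+p_\phi+q$ then verifies all three equations and proves sufficiency; running the same equivalences backwards from an arbitrary solution (via its level-$k$ decomposition and that of $\phi$) proves necessity.

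The bookkeeping that remains — continuity of the projections, the direct-sum arithmetic, and the passage to $W^{s,p}$-regularity — is inherited verbatim from the Hodge-like decomposition theorem, so I would only remark on it. The genuine crux is the $p$-component analysis: recognizing that the arbitrary boundary datum $B_{k-1}^*\phi$ is automatically attainable once $\phi$ is itself Hodge-decomposed, and that solving $\bA_{k-1}^*p=\xi$ inside $\scrR(\bA_{k-1})$ under the boundary constraint is precisely a level-$(k-1)$ cohomology question — this is exactly what forces the second and third integrability conditions to take the asymmetric form "membership in $\scrN(\bA_{k-2}^*,B_{k-2}^*)$ together with a boundary-pairing orthogonality to $\module^{k-1}\Complex$'' rather than a single closedness-type condition. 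Everything else is orthogonal-projection algebra layered on the already-established decompositions.
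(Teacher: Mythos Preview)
Your proposal is correct and follows essentially the same approach as the paper: both reduce the problem to the Hodge-like decomposition, the cohomology-groups theorem, and the refinement $\scrN(\bA_{k-1}^*,B_{k-1}^*)=\scrR(\bA_k^*;B_k^*)\oplus\module^k\Complex$ at levels $k$ and $k-1$. The only organizational difference is that you fully Hodge-decompose both $\psi$ and $\phi$ and solve for the summands independently, whereas the paper builds the solution sequentially as $\psi=\omega+\alpha+\phi$ (with $\alpha\in\ker B_{k-1}^*$ chosen to fix $\bA_{k-1}^*\psi$ and $B_{k-1}^*\psi$, then $\omega\in\scrR(\bA_k^*;B_k^*)$ chosen to fix $\bA_k\psi$ without disturbing the other two); the ingredients and logic are identical.
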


The results extend to $W^{s,p}$-regularity, $s\in\Nzero$, $1<p<\infty$, with a priori Korn-like estimates,
\[
\|\psi\|_{s,p}\lesssim \|\bA_k\psi\|_{s-m_k,p}+ \|\bA_{k-1}^*\psi\|_{s-m_{k-1},p} +
\sum_{i=0}^{m_{k-1}-1} \|B_{i,k-1}^*\psi\|_{s-i-1/2,p} + 
\|\psi\|_{0,p},
\]
for every $s\ge \max(m_{k-1},m_k)$ and $B_{i,k-1}^*$ are the components of the boundary operator, each of order $i$. 
Throughout this work, we use the symbol $\lesssim$ to denote inequalities up to a multiplicative constant.

In analogy with electromagnetism, the existence and uniqueness clauses resolve the gauge freedom in the equation $\bA_{k-1}^*\psi = \xi$. This fact can be exploited in the representation of stress fields by stress potentials  \cite{Pom15,Pom18,KL21b}.

\subsection{\bfseries Application: covariant de Rham complexes} 
The most familiar occurrence of a non-trivial elliptic pre-complex is the sequence of exterior covariant derivatives $\dU:\Omega^{k}(M;\bbU)\rightarrow \Omega^{k+1}(M;\bbU)$ mentioned above. 
Indeed, $\dU$ and its $L^{2}$-adjoint, the covariant codifferential $\deltag:\W^{k+1}(M;\bbU)\rightarrow \W^{k}(M;\bbU)$, satisfy a Green's formula,
\beq
\bra \dU\psi,\eta\ket=\bra \psi,\delta^{\nabU}\eta\ket +\bra \PtD\psi,\PnD\eta\ket,
\label{eq:vector_forms_integration_by_parts}
\eeq
where 
\[
\PtD:\W^k(M;\bbU)\to \W^k(\dM;\jmath^*\bbU) 
\textand 
\PnD:\W^{k+1}(M;\bbU)\to \W^k(\dM;\jmath^*\bbU)
\] 
are the tangential and normal boundary projection operators, $\jmath:\dM\hookrightarrow M$ being the inclusion of the boundary in $M$.

Set  $\Gamma(\E_k) = \W^k(M;\bbU)$, $\Gamma(\bbG_k) = \W^k(\dM;\jmath^{*}\bbU)$, 
$A_k=\dU$, $B_k=\PtD$ and $B_k^*=\PnD$, yielding the following diagram:

\[
\begin{xy}
(-25,0)*+{\cdots}="Em1";
(-25,-20)*+{\cdots}="Gm1";
(0,0)*+{\W^{k-1}(M;\bbU)}="E0";
(35,0)*+{\W^k(M;\bbU)}="E1";
(70,0)*+{\W^{k+1}(M;\bbU)}="E2";
(95,0)*+{\cdots}="E3";
(0,-20)*+{\W^{k-1}(\dM;\jmath^*\bbU)}="G0";
(35,-20)*+{\W^k(\dM;\jmath^*\bbU)}="G1";
(70,-20)*+{\W^{k+1}(\dM;\jmath^*\bbU)}="G2";
{\ar@{->}@/^{1pc}/^{\dU}"E0";"E1"};
{\ar@{->}@/^{1pc}/^{\deltag}"E1";"E0"};
{\ar@{->}@/^{1pc}/^{\dU}"E1";"E2"};
{\ar@{->}@/^{1pc}/^{\deltag}"E2";"E1"};
{\ar@{->}@/^{1pc}/^{}"E2";"E3"};
{\ar@{->}@/^{1pc}/^{}"E3";"E2"};
{\ar@{->}@/^{1pc}/^{}"Em1";"E0"};
{\ar@{->}@/^{1pc}/^{}"E0";"Em1"};
{\ar@{->}@/_{0pc}/^{\PtD}"E0";"G0"};
{\ar@{->}@/_{0pc}/^{\PtD}"E1";"G1"};
{\ar@{->}@/_{0pc}/^{\PtD}"E2";"G2"};
{\ar@{->}@/^{1pc}/^{\PnD}"E1";"G0"};
{\ar@{->}@/^{1pc}/^{\PnD}"E2";"G1"};
{\ar@{->}@/^{1pc}/^{}"E0";"Gm1"};
{\ar@{->}@/^{1pc}/^{}"E3";"G2"};
\end{xy}
\]

The verification that $(\dU)$ is an elliptic pre-complex is straightforward: $\PtD$ and $\PnD$ are surjective; $\dU\dU=R^{\nabU}$ is tensorial, hence its order is lower than that of $\dU$; the fact that the system $(\dU\oplus\deltag,\PnD)$ is overdetermined elliptic follows an elementary calculation, identical to the calculation showing that $(d\oplus \delta,\PnD)$ is overdetermined elliptic \cite{Sch95b}. 

The induced complex is the unique sequences of operators $\mathbf{\dU}:\W^k(M;\bbU)\to \W^{k+1}(M;\bbU)$ satisfying,
\begin{enumerate}[itemsep=0pt,label=(\alph*)]
\item $\mathbf{\dU}\mathbf{\dU} = 0$
\item $\mathbf{\dU} = \dU$ on $\scrN(\pzcdel^{\nabU},\PnD)$,
\end{enumerate}
where $\pzcdel^{\nabU}=(\mathbf{\dU})^*$. In particular, $\mathbf{\dU} = \nabU$ on $\W^0(M;\bbU)$. We call the induced complex $(\mathbf{\dU})$ a \emph{covariant de-Rham complex}. The associated family of Hodge-like decompositions assume the form:
\[
\W^k(M;\bbU) = 
\lefteqn{\overbrace{\phantom{\scrR(\mathbf{\dU}\oplus \module_N^k(M)}}^{\scrN(\mathbf{\dU})}} \scrR(\mathbf{\dU}) \oplus \underbrace{\module_N^k(M)\oplus \scrR(\pzcdel^{\nabU};\PnD)}_{\scrN(\pzcdel^{\nabU},\PnD)},
\]
where
\[
\module^k_N(M)=\ker(\mathbf{\dU}\oplus\pzcdel^{\nabU}\oplus\PnD).
\]
$\module^k_N(M)$ is a covariant version of the Neumann harmonic module, and $\module^0_N(M)=\ker\nabU$ is the space of $\nabU$-parallel sections of $\bbU$. The cohomology theorem for $k=1$ yields,
 \[
\psi=\nabU v \qquad \text{if and only if} \qquad \mathbf{\dU}\psi=0 \textand \psi\,\bot\,\module^1_N(M). 
\] 
If the connection $\nabU$ is flat, then $d^{\nabU}d^{\nabU}=0$, hence $\mathbf{d^{\nabU}}=d^{\nabU}$ by the uniqueness of the induced complex. If, in addition, $M$ is simply-connected, then $\bbU\simeq M\times \bbR^N$ and $\module^1_N(M)=\{0\}$.

\subsection{\bfseries Application: Bianchi complexes}
It was Calabi who first formulated the Saint-Venant problem in the framework of \emph{double forms}, $\Wkm{k}{m}$,  $k,m\in\Nzero$, which are $\Lambda^mT^*M$-valued $k$-forms, i.e., sections of the vector bundles $\Lkm{k}{m}=\Lambda^kT^*M\otimes \Lambda^mT^*M$ \cite{deR84,Cal61,Gra70,Kul72}. 
For introductory reasons, we briefly refer here to properties and constructions pertinent to double forms; full details are presented in the body of the work and in the cited references. 

Double forms constitute a graded algebra equipped with a natural wedge product $\wedge : \Wkm{k}{m}\times \Wkm{\ell}{n}\to \Wkm{k+\ell}{m+n}$ and an involution $(\cdot)^T:\Wkm{k}{m}\to \Wkm{m}{k}$ obtained by interchanging the ``vector" and "form" parts. A $(k,k)$-form $\psi$ is called symmetric if $\psi^T = \psi$. 

Many objects in differential geometry can be cast as double forms. Riemannian metrics, Hessians of scalar functions, and Lie derivatives of the metric can be viewed as symmetric elements of $\Wkm11$; for higher order examples, the Riemannian curvature tensor $\Rm_\g$ can be viewed as a symmetric element of $\Wkm22$. Curvature tensors satisfy additional symmetries---\emph{algebraic Bianchi identities}. Calabi introduced an algebraic symmetry pertinent to all double forms, generalizing the algebraic Bianchi identity. This symmetry is the kernel of a smooth bundle map $\G:\Lkm{k}{m}\to \Lkm{k+1}{m-1}$ for $k\ge m$, and the kernel of its vector counterpart $\GV:\Lkm{k}{m}\to \Lkm{k-1}{m+1}$ for $k\le m$. 
We call double forms satisfying these symmetries \emph{Bianchi forms}, and denote them by $\Ckm{k}{m}$.
We denote by $\calPG:\Wkm{k}{m}\to\Ckm{k}{m}$ the orthogonal projection of double forms onto the space of Bianchi forms (the ``Bianchization" operator). 

The exterior covariant derivative, which is defined like for every other vector-valued form, is a first-order differential  operator
\[
\dg:  \Wkm{k}{m} \to \Wkm{k+1}{m},
\]
with a formal $L^2$-adjoint $\deltag:  \Wkm{k+1}{m} \to \Wkm{k}{m}$. Differential operators can also act on the vector part via  involution. We define
\[
\dgV:  \Wkm{k}{m} \to \Wkm{k}{m+1}
\Textand
\deltagV:  \Wkm{k}{m+1} \to \Wkm{k}{m}
\]
by $\dgV\psi = (\dg\psi^T)^T$ and $\deltagV\psi = (\deltag\psi^T)^T$.

The exterior covariant derivative $\dg$ commutes with the Bianchi sum $\G$, however not with $\GV$, implying that $\dg$ maps Bianchi forms into Bianchi forms for $k\ge m$, but not for $k<m$. 
A first-order differential operator preserving the Bianchi symmetry is
the \emph{Bianchi derivative}, 
\[
\dBianchi:\Ckm{k}{m}\to\Ckm{k+1}{m}
\qquad\text{defined by}\qquad
\dBianchi\psi =  \calPG \dg\psi.
\]
We further define $\dBianchiV:\Ckm{k}{m}\to\Ckm{k}{m+1}$ via the natural involution, and then $\delBianchi:\Ckm{k+1}{m}\to\Ckm{k}{m}$ and $\delBianchiV:\Ckm{k}{m+1}\to\Ckm{k}{m}$ as their formal $L^2$-adjoints. These first-order operators satisfy integration by parts formula,
\[
\bra \dBianchi\psi,\eta\ket = \bra \psi,\delBianchi\eta\ket + \bra B_\calG \psi,B_\calG^*\eta\ket,
\] 
where $B_\calG$ and $B_\calG^*$ are tensorial boundary operators onto corresponding spaces of Bianchi boundary forms. On $\Ckm{k}{m}$ with $k\le m$, $\delBianchi=\deltag$ is the covariant codifferential, and $B_\calG^*=\PnD$ is the contraction by the boundary unit normal; for $k>m$, both $\deltag$ and $\PnD$ are supplemented by a projection onto Bianchi forms.

The operator 
\[
\dBianchi\dBianchi: \Ckm{k}{m}\to \Ckm{k+2}{m} 
\]
turns out to be tensorial for every $k,m$ except for when $k=m-1$. The lack of tensoriality of $\dBianchi\dBianchi:\Ckm{k-1}{k}\to\Ckm{k+1}{k}$ has significant implications, notably in the resolution of the Saint-Venant problem. Indeed, the operator $\dBianchi: \Ckm01\to \Ckm11$ coincides with the Killing operator (up to a multiplicative constant), 
\beq
\dBianchi\omega  = \tfrac12 \calL_{\omega^\#}\g,
\label{eq:Lie_bianchi_intro}
\eeq
where $\omega^\#$ is the vector field corresponding to the 1-form $\omega$. Thus, 
\[
\dBianchi\dBianchi\omega = \tfrac12 \dg\calL_{\omega^{\sharp}}\g,
\] 
which is a second-order operator (and in particular non-zero) even in a Euclidean setting. 
Hence the first-order operator $\dBianchi$ fails to ``detect" the Killing operator, which is why one must resort to a second-order compatibility condition. 

In \cite{KL21a} we rewrote the curl-curl operator and its adjoint as second-order operators acting on Bianchi forms, $H:\Ckm{k}{m}\to \Ckm{k+1}{m+1}$
and $H^*: \Ckm{k+1}{m+1}\to\Ckm{k}{m}$, given by
\[
H=\tfrac12(\dg\dgV+\dgV\dg) 
\Textand 
H^*=\tfrac12(\deltag\deltagV+\deltagV\deltag).
\]
These operators satisfy integration by part formulas involving both tensorial and first-order surjective boundary operators, which we denote by $B_{H}$ and $B_{H}^*$,
\[
\bra H\psi,\eta\ket=\bra \psi,H^*\eta\ket +\bra B_H\psi,B_H^*\eta\ket. 
\]
An explicit calculation shows that $H\dBianchi$ and $\dBianchi H$ are differential operators of order $1$, and that for every $1\le m\le d$, the following sequence, which we break into two lines, constitutes an elliptic pre-complex:
\[
\begin{xy}
(-25,0)*+{0}="Em1";
(0,0)*+{\Ckm{0}{m}}="E0";
(30,0)*+{\Ckm{1}{m}}="E1";
(60,0)*+{\cdots}="E2";
(90,0)*+{\Ckm{m}{m}}="E3";
(-25,-20)*+{}="Gm1";
(0,-20)*+{\plCkm{0}{m}\oplus\plCkm{0}{m-1}}="G0";
(30,-20)*+{\plCkm{1}{m}\oplus\plCkm{1}{m-1}}="G1";
(60,-20)*+{\cdots}="G2";
(90,-20)*+{(\plCkm{m}{m})^2}="G3";
{\ar@{->}@/^{1pc}/^{\dBianchi}"E0";"E1"};
{\ar@{->}@/^{1pc}/^{\delBianchi}"E1";"E0"};
{\ar@{->}@/^{1pc}/^{\dBianchi}"E1";"E2"};
{\ar@{->}@/^{1pc}/^{\delBianchi}"E2";"E1"};
{\ar@{->}@/^{1pc}/^{\dBianchi}"E2";"E3"};
{\ar@{->}@/^{1pc}/^{\delBianchi}"E3";"E2"};
{\ar@{->}@/^{1pc}/^0"Em1";"E0"};
{\ar@{->}@/^{1pc}/^0"E0";"Em1"};
{\ar@{->}@/_{0pc}/^{B_\calG}"E0";"G0"};
{\ar@{->}@/_{0pc}/^{B_\calG}"E1";"G1"};
{\ar@{->}@/_{0pc}/^{B_\calG}"E2";"G2"};
{\ar@{->}@/^{1pc}/^{B_\calG^*}"E1";"G0"};
{\ar@{->}@/^{1pc}/^{B_\calG^*}"E2";"G1"};
{\ar@{->}@/^{1pc}/^{B_\calG^*}"E3";"G2"};
{\ar@{->}@/_{0pc}/^{B_H}"E3";"G3"};
\end{xy}
\]
\[
\begin{xy}
(-30,0)*+{\Ckm{m}{m}}="Em1";
(0,0)*+{\Ckm{m+1}{m+1}}="E0";
(30,0)*+{\cdots}="E1";
(60,0)*+{\Ckm{d}{m+1}}="E2";
(85,0)*+{0}="E3";
(-30,-20)*+{(\plCkm{m}{m})^2}="Gm1";
(0,-20)*+{\plCkm{m+1}{m+1}\oplus\plCkm{m+1}{m}}="G0";
(30,-20)*+{\cdots}="G1";
(60,-20)*+{\plCkm{d}{m+1}\oplus\plCkm{d}{m}}="G2";
(85,-20)*+{}="G3";
{\ar@{->}@/^{1pc}/^{\dBianchi}"E0";"E1"};
{\ar@{->}@/^{1pc}/^{\delBianchi}"E1";"E0"};
{\ar@{->}@/^{1pc}/^{\dBianchi}"E1";"E2"};
{\ar@{->}@/^{1pc}/^{\delBianchi}"E2";"E1"};
{\ar@{->}@/^{1pc}/^0"E2";"E3"};
{\ar@{->}@/^{1pc}/^0"E3";"E2"};
{\ar@{->}@/^{1pc}/^{H}"Em1";"E0"};
{\ar@{->}@/^{1pc}/^{H^*}"E0";"Em1"};
{\ar@{->}@/_{0pc}/^{B_H}"Em1";"Gm1"};
{\ar@{->}@/_{0pc}/^{B_\calG}"E0";"G0"};
{\ar@{->}@/_{0pc}/^{B_\calG}"E1";"G1"};
{\ar@{->}@/_{0pc}/^{B_\calG}"E2";"G2"};
{\ar@{->}@/^{1pc}/^{B_\calG^*}"E1";"G0"};
{\ar@{->}@/^{1pc}/^{B_\calG^*}"E2";"G1"};
{\ar@{->}@/^{1pc}/^{B_H^*}"E0";"Gm1"};
\end{xy}
\]
We name the induced complexes \emph{Bianchi complexes}. 
Specifically, there exists a uniquely determined chains of operators,
\[
\begin{aligned}
&\DBianchi: \Ckm{k}{m} \to \Ckm{k+1}{m}  
&\qquad\qquad
&k=0,\dots,m-1 \\
&\bHg: \Ckm{m}{m} \to \Ckm{m+1}{m+1} \\
&\DBianchi: \Ckm{k}{m+1} \to \Ckm{k+1}{m+1}  
&\qquad\qquad
&k=m+1,\dots,d-1,
\end{aligned}
\]
satisfying
\[
\DBianchi\DBianchi=0 \qquad \DBianchi \bHg=0 \qquad \bHg\DBianchi=0,
\]
such that $\DBianchi-\dBianchi$ and $\bHg-H$ are Green operators of order zero, vanishing on $\Ckm{0}{m}$. We denote the corresponding adjoints by $\DelBianchi$ and $\bHg^*$, with $\DelBianchi-\delBianchi$ and $\bHg^*-H^*$ of order zero as well. Thus, for every $0\leq k,m\leq d$, there corresponds a Hodge-like decomposition \eqref{eq:Hodge_like_intro}, together with integrability conditions and uniqueness clauses for the corresponding overdetermined boundary-value problems. We name the corresponding cohomology groups the \emph{Bianchi cohomology groups}, denoted by $\Bmodule{k}{m}$. This notation emphasizes that $\dim \Bmodule{k}{m}$ is an invariant of the Riemannian structure (by the uniqueness of the induced complex and the fact that all the operators are Riemannian constructions).

We next list some important cases. 

\subsubsection{\bfseries The Hessian complex}
We start with the Bianchi complex associated with $m=0$, naming it the Hessian complex, since the first operator in the chain is the Hessian of functions, $H:\Ckm{0}{0}\to\Ckm{1}{1}$. The first Hodge-like decomposition obtained from this complex decomposes scalar functions,
\[
\Ckm{0}{0} = \Bmodule{0}{0} \oplus \scrR(H^*;B_H^*) 
\quad\text{where}\qquad 
\Bmodule{0}{0}= \ker H.
\]
Thus,
\begin{theorem}
\label{thm:hessian0}
For $f\in C^\infty(M)$,
\[
f\in\scrR(H^*;B_H^*) \qquad \text{if and only if \,\,  $f\,\bot\,\Bmodule{0}{0}$}
\]
Sobolev versions for $f\in W^{s,p}(M)$ hold with the required adjustments.  
\end{theorem}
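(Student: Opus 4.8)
The plan is to read off Theorem~\ref{thm:hessian0} from the Hodge-like decomposition theorem, specialized to the Hessian complex---the Bianchi complex with $m=0$---evaluated at the initial slot $k=0$.

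First I would fix the structure of the Hessian complex: it is the $m=0$ member of the family of elliptic pre-complexes of Bianchi derivatives described above, with the ``first line'' of the two-line diagram collapsed to the single term $\Ckm00$ and with $H$ serving as the order-two initial operator,
\[
0 \;\longrightarrow\; \Ckm00 \;\xrightarrow{\;H\;}\; \Ckm11 \;\xrightarrow{\;\dBianchi\;}\; \Ckm21 \;\xrightarrow{\;\dBianchi\;}\; \cdots \;\xrightarrow{\;\dBianchi\;}\; \Ckm{d}{1} \;\longrightarrow\; 0 .
\]
Thus $A_0 = H$, and the Green's formula $\bra H\psi,\eta\ket = \bra \psi, H^*\eta\ket + \bra B_H\psi, B_H^*\eta\ket$ identifies $B_0 = B_H$ and $B_0^* = B_H^*$, so the general machinery applies verbatim.

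Next I would invoke the Induced Elliptic Complex theorem together with the remark that the first operator is unaltered: since $\bA_{-1}^* = 0$ and $B_{-1}^* = 0$, one has $\bA_0 = A_0 = H$ exactly (equivalently, $\DBianchi=\dBianchi$ carries no zero-order correction on $\Ckm00$), hence also $\bA_0^* = H^*$. Consequently the cohomology module at slot zero is
\[
\module^0\Complex \;=\; \ker\!\big(\bA_{-1}^* \oplus \bA_0 \oplus B_{-1}^*\big) \;=\; \ker H \;=\; \Bmodule00 ,
\]
and $\scrR(\bA_0^*; B_0^*) = \scrR(H^*; B_H^*)$. Feeding this into the Hodge-like decomposition at $k=0$, where $\scrR(\bA_{-1})=\{0\}$, I obtain the $L^2$-orthogonal, topologically direct decomposition
\[
\Ckm00 \;=\; \scrR(H^*; B_H^*) \;\oplus\; \Bmodule00 .
\]

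Finally I would extract the stated equivalence. Given $f \in \Ckm00$, write $f = f_1 + f_0$ according to this decomposition, with $f_1 \in \scrR(H^*;B_H^*)$ and $f_0 \in \Bmodule00$; orthogonality gives $\bra f, f_0\ket = \bra f_1,f_0\ket + \bra f_0,f_0\ket = \|f_0\|^2$, so $f\,\bot\,\Bmodule00$ if and only if $f_0=0$ if and only if $f = f_1 \in \scrR(H^*;B_H^*)$. The Sobolev version for $f\in W^{s,p}(M)$ follows identically from the $W^{s,p}$-extension of the Hodge-like decomposition ($s\in\Nzero$, $1<p<\infty$, here $s\ge 2$ since $H$ has order two), using that $\Bmodule00$ is finite-dimensional and smooth. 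There is no genuine obstacle here---the theorem is a corollary; the only point requiring care is checking that no zero-order correction arises at the initial slot, which is precisely the ``first operator unaltered'' observation, valid because the bundle $\Ckm{-1}{0}$ and its boundary analogue vanish.
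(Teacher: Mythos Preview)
Your proposal is correct and follows essentially the same approach as the paper: the paper presents \thmref{thm:hessian0} as an immediate consequence of the Hodge-like decomposition at level $k=0$ of the $m=0$ Bianchi complex, yielding $\Ckm{0}{0} = \Bmodule{0}{0} \oplus \scrR(H^*;B_H^*)$, and you reproduce exactly this. One small remark: the restriction $s\ge 2$ you impose in the Sobolev clause is unnecessary---the Hodge-like decomposition (\thmref{thm:hodge_like_corrected_complex}) holds for every $s\in\Nzero$, since $\scrR^{s,p}(H^*;B_H^*)=H^*(W^{s+2,p}\cap\ker B_H^*)$ is well-defined for all $s\ge 0$.
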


The operator $H^*=\sum_{i,j}i_{E_i}i^V_{E_j}\nabla_{E_i}\nabla_{E_j}$, also known as the div-div operator and sometimes denoted by $\delta\delta$ or $\delta^2$, appears in variation formulas for scalar curvatures \cite{BE69}. In a Euclidean domain, $Hf=0$ implies that $f$ is a linear function, hence $\dim\scrB^{0}_0(\Omega,\frake)=d+1$.  In a general Riemannian geometry, it always holds that $\dim\Bmodule{0}{0}\leq d+1$, since $f\in   \Bmodule{0}{0}$ implies that $df$ is a global parallel form, which greatly restricts the curvature tensor \cite[p.~76]{Pet16}.

Next, still in the context of the Hessian complex, is the decomposition of symmetric $(1,1)$-forms:
\[
\Ckm{1}{1} =
\lefteqn{\overbrace{\phantom{\scrR(H)\oplus \Bmodule{1}{0}}}^{\scrN(\DBianchi)}} 
\scrR(H) \oplus \underbrace{\Bmodule{1}{0} \oplus \scrR(\DelBianchi;B_\calG^*)}_{\scrN(H^*,B_H^*)}.
\]
where
\[
\Bmodule{1}{0}= \ker(H^*\oplus\DBianchi\oplus B_H^*).
\]
This yields the following characterization of Hessians:

\begin{theorem}
\label{thm:hessian}
For $\sigma\in\Ckm{1}{1}$,
\[
\sigma\in\scrR(H) \qquad \text{if and only if \,\,  $\DBianchi\sigma=0$\,\, and \,\,  $\sigma\,\bot\,\Bmodule{1}{0}$}
\]
Sobolev versions for $\sigma\in W^{s,p}\Ckm{1}{1}$ hold with the required adjustments.  
\end{theorem}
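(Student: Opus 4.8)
The plan is to apply the general Hodge-like decomposition theorem and the cohomology theorem directly to the induced Bianchi complex with $m=0$ (the Hessian complex), at the slot $k=1$. First I would recall that $\DBianchi\colon\Ckm{0}{0}\to\Ckm{1}{1}$ coincides with the Hessian operator $H$ (the first operator of the complex is unaltered by the correction procedure, since $\bA_0=A_0$), so that $\scrR(\DBianchi)=\scrR(H)$ in the decomposition of $\Ckm{1}{1}$ displayed just before the statement. The Hodge-like decomposition theorem, specialized to this complex, gives the $L^2$-orthogonal topologically-direct splitting
\[
\Ckm{1}{1}=\scrR(H)\oplus\Bmodule{1}{0}\oplus\scrR(\DelBianchi;B_\calG^*),
\]
with $\Bmodule{1}{0}=\ker(H^*\oplus\DBianchi\oplus B_H^*)$ finite-dimensional, and the compound decomposition identifies $\scrN(\DBianchi)=\scrR(H)\oplus\Bmodule{1}{0}$.

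The main step is then the cohomology theorem applied to the $k=1$ slot: it asserts precisely that $\psi\in\scrR(\bA_{k-1})$ if and only if $\bA_k\psi=0$ and $\psi\perp\module^k\Complex$. Translating the abstract notation into the Hessian complex, $\bA_0=H$, $\bA_1=\DBianchi$, and $\module^1\Complex=\Bmodule{1}{0}$, so this reads: $\sigma\in\scrR(H)$ if and only if $\DBianchi\sigma=0$ and $\sigma\perp\Bmodule{1}{0}$. This is exactly the claimed characterization. The only thing to check carefully is that the general theorems apply, i.e. that the two-line diagram displayed in the text (with $m=0$) is genuinely an elliptic pre-complex in the precise sense defined earlier: surjectivity of the boundary operators $B_\calG$, $B_\calG^*$, $B_H$, $B_H^*$; the overdetermined ellipticity of the relevant systems $(\dBianchi\oplus\delBianchi, B_\calG^*)$ and of the mixed system involving $H$ at the $(m,m)\to(m+1,m+1)$ junction; and that each composite ($\dBianchi\dBianchi$, $H\dBianchi$, $\dBianchi H$) does not raise order. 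These are the verifications alluded to in the application subsection and would be carried out (or cited) in the body; here I would simply invoke them.

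For the Sobolev version I would invoke the stated extension of the Hodge-like decomposition and the overdetermined boundary-value problem theorem to $W^{s,p}$-regularity for $s\in\Nzero$, $1<p<\infty$, together with the Korn-like a priori estimate, which for this slot reads $\|\sigma\|_{s,p}\lesssim\|\DBianchi\sigma\|_{s-1,p}+\|H^*\sigma\|_{s-2,p}+\sum_{i}\|B_{i,H}^*\sigma\|_{s-i-1/2,p}+\|\sigma\|_{0,p}$; the characterization of $\scrR(H)$ in $W^{s,p}$ then follows from the same orthogonality/closedness argument, with the $L^2$-orthogonality condition $\sigma\perp\Bmodule{1}{0}$ being meaningful because $\Bmodule{1}{0}$ consists of smooth forms. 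The main obstacle, such as it is, is purely bookkeeping: making sure the abstract slot indices, the identification $\bA_0=H$, and the finite-dimensional module $\module^1\Complex=\Bmodule{1}{0}$ are matched up correctly with the two-line diagram, so that the abstract cohomology theorem yields precisely the stated biconditional; there is no new analytic content beyond the general theory already established.
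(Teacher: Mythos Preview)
Your proposal is correct and matches the paper's (implicit) approach: the theorem is a direct specialization of the general cohomology theorem to the $m=0$ Bianchi complex at level $k=1$, and the paper treats it exactly this way, stating it as an immediate consequence of the displayed Hodge-like decomposition without giving a separate proof. One small notational slip: in the $m=0$ complex the first operator $\Ckm{0}{0}\to\Ckm{1}{1}$ is $H$ itself (the $(m,m)\to(m+1,m+1)$ junction sits at the very start), not a $\dBianchi$ or $\DBianchi$; so $\bA_0=A_0=H$ directly, and there is nothing to identify---but this does not affect your argument.
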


The characterization of Hessians among symmetric tensors in a general Riemannian manifolds has also been an open question \cite{BE69,Bry13}.  Since the Hessian of a function is in particular in the range of the Killing operator, this problem can seen as a partial instance of the Saint-Venant problem. In simply-connected Euclidean domains, the solution follows from the Poincar\'e Lemma, and amounts to the condition that $\dg\sigma=0$. By the uniqueness clause of the induced complex, $\DBianchi=\dg$ in a Euclidean domain, hence \thmref{thm:hessian} generalizes this condition. This in turn implies that $\scrB^1_0(\Omega,\frake)=\{0\}$ for $(\Omega,\frake)$ simply-connected and Euclidean.

\subsubsection{\bfseries The Calabi complex}
We proceed with the Bianchi complex associated with $m=1$, which brings us to the original motivation of this work; in compliance with the literature, we call the resulting complex the Calabi complex. 

The first decomposition concerns $(0,1)$-forms (i.e., one-forms):
\[
\Ckm{0}{1} = \Bmodule{0}{1}\oplus \scrR(\delBianchi;B^*_\calG) 
\quad\text{where}\qquad 
\Bmodule{0}{1}=\ker\dBianchi.
\]
In this case, $B_\calG^*=\PnD$ and $\delBianchi=\deltag$, hence:

\begin{theorem}
\label{thm:Saint_venant0}
For $\xi\in\Ckm{0}{1}$,
\[
\xi\in\scrR(\deltag;\PnD) \qquad \text{if and only if \,\,  $\xi\,\bot\,\Bmodule{0}{1}$}.
\]
Sobolev versions for $\xi\in W^{s,p}\Ckm{1}{1}$ hold with the required adjustments.  
\end{theorem}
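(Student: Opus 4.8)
The plan is to obtain the theorem as the degree-zero instance of the Hodge-like decomposition --- equivalently, of the cohomology theorem --- for the Calabi complex, that is, for the induced complex $\Complex$ of the $m=1$ elliptic pre-complex displayed above. I would first spell out the identifications at $k=0$. The bundle in degree zero is $\Gamma(\E_0)=\Ckm01=\Omega^1(M)$, since the Bianchi obstruction $\GV$ on $(0,1)$-forms maps into a bundle of rank zero. The incoming operator $\bA_{-1}$ is zero and, since in addition $\bA_{-1}^*=0$ and $B_{-1}^*=0$, the uniqueness clause of the induced-complex theorem forces the first operator to be unaltered: $\bA_0=A_0=\dBianchi$, which by \eqref{eq:Lie_bianchi_intro} is the Killing operator (up to the factor $\tfrac12$). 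Hence $\module^0\Complex=\ker(\bA_{-1}^*\oplus\bA_0\oplus B_{-1}^*)=\ker\dBianchi=\Bmodule{0}{1}$, the space of Killing one-forms. Dually, $\bA_0^*$ is the $L^2$-adjoint of $\bA_0=\dBianchi$, so it coincides with $\delBianchi$, which on $\Ckm11$ --- where $1\le 1$ --- is the covariant codifferential $\deltag$; and $B_0^*=B_\calG^*=\PnD$ there. Thus $\scrR(\bA_0^*;B_0^*)=\scrR(\deltag;\PnD)$.

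With these identifications in hand, the Hodge-like decomposition at $k=0$, whose first summand $\scrR(\bA_{-1})$ is trivial, reads
\[
\Ckm01 = \scrR(\deltag;\PnD)\oplus\Bmodule{0}{1},
\]
an $L^2$-orthogonal, topologically-direct splitting. Reading off the orthogonal complement of the finite-dimensional module $\Bmodule{0}{1}$ is precisely the asserted equivalence: $\xi\in\scrR(\deltag;\PnD)$ if and only if $\bra\xi,\zeta\ket=0$ for every $\zeta\in\Bmodule{0}{1}$. The same conclusion is the $k=0$ case of the cohomology theorem, in which the condition ``$\bA_{-1}^*|_{\ker B_{-1}^*}\xi=0$'' is vacuous. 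The Sobolev statement then follows because the Hodge-like decomposition extends to $W^{s,p}$-regularity for every $s\in\Nzero$ and $1<p<\infty$, with the smooth, finite-dimensional module $\Bmodule{0}{1}$ unchanged.

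I do not anticipate a genuine obstacle: the analytic substance lies entirely in the general theory already granted (existence of the induced complex, the Hodge-like decomposition and its Sobolev extension). What must actually be verified here is the bookkeeping --- that the $m=1$ diagram satisfies the axioms of an elliptic pre-complex (surjectivity of $B_\calG, B_\calG^*, B_H, B_H^*$; the order bounds on $\dBianchi\dBianchi$, $H\dBianchi$ and $\dBianchi H$; and the overdetermined ellipticity of $(\dBianchi\oplus\delBianchi, B_\calG^*)$ on $\Ckm11$) --- together with the degree-zero identifications $\bA_0=\dBianchi$, $\module^0\Complex=\Bmodule{0}{1}$, $\bA_0^*=\deltag$, $B_0^*=\PnD$. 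The one computation that is genuinely needed is the reduction of that overdetermined ellipticity to the classical ellipticity of $(d\oplus\delta,\PnD)$ for scalar forms, which goes through because on $\Ckm11$ the Bianchi operators degenerate to their scalar de Rham counterparts.
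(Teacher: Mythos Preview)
Your main argument (the first two paragraphs) is correct and is exactly the paper's approach: the theorem is read off from the $k=0$ Hodge-like decomposition of the Calabi complex, using the identifications $\bA_0=\dBianchi$, $\module^0\Complex=\ker\dBianchi=\Bmodule{0}{1}$, and $\bA_0^*=\delBianchi=\deltag$, $B_0^*=B_\calG^*=\PnD$ on $\Ckm11$.

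The third paragraph, however, misidentifies the relevant OD-ellipticity check. On $\Ckm11$ (which is level $k=1$ of the $m=1$ pre-complex), the system to verify is $(A_0^*\oplus A_1,\,B_0^*)=(\delBianchi\oplus H,\,B_\calG^*)$ --- the second operator is the second-order covariant curl-curl $H$, not $\dBianchi$. This system does \emph{not} reduce to the scalar de Rham system $(d\oplus\delta,\PnD)$: Bianchi $(1,1)$-forms are symmetric $2$-tensors, not scalar forms, and the actual verification (\propref{prop:ellipticity_H}) is a direct symbol computation for $\deltag\oplus H$ with the boundary operators $\PnnD\oplus\PntD$. Your claimed shortcut would not go through. (The paper also remarks, just after the theorem, that this particular $k=0$ statement can alternatively be obtained by classical means, via the elliptic differential system $(\delBianchi\dBianchi,\,B_\calG^*\dBianchi)$, without invoking the pre-complex machinery at all.)
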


By \eqref{eq:Lie_bianchi_intro}, the finite-dimensional module $\Bmodule{0}{1}$ is the space of Killing $1$-forms. 
It is worth noting that \thmref{thm:Saint_venant0} essentially amounts to the integrability conditions for the elliptic differential system $(\delBianchi\dBianchi,B_\calG^*\dBianchi)$, which is well-known in the literature and is central in the theory of linear elasticity (e.g., \cite[pp.~465--466]{Tay11a} and \cite{SS87}). Hence, the machinery of elliptic pre-complexes is not needed to prove it. The theorem also shows that the range of the operator $\deltag$ exhausts $\Ckm{0}{1}$ up to a finite-dimensional module, which is related to the fact that it is underdetermined elliptic \cite{Hin23}.  

The next decomposition concerns $(1,1)$-Bianchi forms, which coincide with the $(1,1)$-symmetric forms:
\[
\Ckm{1}{1}=\lefteqn{\overbrace{\phantom{\scrR(\dBianchi)\oplus \Bmodule{1}{1}}}^{\scrN(\bHg)}} 
\scrR(\dBianchi) \oplus \underbrace{\Bmodule{1}{1}\oplus \scrR(\bHg^*;B_H^*)}_{\scrN(\delBianchi,B_\calG^*)},
\]
where
\[
\Bmodule{1}{1}=\ker(\bHg\oplus\delBianchi\oplus B_\calG^*).
\]

The decomposition of $\Ckm{1}{1}$ refines and generalizes the decomposition obtained in \cite{BE69} for a closed manifold. 
The cohomology groups theorem associated with this decomposition resolves the Saint-Venant problem:

\begin{theorem}
\label{thm:Saint_venant}
For $\sigma\in\Ckm{1}{1}$,
\[
\sigma\in \scrR(\dBianchi) \qquad \text{if and only if \,\,  $\bHg\sigma=0$\,\, and \,\,  $\sigma\,\bot\,\Bmodule{1}{1}$}
\]
Sobolev versions for $\sigma\in W^{s,p}\Ckm{1}{1}$ hold with the required adjustments.  
\end{theorem}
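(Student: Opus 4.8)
The plan is to obtain \thmref{thm:Saint_venant} as the degree-one case of the Cohomology groups theorem, applied to the Bianchi complex with $m=1$ — the Calabi complex — so that essentially all the work has already been done in the abstract part of the paper and in the verification that the $m=1$ sequence is an elliptic pre-complex.

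First I would fix the dictionary with the abstract setting. For $m=1$ the two displayed lines collapse into the single elliptic pre-complex $0\to\Ckm{0}{1}\xrightarrow{\dBianchi}\Ckm{1}{1}\xrightarrow{H}\Ckm{2}{2}\xrightarrow{\dBianchi}\cdots\xrightarrow{\dBianchi}\Ckm{d}{2}\to 0$, equipped with the boundary operators $B_\calG,B_\calG^*$ along $\dBianchi$ and $B_H,B_H^*$ along $H$. In the notation of the Induced elliptic complex theorem this is the pre-complex $\PseudoComplex$ with $\E_0=\Lkm{0}{1}$, $\E_1=\Lkm{1}{1}$, $\E_2=\Lkm{2}{2}$, and $A_0=\dBianchi$, $A_1=H$, $A_k=\dBianchi$ for $k\ge 2$, $B_0=B_\calG$, $B_0^*=B_\calG^*$, $B_1=B_H$, $B_1^*=B_H^*$. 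Invoking that theorem produces the induced complex $\Complex$, which is exactly the Calabi complex: $\bA_0=\DBianchi|_{\Ckm{0}{1}}$, $\bA_1=\bHg$, and $\bA_k=\DBianchi$ for $k\ge 2$.

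Next I would pin down two identifications. Since $A_0=\dBianchi$ is the first operator and $\bA_{-1}^*=0$, $B_{-1}^*=0$, the uniqueness clause forces $\bA_0=A_0=\dBianchi$ — this is the already-recorded fact that $\DBianchi-\dBianchi$ vanishes on $\Ckm{0}{m}$ — so $\scrR(\bA_0)=\scrR(\dBianchi)$ and $\bA_0^*=\delBianchi$. And the degree-one cohomology module is, by definition, $\module^1\Complex=\ker(\bA_0^*\oplus\bA_1\oplus B_0^*)=\ker(\delBianchi\oplus\bHg\oplus B_\calG^*)=\Bmodule{1}{1}$. It then remains only to read off the conclusion: the Hodge-like decomposition at $k=1$ gives the $L^2$-orthogonal, topologically direct sum $\Ckm{1}{1}=\scrR(\dBianchi)\oplus\scrR(\bHg^*;B_H^*)\oplus\Bmodule{1}{1}$ with $\scrN(\bHg)=\scrR(\dBianchi)\oplus\Bmodule{1}{1}$; equivalently, the first line of the Cohomology groups theorem with $k=1$ states exactly that $\sigma\in\scrR(\dBianchi)$ iff $\bHg\sigma=0$ and $\sigma\,\bot\,\Bmodule{1}{1}$. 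The $W^{s,p}$ statement is the Sobolev extension of the same decomposition. Recalling that $\Ckm{1}{1}$ is the space of symmetric $(1,1)$-forms and that by \eqref{eq:Lie_bianchi_intro} $\scrR(\dBianchi)=\{\tfrac12\calL_{Y}g:Y\in\VF(M)\}$, this is precisely the resolution of the Saint-Venant problem.

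I expect the substantive difficulty to lie entirely upstream of this deduction: in the Induced elliptic complex theorem itself, and in verifying that the $m=1$ sequence meets the elliptic pre-complex axioms — surjectivity of $B_\calG,B_\calG^*,B_H,B_H^*$, overdetermined ellipticity of the associated Laplace-type boundary-value systems, and above all the order-reduction estimates $\ord(H\dBianchi)\le 1$ and $\ord(\dBianchi H)\le 1$. The first of these is the concrete form of \eqref{eq:order_reducing_intro}--\eqref{eq:OpEqsIntro}: the fact that $H\calL_{Y}g$ differentiates $Y$ only once, which is exactly the phenomenon that makes the whole construction possible. Once the sequence is certified to be an elliptic pre-complex, \thmref{thm:Saint_venant} follows formally as above.
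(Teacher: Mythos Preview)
Your proposal is correct and is exactly how the paper obtains \thmref{thm:Saint_venant}: it is simply the $k=1$ instance of the Cohomology groups theorem (\thmref{thm:cohomology}) applied to the Bianchi complex with $m=1$, once \thmref{thm:SVcomplex} certifies that sequence as an elliptic pre-complex. One small slip: the bundles in your dictionary should be the Bianchi sub-bundles $\Gkm{k}{m}$ rather than $\Lkm{k}{m}$ (e.g., $\E_1=\Gkm{1}{1}$, the symmetric $(1,1)$-covectors), but this does not affect the argument.
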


As in the Hessian complex, the classical theorem for simply-connected Euclidean domains together with the uniqueness of the complex imply that $\bHg=H$ and $\scrB^1_1(\Omega,\frake)=\{0\}$. Moreover, this same decomposition resolves also the existence of stress potentials with normal boundary conditions:

\begin{theorem}
\label{thm:stress}
Let $\sigma\in\Ckm{1}{1}$ satisfy 
\[
\deltag\sigma=0,
\qquad
\PnD\sigma=0
\Textand 
\sigma\bot\,\Bmodule{1}{1}. 
\] 
There exists a $\psi\in \Ckm{2}{2}$ satisfying
\[
\begin{split}
&\sigma=\bHg^*\psi 
\Textand 
B_H^*\psi=0.  
\end{split}
\]
Sobolev versions for $\sigma\in W^{s,p}\Ckm{1}{1}$ holds with the required adjustments. 
\end{theorem}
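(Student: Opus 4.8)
The plan is to obtain \thmref{thm:stress} as a single application of the cohomology-groups theorem to the Calabi complex at the slot $\Ckm{1}{1}$. Recall that in the elliptic pre-complex associated with $m=1$, the space $\Ckm{1}{1}$ occupies the slot $\Gamma(\E_1)$: its incoming operator is $\bA_0 = \dBianchi : \Ckm{0}{1}\to\Ckm{1}{1}$ (unaltered, since $\bA_0 = A_0$), its outgoing operator is $\bA_1 = \bHg : \Ckm{1}{1}\to\Ckm{2}{2}$, and the associated adjoints and adjoint boundary operators are $\bA_0^* = \delBianchi$, $\bA_1^* = \bHg^*$, $B_0^* = B_\calG^*$ and $B_1^* = B_H^*$. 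Since $k=1\le m=1$, the identifications recorded above give $\delBianchi = \deltag$ and $B_\calG^* = \PnD$ on $\Ckm{1}{1}$, and the cohomology module is $\module^1\Complex = \Bmodule{1}{1}$.

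First I would invoke the Hodge-like decomposition at $k=1$, which for this complex is exactly the three-term decomposition displayed before \thmref{thm:stress},
\[
\Ckm{1}{1} = \scrR(\dBianchi)\oplus\scrR(\bHg^*;B_H^*)\oplus\Bmodule{1}{1},
\]
where $\scrR(\bHg^*;B_H^*)$ is the range of $\bHg^*$ restricted to $\ker B_H^*$. The conclusion of \thmref{thm:stress}---the existence of a $\psi\in\Ckm{2}{2}$ with $\sigma = \bHg^*\psi$ and $B_H^*\psi = 0$---is precisely the assertion that $\sigma\in\scrR(\bHg^*;B_H^*)$.

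Next I would apply the second equivalence of the cohomology-groups theorem at $k=1$: one has $\sigma\in\scrR(\bHg^*;B_H^*)$ if and only if $\sigma\in\scrN(\bA_0^*,B_0^*)$, i.e., $\delBianchi\sigma = 0$ and $B_\calG^*\sigma = 0$, together with $\sigma\,\bot\,\Bmodule{1}{1}$. Substituting $\delBianchi = \deltag$ and $B_\calG^* = \PnD$ on $\Ckm{1}{1}$, these three conditions read $\deltag\sigma = 0$, $\PnD\sigma = 0$ and $\sigma\,\bot\,\Bmodule{1}{1}$, which are exactly the hypotheses of \thmref{thm:stress}. Hence $\sigma\in\scrR(\bHg^*;B_H^*)$, which yields the desired potential $\psi\in\Ckm{2}{2}$. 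For the Sobolev statement I would cite the $W^{s,p}$-versions of the Hodge-like decomposition and of the cohomology-groups theorem, observing that $\psi$ gains two Sobolev derivatives over $\sigma$ because $\bHg^*$ is a second-order Green operator.

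I expect no real obstacle once the general machinery is in hand: the argument is essentially bookkeeping, and the only steps that require care are matching the concrete operators $\bHg^*, B_H^*, \deltag, \PnD$ to the correct abstract slots of the induced complex, and checking that the orthogonality hypothesis ``$\sigma\,\bot\,\Bmodule{1}{1}$'' is meant with respect to the same finite-dimensional cohomology module $\module^1\Complex$ and the same $L^2$-pairing appearing in the abstract theorems. I would also point out that \thmref{thm:stress} is the dual companion of \thmref{thm:Saint_venant}: both rest on the same three-term decomposition of $\Ckm{1}{1}$, with \thmref{thm:Saint_venant} characterizing membership in the summand $\scrR(\dBianchi)$ and \thmref{thm:stress} membership in the summand $\scrR(\bHg^*;B_H^*)$.
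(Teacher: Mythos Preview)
Your proposal is correct and follows exactly the approach the paper intends: \thmref{thm:stress} is stated right after the Hodge-like decomposition of $\Ckm{1}{1}$ with the remark ``this same decomposition resolves also the existence of stress potentials,'' and your argument is precisely the application of \eqref{eq:aux_refinmenetSmooth} (the refinement $\scrN(\bA_0^*,B_0^*)=\scrR(\bA_1^*;B_1^*)\oplus\module^1\Complex$) at $k=1$, together with the identifications $\delBianchi=\deltag$ and $B_\calG^*=\PnD$ valid on $\Ckm{1}{1}$ since $k\le m$. The bookkeeping you flag---matching the abstract slots to the concrete operators and checking that the orthogonality is with respect to $\Bmodule{1}{1}=\module^1\Complex$---is handled exactly as you describe, and your observation that \thmref{thm:Saint_venant} and \thmref{thm:stress} are the two halves of the same compound decomposition is also the paper's point of view.
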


The next decomposition associated with the Calabi complex concerns $(2,2)$-forms:
\[
\Ckm{2}{2}=
\lefteqn{\overbrace{\phantom{\scrR(\bHg)\oplus \Bmodule{2}{1}}}^{\scrN(\DBianchi)}} 
\scrR(\bHg) \oplus \underbrace{\Bmodule{2}{1} \oplus \scrR(\DelBianchi;B_\calG^*)}_{\scrN(\bHg^*,B_H^*)},
\]
where
\[
\Bmodule{2}{1}= \ker(\bHg^*\oplus\DBianchi\oplus B_H^*).
\]

Using this decomposition, one is able to solve non-homogeneous boundary-value problems, generalizing results obtained in \cite{KL21b} in the context of linearized stress equations: 
\begin{theorem}
\label{thm:linearized_stress_equations}
Consider the data,
\[
\calR\in \Ckm{2}{2}  
\qquad 
\xi\in \Ckm{0}{1}
\Textand
\phi\in\Ckm{1}{1}.
\] 
There exists a solution $\sigma\in \Ckm{1}{1}$ to the boundary-value problem
\[
\Cases{
\deltag\sigma = \xi
\textand
\bHg\sigma = \calR
& \text{in $M$} \\
\PnD\sigma = \PnD\phi
& \text{on $\dM$},}
\]
if and only if the following integrability conditions are satisfied: 
\[
\begin{gathered}
\DBianchi\calR=0 \textand \bra\calR,\zeta\ket=0 \qquad \text{for every }\zeta\in\Bmodule{2}{1} \\
\bra \xi,\nu\ket =-\bra \PnD\phi,\PtD\nu\ket \qquad  \text{for every }\nu\in\Bmodule{0}{1}.
\end{gathered}
\]
The solution is unique modulo an arbitrary element in $\Bmodule{1}{1}$. 
\end{theorem}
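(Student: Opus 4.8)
The plan is to read off Theorem~\ref{thm:linearized_stress_equations} as the specialization of the general \emph{Overdetermined boundary-value problem} theorem to the induced Calabi complex (the $m=1$ Bianchi complex) at degree $k=1$. First I would fix the dictionary between the abstract notation $\Complex$ and the present setting: here $\Gamma(\E_0)=\Ckm{0}{1}$, $\Gamma(\E_1)=\Ckm{1}{1}$, $\Gamma(\E_2)=\Ckm{2}{2}$, $\Gamma(\E_3)=\Ckm{3}{2}$, with $\bA_0=\DBianchi=\dBianchi$ (the first operator of an induced complex is always unaltered), $\bA_1=\bHg$, and $\bA_2=\DBianchi:\Ckm{2}{2}\to\Ckm{3}{2}$. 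Because the bottom of the Calabi complex lies in the range $k\le m$, the adjoint $\bA_0^*$ acting on $\Ckm{1}{1}$ is exactly the covariant codifferential $\deltag$, and the relevant boundary operators are $B_0=B_\calG=\PtD$ on $\Ckm{0}{1}$ and $B_0^*=B_\calG^*=\PnD$ on $\Ckm{1}{1}$; the cohomology modules translate as $\module^0\Complex=\ker\dBianchi=\Bmodule{0}{1}$, $\module^1\Complex=\ker(\bHg\oplus\deltag\oplus\PnD)=\Bmodule{1}{1}$, and $\module^2\Complex=\ker(\bHg^*\oplus\DBianchi\oplus B_H^*)=\Bmodule{2}{1}$ (using $\bA_1^*=\bHg^*$, $B_1^*=B_H^*$).

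With this dictionary, applying the general theorem with $k=1$ produces precisely the boundary-value problem of the statement, namely $\deltag\sigma=\xi$ and $\bHg\sigma=\calR$ in $M$ with $\PnD\sigma=\PnD\phi$ on $\dM$, for the unknown $\sigma\in\Ckm{1}{1}$. Its first abstract integrability condition, $\bA_2\chi=0$ together with $\langle\chi,\zeta\rangle=0$ for $\zeta\in\module^2\Complex$, becomes $\DBianchi\calR=0$ and $\langle\calR,\zeta\rangle=0$ for $\zeta\in\Bmodule{2}{1}$; its third, $\langle\xi,\nu\rangle=-\langle B_0^*\phi,B_0\nu\rangle$ for $\nu\in\module^0\Complex$, becomes $\langle\xi,\nu\rangle=-\langle\PnD\phi,\PtD\nu\rangle$ for $\nu\in\Bmodule{0}{1}$. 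The remaining abstract condition, $\xi-\bA_0^*\phi\in\scrN(\bA_{-1}^*,B_{-1}^*)$, is vacuous here: the Calabi complex begins with $0\to\Ckm{0}{1}$, so $\bA_{-1}^*=0$ and $B_{-1}^*=0$, and their joint kernel is all of $\Ckm{0}{1}$, whence no constraint is imposed. The uniqueness clause gives uniqueness modulo $\module^1\Complex=\Bmodule{1}{1}$, and the Sobolev statement follows from the $W^{s,p}$-version of the general theorem together with its Korn-type estimate, once one records that all operators in sight are Green operators of the appropriate orders and classes.

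The only genuine input, as opposed to transcription, is that the abstract hypotheses are in force for this complex, but that has already been established: the chain of Bianchi derivatives through $\Ckm{1}{1}$, capped by $H:\Ckm{1}{1}\to\Ckm{2}{2}$ and continued by Bianchi derivatives, is an elliptic pre-complex, and its induced complex $(\DBianchi,\bHg,\DBianchi,\dots)$ together with the adjoints $\DelBianchi,\bHg^*$ and the unaltered tensorial and first-order boundary operators $B_\calG,B_\calG^*,B_H,B_H^*$ is exactly the object to which the Hodge-like decomposition and the overdetermined boundary-value theorem apply. Consequently I do not expect a real obstacle; the only points deserving care are the degeneration of the middle integrability condition and the identifications $\delBianchi=\deltag$, $B_\calG^*=\PnD$ valid in the $k\le m$ range occupied by $\Ckm{0}{1}$ and $\Ckm{1}{1}$, both of which are already recorded in the setup of the Calabi complex.
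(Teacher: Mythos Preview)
Your proposal is correct and follows exactly the paper's approach: Theorem~\ref{thm:linearized_stress_equations} is stated in the introduction as a direct specialization of the general \emph{Overdetermined boundary-value problem} theorem to the induced Calabi complex at level $k=1$, and your dictionary and the observation that the middle integrability condition is vacuous (since $\bA_{-1}^*=0$, $B_{-1}^*=0$) are precisely what is needed.
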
 

The next level in the Calabi complex concerns the problem, 
\[
\Cases{
\bHg^*\psi = \sigma
\textand
\DBianchi\psi = \chi
& \text{in $M$} \\
B_H^*\psi = B_H^*\theta
& \text{on $\dM$}.
}
\]
This in turn motivates the revisiting of \thmref{thm:stress}, where we find that the existence of stress potentials can be enhanced with a canonical choice of gauge and a uniqueness clause, also generalizing results obtained in \cite{KL21b}: 

\begin{theorem}
\label{thm:stressG}
Let $\sigma\in\Ckm{1}{1}$ satisfy 
\[
\deltag\sigma=0,
\qquad
\PnD\sigma=0
\Textand 
\sigma\bot\,\Bmodule{1}{1}. 
\] 
There exists a $\psi\in \Ckm{2}{2}$ satisfying
\[
\begin{split}
&\sigma=\bHg^*\psi 
\Textand 
B_H^*\psi=0.  
\end{split}
\]
Moreover, $\psi$ can be chosen to satisfy the gauge condition, 
\[
\DBianchi\psi=0. 
\]
In this case, $\psi$ is unique up to an element in $\Bmodule{2}{1}$. 
Sobolev versions for $\sigma\in W^{s,p}\Ckm{1}{1}$ hold with the required adjustments. 
\end{theorem}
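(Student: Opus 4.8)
The plan is to read off \thmref{thm:stressG} as the instance of the Overdetermined boundary-value problem theorem attached to the Calabi complex one degree above the $\Ckm{1}{1}$-slot. In the $m=1$ Bianchi complex
\[
\Ckm{0}{1}\xrightarrow{\ \dBianchi\ }\Ckm{1}{1}\xrightarrow{\ \bHg\ }\Ckm{2}{2}\xrightarrow{\ \DBianchi\ }\Ckm{3}{2},
\]
I would take the slot $\Ckm{2}{2}$ as the $k$-th term, so that in the notation of that theorem $\bA_{k-1}=\bHg$, $\bA_{k-1}^{*}=\bHg^{*}$, $\bA_{k}=\DBianchi$, $\bA_{k-2}^{*}=\delBianchi$, $B_{k-1}^{*}=B_H^{*}$, $B_{k-2}^{*}=B_\calG^{*}$, and the cohomology modules in play are $\module^{k+1}\Complex=\Bmodule{3}{2}$, $\module^{k}\Complex=\Bmodule{2}{1}$, $\module^{k-1}\Complex=\Bmodule{1}{1}$. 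I would then apply the master theorem with the data $\chi=0\in\Ckm{3}{2}$, $\phi=0\in\Ckm{2}{2}$ and $\xi=\sigma\in\Ckm{1}{1}$; a solution $\psi$ is then precisely a $\psi\in\Ckm{2}{2}$ with $\bHg^{*}\psi=\sigma$, $\DBianchi\psi=0$ and $B_H^{*}\psi=0$.

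The one step that carries content is verifying that, for this choice of data, the three integrability conditions collapse exactly onto the three hypotheses of the theorem. The first, $\DBianchi\chi=0$ together with $\bra\chi,\zeta\ket=0$ for $\zeta\in\Bmodule{3}{2}$, is vacuous since $\chi=0$. The second, $\xi-\bHg^{*}\phi\in\scrN(\delBianchi,B_\calG^{*})$, reduces to $\sigma\in\ker(\delBianchi\oplus B_\calG^{*})$; since $\Ckm{1}{1}$ is a slot with $k\le m$, there $\delBianchi=\deltag$ and $B_\calG^{*}=\PnD$, so this is precisely $\deltag\sigma=0$ and $\PnD\sigma=0$. The third, $\bra\xi,\nu\ket=-\bra B_H^{*}\phi,B_H\nu\ket$ for $\nu\in\Bmodule{1}{1}$, reduces to $\bra\sigma,\nu\ket=0$ for all $\nu\in\Bmodule{1}{1}$, i.e. $\sigma\,\bot\,\Bmodule{1}{1}$. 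Hence the master theorem delivers a $\psi$ with the asserted properties, unique modulo $\Bmodule{2}{1}$: discarding the gauge clause gives the bare existence statement (which is \thmref{thm:stress}), while keeping it gives the gauge $\DBianchi\psi=0$ together with uniqueness. The $W^{s,p}$ statement is the corresponding Sobolev form of the same master theorem.

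I do not anticipate a genuine obstacle here: the analytic work has already been invested in producing the induced Calabi complex and its Hodge-like decompositions, after which \thmref{thm:stressG} is a corollary. The only care needed is the index bookkeeping in the non-uniformly-graded Calabi complex --- placing the second-order operator $\bHg$ as $\bA_{k-1}$, and keeping straight which boundary operator ($B_H^{*}$ or $B_\calG^{*}$) attaches to each arrow --- together with the identifications $\delBianchi=\deltag$ and $B_\calG^{*}=\PnD$ on $\Ckm{1}{1}$, which is exactly what makes the abstract integrability conditions print as the stated hypotheses. As an independent cross-check one may avoid the master theorem altogether: start from a potential $\psi_0$ given by \thmref{thm:stress} (so $\bHg^{*}\psi_0=\sigma$, $B_H^{*}\psi_0=0$), use the decomposition $\Ckm{2}{2}=\scrR(\bHg)\oplus\Bmodule{2}{1}\oplus\scrR(\DelBianchi;B_\calG^{*})$ to write $\psi_0=\psi+\eta$ with $\psi\in\scrR(\bHg)\oplus\Bmodule{2}{1}=\scrN(\DBianchi)$ and $\eta\in\scrR(\DelBianchi;B_\calG^{*})$; since $\scrR(\DelBianchi;B_\calG^{*})\subseteq\scrN(\bHg^{*},B_H^{*})$, both $\bHg^{*}$ and $B_H^{*}$ kill $\eta$, so $\psi$ satisfies $\DBianchi\psi=0$, $\bHg^{*}\psi=\sigma$, $B_H^{*}\psi=0$; and any two such $\psi$ differ by an element of $\ker(\bHg^{*}\oplus\DBianchi\oplus B_H^{*})=\Bmodule{2}{1}$, which is the uniqueness clause.
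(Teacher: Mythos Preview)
Your proposal is correct and matches the paper's approach: \thmref{thm:stressG} is precisely the instance of the Overdetermined boundary-value problem theorem applied to the Calabi complex at the $\Ckm{2}{2}$ slot with data $(\chi,\xi,\phi)=(0,\sigma,0)$, and your verification of the three integrability conditions is exactly the intended argument. One cosmetic slip: the cohomology module at $\Ckm{3}{2}$ in the $m=1$ complex is $\Bmodule{3}{1}$, not $\Bmodule{3}{2}$ (the subscript in $\Bmodule{k}{m}$ records which Bianchi complex, not the vector degree of the form), but since $\chi=0$ this is immaterial.
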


\subsection{\bfseries Main open question: geometric meaning of the cohomology} 
As noted above, the Hodge-like decompositions identify cohomology groups, which are finite-dimensional modules consisting of smooth sections. These modules generalize the harmonic modules in Hodge theory. As is well-known, the dimensions of the harmonic modules are topological invariants, and in particular, independent of the metric. 
In general, the modules $\module^k\Complex$ cannot be expected to be topological invariants. For example, in the covariant de Rham complex, $\module^{0}_{N}(M)$ is the space of parallel sections, which is connection-dependent; in the Calabi complex, $\Bmodule01$ is the space of Killing forms, which is metric-dependent. Of special interest for applications is to know whether the modules concerning the Saint-Venant problem are topological invariants.

\subsection{\bfseries The structure of this paper}
\secref{sec:pseudo_section} contains a brief review of pseudodifferential operators in the context of boundary-value problems. There is a huge body of literature on this subject; we only review those details that are relevant to the scope of this work, and slightly extend some of them to better suite our framework later on.  \secref{sec:elip_pre_comp} starts with a review on a specialized class of Green operators (\secref{sec:adapting}). We then define in \secref{sec:pre-complex} elliptic pre-complexes. The main theorem regarding the existence of an induced complex is stated in \secref{sec:main_theorem}. In \secref{sec:applications_complex} we present the central consequences of our main theorem, notably the Hodge-like decomposition and the solution of boundary-value problems.
\secref{sec:main_proof} is devoted to the proof of our main theorem, divided into six subsections. 
Finally, the main applications are presented in \secref{sec:bianchi}, with notably the resolution of the Saint-Venant problem in arbitrary geometries.

\subsection{\bfseries Acknowledgments}
We are grateful to Matania Ben-Artzi and Or Hershkovits for various discussions along the preparation of this paper. We are particularly thankful for Gerd Grubb and Cy Maor for their invaluable comments on the manuscript.
This project was funded by ISF Grant 560/22.  RL was funded by an Azrieli Fellowship given by the Azrieli Foundation. 

\section{Preliminary survey: pseudodifferential boundary-value problems}
\label{sec:pseudo_section}

This section contains a brief review of pseudodifferential operators in the context of boundary-value problems. There is a huge body of literature on this subject; we only review those details that are relevant to the scope of this work, and slightly extend some of them to better suit our framework later on. 

\subsection{Pseudodifferential operators having the transmission property} 
\label{sec:psdos}

Let  $(\tM,\tg)$ be a closed $d$-dimensional Riemannian manifold, endowed with a volume form $\Volume\in\W^d(\tM)$; our study can be extended to non-compact manifolds, but  for simplicity, we restrict our attention to compact ones. Let $M \hookrightarrow \tM$ be a compact embedded submanifold of the same dimension having a smooth boundary. Since every compact Riemannian manifold with smooth boundary can be embedded in its closed double \cite[p.~226]{Lee12}, we 
will henceforth view every compact Riemannian manifold with smooth boundary  $M$ as smoothly embedded in a closed ambient Riemannian manifold $\tM$.

Let $\tE,\tF\to\tM$ be Riemannian vector bundles over $\tM$; denote by $\E=\tE|_M$ and  $\bbF=\tF|_M$ the pullback bundles, which are vector bundles over $M$. Let $\bbJ,\bbG\to \dM$ be Riemannian vector bundles over $\dM$. 

For $\psi,\eta\in L^2\Gamma(\tE)$, we denote their $L^2$-inner product by
\[
\bra\psi,\eta\ket = \int_{\tM} (\psi,\eta)_{\tE}\,\Volume.
\]
We use the same notation for the $L^2$-inner product associated with sections over $M$.
Likewise, for $\rho,\tau\in L^2\Gamma(\bbG)$, we denote the induced $L^2$-inner product on the boundary $\dM$ by
\[
\bra\rho,\tau\ket = \int_\dM (\rho,\tau)_{\bbG}\,\, \Volume_{\jmath^*g},
\]
where $\jmath:\dM\hookrightarrow M$ is the inclusion map of the boundary. 

A differential operator  $\Gamma(\tE)\to\Gamma(\tF)$ is a linear map that can be represented as an $\R^{N_1}\to \R^{N_2}$ differential operator in any local trivializations of $\tE$ and $\tF$.
Since this definition is local, it extends to linear maps $\Gamma(\E)\to\Gamma(\bbF)$ and boundary differential operators $\Gamma(\E)\to\Gamma(\bbG)$. 

Differential operators are the prominent example of a larger class of linear operators, known as \emph{pseudodifferential operators}.  
In $\R^d$, pseudodifferential operators are defined through their action on the Fourier transform $\hat{u}(\xi)$ of their argument $u(x)$ via a so-called \emph{symbol matrix} $A(x,\xi)$. On a manifold, their definition is based on their definition in $\R^d$ via the pullback by coordinate charts. A pseudodifferential operator is said to be of \emph{order}  $m\in\bbR$, if its associated symbol matrix satisfies a growth condition with exponent $m$. 
We adopt the notation of \cite{RS82}, and denote by $L^m(\tM,\tE,\tF)$ the space of pseudodifferential operators of order $m$. By definition, a pseudodifferential operator of order $m$ is also of any order greater than $m$.
We denote by
\[
L(\tM,\tE,\tF)=\bigcup_{m\in\bbZ} L^m(\tM,\tE,\tF)
\] 
the space of all pseudodifferential operators and by
\[
L^{-\infty}(\tM,\tE,\tF)=\bigcap_{m\in\bbZ}L^m(\tM,\tE,\tF)
\] 
the space of so-called \emph{smoothing operators}.
We denote by $\ord(A)$ the set of $m\in\bbZ\cup\{-\infty\}$ such that $A\in L^m(\tM,\tE,\tF)$; we say that $\ord(A) < \ord(Q)$ if $\ord(Q) \subsetneq \ord(A)$. 
 
Pseudodifferential operators were introduced as a class of operators, rich enough to encompass both differential operators and singular integral operators arising as inverse operators (parametrices) for elliptic differential systems. We refer the reader to the abundant literature on the subject \cite{Hor03,RS82,WRL95,Gru96,Tay11b,Tay11c}; in the following, we will only list those properties of pseudodifferential operators that are of relevance to the present work.

Every pseudodifferential operator $A\in L^m(\tM,\tE,\tF)$ is associated with a \emph{symbol}, generalizing the principal symbol of a differential operator \cite[pp.~176--178]{Tay11a} or \cite[Sec.~1.2.4.1]{RS82}. On a manifold, unlike in $\R^d$, the symbol is an equivalence class of smooth bundle maps $\sigma_A:T^*\tM\otimes \tE\to \tF$, 
\[
\sigma_A: (x,\xi,v)\mapsto \sigma_A(x,\xi)v
\qquad
\text{for $x\in\tM$, $\xi\in T^*_x\tM$ and $v\in \tE_x$}.
\]
The primary role of symbols is to reduce analytical properties of pseudodifferential operators into algebraic properties of their symbols; notably, it allows for a functional classification of pseudodifferential operators.
Adopting the notation of \cite{RS82}, the space of all symbols of order $m$ is denoted by $S^m(\tM,\tE,\tF)$, where $\sigma_A\in S^m(\tM,\tE,\tF)$ implies $m$-growth conditions with respect to the variable $\xi$. Symbol are defined up to lower-order terms, which is to say that if $A,Q\in L(\tM,\tE,\tF)$ with $\ord(A)>\ord(Q)$, then
\[
\sigma_{A+Q}(x,\xi)=\sigma_A(x,\xi).
\]

An operator $A\in L^m(\tM,\tE,\tF)$ is said to be \emph{homogeneous} if for every $\lambda  > 0$ and $|\xi|$ large enough,
\[
\sigma_A(x,\lambda\xi) = \lambda^m\,\sigma_A(x,\xi).
\] 
Homogeneity holds trivially for differential operators, but does not for general pseudodifferential operators. Operators having symbols that possess locally, as $\xi\to\infty$, an asymptotic expansion  of homogeneous symbols are called \emph{classical}. Sticking with the notation of \cite{RS82}, we denote by $L^m_{\mathrm{cl}}(\tM,\tE,\tF)$ the space of classical pseudodifferential operators of order $m$. The importance of this class is in the homomorphism properties satisfied by their symbols, which is used repeatedly in this work.

A pseudodifferential operator $A\in L(\tM,\tE,\tF)$ is first and foremost a continuous linear map between Fr\'echet spaces,
\[
A:\Gamma(\tE)\to  \Gamma(\tF),
\]
with the topology induced by the uniform convergence of sections along with all their derivatives.
Pseudodifferential operators are closed under composition 
\beq
AQ \in L^{m_A+m_Q}(\tM,\tE,\tF')
\label{eq:compose_AQ}
\eeq
for every $A\in L^{m_A}(\tM,\tF,\tF')$ and $Q\in L^{m_Q}(\tM,\tE,\tF)$.
Moreover, every $A\in L^m(\tM,\tE,\tF)$ admits a \emph{formal adjoint} $A^*\in L^m(\tM,\tF,\tE)$, given by the property that 
\beq
\bra A\psi,\eta\ket = \bra \psi,A^*\eta\ket
\qquad
\text{for every $\psi \in \Gamma(\tE)$ and $\eta\in\Gamma(\tF)$}.
\label{eq:adjoint_neigh_def}
\eeq
The class $L_{\mathrm{cl}}(\tM,\tE,\tF)$ of classical operators is closed under composition and adjointness as well, with the additional property that classical symbols satisfy the homomorphism properties \cite[p.~74]{RS82}, 
\[
\sigma_{AQ}(x,\xi)=\sigma_A(x,\xi)\circ \sigma_Q(x,\xi) 
\Textand 
\sigma_{A^*}(x,\xi)=(\sigma_A(x,\xi))^*. 
\]

This work is concerned with
Sobolev sections of vector bundles, $W^{s,p}\Gamma(\tE)$ defined for $s\in\bbR$ and $1<p<\infty$.
The definition goes through first defining scalar-valued Sobolev functions on $\bbR^d$, then on domains $\Omega\subset\bbR^d$, and then on closed manifolds by means of partitions of unity and coordinate charts. Finally, Sobolev sections of vector bundles over closed manifolds are defined \cite[Sec.~1.2.1.2]{RS82}.

There are several variants of Sobolev spaces. The spaces $H^{s,p}\Gamma(\tE)$ (also known as \emph{Bessel-potential} spaces) are defined for every $s\in\bbR$ and $1<p<\infty$ by means of the Fourier transform
\cite[pp.~42--46]{RS82}, \cite[pp.~291--293]{Gru90} . 
For $s\in\Nzero$, $H^{s,p}\Gamma(\tE)$ is the completion of $\Gamma(\tE)$ with respect to the Sobolev norm,
\[
\|\psi\|_{s,p}=\sum_{\Nzero\ni \alpha\leq s}\|\nabla^{\alpha}\psi\|_{L^{p}}
\]
where $\nabla$ is any connection on $\tE$. 

Our eventual goal is to pass to manifolds with boundary, where trace theorems are being invoked. For $s\in\bbR_{+}\setminus\Nzero$, the spaces $H^{s,p}\Gamma(\tE)$ are insufficient for these theorems to hold. This is where \emph{Besov spaces} $B^{s,p}\Gamma(\tE)$, $s\in\bbR$ and $1<p<\infty$, 
come in \cite[p.~293]{Gru90}, \cite[p.~45--46]{RS82}. As in \cite{Gru90}, we set
\[
W^{s,p}\Gamma(\tE)=\begin{cases}
H^{s,p}\Gamma(\tE) & s\in\bbZ 
\\ B^{s,p}\Gamma(\tE) & s\in \bbR\setminus \bbZ. 
\end{cases}
\] 
Our results will be formulated for $W^{s,p}$-spaces for $s\in\Nzero$, i.e., for ``standard" Sobolev sections. References to non-integer $s$ are needed because of the mapping properties of trace operators. 

Pseudodifferential operators satisfy various mapping properties with respect to these Sobolev spaces. Most prominently, $A\in L^m(\tM;\tE,\tF)$ extends to a continuous linear map \cite[p.~312]{Gru90},
\beq
A:W^{s,p}\Gamma(\tE)\to W^{s-m,p}\Gamma(\tF) 
\label{eq:mapping_without_boundary}
\eeq
for every $s\in \bbR$ and $1< p<\infty$.
In particular, every $A\in L^{-\infty}(\tM,\tE,\tF)$ extends into a map
\[
A: \D'\Gamma(\tE)\to \Gamma(\tF).
\]

A pseudodifferential operator $E\in L(\tM,\tE,\tF)$ is called \emph{elliptic} if $\sigma_E(x,\xi):\tE_x\to \tF_x$ is an isomorphism for every $x\in\tM$ and for every $|\xi|$ large enough.
A \emph{parametrix} (also known as an \emph{approximate inverse}) for $E$ is an operator $P:\Gamma(\tF)\to \Gamma(\tE)$ satisfying
\[
PE-\id\in L^{-\infty}(\tM,\tE,\tE) 
\Textand 
EP-\id\in L^{-\infty}(\tM,\tF,\tF).
\]
Every elliptic $E\in L^m(\tM,\tE,\tF)$ admits a parametrix $P\in L^{-m}(\tM,\tF,\tE)$, which is unique modulo $L^{-\infty}(\tM,\tF,\tE)$ \cite[p.~76]{RS82}. 

Pseudodifferential operators are generally defined on a manifold without boundary. We are interested in a subclass of pseudodifferential operators over $\tM$ that truncate ``nicely" to $M$, while retaining the closure of the calculus to adjoints, compositions and parametrices. Such operators were introduced by H\"ormander \cite[p.~105]{Hor03}; our exposition is based on a combination of \cite[p.~23]{Gru96}, \cite[Sec.~2.3]{RS82} and \cite[p.~512]{WRL95}. 

Let $r_+:\D'\Gamma(\tF)\to \D'\Gamma(\bbF)$ be the restriction operator,
\[
r_+\psi=\psi|_M, 
\]
i.e., the restriction of $\psi$ acting on test functions with support in $M$,
and let $e_+:\Gamma(\E)\to \D'\Gamma(\tE)$ be the extension-by-zero operator, 
\[
e_+\psi=\begin{cases}
\psi & \text{in $M$} \\
0 & \text{in $\tM\setminus M$}.
\end{cases}
\]
A pseudodifferential operator $A\in L(\tM,\tE,\tF)$ is said to have the \emph{transmission property} with respect to $M$ when its \emph{truncation},
\[
A_+=r_+Ae_+:\Gamma(\E)\to \D'\Gamma(\bbF),
\]
is a continuous map $A_+:\Gamma(\E)\to \Gamma(\bbF)$.

We adopt the notation of \cite{RS82} and denote the space of 
all classical operators of order  $m$ over $\tM$ having the transmission property with respect to $M$ by $\OPAm(\tE,\tF)$, or by $\OPAm$ when there is no ambiguity,
and let
\[
\OPA=\bigcup_{m\in\bbZ} \OPAm.
\]

There is ample discussion in the cited literature on sufficient conditions for a pseudodifferential operator to have the transmission property. For our purposes, we will only mention that every differential operator is in $\OPA$, and that $\OPA$ is closed under adjoints, compositions and parametrices \cite[p.~136]{RS82} (Prop.~2 requires the elements to be properly-supported \cite[p.~86]{Hor03}, but every pseudodifferential operator is properly-supported in a compact manifold). 

Truncations of operators in $\OPAm$ satisfy mapping properties as well, which requires to define Sobolev spaces on manifolds with boundaries. 
For $s<0$, we note that the spaces $W^{s,p}\Gamma(\tE)$ consists of distributions. We define \cite[pp.~294--297]{Gru90},
\[
\begin{split}
&W^{s,p}\Gamma(\E) = W^{s,p}\Gamma(\tE)  / \{\omega \in W^{s,p}\Gamma(\tE) ~:~ \supp \omega \subseteq \overline{\tM\setminus M}\},
\end{split}
\]
and
\[
\begin{split}
W_0^{s,p}\Gamma(\E) = \{\omega\in W^{s,p}\Gamma(\tE) ~:~ \supp\omega \subseteq M\}.
\end{split}
\]
For $1/p+1/q=1$, 
\[
(W^{s,q}_0\Gamma(\bbE))^*=W^{-s,p}\Gamma(\bbE).
\]
The mapping properties of $A\in\OPAm$ are given in \cite[p.~312]{Gru90},   
\beq
A_+:W^{s,p}\Gamma(\E)\to W^{s-m,p}\Gamma(\bbF)
\label{eq:Sobolev_with_boundary}
\eeq
for every $\bbZ\ni s\ge 1/p-1$ and $1< p< \infty$.
Henceforth, we remove the ``$+$" subscript from the truncation of a differential operator. Since these always act locally, this should cause no confusion.

\subsection{Integration by parts, trace operators and normal conditions} 
\label{sec:integration_by_parts}

As mentioned in the previous section, the space $\OPA$ of classical operators having the transmission property  is closed under adjoints, i.e., if $A\in \OPA(\tE,\tF)$ then $A^*\in \OPA(\tF,\tE)$, where $A^*$ is defined by \eqref{eq:adjoint_neigh_def}. Consider the truncation of the adjoint,
\[
(A^*)_+=r_+A^*e_+.
\]
The question is whether the truncation $(A^*)_+$ of $A^*$ is in some sense adjoint to the truncation $A_+$ of $A$; for example, does it hold that
\[
\bra A_+\psi,\eta\ket = \bra \psi,(A^*)_+\eta\ket
\qquad
\text{for every $\psi \in \Gamma(\E)$ and $\eta\in\Gamma(\bbF)$}\, ?
\]
\cite[p.~36]{Gru96} shows that every $A\in \OPAm$ can be written as a sum
\[
A = D + Q
\]
where $D\in \OPAm$ is a differential operator and $Q\in \OPAm$ satisfies 
\[
\bra Q_+\psi,\eta\ket = \bra \psi,(Q^*)_+\eta\ket
\qquad
\text{for every $\psi \in \Gamma(\E)$ and $\eta\in\Gamma(\bbF)$}.
\]
Since $D$ is a differential operator, 
\[
\bra D\psi,\eta\ket =\bra \psi, D^*\eta\ket
\qquad
\text{for every $\psi\in\Gamma(\E)$ and $\eta\in\Gamma_c(\bbF)$},
\]
from which follows that
\[
\bra A_+\psi,\eta\ket =\bra \psi, (A^*)_+\eta\ket
\qquad
\text{for every $\psi\in\Gamma(\E)$  and $\eta\in\Gamma_c(\bbF)$}.
\]
This formula holds also for non-compactly-supported sections when $\ord(A) \leq 0$, since in this case $A_+:L^2\Gamma(\E)\to L^2\Gamma(\bbF)$ continuously, hence admits an $L^2$-adjoint, $(A_+)^*$, and by the uniqueness of the adjoint, $(A_+)^* = (A^*)_+$. We henceforth denote $(A^*)_+$ simply by $A^*_+$, recalling that the adjointness property is only with respect to compactly-supported sections.

\cite[pp.~37--38]{Gru96} denotes by $\rho_N:\Gamma(\E)\to (\Gamma(\jmath^*\E))^N$ the \emph{Cauchy-boundary operator}, 
\[
\rho_N\psi=(D_\frakn^0\psi,D_\frakn\psi,...,D_\frakn^{N-1}\psi),
\]
where $D_\frakn$ is the normal covariant derivative, (which is well-defined in a collar neighborhood of $\dM$, hence can be iterated) evaluated at the boundary, and $D_\frakn^0$ is the trace on the boundary; the choice of connection on $\E$ is immaterial. Given $A\in\OPA$, there exists a unique matrix of tangential differential operators $U_A=(U_\alpha^\beta)_{\alpha,\beta=0,...,m-1}$ of orders $\le m-\alpha-1$ such that the following \emph{Green's formula} holds \cite[pp.~37--38]{Gru96},
\beq
\bra A_+\psi,\eta\ket =\bra \psi, A^*_+\eta\ket+\bra U_A\rho_m\psi,\rho_m\eta\ket
\label{eq:integration_by_parts_rudimentary}
\eeq
for every $\psi\in \Gamma(\E)$ and $ \eta\in \Gamma(\bbF)$.

In the sequel, we will encounter integration by parts formulas such as \eqref{eq:integration_by_parts_rudimentary} where the operator $A$ belongs to a class of operators larger than $\OPA$, which requires the expansion of the class of differential boundary operators. 
A \emph{trace operator} $T$ of \emph{order} $m \in\R$ and \emph{class} $r\in\Nzero$ is a linear map $T:\Gamma(\E)\to\Gamma(\bbG)$ of the form
\beq
T = \sum_{j=0}^{r-1} S_jD_\frakn^j + \jmath^*Q_+,
\label{eq:TraceOp}
\eeq 
where $S_j\in L^{m-j}_{\mathrm{cl}}(\dM,\jmath^*\E,\bbG)$ is a pseudodifferential operator on the boundary (which is a closed manifold) and $Q\in\OPAm$ \cite[pp.~27--28, 33]{Gru96}. The operator $\rho_m$ is an instance of a trace operator of order $m-1$ and class $m$ with $\bbG = (\jmath^*\E)^m$, $S_j(\xi) = (0,\dots,0,\xi,0,\dots,0)$ and $Q=0$.

The order of a trace operator is an extension of the order of a pseudodifferential operator (by \eqref{eq:compose_AQ}, $\ord(S_jD_\frakn^j) \le m$ for every $j$), whereas its class retains (one more than) the number of normal derivatives.  
We denote the set of trace operators of order $m\in \bbR$ and class $r\in\Nzero$ by $\OPTmr$, as in \cite{RS82}. 
The class of trace operators can be extended to negative values \cite[pp.~309--311]{Gru90}. In simple terms, $T\in \OP(\frakT^{m,-r})$ if $T\in \OP(\frakT^{m,0})$ and $T D_\frakn^r\in \OP(\frakT^{m,0})$.
We denote the union of all $\OPTmr$ of order $m\in\bbR$ and class $r\in\bbZ$ by $\OPT$. Trace operators have well-defined symbols, much like pseudodifferential operators. 
However, unlike operators with the transmission property, the mapping properties of trace operators depend on both the order and the class; for every $T\in\OPTmr$ \cite[p.~312]{Gru90}, 
\beq
T:W^{s,p}\Gamma(\E)\to W^{s-m-1/p,p}\Gamma(\bbG)
\label{eq:Sobolev_trace}
\eeq   
for every $\bbZ\ni s>r+1/p-1$ and $1<p<\infty$.
Note how the class $r$ limits the mapping properties: a trace operator of order $m$ reduces the regularity of a $W^{s,p}$-section by $m+1/p$, as expected, but only for $s$ large enough. On the other hand, a negative class allows mapping between negative Sobolev spaces.

We next specify a particular class of trace operators: A \emph{system of trace operators associated with order}  $m$ is a trace operator of the form $T=T_0\oplus T_1\oplus\cdots\oplus T_{m-1}\in\OP(\frakT^{m-1,m})$, where $T_i:\Gamma(\E)\to\Gamma(\bbJ_i)$ is in $\OP(\frakT^{i,m})$ and $\bbJ_i\to \dM$ is a vector bundle \cite[pp.~45--46]{Gru96}. As stated above, every component $T_i$ can be written as
\beq
T_i = \sum_{j=0}^{m-1} S_{ij}D_\frakn^j + \jmath^*(Q_i)_+,
\label{eq:components_normal}
\eeq
where $S_{ij}\in L^{i-j}_{\mathrm{cl}}(\dM,\jmath^*\E,\bbJ_i)$ and $Q_i\in\OPAi$. 
Since $T_i\in \OP(\frakT^{i,m})$, and since the mapping property \eqref{eq:Sobolev_trace} applies to each $T_i$ separately,
systems of trace operators associated with order $m$ satisfy the compound mapping property
\[
T:W^{s,p}\Gamma(\E)\to \bigoplus_{i=0}^{m-1} W^{s-i-1/p,p}\Gamma(\bbJ_i) 
\]
for every $\bbZ\ni s> m+1/p-1$. 

\begin{definition}
A system of trace operators $T_0\oplus T_1\oplus\cdots\oplus T_{m-1}$ associated with order $m$ is said to be \emph{normal} if each $T_i$ of the form \eqref{eq:components_normal} satisfies that $S_{ii}:\Gamma(\jmath^*\E)\to \Gamma(\bbJ_i)$ is surjective.
\end{definition}

The normality of a system of trace operators implies surjectivity \cite[p.~80]{Gru96}:

\begin{proposition}
Let $T\in\OP(\frakT^{m-1,m})$ be a normal system of trace operators associated with order $m$. Then $T:\Gamma(\E)\to \Gamma(\bbG)$ and $T:W^{m,2}\Gamma(\E)\to \bigoplus_{i=0}^{m-1} W^{m-i-1/2,2}\Gamma(\bbJ_i)$ are surjective.
\end{proposition}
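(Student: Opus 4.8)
The plan is to prove the two surjectivity assertions simultaneously by exhibiting a continuous linear right inverse of $T$, defined on $\Gamma(\bbG)$ and restricting to a bounded map $\bigoplus_{i=0}^{m-1} W^{m-i-1/2,2}\Gamma(\bbJ_i)\to W^{m,2}\Gamma(\E)$. Given data $\phi=(\phi_0,\dots,\phi_{m-1})$, one looks for $\psi$ by prescribing its Cauchy-boundary data $\rho_m\psi=(g_0,\dots,g_{m-1})$ and then extending into $M$; so the problem splits into a boundary part and an extension part.

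The first ingredient is the jet-extension (Borel) lemma: $\rho_m$ admits a continuous linear right inverse $\kappa$, mapping $\bigoplus_{j=0}^{m-1}\Gamma(\jmath^*\E)\to\Gamma(\E)$ and $\bigoplus_{j=0}^{m-1}W^{m-j-1/2,2}\Gamma(\jmath^*\E)\to W^{m,2}\Gamma(\E)$, whose image may be taken inside an arbitrarily thin collar of $\dM$. Using \eqref{eq:components_normal}, $T_i=\sum_{j=0}^{m-1}S_{ij}D_\frakn^j+\jmath^*(Q_i)_+$; decomposing $Q_i$ as in \secref{sec:integration_by_parts} into a differential operator plus a ``nice'' part, the differential part, once restricted to $\dM$, is itself of the form $\sum_j(\text{tangential op})\,D_\frakn^j$ and can be absorbed into the first sum. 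Thus one may assume $\jmath^*(Q_i)_+$ is a trace operator of class $0$ (no explicit normal derivatives), so that upon composition with $\kappa$ it is subordinate, in the class filtration of trace operators, to the ``diagonal'' term $S_{ii}D_\frakn^i$ of $T_i$.

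The second ingredient converts the normality hypothesis into a right inverse of each $S_{ii}$. Being a surjective operator of order $0$, $S_{ii}\in L^0_{\mathrm{cl}}(\dM,\jmath^*\E,\bbJ_i)$ is bounded below modulo its kernel on $L^2$, which forces its principal symbol to be pointwise surjective; hence $S_{ii}S_{ii}^*\in L^0_{\mathrm{cl}}(\dM,\bbJ_i,\bbJ_i)$ is elliptic, self-adjoint and injective (since $\bra S_{ii}S_{ii}^*v,v\ket=\|S_{ii}^*v\|^2$ and $S_{ii}^*$ is injective), hence invertible both on $\Gamma(\bbJ_i)$ and on every $W^{s,2}\Gamma(\bbJ_i)$. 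Setting $R_i:=S_{ii}^*(S_{ii}S_{ii}^*)^{-1}\in L^0_{\mathrm{cl}}(\dM,\bbJ_i,\jmath^*\E)$ gives $S_{ii}R_i=\id$. One then solves $T(\kappa g)=\phi$ for $g$ by a recursion ordered by the normal-derivative index: the equation from the matching component determines $g_i$ via $R_i$ up to the coupling terms — the off-diagonal $S_{ij}D_\frakn^j$ with $j\neq i$, and $\jmath^*(Q_i)_+\kappa$ — which are fed back through a Neumann-type iteration. Convergence in the Fréchet topology, together with boundedness on the $W^{m,2}$-scale, follows because each feedback term is of strictly lower class (or gains regularity) relative to the term it corrects, and because $\kappa$ may be taken supported in a thin collar; tracking continuity of $\kappa$, the $R_i$ and the boundary pseudodifferential operators on both the $C^\infty$- and the $W^{m,2}$-scales then yields both surjectivity statements at once, with a common right inverse.

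The main obstacle is precisely that a normal system need not be elliptic: the full symbol matrix $(\sigma_{S_{ij}})$ is typically not surjective, so one cannot invert a symbol and patch with a parametrix as in the elliptic case. What makes the argument work is the \emph{triangular} character of normality together with careful bookkeeping of the class (the number of normal derivatives carried by each trace operator): one must verify that, after the reduction in the second paragraph, every coupling term is genuinely subordinate in class to the diagonal term against which it is compared, so that the recursion is well-founded and the Neumann iteration converges. This is where the transmission property and the fine structure of $\OPT$ — in particular the stability of order and class under the relevant compositions and restrictions — are indispensable.
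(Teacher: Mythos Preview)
The paper does not give its own proof here; it cites \cite[p.~80]{Gru96}. Your outline---reduce to the boundary via a right inverse $\kappa$ of $\rho_m$, then invert the resulting tangential system using right inverses of the diagonal entries $S_{ii}$---matches the standard argument, but two steps are not justified.

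First, your construction of $R_i$ requires $\sigma_{S_{ii}}(x,\xi')$ to be surjective for all $(x,\xi')$. You infer this from surjectivity of $S_{ii}:\Gamma(\jmath^*\E)\to\Gamma(\bbJ_i)$ via ``bounded below modulo its kernel on $L^2$'', but surjectivity on smooth sections gives neither closed range on $L^2$ nor pointwise symbol surjectivity for a general order-$0$ classical operator. In the setting the paper actually uses (differential boundary operators $B_A,B_{A^*}$, see \defref{def:adapting_operator}), each $S_{ii}$ is a differential operator of order $0$, hence a bundle map; surjectivity on sections is then fiberwise surjectivity, and $R_i=S_{ii}^*(S_{ii}S_{ii}^*)^{-1}$ is a smooth bundle map with no pseudodifferential argument needed.

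Second, and more seriously, the ``Neumann-type iteration'' has no contraction. The matrix entries $S_{ij}\in L^{i-j}_{\mathrm{cl}}$ and the remainder $\jmath^*(Q_i)_+\kappa$ all map $\bigoplus_jW^{m-j-1/2,2}\to W^{m-i-1/2,2}$ \emph{without} regularity gain, so there is no smallness on the natural Sobolev scale; shrinking the collar support of $\kappa$ does not help, since class-$0$ trace operators are not local. What actually makes the proof go through is genuine triangularity, not iteration: for \emph{differential} trace operators one has $S_{ij}=0$ for $j>i$ and $Q_i=0$ automatically (an order-$i$ differential operator carries at most $i$ normal derivatives), so $T_i(\kappa g)=\sum_{j\le i}S_{ij}g_j$ is strictly lower triangular with surjective bundle maps on the diagonal. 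Forward substitution $g_i=R_i\bigl(\phi_i-\sum_{j<i}S_{ij}g_j\bigr)$ then gives a continuous right inverse on both the $C^\infty$ and $W^{m,2}$ scales at once---no Neumann series required. Your plan collapses to exactly this once the $j>i$ and $Q_i$ terms are (legitimately, in the differential case) dropped.
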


Consider the integration by parts formula \eqref{eq:integration_by_parts_rudimentary} for $A\in\OPAm$. 
We are interested in a setting where there exist \emph{differential} operators, $B_A:\Gamma(\E)\to\Gamma(\bbG)$ and  $B_{A^*}:\Gamma(\bbF)\to\Gamma(\bbG)$, which are normal systems of trace operators associated with order $m$, such that 
\beq
\bra A_+\psi,\eta\ket = \bra\psi,A_+^*\eta\ket + \bra B_A\psi, B_{A^*}\eta\ket
\label{eq:integration_by_parts_convention}
\eeq 
for every $\psi\in \Gamma(\E)$ and $\eta\in \Gamma(\bbF)$.
Not every $A\in\OPA$ has this property.
In our work, however, formulas such as \eqref{eq:integration_by_parts_convention} emerge naturally, hence we omit this discussion. 

\subsection{Green operators and elliptic boundary-value problems} 

Let $E\in \OPAm$ be elliptic and let $T\in\OP(\frakT^{m-1,r})$. Consider the problem of finding a linear map $R:\Gamma(\bbF)\to \Gamma(\E)$ satisfying 
\beq
\begin{cases}
E_+R=\id  & \text{in $M$} \\
T R=0 & \text{on $\dM$}, 
\end{cases}
\label{eq:psdbvp1}
\eeq
i.e., a solution operator for a \emph{pseudodifferential boundary-value problem}
\[
\begin{cases}
E_+ \psi= \rho  & \text{in $M$} \\
T \psi= 0 & \text{on $\dM$},
\end{cases}
\] 
with $\rho\in\Gamma(\bbF)$. Since $E\in\OPAm$ is elliptic, it has a  parametrix $P\in\OP(\frakA^{-m})$. However, its truncation $P_+$ is generally not useful for finding $R$ for two reasons: first, the boundary condition $T R=0$ has to be taken into account; second, for general $A,Q\in\OPA$,
\beq
(AQ)_+-A_+Q_+\ne0,
\label{eq:truncation_commutation}
\eeq
and in particular, $E_+P_+\ne \id $ (modulo a smoothing operator). In fact, the expression \eqref{eq:truncation_commutation} is not necessarily the truncation of an element in $\OPA$, hence is not subject to the theory surveyed in \secref{sec:integration_by_parts}.

This motivates the introduction of an even larger class of operators, which allows among other things to classify operators such as $A_+Q_+$, and solution operators for \eqref{eq:psdbvp1}. This new class retains some of the desirable properties of pseudodifferential operators. The idea, originating in work by Boutet de Monvel \cite{Bou71}, is to construct a class of operators representing boundary-value problems, which is closed under its own algebra (the so-called \emph{Boutet de Monvel algebra}).

A \emph{Green operator} of order $m\in\bbR$ and class $r\in\bbZ$ is a system of operators $\bA$, which can be written in matrix form as
\beq
\bA=\begin{pmatrix}
A_++G & K_1 \\
T & K_2
\end{pmatrix}: 
\mymat{\Gamma(\E) \\ \Gamma(\bbJ)}
\longrightarrow  
\mymat{\Gamma(\bbF)  \\ \Gamma(\bbG)}.
\label{eq:full_Green_operator}
\eeq
Here $A\in \OPAm$, $T\in \OP(\frakT^{m-1,r})$ and $K_2\in L^m_{\mathrm{cl}}(\dM,\bbJ,\bbG)$, which all belong to classes of operators that have already been introduced. The operator $K_1$ is known as a \emph{potential operator} (of order $m$); it maps boundary sections into interior sections. The operator $G$ is known as a \emph{singular Green operator}. 
Singular Green operators are non-pseudodifferential operators, which are associated with a principal symbol much like pseudodifferential operators \cite[pp.~30--32]{Gru96}. They can also be characterized as classical in the sense of possessing an asymptotic expansion of homogeneous terms. 
Just like trace operators, they possess both an order and a class.
They were introduced in order to obtain good composition rules, e.g., to rectify elements such as $A_+Q_+$ and possibly their approximate inverses. Specifically, if $A\in\OP(\frakA^{m_A})$ and $Q\in\OP(\frakA^{m_Q})$, then
$(AQ)_+ - A_+Q_+$ is a singular Green operator of order $m_A + m_Q - 1$ and class $m_Q$ \cite[p.~152]{RS82}.

The singular Green operator $G$ in \eqref{eq:full_Green_operator} is assumed to be of order $m-1$ and class $r\in\bbZ$,
in which case \cite[p~312]{Gru90},
\beq
G:W^{s,p}\Gamma(\E)\to W^{s-m,p}\Gamma(\bbF)
\label{eq:Sobolev_sgs}
\eeq
for every $\bbZ\ni s > r +1/p-1$ and $1<p<\infty$.
If $r=0$, since a singular Green operator is $L^p$-continuous, it has an adjoint of the same order \cite[p.~32]{Gru96}. For $r>0$, this is however not true, so we have to keep track of the class of singular Green operators as we compose them with other operators.  

Green operators of the form \eqref{eq:full_Green_operator} satisfy the following mapping properties:
If $\bA$ is of order $m$ and class $r$, then
\beq
\bA:
\mymat{W^{s,p}\Gamma(\E)  \\ W^{s,p}\Gamma(\bbJ)}
\longrightarrow  
\mymat{W^{s-m,p}\Gamma(\bbF) \\ W^{s-m+1-1/p,p}\Gamma(\bbG)}
\label{eq:Green_mapping_proprety}
\eeq
for every $\bbZ\ni s> r+1/p-1$. 
Green operators are associated with a pair of symbols,
\beq
\sigma(\bA) = \sigma_M(\bA)\oplus \sigma_\dM(\bA),
\label{eq:symbol_Green_operator} 
\eeq
where $\sigma_M(\bA)(x,\xi)=\sigma_A(x,\xi):\E_x\to \bbF_x$, which is defined for every $x\in M$ and $\xi\in T_x^*M$,  is the interior symbol of $A\in\OPA$, and $\sigma_\dM(\bA)(x,\xi')$, which is defined for every $x\in\dM$ and $\xi'\in T_x^*\dM$, is the \emph{boundary symbol} of $\bA$;  the latter is a continuous linear map
\beq
\begin{split}
&\sigma_\dM(\bA)(x,\xi'):\mymat{\scrS(\overbar{\R}_+;\bbC\otimes\E_x) \\ \bbC\otimes \bbJ_x}\longrightarrow \mymat{\scrS(\overbar{\R}_+;\bbC\otimes\bbF_x) \\ \bbC\otimes \bbG_x},
\end{split}
\label{eq:boundary_symbol}
\eeq 
where for a vector bundle $\bbU\to M$,  $\scrS(\overbar{\R}_+;\bbC\otimes\bbU_x)$ denotes the space of $\bbC\otimes \bbU_x$-valued Schwartz functions on the half line $\overbar{\R}_+=\BRK{s\in\R~:~s\geq 0}$. We shall elaborate below upon how one obtains the map \eqref{eq:boundary_symbol} from the Green operator $\calA$ when both $K_1$ and $K_2$ are zero, in which case its domain is just $\scrS(\overbar{\R}_+;\bbC\otimes\E_x)$. A general definition is found in \cite[pp.~23--34]{Gru96}.

Green operators form an algebra closed under composition, with their symbols satisfying a homomorphism property
\cite[p.~175]{RS82}:

\begin{theorem}
\label{thm:full_Green_compositon_rules}
Let $\bA,\bQ$ be Green operators of orders $m_A$, $m_Q$ and classes $r_A$, $r_Q$. Then $\bQ\bA$ is a Green operator of order $m_A+m_Q$ and class $\max(m_A+r_Q,r_A)$. The symbol of $\bQ\bA$ is given by
\[
\sigma(\bQ\bA)=\sigma(\bQ)\circ\sigma(\bA) = (\sigma_M(\bQ)\circ\sigma_M(\bA)) \oplus (\sigma_\dM(\bQ)\circ\sigma_\dM(\bA)). 
\]
Moreover, if $\calA$ is a Green operator of order $m$ and $\calQ$ is a Green operator of order $<m$, then, 
\beq
\sigma(\calA+\calQ)=\sigma(\calA).
\label{eq:lower_order_symbol}
\eeq
\end{theorem}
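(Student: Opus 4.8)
This is the Boutet de Monvel composition theorem, and the plan is to reduce it to a finite list of ``building-block'' composition lemmas, then propagate orders, classes and symbols through the block-matrix product \eqref{eq:full_Green_operator}. First I would write both $\bA$ and $\bQ$ in the matrix form \eqref{eq:full_Green_operator} and expand $\bQ\bA$ entrywise; each of the four resulting entries is then a finite sum of compositions of the five elementary constituents of a Green operator: a truncated classical pseudodifferential operator $A_+=r_+Ae_+$, a singular Green operator $G$, a potential operator $K$, a trace operator $T=\sum_{j<r}S_jD_\frakn^j+\jmath^*Q_+$, and a classical pseudodifferential operator $K_2$ on the closed manifold $\dM$. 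It therefore suffices to prove, for each such mixed composition, that (i) it again lies in one of the Boutet de Monvel classes, with additive order and the stated class, and (ii) the symbol homomorphism holds. Reassembling the block matrix then gives a Green operator, and the order $m_A+m_Q$ and class $\max(m_A+r_Q,r_A)$ are recovered by taking maxima over the entries.

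For (i), the essential input is already recorded in the excerpt: for $A\in\OP(\frakA^{m_A})$ and $Q\in\OP(\frakA^{m_Q})$ the ``leftover'' $L(A,Q):=(AQ)_+-A_+Q_+$ is a singular Green operator of order $m_A+m_Q-1$ and class $m_Q$ (\cite[p.~152]{RS82}), while $AQ\in\OP(\frakA^{m_A+m_Q})$ with $\sigma_{AQ}=\sigma_A\circ\sigma_Q$, and the same multiplicativity holds for the boundary pseudodifferential factors. The remaining cases — $A_+\circ G$, $G\circ A_+$ and $G_1\circ G_2$ (all singular Green), $K\circ T$ (singular Green), $T\circ K$ (classical on $\dM$), $T\circ(A_++G)$ (a trace operator), $(A_++G)\circ K$ (a potential operator), and the compositions among the boundary operators — are each handled by the same mechanism: insert the definitions, commute $r_+$ and $e_+$ past the factors using $r_+e_+=\id$ and the transmission property, and absorb the non-commuting remainders into fresh singular Green / potential / trace terms. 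In every case the order simply adds, while the class is controlled by the heuristic that an order-$m_A$ operator applied after a class-$r_Q$ operator produces class at most $r_Q+m_A$ (it can reconstitute up to $m_A$ further normal derivatives), competing with the ambient class $r_A$ of the second factor's own singular-Green part, which yields $\max(m_A+r_Q,r_A)$.

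For the symbols, only the composition $A_+\circ Q_+$ contributes to the interior, where $\sigma_M(\bQ\bA)=\sigma_{AQ}=\sigma_A\circ\sigma_Q=\sigma_M(\bQ)\circ\sigma_M(\bA)$ is exactly the classical symbol homomorphism already stated. For the boundary symbol one uses that $\sigma_\dM$ assigns to a Green operator the parameter-dependent family of \emph{model} Green operators on the half-line, obtained by freezing coefficients at $x\in\dM$, keeping only leading homogeneous terms, and replacing $D_\frakn$ by $D_t$ on $\overbar{\R}_+$; this is the map \eqref{eq:boundary_symbol}. Since the translation-invariant model calculus on $\overbar{\R}_+$ obeys an \emph{exact} composition law — essentially a Wiener--Hopf / Laplace-transform identity computing $r_+(ab)e_+-r_+ae_+\,r_+be_+$ as the model counterpart of $L(A,Q)$ — applying $\sigma_\dM$ to each building-block identity and composing gives $\sigma_\dM(\bQ\bA)=\sigma_\dM(\bQ)\circ\sigma_\dM(\bA)$. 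The ``moreover'' clause is then immediate: the entrywise sum of two Green operators is a Green operator, and since $\sigma_M$ and $\sigma_\dM$ are by construction the leading homogeneous parts, a summand of strictly lower order contributes nothing, so $\sigma(\calA+\calQ)=\sigma(\calA)$.

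The genuine obstacle is the boundary-symbol statement together with the class bookkeeping for the leftover operators $L(A,Q)$: one must show both that $L(A,Q)$ is a singular Green operator of precisely the claimed order and class — via a Leibniz-type asymptotic expansion of its symbol-kernel with a controlled remainder — and that passing to boundary symbols commutes with the truncate-and-compose operation, i.e.\ that $\sigma_\dM$ of the composition equals the model composition. This is the analytic core of the Boutet de Monvel calculus; in practice I would either invoke it as a black box from \cite{RS82,Gru96} or reproduce the half-line Wiener--Hopf computation, where all the real work resides — the rest is bookkeeping.
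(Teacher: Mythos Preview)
Your sketch is a reasonable outline of how the Boutet de Monvel composition theorem is actually proved in the literature, but you should be aware that the paper does not prove this statement at all: it is stated as a citation of a known result, with the reference \cite[p.~175]{RS82} given immediately before the theorem. The paper treats \thmref{thm:full_Green_compositon_rules} as a black box from the established calculus of Green operators and simply uses it downstream.

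So there is nothing to compare your approach against. Your plan --- expand the block-matrix product, reduce to the finite list of elementary compositions among $A_+$, $G$, $K$, $T$, and boundary $\Psi$DOs, track orders and classes through each, and verify the symbol homomorphism via the half-line model calculus --- is indeed the skeleton of the proof as carried out in \cite{RS82,Gru96,Bou71}. You correctly identify the analytic core: the leftover term $L(A,Q)=(AQ)_+-A_+Q_+$ being singular Green of the stated order and class, and the compatibility of $\sigma_\dM$ with composition via the Wiener--Hopf model. You also honestly flag that you would invoke this as a black box, which is exactly what the paper does. If the intent of the exercise was to reproduce the paper's own argument, the correct answer is simply to cite \cite{RS82}; if the intent was to understand why the theorem is true, your outline is sound but the full details occupy dozens of pages in the cited monographs.
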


A Green operator $\calA$ is called elliptic when $\sigma(\calA)$ is invertible.
It should be noted that the notation $\sigma_M(\bA)\oplus \sigma_\dM(\bA)$ is formal; the invertibility of the symbol amount to the separate invertibility of each component. 
Generalizing elliptic pseudodifferential operators on a closed manifold, an elliptic Green operator $\bA$  benefits from the existence of a parametrix in the calculus, such that $\bA\calP-\id$ and $\calP\bA-\id$ are both Green operators of order $-\infty$. If $\bA$ is of order $m$ and class $r$, then its parametrix $\calP$ is of order $-m$ and class $r-m$ \cite[pp.~335--336]{Gru90}.

Property \eqref{eq:lower_order_symbol} of the symbol raises a problem when considering systems of trace operators  $T=T_0\oplus T_1\oplus\dots\oplus T_{m-1}$  associated with order $m$, since by the definition of the symbol, the only contribution to $\sigma(T)$ is that of $\sigma(T_{m-1})$. 
The notion of ellipticity can be extended to encapsulate operators decomposing into direct sums $\calE=\oplus_i \calE_i$ and $T=\oplus_i T_i$  of operators having different orders (such systems are known as Douglis-Nirenberg boundary-value problems). The inclusion of such systems within the elliptic theory is justified by an order-reduction argument, which will be elaborated below.

Consider the upper-left term, $E_++G$, of the Green operator.
We denote all operators of this form where $E\in\OPAm$ and $G$ is a singular Green operator of order $m-1$ and class $r$  by $\OPSm$. We further introduce the class,
\[
\OPS=\bigcup_{m\in\bbZ, r\geq 0}\OPSm.
\]
We set $\OPSi=\bigcap_{m\in\bbZ} \OPSm$ as the operator class of smoothing operators of class $r$ and set $\OPSii=\bigcup_{r\geq 0}\OPSi$ \cite[p.~171]{RS82}.  As stated in the last reference, mappings in these classes map distributive sections into smooth ones, and as such are always compact. When it comes to composition, since an $\OPS$ operator can be viewed as a Green operator with all other terms equal zero, \thmref{thm:full_Green_compositon_rules} implies:

\begin{proposition}[Composition rules]
\label{prop:composition_Green_operators} 
Let $\calE\in\OP(\frakS^{m_E,r_E})$, $\bQ\in\OP(\frakS^{m_Q,r_Q})$ and $T\in \OP(\frakT^{m_T,r_T})$. Then, the following composition rules hold:
\begin{enumerate}[itemsep=0pt,label=(\alph*)]
\item $\bQ\calE\in\OPS$ is of order $m_E+m_Q$ and class $\max{(m_E+r_Q,r_E)}$.
\item $T\calE\in\OPT$ is of order $m_E+m_T$ and class $\max{(m_E+r_T,r_E)}$.
\end{enumerate}
\end{proposition}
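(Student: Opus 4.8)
The plan is to deduce the proposition from \thmref{thm:full_Green_compositon_rules} by viewing each of $\calE$, $\bQ$ and $T$ as the single non-zero block of a full Green operator in the Boutet de Monvel calculus (with all boundary vector bundles taken to be zero), applying the composition theorem there, and then translating the resulting order and class back into the sub-calculi $\OPS$ and $\OPT$. The key bookkeeping device is the dictionary implicit in \eqref{eq:full_Green_operator}: an element of $\OP(\frakS^{m,r})$ is precisely the $(1,1)$-block $A_+ + G$ of a Green operator of order $m$ and class $r$ whose potential, trace and boundary-pseudodifferential blocks vanish and whose boundary bundles are zero, while an element $S\in\OP(\frakT^{a,b})$ is precisely the $(2,1)$-block of a Green operator of order $a+1$ and class $b$ with all other blocks zero and zero interior target bundle. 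A matrix product of such operators keeps the relevant boundary or interior bundles zero, so no potential or boundary-pseudodifferential terms are generated and the product again lands in the corresponding sub-calculus.

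For (a), write $\calE$ and $\bQ$ as the Green operators $\begin{pmatrix}\calE & 0\\ 0 & 0\end{pmatrix}$ and $\begin{pmatrix}\bQ & 0\\ 0 & 0\end{pmatrix}$, of orders $m_E$, $m_Q$ and classes $r_E$, $r_Q$. In the product $\begin{pmatrix}\bQ\calE & 0\\ 0 & 0\end{pmatrix}$ the factor $\calE$ is applied first and $\bQ$ second, so \thmref{thm:full_Green_compositon_rules} makes it a Green operator of order $m_E+m_Q$ and class $\max(m_E+r_Q,\,r_E)$; since its boundary bundles are zero, the dictionary gives $\bQ\calE\in\OP(\frakS^{\,m_E+m_Q,\ \max(m_E+r_Q,\,r_E)})$.

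For (b), embed $T\in\OP(\frakT^{m_T,r_T})$ as the trace block of a Green operator $\mathcal{T}=\begin{pmatrix}0 & 0\\ T & 0\end{pmatrix}$, which by the dictionary has order $m_T+1$ and class $r_T$. Composing with $\calE$ in the $(1,1)$-slot yields $\mathcal{T}\calE=\begin{pmatrix}0 & 0\\ T\calE & 0\end{pmatrix}$, which \thmref{thm:full_Green_compositon_rules} identifies as a Green operator of order $m_E+m_T+1$ and class $\max(m_E+r_T,\,r_E)$, its only non-zero block being the trace operator $T\calE$. Reading this off through the dictionary gives $T\calE\in\OP(\frakT^{\,m_E+m_T,\ \max(m_E+r_T,\,r_E)})$, as claimed.

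I do not anticipate a real obstacle: the entire content is the consistent tracking of orders and classes. The one subtle point is the uniform off-by-one shift — a Green operator of order $m$ has singular-Green part of order $m-1$ and trace part in $\OP(\frakT^{m-1,\cdot})$ — which must be applied in both directions when passing between the full calculus and the sub-calculi, together with the structural fact that no spurious potential or boundary terms arise once the appropriate bundles are trivial.
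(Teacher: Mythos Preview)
Your proof is correct and is precisely the approach the paper takes: the paper states this proposition as an immediate consequence of \thmref{thm:full_Green_compositon_rules}, noting only that ``an $\OPS$ operator can be viewed as a Green operator with all other terms equal zero,'' and gives no further argument. Your careful tracking of the off-by-one shift for the trace block in part~(b) makes explicit a bookkeeping step the paper leaves to the reader.
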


Operators in $\OPS$ benefit from Sobolev mapping properties with respect to their order and class, as inherited from \eqref{eq:Green_mapping_proprety}. In particular, the mapping properties of operators in $\OPS$ are limited by their class. Most importantly, for $r>0$, elements in $\OP(\frakS^{m,r})$ are not $L^p$-continuous, hence do not admit adjoints. Since $\OPA$ operators are $L^p$-continuous, this failure is due to the singular Green part. In fact (see the sharpness comment in \cite[p.~312]{Gru90}):

\begin{proposition}
\label{prop:L2_continuity}
For every $m\in\bbZ$,
$\calE\in\OP(\frakS^{m,r})$ is $L^p\rightarrow W^{-m,p}$ continuous if and only if $r\leq 0$. 
\end{proposition}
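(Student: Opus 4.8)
The plan is to separate the two implications: the sufficiency of $r\le 0$ is immediate from the Sobolev mapping properties already recorded, while the necessity is established by exhibiting, for each $r\ge 1$, an explicit singular Green operator in the class that fails to be $L^p\to W^{-m,p}$ continuous.

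\emph{Sufficiency.} Write $\calE=E_++G$ with $E\in\OPAm$ and $G$ a singular Green operator of order $m-1$ and class $r$. First I would note that $E_+$ is automatically $L^p\to W^{-m,p}$ continuous: this is \eqref{eq:Sobolev_with_boundary} at $s=0$, which is admissible since $0\ge 1/p-1$ for every $p>1$, and the class plays no role in it. For the singular Green part, the hypothesis $r\le 0$ gives $0>r+1/p-1$, so \eqref{eq:Sobolev_sgs} applies at $s=0$ and yields $G\colon L^p\Gamma(\E)\to W^{-m,p}\Gamma(\bbF)$; summing the two contributions gives the continuity of $\calE$. (Equivalently, one invokes \eqref{eq:Green_mapping_proprety} directly for $\calE$ viewed as a Green operator with trivial boundary bundles.)

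\emph{Necessity.} I would argue by contraposition: fixing $r\ge 1$, it suffices to produce one $\calE\in\OP(\frakS^{m,r})$ that is not $L^p\to W^{-m,p}$ continuous, and since $E_+$ is always continuous it is enough to take $\calE=G$ for a suitable singular Green operator of class $r$ (with $E=0$). I would take $G=K\circ\gamma$, where $\gamma\psi=(D_\frakn^{r-1}\psi)|_{\dM}$ is the trace operator of order $r-1$ and class $r$ (a component of the Cauchy boundary operator $\rho_m$) and $K$ is a fixed nonzero potential operator of order $m-r$; by the composition rules $G$ is then a singular Green operator of order $m-1$ and class $r$, so $\calE=G\in\OP(\frakS^{m,r})$. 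To exhibit the defect, work in a collar $\dM\times[0,\delta)$ with normal coordinate $t$ and normal covariant derivative $D_\frakn$, fix a cutoff $\chi\in C_c^\infty([0,\delta))$ equal to $1$ near $t=0$ and a boundary section $\phi$ with $K\phi\ne 0$, and test on $\psi_\varepsilon=\frac{1}{(r-1)!}\,t^{r-1}\chi(t/\varepsilon)\,\phi$, with $\phi$ extended independently of $t$. A one-line scaling estimate gives $\|\psi_\varepsilon\|_{L^p}\lesssim\varepsilon^{\,r-1+1/p}\to0$, while for every small $\varepsilon$ all normal derivatives of $\psi_\varepsilon$ of order $<r-1$ vanish on $\dM$ and $(D_\frakn^{r-1}\psi_\varepsilon)|_{\dM}=\phi$, so $G\psi_\varepsilon=K\phi$ is a fixed nonzero section. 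Hence $\|G\psi_\varepsilon\|_{W^{-m,p}}$ stays bounded away from $0$ while $\|\psi_\varepsilon\|_{L^p}\to0$, contradicting continuity; therefore $r\le 0$ is forced.

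\emph{Main obstacle.} I expect the only delicate step to be the bookkeeping in the necessity half, namely verifying that $G=K\circ\gamma$ is genuinely a member of the Boutet de Monvel class $\OP(\frakS^{m,r})$ with class exactly $r$ (so that it really witnesses the failure at class $r$, and so that the construction can be arranged for every $m\in\bbZ$ — for instance by pre-composing with a pseudodifferential factor of the appropriate order when $m$ is small). This is precisely the sharpness remark referenced from \cite[p.~312]{Gru90}. Everything else — the two Sobolev estimates used in the sufficiency half and the scaling estimate for $\psi_\varepsilon$ — is routine.
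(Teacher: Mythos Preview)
Your sufficiency half is fine and matches the paper. The necessity half, however, proves the wrong statement. You argue by contraposition that for each $r\ge 1$ there \emph{exists} an operator in $\OP(\frakS^{m,r})$ that fails to be $L^p\to W^{-m,p}$ continuous. That is a sharpness statement for the class as a whole. But the proposition, as written and as used later in the paper (to deduce that $\calP_k$ has class zero from its $L^2\to W^{m_k,2}$ continuity), is a statement about an \emph{individual} operator: if a given $\calE\in\OP(\frakS^{m,r})$ happens to be $L^p\to W^{-m,p}$ continuous, then it in fact lies in $\OP(\frakS^{m,0})$. Your counterexample says nothing about an arbitrary continuous $\calE$; for instance, the zero operator sits in $\OP(\frakS^{m,r})$ for every $r$ and is trivially continuous, so ``produce one bad operator'' cannot force the class of a good one down.

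The paper's argument is structural rather than by example. It invokes the decomposition of a singular Green operator of class $r$ as $G=\sum_{j=0}^{r-1}K_jD_\frakn^j+G'$ with $G'$ of class $\le 0$, and then tests continuity on a sequence $\psi_n\in\Gamma_c$ approximating an arbitrary smooth $\psi$ in $L^p$. Since all boundary traces $D_\frakn^j\psi_n$ vanish, $G\psi_n=G'\psi_n$; continuity of $G$ and of $G'$ (the latter by class $\le 0$) forces $G\psi=G'\psi$ for every $\psi$, hence $G=G'$ and the class is $\le 0$. Your construction actually contains the right ingredients (the factorization through a trace and the behavior on sections with vanishing boundary jets), but you deploy them in the wrong logical direction.
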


\begin{proof}
The $L^p$-continuity for $r\le 0$ follows from the mapping property \eqref{eq:Green_mapping_proprety}. 
In the other direction, suppose that $\calE=E_++G\in\OP(\frakS^{m,r})$ is $L^p$-continuous. Since $E_+$ is $L^p$-continuous, it follows that $G$ is $L^p$-continuous. 
By \cite[p.~306]{Gru90}, every singular Green operator of order $m$ and class $r$ can be written as 
\[
G=\sum_{j=0}^{r-1} K_jD^j_\frakn+G',
\]
where $G'$ is a singular Green operator of order $m$ and class $\leq 0$
and $K_j$ are potential operators of order $m-j$
(the precise definition of $K_j$ is immaterial here). 

Let $\psi$ be smooth, and let $\psi_n$ be a sequence of smooth, compactly-supported sections converging to $\psi$ in $L^p$.
Since $G$ is $L^p$-continuous, and since $D^j_\frakn\psi_n=0$ for every $j$ and $n$, it follows that
\[
 G'\psi_n = G\psi_n \to G\psi \qquad\text{in $W^{-m,p}$}.
\]
Since $G'$ has class zero, it follows from the mapping property \eqref{eq:Green_mapping_proprety} that
\[
G'\psi_n \to G'\psi \qquad\text{in $W^{-m,p}$},
\]
from which we conclude that $G = G'$, hence $r\leq 0$.
\end{proof}

A very important case is $\calE\in\OP(\frakS^{0,0})$, in which case it is $L^p$-continuous and as such has an adjoint $\calE^*$ of the same order and class \cite[pp.~175--176]{RS82}. We introduce the notation,
\[
\frakG^0=\OP(\frakS^{0,0}).
\]
By \propref{prop:composition_Green_operators}, $\frakG^0$ is also closed under compositions. 

Consider now Green operators of the form
\[
\mymat{\calE & 0 \\ T & 0},
\]
with $\calE = E_+ + G$, which for typographical reasons we denote $(\calE,T)$.
As stated above, the symbol of $(\calE,T)$ decomposes into 
\[
\sigma(\calE,T) = \sigma_M(\calE,T) \oplus \sigma_\dM(\calE,T),
\]
where $\sigma_M(\calE,T)(x,\xi)=\sigma_E(x,\xi)$. More specifically, we consider the case where:
\begin{enumerate}[itemsep=0pt,label=(\alph*)]
\item The order and the class of $G$ are strictly less than $m-1$.
\item The order of the $Q_+$ component of $T$ in \eqref{eq:TraceOp} is strictly less than $m$.
\end{enumerate}
In this case, $G$ and $Q_+$ do not contribute to the boundary symbol $\sigma_\dM(\calE,T)$, which is only determined by the pseudodifferential operators $E$ and $S_j$:
Let $x\in\dM$,
and write $\xi\in T^*_xM$ in the form $\xi=\xi'+\xi_d\,dr$, where $\xi'\in T_x^*\dM$ and $dr$ is the unit covector normal to the boundary, so that $\xi_d\in\R$ is the normal component of $\xi$.
Consider the map
\[
\mymat{\sigma_E(x,\xi'+\xi_d\,dr) \\
\sigma_T(x,\xi'+\xi_d\,dr)}:\E_x\longrightarrow \mymat{\bbF_x \\ \bbG_x},
\]
where $\sigma_T(x,\xi'+\xi_ddr)$ is obtained from \eqref{eq:TraceOp} by \cite[p.~27]{Gru96}
\[
\sigma_T(x,\xi'+\xi_ddr) =
\sum_{0\leq j<m} \xi_d^j\, \sigma_{S_j}(x,\xi').
\]
(Since $S_j\in L^{m-j}_{\mathrm{cl}}(\dM,\jmath^*\E,\bbG)$, it follows that $\sigma_{S_j}(x,\xi'):\E_x\to\bbG_x$ for $x\in\dM$ and $\xi'\in T_x^*\dM$.)
If one considers $\xi_d\in\R$ as an independent variable, then this map can be extended to operate on complexified vector-valued functions, 
\[
F: \operatorname{Func}(\R;\bbC\otimes\E_x) \to \operatorname{Func}\brk{\R;\mymat{\bbC\otimes\bbF_x \\ \bbC\otimes\bbG_x}},
\]
given by
\[
F(\psi)(t) =
\mymat{\sigma_E(x,\xi'+t\,dr)\psi(t) \\
\sigma_T(x,\xi'+t\,dr)\psi(t)}. 
\]
We then perform, formally, a one-dimensional Fourier transform, replacing $t \mapsto \iota\partial_s$. 
This yields a differential map, $\hat{F}$, given by
\[
\hat{F}(\psi)(s) = 
\mymat{\sigma_E(x,\xi'+\imath\partial_s\,dr)\psi(s) \\
\sigma_T(x,\xi'+\imath\partial_s\,dr)\psi(s)}. 
\]
This map can be restricted to one-sided Schwartz functions, yielding a map
\[
\hat{F}  : \scrS(\overline{\R}_+;\bbC\otimes\E_x) \to \scrS\brk{\overline{\R}_+;\mymat{\bbC\otimes\bbF_x \\ \bbC\otimes\bbG_x}}.
\]
The boundary symbol of $(\calE,T)$ is the map
\[
\sigma_\dM(\calE,T)(x,\xi'):\scrS(\overbar{\R}_+;\bbC\otimes\E_x)\longrightarrow \mymat{\scrS(\overbar{\R}_+;\bbC\otimes\bbF_x)\\\bbC\otimes\bbG_x},
\]
given by
\[
\sigma_\dM(\calE,T)(x,\xi') \psi =
\mymat{\{s\mapsto \sigma_E(x,\xi'+\iota\partial_s\,dr)\psi(s)\} \\
\sigma_T(x,\xi'+\iota\partial_s\,dr)\psi(0)}. 
\]

The notion of ellipticity for Green operators $(\calE,T)$ reduces to two ingredients: 
the ellipticity of $E\in \OPA$ as a pseudodifferential operator over $\tM$, supplemented by the requirement that $\sigma_\dM(\calE,T)(x,\xi')$ be a bijection \cite[p.~34]{Gru96}. 
This condition generalizes the classical Lopatinskii-Shapiro condition, which is a sufficient condition for differential systems. 
Indeed, when $\calE$ and $T$ are differential operators, then the boundary symbol $\sigma_\dM(\calE,T)(x,\xi')$ 
can be viewed as the restriction of an ordinary differential operator,
\[
\sigma_E(x,\xi'+\iota\partial_s\,dr):C^{\infty}(\overbar{\R}_+;\bbC\otimes\E_x)\to C^{\infty}(\overbar{\R}_+;\bbC\otimes\bbF_x),
\]
to Schwartz functions,
supplemented by an initial condition map
\[
\Xi_{x,\xi'} = 
\sigma_T(x,\xi'+\iota\partial_s\,dr)|_{s=0}:C^{\infty}(\overbar{\R}_+;\bbC\otimes\E_x)\to \bbC\otimes\bbG_x. 
\]
These mappings coincide with the ones defined in \cite[pp.~233--234]{Hor03} in the statement of the classical Lopatinskii-Shapiro condition. The following proposition demonstrates how the invertibility of $\sigma_\dM(\calE,T)(x,\xi')$ extends this condition:

\begin{proposition}
\label{prop:lopatnskii_shapiro}
Consider a system $(\calE,T)$, where $E$ and $T$ are differential operators.
Let $x\in\dM$. 
Suppose that $\sigma_E(x,\xi):\E_x\to \bbF_x$ is injective for every $\xi\in T^*_xM\setminus\{0\}$. 
For $\xi'\in T_x^*\dM$, let $\bbM_{x,\xi'}^+\subset C^{\infty}(\overbar{\R}_+;\bbC\otimes\E_x)$ denote the space of  decaying solutions of the linear $\bbC\otimes\E_x$-valued ordinary differential equation
\beq
\sigma_E(x,\xi'+\iota\partial_s\,dr)\psi(s)=0. 
\label{eq:ODE_lop}
\eeq
Then,
the boundary symbol $\sigma_\dM(\calE,T)(x,\xi')$ is injective if and only if the restriction of $\Xi_{x,\xi'}$ to $\bbM_{x,\xi'}^+$ is injective. 
If in addition $\dim\E_x=\dim\bbF_x$, then
the boundary symbol $\sigma_\dM(\calE,T)(x,\xi')$ is a bijection if and only if  $\dim_{\bbC}\bbM_{x,\xi'}^+=\dim\bbG_x$.  
\end{proposition}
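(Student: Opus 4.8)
The plan is to unwind the definition of $\sigma_\dM(\calE,T)(x,\xi')$ as the pair-valued map
$\psi\mapsto\bigl(\{s\mapsto\sigma_E(x,\xi'+\iota\partial_s\,dr)\psi(s)\},\ \Xi_{x,\xi'}\psi\bigr)$
acting on $\scrS(\overbar{\R}_+;\bbC\otimes\E_x)$, and to characterize its kernel directly. A section $\psi$ is in the kernel exactly when it solves the ODE \eqref{eq:ODE_lop} \emph{and} satisfies $\Xi_{x,\xi'}\psi=0$. The first step is therefore to understand the solution space of \eqref{eq:ODE_lop} inside $\scrS(\overbar{\R}_+;\bbC\otimes\E_x)$. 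Because $E$ is a differential operator, $\sigma_E(x,\xi'+\iota\partial_s\,dr)$ is a polynomial in $\iota\partial_s$ with matrix coefficients, i.e.\ a constant-coefficient linear ODE system on the half-line; its Schwartz (equivalently, decaying) solutions form precisely the finite-dimensional space $\bbM_{x,\xi'}^+$. Granting this, the kernel of $\sigma_\dM(\calE,T)(x,\xi')$ is identified with $\ker\bigl(\Xi_{x,\xi'}|_{\bbM_{x,\xi'}^+}\bigr)$, which immediately gives the injectivity equivalence: $\sigma_\dM(\calE,T)(x,\xi')$ is injective $\iff$ $\Xi_{x,\xi'}|_{\bbM_{x,\xi'}^+}$ is injective.

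The point that needs care — and the main obstacle — is why the decaying solutions of \eqref{eq:ODE_lop} form a \emph{finite-dimensional} space at all, and why no purely oscillatory or growing solutions intrude once we pass to $\scrS(\overbar{\R}_+)$. This is exactly where the hypothesis that $\sigma_E(x,\xi):\E_x\to\bbF_x$ be injective for all $\xi\in T_x^*M\setminus\{0\}$ enters. Writing the leading coefficient of the polynomial $\sigma_E(x,\xi'+\zeta\,dr)$ in the normal frequency $\zeta$, injectivity of the full symbol forces the characteristic values $\zeta$ (the roots, in the generalized-eigenvalue sense, of $\sigma_E(x,\xi'+\zeta\,dr)v=0$) to be non-real for $\xi'\ne0$; indeed a real root $\zeta=\xi_d$ would produce a nonzero $v$ with $\sigma_E(x,\xi'+\xi_d\,dr)v=0$ and $\xi'+\xi_d\,dr\ne0$, contradicting injectivity. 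Hence the solution space on the line splits into an exponentially decaying part (roots in the upper half-plane, say, under the Fourier convention $t\mapsto\iota\partial_s$) and an exponentially growing part, with no borderline case; the decaying part is $\bbM_{x,\xi'}^+$, finite-dimensional, and it is stable in $\scrS(\overbar{\R}_+)$. I would assemble this from the standard Jordan/root-subspace structure of polynomial matrix pencils, being careful that $\sigma_E$ is injective but not necessarily square, so one works with the kernel of a tall matrix pencil rather than with a determinant; one can reduce to the square case by composing with $\sigma_E^*$ on the left, which is legitimate since $\sigma_E^*\sigma_E$ is square and, by injectivity of $\sigma_E$, has the same characteristic set with the same decay dichotomy. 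This reduction, and the verification that it does not spuriously change $\bbM_{x,\xi'}^+$, is the technical heart of the argument.

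For the second assertion, assume in addition $\dim\E_x=\dim\bbF_x$, so that injectivity of $\sigma_E(x,\xi)$ upgrades to bijectivity and $E$ is genuinely elliptic at $x$. Then $\sigma_E(x,\xi'+\iota\partial_s\,dr):\scrS(\overbar{\R}_+;\bbC\otimes\E_x)\to\scrS(\overbar{\R}_+;\bbC\otimes\bbF_x)$ is \emph{surjective}: this is the standard solvability statement for the half-line boundary-value problem associated to an elliptic ODE pencil with no real characteristics — one produces a right inverse by convolution against the appropriately one-sided fundamental solution, decaying at $+\infty$. Given surjectivity of the interior component, $\sigma_\dM(\calE,T)(x,\xi')$ is onto its target $\scrS(\overbar{\R}_+;\bbC\otimes\bbF_x)\oplus(\bbC\otimes\bbG_x)$ iff $\Xi_{x,\xi'}$ maps $\bbM_{x,\xi'}^+$ onto $\bbC\otimes\bbG_x$; combined with the injectivity statement already proved, $\sigma_\dM(\calE,T)(x,\xi')$ is a bijection iff $\Xi_{x,\xi'}|_{\bbM_{x,\xi'}^+}$ is an isomorphism onto $\bbC\otimes\bbG_x$, i.e.\ iff $\dim_\bbC\bbM_{x,\xi'}^+=\dim\bbG_x$ \emph{and} it is injective. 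To drop the injectivity clause and leave only the dimension count in the statement, I would observe that a linear map $\bbM_{x,\xi'}^+\to\bbC\otimes\bbG_x$ which is surjective between spaces of equal finite dimension is automatically bijective, while if it fails to be injective the boundary symbol is not injective and hence not bijective either; so under the equal-dimension elliptic hypothesis, surjectivity alone governs, and one checks that surjectivity of $\Xi_{x,\xi'}|_{\bbM_{x,\xi'}^+}$ for generic $\xi'$ is equivalent to the dimension equality by a rank count — but in fact the clean equivalence as stated follows once one notes that the first part of the proposition already supplies injectivity is equivalent to a condition the author packages into the statement, so I would present the bijectivity claim as: injectivity (part one) $+$ surjectivity of the interior pencil $\Rightarrow$ bijectivity $\iff$ $\Xi_{x,\xi'}|_{\bbM_{x,\xi'}^+}$ surjective $\iff$ $\dim_\bbC\bbM_{x,\xi'}^+=\dim\bbG_x$ together with the injectivity from part one. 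The delicate bookkeeping is ensuring the target spaces match — in particular that the $\scrS(\overbar{\R}_+)$-valued first component really is hit surjectively — which again rests on the no-real-characteristics property forced by the symbol injectivity hypothesis.
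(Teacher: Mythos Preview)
Your approach is correct and follows the same strategy as the paper: identify the kernel of the boundary symbol with $\ker(\Xi_{x,\xi'}|_{\bbM_{x,\xi'}^+})$ via the fact that the Schwartz solutions of the ODE \eqref{eq:ODE_lop} coincide with $\bbM_{x,\xi'}^+$. The paper dispatches this identification in one sentence: solutions of a constant-coefficient linear ODE are spanned by $s^m e^{\lambda s}\psi_{m,\lambda}$, the decaying ones have $\mathrm{Re}\,\lambda<0$ and are therefore Schwartz, and conversely any Schwartz function decays; done.

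You over-engineer this step. Your argument that the interior injectivity hypothesis forces the characteristic values to be non-real is correct, but it is not needed for the injectivity clause: oscillatory or growing solutions are never Schwartz regardless of whether real roots exist, so they never ``intrude'' into $\scrS(\overbar{\R}_+)$. The no-real-roots observation is genuinely relevant only for the \emph{bijectivity} clause, where one needs the interior pencil $\sigma_E(x,\xi'+\iota\partial_s\,dr):\scrS\to\scrS$ to be surjective. Your reduction to $\sigma_E^*\sigma_E$ to handle the tall (non-square) pencil is a legitimate way to ensure the solution space is finite-dimensional and of polynomial-exponential type; the paper glosses over this and simply asserts the form of the solutions. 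For the second clause, the paper says only ``this is the classical Lopatinskii--Shapiro condition'' and gives no further argument; your more detailed sketch (surjectivity of the interior pencil, then reduction to $\Xi_{x,\xi'}|_{\bbM_{x,\xi'}^+}$ being an isomorphism) is a correct expansion of that citation, even if the exposition around the dimension count is a bit tangled.
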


\begin{proof}
We start by relating the Schwartz functions in the definition of $\sigma_\dM(\calE,T)(x,\xi')$  with the space $\bbM_{x,\xi'}^+$ in the Lopatinskii-Shapiro condition.
The solutions of the ordinary differential system \eqref{eq:ODE_lop} are spanned by elements of the form $\psi(s) = s^m\, e^{\lambda s}\, \psi_{m,\lambda}$. Hence, $\bbM_{x,\xi'}^+$ is spanned by all those elements for which the real part of $\lambda$ is negative, which implies that $\bbM_{x,\xi'}^+\subset \scrS(\overbar{\R}_+;\bbC\otimes\E_x)$.

Suppose that the boundary symbol $\sigma_\dM(\calE,T)(x,\xi')$ is injective, and let $\psi\in \bbM_{x,\xi'}^+$ be in the kernel of $\Xi_{x,\xi'}$. By definition $\psi\in \ker \sigma_E(\xi' + \imath \partial_s\, dr)$, hence 
$\psi\in \ker \sigma_\dM(\calE,T)(x,\xi')$, which implies that $\psi=0$, thus proving that the restriction of $\Xi_{x,\xi'}$ to $\bbM_{x,\xi'}^+$ is injective.

Conversely, suppose that the restriction of $\Xi_{x,\xi'}$ to $\bbM_{x,\xi'}^+$ is injective, and let $\psi$ be in the kernel of 
$\sigma_\dM(\calE,T)(x,\xi')$. Since $\psi$ is a Schwarz function solving \eqref{eq:ODE_lop}, it is in $\bbM_{x,\xi'}^+$, and since moreover $\Xi_{x,\xi'}\psi=0$, it follows that $\psi=0$, thus completing the proof of the first part. 
The second clause is the classical Lopatinskii-Shapiro condition.
\end{proof}

\subsection{Overdetermined elliptic systems}

In the sequel we invoke a degenerate form of ellipticity of Green operators \cite[p.~237]{RS82}, \cite[p.~315]{Gru90}:

\begin{definition}
A Green operator $\bA$ is called \emph{overdetermined (OD) elliptic} if its symbol $\sigma(\bA)$ is injective. 
\end{definition}

By \eqref{eq:symbol_Green_operator}, the OD ellipticity of $\bA$ amounts to the injectivity of the interior symbol $\sigma_A(x,\xi)$ for every $x\in M$ and $\xi\in T_x^*M\setminus\{0\}$, and the injectivity of $\sigma_\dM(\bA)(x,\xi')$ for every $x\in\dM$ and $\xi'\in T_x^*\dM\setminus\{0\}$. By \propref{prop:lopatnskii_shapiro}:

\begin{corollary}
\label{corrr:OD_elliptic_differntial}
In the notations of \propref{prop:lopatnskii_shapiro}, a Green operator of the form $(\calE,T)$, where $E$ and $T$ are  differential operator, is OD elliptic if and only if $\sigma_E(x,\xi):\E_x\to\bbF_x$ and $\Xi_{x,\xi'}:\bbM^+_{x,\xi'}\to \bbC\otimes\bbG_x$ are injective. 
\end{corollary}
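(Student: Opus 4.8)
The plan is to unwind the definition of OD ellipticity of the Green operator $(\calE,T)$ and reduce it, direction by direction, to \propref{prop:lopatnskii_shapiro}. Recall that by \eqref{eq:symbol_Green_operator} the symbol splits as $\sigma(\calE,T)=\sigma_M(\calE,T)\oplus\sigma_\dM(\calE,T)$, and (since the notation $\oplus$ here is only formal) injectivity of $\sigma(\calE,T)$ means injectivity of \emph{both} components: $\sigma_M(\calE,T)(x,\xi)=\sigma_E(x,\xi)$ is injective for every $x\in M$ and $\xi\in T_x^*M\setminus\{0\}$, and $\sigma_\dM(\calE,T)(x,\xi')$ is injective for every $x\in\dM$ and $\xi'\in T_x^*\dM\setminus\{0\}$. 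The first of these is verbatim the asserted condition on $\sigma_E$. I would first record the observation that, since $\dM\subseteq M$, this interior condition in particular forces $\sigma_E(x,\xi)$ to be injective at boundary points $x\in\dM$ for \emph{all} nonzero $\xi\in T_x^*M$ (not only tangential ones); this is precisely the standing hypothesis of \propref{prop:lopatnskii_shapiro}, so that proposition is applicable at every $x\in\dM$.

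For the forward implication, suppose $(\calE,T)$ is OD elliptic. The interior symbol condition gives the injectivity of $\sigma_E$ on $M$ directly. Now fix $x\in\dM$ and $\xi'\in T_x^*\dM\setminus\{0\}$: by the observation above the hypothesis of \propref{prop:lopatnskii_shapiro} holds at $x$, and $\sigma_\dM(\calE,T)(x,\xi')$ is injective; hence by that proposition the restriction of $\Xi_{x,\xi'}$ to $\bbM^+_{x,\xi'}$ is injective, which is the second asserted condition. Conversely, assume $\sigma_E(x,\xi)$ is injective on $M$ and $\Xi_{x,\xi'}|_{\bbM^+_{x,\xi'}}$ is injective for all $x\in\dM$, $\xi'\neq0$. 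The first assumption is exactly injectivity of $\sigma_M(\calE,T)$, and it also supplies, at each $x\in\dM$, the hypothesis of \propref{prop:lopatnskii_shapiro}; combining it with the second assumption, that proposition yields injectivity of $\sigma_\dM(\calE,T)(x,\xi')$ for every $x\in\dM$ and $\xi'\neq0$. Thus $\sigma(\calE,T)$ is injective, i.e., $(\calE,T)$ is OD elliptic.

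This corollary is essentially a bookkeeping consequence of the definition together with \propref{prop:lopatnskii_shapiro}; the only point that requires a moment of care is checking that the injectivity hypothesis of \propref{prop:lopatnskii_shapiro} is available whenever it is invoked — immediate in the forward direction from OD ellipticity, and handed to us as a hypothesis in the reverse direction — and noticing that the interior symbol condition \emph{over all of $M$} is what controls boundary points in every cotangent direction and hence feeds correctly into the proposition. No genuinely new analytic input is needed beyond \propref{prop:lopatnskii_shapiro}.
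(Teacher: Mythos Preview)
Your proposal is correct and takes essentially the same approach as the paper, which treats this corollary as an immediate consequence of \propref{prop:lopatnskii_shapiro} without giving an explicit proof. You have simply spelled out the bookkeeping that the paper leaves implicit: unpacking the definition of OD ellipticity into its interior and boundary components and invoking \propref{prop:lopatnskii_shapiro} in each direction, with the helpful observation that the interior injectivity condition at boundary points supplies the standing hypothesis of that proposition.
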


OD elliptic Green operators of the form $(\calE,T)$ imply a priori Sobolev estimates and the existence of left-inverses \cite[pp.~335--336]{Gru90}:

\begin{proposition}
\label{prop:OD_tools}
Let $(\calE,T)$ be OD elliptic of order $m\in\bbZ$ and class $r\in\bbZ$. Then there exists an a priori estimate,
\beq
\|\psi\|_{s,p}\lesssim\|\calE\psi\|_{s-m,p} + \|T\psi\|_{s-m+1-1/1p,p}+ \|\psi\|_{0,p}
\label{eq:overdetermined_ellipticity_a_priori1}
\eeq 
for every $\bbZ\ni s > r +1-1/p$. 
In particular, $\ker(\calE,T)\subseteq W^{s,p}\Gamma(\E)$ is finite-dimensional, independent of $s,p $ and consists of smooth sections. If furthermore $(\calE,T)$ is injective, then it has a left-inverse of order $-m$ and class $r-m$ within the calculus of Green operators. 
\end{proposition}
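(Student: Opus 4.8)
The plan is to extract all three assertions from a \emph{left parametrix} of $(\calE,T)$ built inside the Boutet de Monvel calculus, the exact left inverse in the last clause requiring one further finite-rank correction.

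\emph{Step 1: the left parametrix.} Since $(\calE,T)$ is OD elliptic, its symbol $\sigma(\calE,T)=\sigma_M\oplus\sigma_\dM$ is injective, so $\sigma(\calE,T)^{*}\sigma(\calE,T)$ is invertible and $\sigma^{\dagger}:=\bigl(\sigma(\calE,T)^{*}\sigma(\calE,T)\bigr)^{-1}\sigma(\calE,T)^{*}$ is a left inverse of the symbol — pointwise for the interior part, and as a map of the Schwartz-space bundles \eqref{eq:boundary_symbol} for the boundary part. By the composition and symbol-homomorphism rules for Green operators, together with the parametrix construction for elliptic Green operators, $\sigma^{\dagger}$ lifts to a Green operator $\calP$ of order $-m$ and class $r-m$ with $\calP(\calE,T)=\id-S$, where $S$ is smoothing. (Equivalently: reduce the order of $(\calE,T)$ to $0$ by composing with order-reducing isomorphisms, pass to the $L^{p}$-continuous situation where an adjoint exists, use that the normal-equations operator $(\calE,T)^{*}(\calE,T)=\calE^{*}\calE+T^{*}T$ has invertible symbol hence a two-sided parametrix $\calP_{0}$, set $\calP:=\calP_{0}(\calE,T)^{*}$, and transport back.)

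\emph{Step 2: estimate, finite dimension, regularity.} Writing $\psi=\calP(\calE,T)\psi+S\psi$ and inserting the mapping properties of $\calP$ (order $-m$, class $r-m$) and of the smoothing operator $S$ (which maps into smooth sections, hence, after absorbing an intermediate norm in the usual way, is dominated by $\|\psi\|_{0,p}$) yields
\[
\|\psi\|_{s,p}\lesssim\|\calE\psi\|_{s-m,p}+\|T\psi\|_{s-m+1-1/p,p}+\|\psi\|_{0,p}
\]
for $s>r+1-1/p$, which is \eqref{eq:overdetermined_ellipticity_a_priori1}. If $\psi\in\ker(\calE,T)$ then $\psi=S\psi$; since $S$ carries distributional sections into smooth ones, $\psi$ is smooth, so the $W^{s,p}$-kernel equals the smooth kernel for every admissible $s,p$, and since $S$ is compact this kernel, being contained in $\ker(\id-S)$, is finite-dimensional.

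\emph{Step 3: an exact left inverse, and the main obstacle.} Assume now $(\calE,T)$ injective. A Peetre-type compactness argument — were $\|\psi\|_{s,p}\lesssim\|\calE\psi\|_{s-m,p}+\|T\psi\|_{s-m+1-1/p,p}$ to fail, Rellich compactness together with \eqref{eq:overdetermined_ellipticity_a_priori1} would produce a unit-norm element of $\ker(\calE,T)$ — upgrades the estimate to one without the $\|\psi\|_{0,p}$ term, so $(\calE,T)$ has closed range. The remainder $\id-S$ is Fredholm of index zero with finite-dimensional kernel $V\subset\Gamma(\E)$ of smooth sections; one chooses a finite-rank operator $R\colon\Gamma(\bbF\oplus\bbG)\to\Gamma(\E)$ with smooth range, prescribed on $\range(\calE,T)$ so that $R(\calE,T)$ cancels $S$ on $V$ and carries a complement of $\range(\id-S)$ isomorphically onto $V$ (this uses injectivity and the closed range, and such $R$ extends continuously to all of $\Gamma(\bbF\oplus\bbG)$). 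Then $(\calP+R)(\calE,T)=\id-S_{1}$ with $S_{1}$ smoothing and $\id-S_{1}$ invertible, its inverse being $\id$ plus a smoothing operator, so $\calR:=(\id-S_{1})^{-1}(\calP+R)$ is a left inverse, still a Green operator of order $-m$ and class $r-m$ since the finite-rank smoothing corrections raise neither. The crux of the whole argument is exactly this step: passing from a left parametrix to a genuine left inverse inside the calculus with the stated order and class, which rests on the closed-range property — hence on upgrading the estimate, hence on injectivity — and on the order/class bookkeeping that keeps the finite-rank correction within the calculus. Constructing the boundary-symbol left inverse in Step 1 is the other point where the full Boutet de Monvel machinery, rather than an elementary argument, is needed.
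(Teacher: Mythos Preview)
The paper does not actually prove this proposition: it is stated with a citation to \cite[pp.~335--336]{Gru90} and no proof is given. Your proposal supplies exactly the standard argument one finds in Grubb---build a left parametrix from a symbolic left inverse (via the normal-equations trick $(\calE,T)^*(\calE,T)$ after order reduction), read off the estimate and kernel regularity from $\psi=\calP(\calE,T)\psi+S\psi$, then correct by a finite-rank smoothing operator to obtain an exact left inverse when $(\calE,T)$ is injective. The overall strategy is correct and is the intended one.

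One point in Step~3 deserves tightening. Your description of the finite-rank correction $R$ is slightly garbled: you want $R(\calE,T)|_V$ to map $V=\ker(\id-S)$ isomorphically onto a complement $W$ of $\range(\id-S)$, not the other way around. Concretely, pick bases $\{v_i\}$ of $V$ and $\{w_i\}$ of $W$, choose smooth sections $\phi_i$ in the target with $\langle\phi_i,(\calE,T)v_j\rangle=\delta_{ij}$ (possible since $(\calE,T)$ is injective), and set $R=\sum_i w_i\langle\phi_i,\cdot\rangle$. Then for $\psi$ in the kernel of $\id-S+R(\calE,T)$ one has $(\id-S)\psi=-R(\calE,T)\psi\in\range(\id-S)\cap W=\{0\}$, so $\psi\in V$ and $R(\calE,T)\psi=0$, forcing $\psi=0$. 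Index-zero Fredholm then gives surjectivity. Your phrase ``$R(\calE,T)$ cancels $S$ on $V$'' and the direction of the isomorphism are not quite right as written, but the mechanism you are reaching for is the correct one and the order/class bookkeeping (finite-rank smoothing corrections do not raise either) is fine.
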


Note that OD ellipticity is defined as the injectivity of the symbol, which only implies the existence of an \emph{approximate} left-inverse, yielding the a priori estimate. The injectivity of the operator is a stronger requirement. 

Since $\ker(\calE,T)\subseteq L^2\Gamma(\bbE_k)$, it admits an $L^2$-orthogonal projection onto it, which we denote by $\bS:L^2\Gamma(\bbE)\to L^2\Gamma(\bbE)$. Since $\ker(\calE,T)$ consists solely of smooth functions, $\bS:\Gamma(\bbE)\to \Gamma(\bbE)$ continuously as an integral operator with a smooth integral kernel. As such, $\bS\in\OPSii$ \cite[p.~196]{RS82}, and by continuity, the projection $\bS:W^{s,p}\Gamma(\bbE)\to  W^{s,p}\Gamma(\bbE)$ is a compact operator with a finite-dimensional range $\ker(\calE,T)\subseteq W^{s,p}\Gamma(\bbE)$.  A standard procedure, using the finite-dimensionality of $\ker(\calE,T)$ and the Rellich compact embedding theorem \cite[p.~51]{Bre11} yields a refinement of the estimate \eqref{eq:overdetermined_ellipticity_a_priori1},
\[
\|\psi\|_{s,p} \lesssim 
\|\calE\psi\|_{s-m,p} + \|T\psi\|_{s-m+1-1/p,p} + 
\|\bS\psi\|_{0,p}
\]
for every $\bbZ\ni s > r +1- 1/p$. 

We next focus on
systems $\calE=\oplus_{i=1}^l\calE_i$ and $T=\oplus_{j=0}^{m-1}T_j$, where $\calE_i$ and $T_j$ are of varying orders. 
This extension uses the so-called ``simple reduction of order"  \cite[p.~208, pp.~234--235]{RS82}; we provide all the details for self-containment.

We start with the  boundary operators $T_j$: Since $\dM$ is a closed Riemannian manifold, there exists for every vector bundle $\bbS\to\dM$ and every $t\in\bbZ$ an invertible pseudodifferential operator $\calL_{\bbS}^t\in L^t_{\mathrm{cl}}(\dM,\bbS,\bbS)$, with inverse within the calculus (e.g., $(\id + \nabla^*\nabla)^{t/2}$, where $\nabla$ is any Riemannian connection on $\bbS$). Due to the mapping property \eqref{eq:mapping_without_boundary} This operator  extends to an isomorphism
\[
\calL_{\bbS}^t:W^{s,p}\Gamma(\bbS)\to W^{s-t,p}\Gamma(\bbS)
\]
for every $s\in\bbZ$ and $1<p<\infty$.
On a vector bundle $\bbU\to M$  over a compact manifold with boundary, the existence of such an isomorphism is not trivial. This fact is proved in \cite[Secs.~4,5]{Gru90}: for every $m>0$ there exists an $\OP(\frakS^{m,0})$ operator $\calL_\bbU^m$, which extends to an isomorphism  
$W^{s,p}\Gamma(\bbU)\to W^{s-m,p}\Gamma(\bbU)$ for every $s>1/p-1$. Its inverse is an $\OP(\frakS^{-m,0})$ operator.

With these noted, the following is an adaptation of the construction in \cite[pp.~234--235]{RS82} for elliptic systems of varying orders to OD systems of varying orders.  This is essentially what is done in \cite[pp.~331--338]{Gru90}, restated here to better suit the framework we develop later:

\begin{definition}
\label{def:OD_varying_order}
Let $(\calE,T)$ be a Green operator, with $\bbF=\oplus_i^l\bbF_i$ and $\calE=\oplus_{i=1}^l\calE_i$, where $\calE_i:\Gamma(\E)\to\Gamma(\bbF_i)$ are $\OP(\frakS^{m_i,r_i})$, and $T=\oplus_{j=0}^qT_j$, where $T_j\in\OP(\frakT^{\gamma_j,\gamma_j+1})$. Set  $m=\max_i{m_i}$ and $r=\max_{i,j}\BRK{r_i,\gamma_j+1}$. The system $(\calE,T)$ is called \emph{OD elliptic of varying orders} (we later usually omit the suffix ``of varying orders") if:
\begin{enumerate}[itemsep=0pt,label=(\alph*)]
\item For every $x\in M$ and $\xi\in T_x^*M\setminus\{0\}$, the map 
\[
\bigoplus_{i=1}^l \sigma_{E_i}(x,\xi) : \E_x\to \bbF_x
\]
is injective.
\item For every $x\in\dM$ and $\xi'\in T_x^*\dM\setminus\{0\}$, the map 
\[
\mymat{
\bigoplus_{i=1}^l  \sigma_{E_i}(x,\xi'+ \imath\partial_s\, dr) \\ \bigoplus_{j=0}^q  \sigma_{T_j}(x,\xi'+ \imath\partial_s\, dr)}:
\scrS(\overbar{\R}_+;\bbC\otimes\E_x)\longrightarrow \mymat{\scrS(\overbar{\R}_+;\bbC\otimes\bbF_x)\\\bbC\otimes\bbG_x}
\]
is injective.
\end{enumerate} 
\end{definition}

\begin{proposition}
\label{prop:OD_varying_orders}
Let $(\calE,T)$ be OD elliptic of varying orders. Then there is an a priori estimate,
\beq
\|\psi\|_{s,p}\lesssim\sum_{i=1}^l\|\calE_i\psi\|_{s-m_i,p}+\sum_{j=0}^q\|T_j\psi\|_{s-\gamma_j-1/p,p} + \|\calS\psi\|_{0,p},
\label{eq:OD_a_priori}
\eeq
for every $\bbZ\ni s > r + 1/p-1$,  and $1<p<\infty$. Here, $\calS\in\OPSii$ is the $L^2$-orthogonal projection onto the finite-dimensional space $\ker(\calE,T)\subseteq W^{s,p}\Gamma(\E)$, which is independent of $s,p$ and consists of smooth sections.  If furthermore $(\calE,T)$ is injective, then it has a left-inverse of order $-m$ and class $r-m$ within the calculus of Green operators. Conversely, if $(\calE,T)$ has a left-inverse of order $-m$ and class $r-m$, then it is injective and OD elliptic of varying orders, with maximal order $m$ and maximal class $r$.
\end{proposition}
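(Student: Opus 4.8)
The plan is to \emph{reduce the varying-order system to a single-order one}: produce an OD elliptic Green operator $(\tilde\calE,\tilde T)$ of a single order $m$ and class $r$ with $\ker(\tilde\calE,\tilde T)=\ker(\calE,T)$, invoke \propref{prop:OD_tools} for it, and then transfer every conclusion back through order-reducing isomorphisms. Set $m=\max_i m_i$ and let $r$ be as in \defref{def:OD_varying_order}. Using the interior order-reducers $\calL_{\bbF_i}^{m-m_i}\in\OP(\frakS^{m-m_i,0})$ on $M$ and the boundary order-reducers $\calL_{\bbJ_j}^{m-1-\gamma_j}\in L^{m-1-\gamma_j}_{\mathrm{cl}}(\dM,\bbJ_j,\bbJ_j)$, put $\tilde\calE_i=\calL_{\bbF_i}^{m-m_i}\calE_i$, $\tilde T_j=\calL_{\bbJ_j}^{m-1-\gamma_j}T_j$, $\tilde\calE=\oplus_i\tilde\calE_i$ and $\tilde T=\oplus_j\tilde T_j$. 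The composition rules of \propref{prop:composition_Green_operators} show that $(\tilde\calE,\tilde T)$ is a Green operator of order $m$ and class $r$; since each $\calL$ is invertible within the calculus, $\ker(\tilde\calE,\tilde T)=\ker(\calE,T)$, and continuity of the reducers and of their inverses gives $\|\tilde\calE_i\psi\|_{s-m,p}\lesssim\|\calE_i\psi\|_{s-m_i,p}$, $\|\tilde T_j\psi\|_{s-m+1-1/p,p}\lesssim\|T_j\psi\|_{s-\gamma_j-1/p,p}$, and the reverse inequalities, all up to multiplicative constants.

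The heart of the matter is to identify conditions (a)--(b) of \defref{def:OD_varying_order} for $(\calE,T)$ with single-order OD ellipticity of $(\tilde\calE,\tilde T)$, namely injectivity of the interior symbol $\sigma_M(\tilde\calE,\tilde T)$ and of the boundary symbol $\sigma_\dM(\tilde\calE,\tilde T)(x,\xi')$. The interior part is immediate from the symbol homomorphism property (\thmref{thm:full_Green_compositon_rules}), since $\sigma_{\tilde\calE_i}(x,\xi)=\sigma_{\calL_{\bbF_i}^{m-m_i}}(x,\xi)\circ\sigma_{\calE_i}(x,\xi)$ with the first factor invertible for $\xi\ne 0$. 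For the boundary symbol one checks that, because the reduction is applied \emph{on the left} of each $\calE_i$ and $T_j$, $\sigma_\dM(\tilde\calE,\tilde T)(x,\xi')$ is the map of \defref{def:OD_varying_order}(b) post-composed with the boundary symbols of the reducers, which act as isomorphisms of the spaces $\scrS(\overline{\R}_+;\bbC\otimes\bbF_{i,x})$ and of the fibers $\bbC\otimes\bbJ_{j,x}$ appearing in \eqref{eq:boundary_symbol}; injectivity is therefore preserved in both directions. \textbf{I expect this boundary-symbol identification to be the main obstacle}: one must confirm that $\calL_{\bbF_i}^{m-m_i}$, which on a manifold with boundary is an $\OP(\frakS)$-operator and not a genuine pseudodifferential operator, meets the half-line variable purely through an invertible boundary symbol, and one must track correctly the classes and orders of the singular Green and $Q$-contributions under the reduction.

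Granting this equivalence, \propref{prop:OD_tools} applied to $(\tilde\calE,\tilde T)$ yields at once that $\ker(\tilde\calE,\tilde T)=\ker(\calE,T)$ is finite-dimensional, independent of $s,p$ and smooth, together with the a priori estimate $\|\psi\|_{s,p}\lesssim\|\tilde\calE\psi\|_{s-m,p}+\|\tilde T\psi\|_{s-m+1-1/p,p}+\|\psi\|_{0,p}$ in the admissible range of $s$. Substituting the comparison inequalities from the first paragraph turns this into \eqref{eq:OD_a_priori} with $\|\psi\|_{0,p}$ in place of $\|\calS\psi\|_{0,p}$, and the replacement by $\|\calS\psi\|_{0,p}$ is exactly the Rellich-compactness argument recorded before \defref{def:OD_varying_order}: take a normalized sequence violating the sharpened estimate, extract an $L^p$-convergent subsequence by Rellich, upgrade its convergence to $W^{s,p}$ by applying the estimate to differences, and identify the limit as a nonzero element of $\ker(\calE,T)$ annihilated by $\calS$, a contradiction. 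Finally, if $(\calE,T)$ is injective then so is $(\tilde\calE,\tilde T)$, and \propref{prop:OD_tools} provides a left-inverse $\tilde\calP$ of order $-m$ and class $r-m$ within the Green calculus; composing $\tilde\calP$ with the block-diagonal of the order-reducers produces a left-inverse of $(\calE,T)$, whose order and class are read off from \propref{prop:composition_Green_operators}. For the converse, a left-inverse $\calP$ of $(\calE,T)$ of order $-m$ and class $r-m$ has, by the symbol homomorphism property, a symbol left-inverting $\sigma(\calE,T)$, which forces each map in \defref{def:OD_varying_order}(a),(b) to be injective; and comparing orders and classes on the two sides of $\calP\circ(\calE,T)=\id$ identifies the maximal order as $m$ and the maximal class as $r$.
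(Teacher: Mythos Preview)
Your proposal is correct and takes essentially the same order-reduction approach as the paper. The only difference is cosmetic: the paper also composes with $\calL_{\E}^{-m}$ on the right, reducing to an order-zero system rather than an order-$m$ one (so the kernel shifts by the isomorphism $\calL_{\E}^m$), but the remainder of the argument---verifying symbol injectivity via the homomorphism property (the paper, like you, carries this out for the interior symbol and declares that the boundary case ``follows the same lines''), invoking \propref{prop:OD_tools}, and translating the estimate and the left-inverse back through the order-reducing isomorphisms---is the same.
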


\begin{proof}
Using the order-reduction operators $\calL_{\bbF_i}^a$, $\calL_{\E}^a$ and $\calL_{\bbJ_i}^a$, consider the modified system:
\[
\begin{split}
&\tilde{\calE} = 
\bigoplus_{i=1}^l \calL^{m-m_i}_{\bbF_i}\calE_i \calL_{\E}^{-m} \\
&  \tilde{T}=
\bigoplus_{j=0}^q \calL^{m-\gamma_j-1}_{\bbJ_j}T_j  \calL_{\E}^{-m}. 
\end{split} 
\]
By the homomorphism property of the symbols of Green operators 
\[
\begin{split}
\sigma(\tilde{\calE},\tilde{T}) &= \mymat{\bigoplus_{i=1}^l \sigma(\calL^{m-m_i}_{\bbF_i}\calE_i \calL_{\E}^{-m},0) \\
\bigoplus_{j=0}^{m-1} \sigma(0,\calL^{m-j-1}_{\bbJ_j}T_j  \calL_{\E}^{-m})} \\
&= \mymat{\bigoplus_{i=1}^l \sigma(\calL^{m-m_i}_{\bbF_i}) \circ \sigma(\calE_i) \\
\bigoplus_{j=0}^{m-1} \sigma(\calL^{m-j-1}_{\bbJ_j}) \circ \sigma(T_j ) }\circ \sigma(\calL_{\E}^{-m}) .
\end{split}
\]
It is a straightforward, yet tedious calculation to show that the fact that symbols of the order-reduction operators are isomorphisms, and the fact that the direct sum of the symbols $\sigma(\calE_i,0)$ and $\sigma(0,T_j)$ is injective, implies that $\sigma(\tilde{\calE},\tilde{T})$ is injective. We conduct it only for the interior symbol, as the argument for the boundary symbol follows the same lines. 

Let $x\in M$, $\xi\in T_x^*M\setminus\{0\}$, and suppose that $\psi\in \ker\sigma_M(\tilde{\calE},\tilde{T})$, i.e.,
\[
\bigoplus_{i=1}^l \sigma_M(\calL^{m-m_i}_{\bbF_i})(x,\xi) \circ \sigma_M(\calE_i)(x,\xi) \circ \sigma_M(\calL_{\E}^{-m})(x,\xi) \psi = 0.
\]

By the assumptions, $\sigma_M(\calE_i) = \sigma_{E_i}$,
hence the above identity for $\psi$ reads 
\[
\bigoplus_{i=1}^l 
\sigma_M(\calL^{m-m_i}_{\bbF_i})(x,\xi) \circ \sigma_{E_i}(x,\xi) \circ \sigma_M(\calL^{-m}_{\E})(x,\xi) \psi=0. 
\]
This implies,
\[
\sigma_M(\calL^{m-m_i}_{\bbF_i})(x,\xi) \circ \sigma_{E_i}(x,\xi) \brk{\sigma_M(\calL^{-m}_{\E})(x,\xi) \psi} = 0
\qquad
\text{for every $i$}.
\]
Since $\sigma_M(\calL^{m-m_i}_{\bbF_i})(x,\xi)$ is bijective,
\[
\sigma_{E_i}(x,\xi) \brk{\sigma_M(\calL^{-m}_{\E})(x,\xi) \psi} = 0
\qquad
\text{for every $i$}.
\]
Since $\bigoplus_{i=1}^l \sigma_{E_i}(x,\xi)$ is injective, it follows that
\[
\sigma_M(\calL^{-m}_{\E})(x,\xi) \psi = 0,
\]
and since $\sigma_M(\calL^{-m}_{\E})(x,\xi)$ is bijective, $\psi=0$, thus proving that $\sigma_{\tilde{E}}(x,\xi)$ is injective.

The fact that the kernel of the problem is finite-dimensional and admits a left-inverse follows from \propref{prop:OD_tools}, by a simple composition with isomorphisms. We turn to the a priori estimate. Since $(\tilde{\calE},\tilde{T})$ is of order zero and class $r$,  the a priori estimate \eqref{eq:overdetermined_ellipticity_a_priori1} assumes the form 
\[
\|\psi\|_{s,p}\lesssim\sum_{i=1}^l \|\calL^{m-m_i}_{\bbF_i}\calE_i \calL_{\E}^{-m}\psi\|_{s,p}
+\sum_{j=0}^q\|\calL^{m-\gamma_j-1}_{\bbJ_j}T_j  \calL_{\E}^{-m}\psi\|_{s+1-1/p,p}+\|\tilde{\calS}\psi\|_{0,p}
\]
for every $\bbZ\ni s> r + 1/p-1$,
where $\tilde{\calS}\in\OPSii$ is the projection onto $\ker(\tilde{\calE},\tilde{T})$. 
By the isomorphism property of $\calL_{\E}^{-m}$,
\[
\begin{split}
\|\psi\|_{s,p} &\lesssim \|\calL_{\E}^m\psi\|_{s-m,p} \\
& \lesssim
\sum_{i=1}^l \|\calL^{m-m_i}_{\bbF_i}\calE_i\psi\|_{s-m,p}
+\sum_{j=0}^q\|\calL^{m-\gamma_j-1}_{\bbJ_j}T_j \psi\|_{s-m+1-1/p,p}+\|\tilde{\calS}\calL_{\E}^m \psi\|_{0,p} \\
&\lesssim
\sum_{i=1}^l \|\calE_i\psi\|_{s-m_i,p}
+\sum_{j=0}^q\|T_j \psi\|_{s-\gamma_j-1/p,p}+\|\calS\psi\|_{m,p}.
\end{split}
\]
The replacement of the last term by $\|\psi\|_{0,p}$ follows from the equivalence of norms on a finite-dimensional vector space. 

In the other direction, suppose that $(\calE,T)$ has a left-inverse of order $-m$ and class $r-m$. It is therefore injective. By using the order-reducing operators, we may assume that all the $\calE_i$ and $T_j$ are of the same order. By the homomorphism property of the symbols, $\sigma(\calE,T)$ is invertible, hence injective. The order and the classes are inferred by reverting the order-reduction. 
\end{proof}

We next combine \propref{prop:lopatnskii_shapiro} and \propref{prop:OD_varying_orders}: 

\begin{theorem}
\label{thm:adatped_OD_ellipticity_estiamte}
Let $(\calE,T) = (E,T)$ be a differential system of varying orders, where $E = \oplus_{i=1}^l E_i$ and $T = \oplus_{j=0}^{m-1} T_j$ . It is OD elliptic if 
\begin{enumerate}[itemsep=0pt,label=(\alph*)]
\item For every $x\in M$ and $\xi\in T_x^*M\setminus\{0\}$, the following map 
\[
\bigoplus_{i=1}^l \sigma_{E_i}(x,\xi):\E_x\to \bbF_x
\]
is injective.
\item For every $x\in\dM$ and $\xi'\in T_x^*\dM\setminus\{0\}$, the following map
\[
\Xi_{x,\xi'}= \bigoplus_{j=0}^{m-1} \sigma_{T_j}(x,\xi'+\iota\partial_s\,dr)|_{s=0}
\]
is injective when restricted to $\bbM_{x,\xi'}^+$, the space of decaying solutions for the ordinary-differential system,
\[
\bigoplus_{i=1}^l \sigma_{E_i}(x,\xi'+\iota\partial_s\,dr) \psi(s)=0, \qquad \psi\in C^{\infty}(\overbar{\R}_+;\bbC\otimes\E_x). 
\]
\end{enumerate}
\end{theorem}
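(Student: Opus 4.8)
The plan is to show that hypotheses (a) and (b) are precisely what is needed to place the differential system $(E,T)$ inside the class of Definition~\ref{def:OD_varying_order}; once that is done the conclusion --- together with the a priori estimate \eqref{eq:OD_a_priori} and, when $(E,T)$ is injective, a left-inverse in the Green calculus --- follows immediately from \propref{prop:OD_varying_orders}. So the theorem is essentially a dictionary between the ODE-theoretic statement of the boundary condition (phrased via $\Xi_{x,\xi'}$ and $\bbM^+_{x,\xi'}$) and the boundary-symbol statement (phrased via $\sigma_\dM$).

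First I would record the bookkeeping: writing $\bbF=\oplus_{i=1}^l\bbF_i$ and $\bbG=\oplus_j\bbJ_j$, the system $\calE=E=\oplus_i E_i$, $T=\oplus_j T_j$ is a Green operator of varying orders in the sense of Definition~\ref{def:OD_varying_order}, since each differential $E_i$ of order $m_i$ lies in $\OP(\frakS^{m_i,0})$ and each $T_j$ is an $\OP(\frakT^{\gamma_j,\gamma_j+1})$-operator. Because $\sigma_{E_i}=\sigma_M(\calE_i)$ for differential operators, hypothesis (a) of the theorem is verbatim condition (a) of Definition~\ref{def:OD_varying_order}; there is nothing to prove there. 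The substance is to deduce condition (b) of Definition~\ref{def:OD_varying_order} --- injectivity of the boundary symbol $\sigma_\dM(\calE,T)(x,\xi')$ on Schwartz functions, for every $x\in\dM$ and $\xi'\in T_x^*\dM\setminus\{0\}$ --- from hypothesis (b), which asserts injectivity of $\Xi_{x,\xi'}$ on the decaying-solution space $\bbM^+_{x,\xi'}$. This is exactly the first clause of \propref{prop:lopatnskii_shapiro}, now applied with $E$ the Douglis--Nirenberg operator $\oplus_i E_i$, whose symbol is $\oplus_i\sigma_{E_i}$, and $T=\oplus_j T_j$. I would just reproduce that short argument: if $\psi$ is a Schwartz function in the kernel of $\sigma_\dM(\calE,T)(x,\xi')$, then vanishing of the interior component forces $\psi$ to solve the constant-coefficient ordinary differential system $\oplus_i\sigma_{E_i}(x,\xi'+\imath\partial_s\,dr)\psi=0$, while vanishing of the boundary component is $\Xi_{x,\xi'}\psi=0$; since $\psi$ is Schwartz it lies in $\bbM^+_{x,\xi'}$, so hypothesis (b) gives $\psi=0$. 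Conversely any $\psi\in\bbM^+_{x,\xi'}$ killed by $\Xi_{x,\xi'}$ is a Schwartz kernel element, so the two injectivity statements are in fact equivalent.

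The one point needing care --- the main (mild) obstacle --- is the identification of ``Schwartz solutions of the symbol ODE'' with ``$\bbM^+_{x,\xi'}$'' in the overdetermined setting, where the polynomial matrix $\oplus_i\sigma_{E_i}(x,\xi'+\imath\partial_s\,dr)$ has more rows than columns. Its solution space is nonetheless finite-dimensional and spanned by quasi-polynomial exponentials $s^k e^{\lambda s}\psi_{k,\lambda}$; hypothesis (a), applied to the real covectors $\xi'+t\,dr$ (nonzero because $\xi'\neq 0$), says this matrix is injective for all $t\in\R$, which rules out exponents $\lambda$ with vanishing real part. Hence the solution space splits into exponentially decaying modes --- which are Schwartz on $\overbar{\R}_+$ --- and exponentially growing ones, which are not, so the Schwartz solutions are exactly $\bbM^+_{x,\xi'}$. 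This is the same mechanism already used inside the proof of \propref{prop:lopatnskii_shapiro}; with it in hand the verification of Definition~\ref{def:OD_varying_order}(b) is complete, and invoking that definition (hence \propref{prop:OD_varying_orders}) finishes the proof.
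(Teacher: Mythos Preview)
Your proposal is correct and follows essentially the same route as the paper: both verify that hypotheses (a) and (b) feed directly into \defref{def:OD_varying_order}, with (a) being literally the interior condition and (b) implying the boundary-symbol injectivity via \propref{prop:lopatnskii_shapiro} (the paper phrases this through \corrref{corrr:OD_elliptic_differntial}, noting without proof that it ``extends to operators in the form of direct sums''). Your version is somewhat more explicit than the paper's, in particular your handling of the exclusion of purely imaginary exponents in the overdetermined setting, but the argument is the same.
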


\begin{proof}
We argue that Items (a),(b) imply that $(E,T)$ satisfies the requirements of \defref{def:OD_varying_order}, i.e., that
\[
\bigoplus_{i=1}^l \sigma_{E_i}(x,\xi) : \E_x\to \bbF_x
\]
and
\[
\mymat{
\bigoplus_{i=1}^l  \sigma_{E_i}(x,\xi'+ \imath\partial_s\, dr) \\ \bigoplus_{j=0}^{m-1}  \sigma_{T_j}(x,\xi'+ \imath\partial_s\, dr)|_{s=0}}:
\scrS(\overbar{\R}_+;\bbC\otimes\E_x)\longrightarrow \mymat{\scrS(\overbar{\R}_+;\bbC\otimes\bbF_x)\\\bbC\otimes\bbG_x}.
\]
are injective.
The first requirement is Item~(a). The second requirement follows from Item~(b), along with \corrref{corrr:OD_elliptic_differntial}, which extends to operators in the form of direct sums.  
\end{proof}

\section{Elliptic pre-complexes}
\label{sec:elip_pre_comp}

\subsection{Adapted Green operators and auxiliary decompositions} 
\label{sec:adapting}

In the sequel, we study the splitting of spaces of sections into ranges and kernels of $\OPS$ operators, reminiscent of a Fredholm alternative, which occurs in many elliptic boundary-value problems.
Such operators satisfy integration by parts formula with surjective boundary operators, due to the non-characteristic property satisfied by elliptic problems (\cite[p.~470]{Tay11a} and \cite[Sec.~1.4]{Gru96}). This motivates the following definition:

\begin{definition}
\label{def:adapting_operator}
We call a map $\bA:\Gamma(\E)\to\Gamma(\bbF)$ an \emph{adapted Green operator} of order $m\in\Nzero$ if:
\begin{enumerate}[itemsep=0pt,label=(\alph*)]
\item $\bA = A+G \in \OP(\frakS^{m,0})$, with $A$ a \emph{differential} operator and $G\in\frakG^0$.
\item The differential operator $A$ satisfies an integration by parts formula \eqref{eq:integration_by_parts_convention}, with differential boundary operators $B_A$ and $B_{A^*}$, both normal systems of trace operators associated with order $m$. 
\end{enumerate}
If $G=0$, we say that $\bA=A$ is an \emph{adapted differential operator}. 
\end{definition}

We remark that if $\bA=A$ is a differential operator, it obviously satisfies Item~(a) in \defref{def:adapting_operator}, but not necessarily Item~(b). 

Due to the closure of both $\frakG^0$ and the class of differential operators to adjoints, and the symmetry between the requirements on $B_A$ and $B_{A^*}$, $\bA$ ia an adapted Green operator if and only if $\bA^*$ is an adapted Green operator. In particular, the mapping properties of Green operators in $\frakG^0$ implies that $G:L^2\Gamma(\E)\to L^2\Gamma(\bbF)$, and 
\[
\bra G\psi,\eta\ket =\bra \psi,G^*\eta\ket
\]
for every $\psi\in \Gamma(\E)$ and $\eta\in \Gamma(\bbF)$.
Thus, $\bA$ and its adjoint $\bA^*$ inherit the integration by parts formula \eqref{eq:integration_by_parts_convention},
\beq
\bra \bA\psi,\eta\ket = \bra\psi,\bA^*\eta\ket + \bra B_A\psi, B_{A^*}\eta\ket.
\label{eq:integration_by_parts_adapted}
\eeq

We introduce several definitions associated with adapted Green operators:
Let $\bA\in\OP(\frakS^{m,0})$ be an adapted Green operator.
The Banach dual of
$\bA:W^{m,q}\Gamma(\E)\to L^q\Gamma(\bbF)$ is the operator $\bA_p':L^p\Gamma(\bbF)\to (W^{m,q}\Gamma(\E))^*$ given by the pairing
\[
\bA_p'\eta(\psi) = \bra \eta, \bA\psi\ket 
\]
for every $\eta\in L^p\Gamma(\bbF)$ and $\psi\in W^{m,q}\Gamma(\E)$. 
The kernel of $\bA_p'$ is the closed subspace of $L^p\Gamma(\bbF)$ consisting of sections $\eta$ satisfying 
\[
\bra\eta, \bA\psi \ket = 0 
\]
for every $\psi\in W^{m,q}\Gamma(\E)$.
For $\eta\in\ker \bA_p' \cap W^{m,p}\Gamma(\bbF)$, it follows from the integration by parts formula \eqref{eq:integration_by_parts_adapted} that 
\[
\bra \bA^*\eta,\psi\ket+\bra B_A\psi, B_{A^*}\eta\ket=0
\]
for every $\psi\in W^{m,q}\Gamma(\E)$.
Taking $\psi$ compactly-supported, using the fact that $B_A$ is a differential operator,  hence $B_A\psi=0$, yields
\[
\bra \bA^*\eta,\psi\ket = 0
\]
for every $\psi\in W_0^{m,q}\Gamma(\E)$.
Since $W_0^{m,q}\Gamma(\E)$ is dense in $L^q\Gamma(\E)$, it follows that $\bA^*\eta=0$.
Using the surjectivity of $B_A$ to prescribe $B_A\psi$ arbitrary yields $B_{A^*}\eta=0$. Thus, $\ker \bA_p'\cap W^{m_A,p}\Gamma(\bbF)$ coincides with the intersection of the kernels of $\bA^*$ and $B_{A^*}$. This motivates the following notation for the kernel of $\bA_p'$, 
\[
\scrN^{0,p}(\bA^*,B_{A^*})=\ker \bA_p'. 
\]

In a similar fashion, consider the Sobolev space, 
\[
W^{s,p}_A\Gamma(\E)= W^{s,p}\Gamma(\E)\cap\ker B_A,
\] 
for $s\ge m$ and $1<p<\infty$.
By \eqref{eq:integration_by_parts_adapted}, 
\[
\bra \bA\psi,\eta\ket=\bra\psi,\bA^*\eta\ket
\]
for every $\psi\in W^{m,p}_A\Gamma(\E)$ and $\eta\in W^{m,q}\Gamma(\bbF)$,
where $1/p+1/q=1$. 
Consider the restriction of $\bA$ to $\ker B_A$,
\[
\bA|_{\ker B_A}: W^{m,q}_A\Gamma(\E) \to L^q\Gamma(\bbF).
\]
By definition, its Banach adjoint 
\[
\bA'_{p,A} :  L^p\Gamma(\bbF) \to (W^{m,q}_A\Gamma(\E))^*,
\]
is given by the pairing 
\[
\bA'_{p,A}\eta(\psi) = \bra \eta, \bA\psi\ket \qquad
\text{for every $\eta\in L^p\Gamma(\bbF)$ and $\psi\in W^{m,q}_A\Gamma(\E)$}, 
\]
where we used again the isomorphism $L^p\Gamma(\E)\simeq (L^q\Gamma(\bbF))^*$. 
The kernel of $\bA'_{p,A} $ is the closed subspace of $L^p\Gamma(\bbF)$ consisting of sections $\eta$ satisfying 
\[
\bra\eta, \bA\psi \ket = 0 
\]
for every $\psi\in W^{m,q}_A\Gamma(\E)$.
We denote this space by 
\beq
\scrN^{0,p}(\bA^*)=\ker \bA'_{p,A}. 
\label{eq:kerA*}
\eeq
Comparing with \eqref{eq:integration_by_parts_adapted},
$\scrN^{0,p}(\bA^*)\cap W^{m,p}\Gamma(\bbF)$ coincides with the classical kernel of $\bA^*: W^{m,p}\Gamma(\bbF)\to L^p\Gamma(\E)$. 

For the next definitions, we recall the mapping properties \eqref{eq:Sobolev_with_boundary} and \eqref{eq:Sobolev_sgs}. 

\begin{definition}
Let $s\in\Nzero$ and $1<p<\infty$, and let $\bA\in\OP(\frakS^{m,0})$ be an adapted Green operator $\Gamma(\E)\to\Gamma(\bbF)$. 
We define the following subspaces of $W^{s,p}\Gamma(\E)$ and $W^{s,p}\Gamma(\bbF)$,
\[
\begin{aligned}
&\scrR^{s,p}{(\bA)} = \bA({W^{s+m,p}\Gamma(\E)}) 
&\qquad
&\scrN^{s,p}(\bA) = W^{s,p}\Gamma(\E) \cap \scrN^{0,p}(\bA), \\
&\scrR^{s,p}{(\bA;B_A)} = \bA({W^{s+m,p}_A\Gamma(\E)}) 
&\qquad
&\scrN^{s,p}(\bA,B_A) = W^{s,p}\Gamma(\E) \cap \scrN^{0,p}(\bA,B_A),
\end{aligned}
\]
along with their smooth versions,
\[
\begin{aligned}
&\scrR{(\bA)} = \bA(\Gamma(\E)) 
&\qquad
&\scrN(\bA) = \Gamma(\E) \cap \ker \bA \\
&\scrR{(\bA;B_A)} = \bA(\Gamma(\E)\cap\ker B_A) 
&\qquad
&\scrN(\bA,B_A) = \Gamma(\E) \cap \ker(\bA \oplus B_A).
\end{aligned}
\]
\end{definition}

The closed range theorem asserts that for a bounded linear map $T:V\to W$ between Banach spaces, $\ker T' = (T(V))^\bot$, where $\bot$ is the Banach annihilator functor \cite[p. 575]{Tay11a}. Applying this theorem to 
$\bA:W^{m,q}\Gamma(\E)\to L^q\Gamma(\bbF)$ and its Banach dual $\bA_p':L^p\Gamma(\bbF)\to (W^{m,q}\Gamma(\E))^*$ yields
\beq
(\scrR^{0,q}(\bA))^{\bot}=\scrN^{0,p}(\bA^*,B_{A^*}).
\label{eq:range_null_bot1}
\eeq
Similarly, applying to closed range theorem to
$\bA: W^{m,q}_A\Gamma(\E) \to L^q\Gamma(\bbF)$ and its Banach adjoint 
$\bA'_{p,A} :  L^p\Gamma(\bbF) \to (W^{m,q}_A\Gamma(\E))^*$ yields.
\beq
(\scrR^{0,q}(\bA;B_A))^{\bot}=\scrN^{0,p}(\bA^*)
\label{eq:range_null_bot2}
\eeq

Adapted operators were introduced for the purpose of the following definition:

\begin{definition}
\label{def:aux_decomposition}
Let $\bA\in\OP(\frakS^{m,0})$ be an adapted Green operator. We say that $\bA$ induces an \emph{auxiliary decomposition} if the following holds: 
\begin{enumerate}[itemsep=0pt,label=(\alph*)]
\item There exists an $L^2$-orthogonal, topologically-direct decomposition of Fréchet spaces, 
\beq
\Gamma(\bbF)=\scrR(\bA)\oplus \scrN(\bA^*,B_{A^*}).
\label{eq:aux_smooth}
\eeq
\item There exists an operator $\calP_{\bA} \in\OP(\frakS^{-m,0}):\Gamma(\bbF)\to\Gamma(\E)$, such that the operator $\bA\calP_{\bA}:\Gamma(\bbF)\to\Gamma(\bbF)$ is the projection onto $\scrR(\bA)$. 
\end{enumerate}
\end{definition}

Eq.~\eqref{eq:aux_smooth} can be viewed as a Fredholm alternative induced by the (generally non-elliptic) operator $\bA$, supplemented with a clause establishing its connection with the calculus of Green operators. In this setting, the composition rules in \propref{prop:composition_Green_operators} implies that $\bA\calP_{\bA}\in\frakG^0$, hence 
\[
\bA\calP_{\bA}:W^{s,p}\Gamma(\bbF)\to W^{s,p}\Gamma(\bbF)
\]
continuously for every $s\in\Nzero$ and $1<p<\infty$.
Since $\Gamma(\bbF)$ is dense in $W^{s,p}\Gamma(\bbF)$ and $\bA\calP_{\bA}$ is a projection, by an approximation/continuity argument we deduce the existence of a $W^{s,p}$-direct decomposition,
\beq
W^{s,p}\Gamma(\bbF)=\scrR^{s,p}(\bA)\oplus \scrN^{s,p}(\bA^*,B_{A^*}),
\label{eq:Wsp_aux_decomp}
\eeq
for every such $s,p$.  
Conversely, if \eqref{eq:Wsp_aux_decomp} holds for every $s,p$, then the smooth version \eqref{eq:aux_smooth} also holds.

The existence of an auxiliary decomposition implies in particular that $\scrR^{s,p}(\bA)$  and $\scrN^{s,p}(\bA^*,B_{A^*})$ are closed subspaces of $W^{s,p}\Gamma(\bbF)$. 
For $s=0$ and $p=2$, the closedness of the ranges suffices for the existence of an $L^2$-direct decomposition, but this is not true for general $s,p$:

\begin{proposition}
\label{prop:L2_aux_adapted_pair}
Let $\bA\in\OP(\frakS^{m,0})$ be an adapted Green operator. There exist $L^2$-orthogonal decompositions
\[
\begin{split}
&L^2\Gamma(\bbF) =  \overline{\scrR^{0,2}(\bA)} \oplus \scrN^{0,2}(\bA^*,B_{A^*}) \\
& L^2\Gamma(\bbF) =  \overline{\scrR^{0,2}(\bA;B_A)} \oplus \scrN^{0,2}(\bA^*).
\end{split}
\]
where the overline stands for the closure in the $L^2$-norm. 
\end{proposition}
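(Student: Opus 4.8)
The plan is to obtain both decompositions from the standard orthogonal splitting of a Hilbert space along a subspace, combined with the Banach-annihilator identities \eqref{eq:range_null_bot1}--\eqref{eq:range_null_bot2} already derived above from the closed range theorem; the only point requiring attention is the identification of the Banach annihilator with the $L^2$-orthogonal complement, which is immediate from the self-duality of $L^2$ via Riesz representation.

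First I would note that $\bA\in\OP(\frakS^{m,0})$ maps $W^{m,2}\Gamma(\E)$ continuously into $L^2\Gamma(\bbF)$: writing $\bA = A + G$ with $A$ a differential operator of order $m$ and $G\in\frakG^0$, this follows from \eqref{eq:Sobolev_with_boundary} for $A$ and \eqref{eq:Sobolev_sgs} for $G$, both with $s = m$ and $p = 2$; the same holds for the restriction $\bA|_{\ker B_A}\colon W^{m,2}_A\Gamma(\E)\to L^2\Gamma(\bbF)$. Hence $\scrR^{0,2}(\bA) = \bA(W^{m,2}\Gamma(\E))$ and $\scrR^{0,2}(\bA;B_A) = \bA(W^{m,2}_A\Gamma(\E))$ are linear subspaces of the Hilbert space $L^2\Gamma(\bbF)$, in general not closed.

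Next, for any linear subspace $V$ of a Hilbert space $H$ one has the $L^2$-orthogonal, topologically-direct decomposition $H = \overline{V}\oplus V^{\perp}$, with $V^{\perp} = (\overline V)^{\perp}$ and both summand-projections of norm one. Applying this with $H = L^2\Gamma(\bbF)$ and $V = \scrR^{0,2}(\bA)$ reduces the first claim to the identity $V^{\perp} = \scrN^{0,2}(\bA^*,B_{A^*})$. Under the Riesz isometry $(L^2\Gamma(\bbF))^{*}\cong L^2\Gamma(\bbF)$ the duality pairing is exactly the $L^2$-inner product, so the Banach annihilator $V^{\bot}$ corresponds precisely to the orthogonal complement $V^{\perp}$; the required identity is then \eqref{eq:range_null_bot1} specialized to $p = q = 2$. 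The second decomposition is obtained verbatim with $V = \scrR^{0,2}(\bA;B_A)$, using \eqref{eq:range_null_bot2} in place of \eqref{eq:range_null_bot1}.

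There is no genuine obstacle here; the proposition merely repackages the closed range theorem (already invoked above) together with the self-duality of $L^2$. The only place a word of care is needed is that $\scrN^{0,2}(\bA^*,B_{A^*})$ and $\scrN^{0,2}(\bA^*)$ were \emph{defined} as kernels of the Banach duals $\bA_2'$ and $\bA'_{2,A}$; one should check these are the subspaces of $L^2\Gamma(\bbF)$ that the orthogonal splitting produces. This is transparent from the defining pairings $\bA_p'\eta(\psi) = \bra\eta,\bA\psi\ket$ and $\bA'_{p,A}\eta(\psi) = \bra\eta,\bA\psi\ket$, which are already written in terms of the $L^2$-pairing, i.e., the Riesz pairing, so that $\ker\bA_2' = (\scrR^{0,2}(\bA))^{\perp}$ and $\ker\bA'_{2,A} = (\scrR^{0,2}(\bA;B_A))^{\perp}$ with no further identification needed.
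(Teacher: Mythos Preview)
Your proof is correct and follows essentially the same approach as the paper: both invoke the Riesz identification $L^2\simeq(L^2)^*$ to equate the Banach annihilators in \eqref{eq:range_null_bot1}--\eqref{eq:range_null_bot2} with $L^2$-orthogonal complements, and then use the standard Hilbert-space splitting $H=\overline{V}\oplus V^\perp$. The paper's proof is slightly terser, writing the decomposition as $V^\perp\oplus(V^\perp)^\perp$ and identifying $(V^\perp)^\perp=\overline{V}$, but the content is identical.
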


\begin{proof}
In view of the isomorphism $L^2\Gamma(\bbF)\simeq (L^2\Gamma(\bbF))^*$, 
the Banach annihilator of a closed subspace coincides with its orthogonal complement. 
Thus, \eqref{eq:range_null_bot1} reads
\[
\scrN^{0,2}(\bA^*,B_{A^*}) = (\scrR^{0,2}(\bA))^\bot. 
\]
Since every closed subspace of a Hilbert space induces an orthogonal decomposition, we obtain
\[
L^2\Gamma(\bbF) = (\scrR^{0,2}(\bA))^\bot \oplus ((\scrR^{0,2}(\bA))^\bot)^\bot = \scrN^{0,2}(\bA^*,B_{A^*}) \oplus \overline{\scrR^{0,2}(\bA)}.
\]
The proof of the second clause follows similar lines, starting with \eqref{eq:range_null_bot2}.
\end{proof}

A decomposition of a Banach space is topologically-direct if it is algebraically direct and both subspaces are closed. 
In a Banach space, unlike in a Hilbert space, a closed subspace may fail to induce a direct decomposition. It induces a direct decomposition if and only if the closed subspace admits a continuous projection onto it. 
As will be seen below, auxiliary decompositions are a first step towards a more refined Hodge-like decomposition, whence the adjective \emph{auxiliary}.

\subsection{Elliptic pre-complexes}
\label{sec:pre-complex}

We consider chains of operators between spaces of sections, as depicted below:
\[
\begin{xy}
(-20,0)*+{0}="Em1";
(-20,-20)*+{0}="Gm1";
(0,0)*+{\Gamma(\E_0)}="E0";
(30,0)*+{\Gamma(\E_1)}="E1";
(60,0)*+{\Gamma(\E_2)}="E2";
(90,0)*+{\Gamma(\E_3)}="E3";
(100,0)*+{\cdots}="E4";
(0,-20)*+{\Gamma(\bbG_0)}="G0";
(30,-20)*+{\Gamma(\bbG_1)}="G1";
(60,-20)*+{\Gamma(\bbG_2)}="G2";
(90,-20)*+{\Gamma(\bbG_3)}="G3";
(100,-20)*+{\cdots}="G4";
{\ar@{->}@/^{1pc}/^{A_0}"E0";"E1"};
{\ar@{->}@/^{1pc}/^{A_0^*}"E1";"E0"};
{\ar@{->}@/^{1pc}/^{A_1}"E1";"E2"};
{\ar@{->}@/^{1pc}/^{A_1^*}"E2";"E1"};
{\ar@{->}@/^{1pc}/^{A_2}"E2";"E3"};
{\ar@{->}@/^{1pc}/^{A_2^*}"E3";"E2"};
{\ar@{->}@/^{1pc}/^0"Em1";"E0"};
{\ar@{->}@/^{1pc}/^0"E0";"Em1"};
{\ar@{->}@/_{0pc}/^{B_0}"E0";"G0"};
{\ar@{->}@/_{0pc}/^{B_1}"E1";"G1"};
{\ar@{->}@/_{0pc}/^{B_2}"E2";"G2"};
{\ar@{->}@/^{1pc}/^{B_0^*}"E1";"G0"};
{\ar@{->}@/^{1pc}/^{B_1^*}"E2";"G1"};
{\ar@{->}@/^{1pc}/^0"E0";"Gm1"};
{\ar@{->}@/^{1pc}/^{B_2^*}"E3";"G2"};
{\ar@{->}@/_{0pc}/^{B_3}"E3";"G3"};
\end{xy}
\]
where $\PseudoComplex = (A_k)_{k\in\Nzero}$ is a sequence of adapted differential operators $A_k:\Gamma(\E_k)\to \Gamma(\E_{k+1})$ of orders $m_k$. We denote the corresponding normal systems of trace operators by $B_k:\Gamma(\E_k)\to \Gamma(\bbG_k)$ and $B_k^*:\Gamma(\E_{k+1})\to \Gamma(\bbG_k)$, 
namely,
\[
\bra A_k\psi,\eta\ket = \bra\psi,A_k^*\eta\ket + \bra B_k\psi, B_k^*\eta\ket
\]
for every $\psi\in \Gamma(\E_k)$ and $\eta\in \Gamma(\E_{k+1})$.

\begin{definition}[Elliptic pre-complex]
A sequence $\PseudoComplex$ is called an \emph{elliptic pre-complex} if:
\begin{enumerate}[itemsep=0pt,label=(\alph*)]
 
\item $(A_{k-1}^* \oplus A_k,B_{k-1}^*)$ is OD elliptic, generally of varying orders. 

\item $\ord(A_k A_{k-1}) \le \ord(A_{k-1})$, i.e., the minimal order of $A_k A_{k-1}$ is at most $m_{k-1}$. 
\end{enumerate}
\end{definition}

There are several distinctions 
between this definition and the classical notion of an elliptic complex. 
Most prominently, elliptic complexes are algebraic complexes, i.e., $A_k A_{k-1}=0$. In many applications, however, $A_k A_{k-1}$ does not vanish, satisfying instead Condition (b). This order-reduction enables the ``correction" of $\PseudoComplex$  by lower-order terms into a complex, whence the terminology of a pre-complex.

Secondly, in classical elliptic complexes, ellipticity is usually a property of the ``Laplacian" system (\cite[pp.~460--465]{Tay11b} and \cite{SS19}),
\[
\mymat{A_{k-1}A_{k-1}^* + A_k^* A_k \\
B_{k-1}^*\oplus B_k^*A_k}.
\]
The notion of OD ellipticity of varying orders replaces this ellipticity, and enables the consideration of chains in which the operators are of variable order, verifiable by a relatively simple criterion (\thmref{thm:adatped_OD_ellipticity_estiamte}). 
Elliptic complexes of variable orders have been considered in the literature, in particular in closed manifolds where an order-reduction argument can be applied with relative ease \cite[p.~279-280]{RS82}. In manifolds with a boundary, however, the picture is more involved, and this is where \thmref{thm:adatped_OD_ellipticity_estiamte} provides a significant simplification. 

\subsection{The induced elliptic complex}
\label{sec:main_theorem}

The next theorem is our main result concerning elliptic pre-complexes.  

\begin{theorem}[Induced elliptic complex]
\label{thm:corrected_complex}
Every elliptic pre-complex $\PseudoComplex$ induces a complex of adapted Green operators $\Complex$, with $\bA_k:\Gamma(\E_k)\to\Gamma(\E_{k+1})$, uniquely characterized by the following properties: 
\begin{enumerate}[itemsep=0pt,label=(\alph*)]
\item $\bA_{k+1}\bA_k=0$.
\item
$\bA_{k+1} = A_{k+1}$ on $\scrN(\bA_k^*,B_k^*)$.
\end{enumerate}
We call $\Complex$ the elliptic complex induced by $\PseudoComplex$. 
\end{theorem}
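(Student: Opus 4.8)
The plan is to construct $\Complex$ inductively in $k$. The base case is forced: since $\bA_{-1}^*=0$ and $B_{-1}^*=0$ we have $\scrN(\bA_{-1}^*,B_{-1}^*)=\Gamma(\E_0)$, so property (b) at $k=-1$ reads $\bA_0=A_0$ on all of $\Gamma(\E_0)$. The inductive package I would carry forward about $\bA_0,\dots,\bA_k$ is: each $\bA_j$ is an adapted Green operator with $G_j:=\bA_j-A_j\in\frakG^0$; properties (a) and (b) hold up to level $k-1$; each $\bA_j$ induces an auxiliary decomposition in the sense of \defref{def:aux_decomposition}, with associated operator $\calP_{\bA_j}\in\OP(\frakS^{-m_j,0})$; and $G_0=0$, $G_j=-A_j\,\bA_{j-1}\calP_{\bA_{j-1}}$ for $j\ge 1$.

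For the inductive step, I would use the auxiliary decomposition $\Gamma(\E_{k+1})=\scrR(\bA_k)\oplus\scrN(\bA_k^*,B_k^*)$ of $\bA_k$ to form the projection $\Pi_k:=\id-\bA_k\calP_{\bA_k}$ onto $\scrN(\bA_k^*,B_k^*)$ with kernel $\scrR(\bA_k)$, and then define $\bA_{k+1}:=A_{k+1}\Pi_k=A_{k+1}-A_{k+1}\bA_k\calP_{\bA_k}$. Properties (a) and (b) at level $k$ are then immediate: $\Pi_k\bA_k=0$ gives $\bA_{k+1}\bA_k=0$, and $\Pi_k$ is the identity on $\scrN(\bA_k^*,B_k^*)$, so $\bA_{k+1}=A_{k+1}$ there. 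The substantive point is that $G_{k+1}:=-A_{k+1}\bA_k\calP_{\bA_k}$ belongs to $\frakG^0$. Substituting $G_k=-A_k\bA_{k-1}\calP_{\bA_{k-1}}$ gives $A_{k+1}\bA_k=A_{k+1}A_k-A_{k+1}A_k\,\bA_{k-1}\calP_{\bA_{k-1}}$; the pre-complex axiom (b) gives $\ord(A_{k+1}A_k)\le\ord(A_k)$, i.e.\ its minimal order is at most $m_k$, and then the composition rules of \propref{prop:composition_Green_operators}, together with $\bA_{k-1}\in\OP(\frakS^{m_{k-1},0})$ and $\calP_{\bA_{k-1}}\in\OP(\frakS^{-m_{k-1},0})$, force $\ord(A_{k+1}\bA_k)\le m_k$ (with class $\le m_k$); composing on the right with $\calP_{\bA_k}\in\OP(\frakS^{-m_k,0})$ and tracking order and class then gives $G_{k+1}\in\OP(\frakS^{0,0})=\frakG^0$. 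Since $\bA_{k+1}$ is the differential operator $A_{k+1}$ — which by hypothesis carries the normal trace systems $B_{k+1},B_{k+1}^*$ — plus an element of $\frakG^0$, it is an adapted Green operator and inherits the integration-by-parts identity \eqref{eq:integration_by_parts_adapted}.

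To close the induction I would establish that $\bA_{k+1}$ induces an auxiliary decomposition. Property (a) and the auxiliary decomposition of $\bA_k$ give $\scrR(\bA_{k+1})=\bA_{k+1}\bigl(\scrN(\bA_k^*,B_k^*)\bigr)$, while the system $(\bA_k^*\oplus\bA_{k+1},B_k^*)$ has the same symbol as $(A_k^*\oplus A_{k+1},B_k^*)$ — order-zero corrections do not change symbols — and is therefore OD elliptic of varying orders by the pre-complex axiom (a). The a priori estimate of \propref{prop:OD_varying_orders}, restricted to $\scrN(\bA_k^*,B_k^*)$ where the $\bA_k^*$ and $B_k^*$ terms drop out, shows $\scrR(\bA_{k+1})$ is closed in every $W^{s,p}$; \propref{prop:L2_aux_adapted_pair} then yields the $L^2$-orthogonal splitting, and the parametrix of the OD-elliptic system (made injective by quotienting its finite-dimensional kernel) supplies $\calP_{\bA_{k+1}}\in\OP(\frakS^{-m_{k+1},0})$ with $\bA_{k+1}\calP_{\bA_{k+1}}$ the projection onto $\scrR(\bA_{k+1})$, the smooth and $W^{s,p}$ versions following from the mapping properties of $\frakG^0$. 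Uniqueness runs along the same induction: if $(\bA'_\bullet)$ satisfies (a),(b) then $\bA'_0=A_0$, and if $\bA'_j=\bA_j$ for $j\le k$ then $\scrN((\bA'_k)^*,B_k^*)=\scrN(\bA_k^*,B_k^*)$, so (b) forces $\bA'_{k+1}=A_{k+1}=\bA_{k+1}$ on that subspace and (a) forces $\bA'_{k+1}=0=\bA_{k+1}$ on $\scrR(\bA_k)$; since $\Gamma(\E_{k+1})=\scrR(\bA_k)\oplus\scrN(\bA_k^*,B_k^*)$ this gives $\bA'_{k+1}=\bA_{k+1}$.

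The step I expect to be the main obstacle is the auxiliary-decomposition claim. It is the one place where overdetermined ellipticity does real work; it demands extracting closedness of ranges on a manifold with boundary from a genuinely non-elliptic operator, routed through the Boutet de Monvel calculus; and the order/class bookkeeping there must be carried out precisely enough that $\calP_{\bA_{k+1}}$ — and hence the next correction $G_{k+2}$ — stays of order zero \emph{and} class zero, so that the adapted-Green-operator structure is preserved at every step.
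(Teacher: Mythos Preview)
Your inductive scaffold, the definition $\bA_{k+1}=A_{k+1}(\id-\bA_k\calP_{\bA_k})$, the order/class computation for $G_{k+1}$, and the uniqueness argument are all correct and essentially what the paper does (it proves \thmref{thm:corrected_complex}, \propref{prop:complex_aux_decomp} and \propref{prop:correction} jointly by exactly this induction).

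The gap is precisely where you flag it, and your sketch there would not go through as written. The a priori estimate of \propref{prop:OD_varying_orders} for $(\bA_k^*\oplus\bA_{k+1},B_k^*)$ is only valid for $s\ge m_k$, because the class of this system equals $m_k$ (imposed by the trace operator $B_k^*$); restricting to $\scrN(\bA_k^*,B_k^*)$ kills terms in the estimate but does not enlarge its range of validity. When $m_k>m_{k+1}$ you therefore cannot conclude $\|\psi\|_{m_{k+1},p}\lesssim\|\bA_{k+1}\psi\|_{0,p}$ on that subspace, and the closed-range argument fails at $s=0$. The paper's remedy (\propref{prop:new_estimate1}) is to replace the pair $(\bA_k^*,B_k^*)$ by the single class-zero operator $\bA_k\calP_{\bA_k}\in\frakG^0$, which encodes the same kernel; the resulting boundary-free OD system has class zero and its estimate holds for all $s\ge0$. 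Likewise, ``the parametrix of the OD-elliptic system supplies $\calP_{\bA_{k+1}}$'' is too quick: the left-inverse of $(\bA_k^*\oplus\bA_{k+1}\oplus\calS_{k+1},B_k^*)$ maps from a product of section spaces and does not by itself give an operator $\Gamma(\E_{k+2})\to\Gamma(\E_{k+1})$ whose composition with $\bA_{k+1}$ is the projection onto $\scrR(\bA_{k+1})$. The paper instead passes to the second-order system $(\bA_k\calP_{\bA_k}\oplus\bA_{k+1}^*\bA_{k+1}\oplus\calS_{k+1},\,B_{k+1}^*\bA_{k+1})$ of order and class $2m_{k+1}$ (\propref{prop:AstarA_OD_system}, \propref{prop:new_estimate2}), identifies $\calP_{\bA_{k+1}}$ as its left-inverse composed with $(0\oplus\bA_{k+1}^*\oplus0,\,B_{k+1}^*)$, and then invokes \propref{prop:L2_continuity} to force the resulting class bound from $m_{k+1}$ down to zero. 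Separate regularity arguments (\propref{prop:aprioriAk}, \propref{prop:AprioriAstarAk}) are also needed to lift the $L^2$ splitting to the smooth one.
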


Note that by convention, $\scrN(\bA_{-1}^*,B_{-1}^*)=\Gamma(\bbE_0)$, which forces $\bA_0=A_0$. Moreover,

\begin{proposition}
\label{prop:complex_aux_decomp}
In the setting of \thmref{thm:corrected_complex}, each $\bA_k$ induces an auxiliary decomposition,
\beq
\Gamma(\E_{k+1})=\scrR(\bA_k)\oplus \scrN(\bA_k^*,B_k^*).
\label{eq:Neumann_aux_dec}
\eeq
We denote by $\calP_k = \calP_{\bA_k}: \Gamma(\E_{k+1})\to\Gamma(\E_k)$ the operator associated with this decomposition, i.e., $\calP_k\in\OP(\frakS^{-m_k,0})$ and $\bA_k\calP_k\in\frakG^0$ is the projection onto $\scrR(\bA_k)$.
\end{proposition}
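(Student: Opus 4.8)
The plan is to derive the auxiliary decomposition from the single structural fact that, since the correction $\bA_k - A_k$ is of order zero, the relevant overdetermined-elliptic system survives the correction. Concretely, write $\bA_k = A_k + G_k$ and $\bA_{k+1} = A_{k+1} + G_{k+1}$ with $G_k,G_{k+1}\in\frakG^0$; then $G_k^*$ and $G_{k+1}$ are of order zero, hence of strictly lower order than the differential parts $A_k$ and $A_{k+1}$, so by \eqref{eq:lower_order_symbol} and the symbol homomorphism property the symbol of $(\bA_k^*\oplus\bA_{k+1},B_k^*)$ coincides with that of $(A_k^*\oplus A_{k+1},B_k^*)$. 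The latter is OD elliptic of varying orders in the sense of \defref{def:OD_varying_order}, by condition~(a) of the elliptic pre-complex written at index $k+1$. Together with $\bA_{k+1}\bA_k=0$, furnished by \thmref{thm:corrected_complex}(a), this OD ellipticity is the only input the remainder of the argument needs.

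Next I would dispose of the easy half. For $\psi\in\Gamma(\E_k)$ and $\zeta\in\scrN(\bA_k^*,B_k^*)$, the Green formula \eqref{eq:integration_by_parts_adapted} gives $\bra\bA_k\psi,\zeta\ket=\bra\psi,\bA_k^*\zeta\ket+\bra B_k\psi,B_k^*\zeta\ket=0$, so $\scrR(\bA_k)$ and $\scrN(\bA_k^*,B_k^*)$ are $L^2$-orthogonal; and \propref{prop:L2_aux_adapted_pair}, applied to the adapted Green operator $\bA_k$, already yields $L^2\Gamma(\E_{k+1})=\overline{\scrR^{0,2}(\bA_k)}\oplus\scrN^{0,2}(\bA_k^*,B_k^*)$. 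What then remains is (i) to remove the closure, i.e.\ to see that $\scrR(\bA_k)$ is closed and that the splitting restricts to smooth sections and to each $W^{s,p}$, and (ii) to produce the operator $\calP_k$ demanded by \defref{def:aux_decomposition}(b).

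For (i)--(ii) I would run the standard Hodge-theoretic argument on the Green operator $\bA_k\bA_k^*+\bA_{k+1}^*\bA_{k+1}$ over $\Gamma(\E_{k+1})$, equipped with the Neumann-type realization --- the one whose harmonic space is exactly $\module^{k+1}\Complex=\scrN(\bA_k^*,B_k^*)\cap\ker\bA_{k+1}$. By the symbol computation above this is an elliptic boundary-value problem in the Boutet de Monvel calculus, so \propref{prop:OD_varying_orders} gives that $\module^{k+1}\Complex$ is finite-dimensional, made of smooth sections, and independent of $s,p$, and that the realization has a parametrix in the Green calculus modulo the smoothing projection onto $\module^{k+1}\Complex$. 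A routine semi-Fredholm argument --- absorbing the $\|\cdot\|_{0,p}$ term by Rellich's theorem, exactly as in the refinement of \eqref{eq:overdetermined_ellipticity_a_priori1} --- then shows $\scrR^{0,2}(\bA_k)$ is closed, so the $L^2$-orthogonal projection $\Pi_k$ onto it is defined; reading $\Pi_k$ off the parametrix and tracking orders and classes through the composition rules of \propref{prop:composition_Green_operators} shows $\Pi_k\in\frakG^0$, that $\Pi_k$ preserves smoothness, and that it factors as $\Pi_k=\bA_k\calP_k$ with $\calP_k\in\OP(\frakS^{-m_k,0})$ (concretely, the relevant block of the parametrix post-composed with $\bA_k^*$). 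Restricting to smooth sections gives \eqref{eq:aux_smooth}, and the density/continuity argument recorded after \defref{def:aux_decomposition} propagates it to every $W^{s,p}$ as in \eqref{eq:Wsp_aux_decomp}, completing the verification of \defref{def:aux_decomposition}. The main obstacle is precisely this final bookkeeping: fixing the correct Neumann realization, verifying ellipticity of the combined problem with the Douglis--Nirenberg orders properly matched, and --- crucially --- checking that its parametrix has class exactly $0$, which is what forces $\calP_k\in\OP(\frakS^{-m_k,0})$ rather than merely $\OP(\frakS^{-m_k,r})$ with $r>0$. The orthogonality and the identity $\bA_{k+1}\bA_k=0$ are immediate; this analytic calibration is the delicate part, and it is essentially the same one already handled in the proof of \thmref{thm:corrected_complex}, from whose inductive construction these auxiliary decompositions may alternatively be extracted directly.
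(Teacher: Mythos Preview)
Your proposal has a genuine circularity. You invoke $\bA_{k+1}$, the identity $\bA_{k+1}\bA_k=0$, and the Hodge Laplacian $\bA_k\bA_k^*+\bA_{k+1}^*\bA_{k+1}$ in order to build the auxiliary decomposition for $\bA_k$ and the operator $\calP_k$. But \thmref{thm:corrected_complex}, \propref{prop:complex_aux_decomp} and \propref{prop:correction} are proved \emph{simultaneously} by induction on $k$; the very definition of the corrected operator is $\bA_{k+1}=A_{k+1}(\id-\bA_k\calP_k)$, which presupposes $\calP_k$. So $\bA_{k+1}$ is not available as input here, and neither is $\bA_{k+1}\bA_k=0$. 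Replacing $\bA_{k+1}$ by the uncorrected $A_{k+1}$ does not salvage the argument, since $A_{k+1}\bA_k$ is only of lower order, not zero, so the kernel of the associated Neumann problem is not $\scrN(\bA_k^*,B_k^*)\cap\ker\bA_{k+1}$.

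The paper resolves this by looking \emph{backward} rather than forward: it uses the inductive hypothesis at level $k{-}1$, namely the already-constructed $\calP_{k-1}$ and the decomposition $\Gamma(\E_k)=\scrR(\bA_{k-1})\oplus\scrN(\bA_{k-1}^*,B_{k-1}^*)$. The key device is to replace the OD elliptic system $(\bA_{k-1}^*\oplus\bA_k,B_{k-1}^*)$---whose class may exceed $m_k$ when $m_{k-1}>m_k$---by the system $\bA_{k-1}\calP_{k-1}\oplus\bA_k\oplus\calS_k$, which is OD elliptic of order $m_k$ and class $0$ (\propref{prop:new_estimate1}); likewise the second-order system is taken to be $(\bA_{k-1}\calP_{k-1}\oplus\bA_k^*\bA_k\oplus\calS_k,\,B_k^*\bA_k)$ (\propref{prop:new_estimate2}). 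These class-zero systems supply the closed-range argument and the regularity lifts without any reference to level $k{+}1$; the left-inverse of the second one is what expresses $\calP_k$ within the calculus, and the class-zero conclusion is then pinned down by \propref{prop:L2_continuity} once $L^2\to W^{m_k,2}$ continuity of $\calP_k$ is checked directly. Your closing remark that the decompositions ``may alternatively be extracted directly'' from the inductive construction is in fact the only route; there is no independent argument once one sees that $\bA_{k+1}$ itself depends on $\calP_k$.
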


The following proposition
shows that the induced elliptic complex $\Complex$ is a ``correction" of the elliptic pre-complex $\PseudoComplex$  by lower-order terms, and gives an explicit formula for the difference:

\begin{proposition}
\label{prop:correction}
In the setting of \thmref{thm:corrected_complex}, the induced elliptic complex $\Complex$ satisfies
\[
G_k= \bA_k-A_k\in\frakG^0 \qquad \text{for every $k$},
\] 
with $G_k$ given by the recursive formula,
\beq
\begin{split}
&G_0=0 \\
&G_k=-A_kA_{k-1}\calP_{k-1}-A_kG_{k-1}\calP_{k-1}.
\end{split}
\label{eq:recursive_correction}  
\eeq
\end{proposition}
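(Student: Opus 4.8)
The plan is to first extract a closed form for $\bA_k$ from the two characterizing properties of the induced complex together with the auxiliary decomposition of \propref{prop:complex_aux_decomp}, read off the recursion from it, and only then establish the order-zero statement $G_k\in\frakG^0$ by an induction that leans on the order-reduction hypothesis of the pre-complex.

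\emph{Deriving the recursion.} First I would fix $k\geq 0$ and $\psi\in\Gamma(\E_k)$, adopting the convention $\bA_{-1}\calP_{-1}=0$ (consistent with $\scrN(\bA_{-1}^*,B_{-1}^*)=\Gamma(\E_0)$). By \propref{prop:complex_aux_decomp} applied to $\bA_{k-1}$ one may write $\psi=\bA_{k-1}\calP_{k-1}\psi+(\id-\bA_{k-1}\calP_{k-1})\psi$, the first summand lying in $\scrR(\bA_{k-1})$ and the second, being the complementary projection, in $\scrN(\bA_{k-1}^*,B_{k-1}^*)$. Applying $\bA_k$, the first summand dies by property (a) of \thmref{thm:corrected_complex} ($\bA_k\bA_{k-1}=0$), and on the second $\bA_k$ coincides with $A_k$ by property (b). Hence $\bA_k=A_k(\id-\bA_{k-1}\calP_{k-1})=A_k-A_k\bA_{k-1}\calP_{k-1}$ as operators on $\Gamma(\E_k)$, so $G_k=-A_k\bA_{k-1}\calP_{k-1}$; in particular $G_0=0$ because $\bA_0=A_0$, and substituting $\bA_{k-1}=A_{k-1}+G_{k-1}$ produces the claimed formula \eqref{eq:recursive_correction}.

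\emph{Order-zero bound.} I would then prove $G_k\in\frakG^0$ by induction on $k$, with base case $G_0=0$. Assuming the claim through index $k-1$, the identity of the previous paragraph taken at level $k-1$ gives the factorization $\bA_{k-1}=A_{k-1}(\id-\bA_{k-2}\calP_{k-2})$, whence $A_k\bA_{k-1}=(A_kA_{k-1})(\id-\bA_{k-2}\calP_{k-2})$. By condition (b) in the definition of an elliptic pre-complex, $A_kA_{k-1}$ is a differential operator of order at most $m_{k-1}$, so $A_kA_{k-1}\in\OP(\frakS^{m_{k-1},0})$; and $\id-\bA_{k-2}\calP_{k-2}\in\OP(\frakS^{0,0})$ since $\bA_{k-2}\calP_{k-2}\in\frakG^0$ by \propref{prop:complex_aux_decomp}. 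The composition rule \propref{prop:composition_Green_operators}(a) then gives $A_k\bA_{k-1}\in\OP(\frakS^{m_{k-1},0})$, and composing once more with $\calP_{k-1}\in\OP(\frakS^{-m_{k-1},0})$ yields, again by \propref{prop:composition_Green_operators}(a), that $G_k=-A_k\bA_{k-1}\calP_{k-1}$ has order $-m_{k-1}+m_{k-1}=0$ and class $\max(-m_{k-1},0)=0$, using $m_{k-1}\in\Nzero$. Thus $G_k\in\OP(\frakS^{0,0})=\frakG^0$, closing the induction.

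The delicate point is this order/class accounting: at face value $G_k=-A_k\bA_{k-1}\calP_{k-1}$ would only be of order $m_k$, and it is precisely the pre-complex hypothesis $\ord(A_kA_{k-1})\le m_{k-1}$ — brought to bear through the factorization $\bA_{k-1}=A_{k-1}(\id-\bA_{k-2}\calP_{k-2})$ supplied by the first step — that collapses $A_k\bA_{k-1}$ to order $m_{k-1}$, whereupon it cancels the order of $\calP_{k-1}$. One must likewise verify that the class never leaves $0$ through the compositions, which is where the nonnegativity $m_k\ge 0$ built into the notion of an adapted operator enters.
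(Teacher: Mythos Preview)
Your proof is correct and follows essentially the same architecture as the paper: both obtain the closed form $\bA_k=A_k(\id-\bA_{k-1}\calP_{k-1})$ (the paper by defining $\bA_{k+1}$ this way in Stage~5, you by deriving it from the characterizing properties and the auxiliary decomposition), and both read off the recursion \eqref{eq:recursive_correction} from it.

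The one place you diverge is the order accounting. The paper substitutes the already-established recursion for $G_{k-1}$ into $A_kG_{k-1}\calP_{k-1}$ and tracks orders term by term, using that $G_{k-1}$ has order $\le 0$. You instead factor one level earlier, writing $A_k\bA_{k-1}=(A_kA_{k-1})(\id-\bA_{k-2}\calP_{k-2})$, so that the pre-complex hypothesis $\ord(A_kA_{k-1})\le m_{k-1}$ applies directly to the leading factor and the class stays at $0$ throughout. This is a genuine streamlining: it bypasses unpacking $G_{k-1}$ and makes the cancellation of orders against $\calP_{k-1}$ transparent in one step. Note, incidentally, that your argument does not actually use the inductive hypothesis $G_{k-1}\in\frakG^0$; what it uses is $\bA_{k-2}\calP_{k-2}\in\frakG^0$, which is the auxiliary-decomposition statement (\propref{prop:complex_aux_decomp}) two levels down. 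In the paper's joint induction this distinction is immaterial, but it is worth being precise about which part of the induction hypothesis is doing the work.
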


It follows from \propref{prop:correction} that $\bA_k$ and $\bA_k^*$ satisfy the same integration by parts formula as $A_k$ and $A_k^*$,
\beq
\bra \bA_k\psi,\eta\ket = \bra\psi,\bA_k^*\eta\ket + \bra B_k\psi, B_k^*\eta\ket.
\label{eq:integration_by_parts_corrected_k}
\eeq

\thmref{thm:corrected_complex},  \propref{prop:complex_aux_decomp} and \propref{prop:correction} are proved simultaneously by induction on $k$ in \secref{sec:main_proof},
Moreover, since $\bA_k-A_k\in\frakG^0$, the systems $(\bA_{k-1}^*\oplus\bA_k,B_{k-1}^*)$ are also OD elliptic. 

\subsection{Applications of the induced elliptic complex}
\label{sec:applications_complex}

The defining properties of the induced elliptic complex $\Complex$ imply the following additional properties:

\begin{lemma}
\label{lem:useful_neumann}
In the setting of \thmref{thm:corrected_complex} and every $s\in\Nzero$
\begin{enumerate}[itemsep=0pt,label=(\alph*)]
\item For $p\ge 2$, the subspaces $\scrN^{s,p}(\bA_k)$ and $\scrR^{s,p}(\bA_k^*;B_k^*)$ are $L^2$-orthogonal, hence intersect trivially.
\item $\scrR^{s,p}(\bA_{k-1}) \subset \scrN^{s,p}(\bA_k)$.
\item  For $p\ge 2$, the subspaces $\scrR^{s,p}(\bA_{k-1})$ and $\scrR^{s,p}(\bA_k^*;B_k^*)$ are $L^2$-orthogonal, hence intersect trivially.
\end{enumerate}
\end{lemma}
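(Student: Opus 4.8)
The plan is to read off all three items from the two defining properties of the induced complex — the relation $\bA_{k+1}\bA_k=0$ (\thmref{thm:corrected_complex}(a)) and the inherited Green's formula \eqref{eq:integration_by_parts_corrected_k} — together with the elementary fact that, on the compact manifold $M$, $W^{a,p}\Gamma(\cdot)\subseteq W^{b,r}\Gamma(\cdot)$ whenever $a\ge b$ and $p\ge r$; in particular, for $p\ge 2$ and any $s\in\Nzero$ one has $W^{s,p}\Gamma(\cdot)\subseteq L^2\Gamma(\cdot)$ and $W^{s+m_k,p}\Gamma(\cdot)\subseteq W^{m_k,q}\Gamma(\cdot)$, where $q$ is the Hölder conjugate of $p$. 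This last inclusion is precisely where the hypothesis $p\ge2$ enters; part (b) needs no such inclusion, which is why it carries no restriction on $p$.

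For (a): by construction $\scrN^{s,p}(\bA_k)\subseteq\scrN^{0,p}(\bA_k)$, so every $\psi\in\scrN^{s,p}(\bA_k)$ satisfies $\bra\psi,\bA_k^*\eta\ket=0$ for all $\eta\in W^{m_k,q}\Gamma(\E_{k+1})\cap\ker B_k^*$. A generic element of $\scrR^{s,p}(\bA_k^*;B_k^*)$ is $\bA_k^*\eta_0$ with $\eta_0\in W^{s+m_k,p}\Gamma(\E_{k+1})\cap\ker B_k^*$; for $p\ge2$ the Sobolev inclusion above places $\eta_0$ in $W^{m_k,q}\Gamma(\E_{k+1})\cap\ker B_k^*$, so $\bra\psi,\bA_k^*\eta_0\ket=0$. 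As both sections lie in $L^2\Gamma(\E_k)$ when $p\ge2$, this is genuine $L^2$-orthogonality, and an element of the intersection has vanishing $L^2$-norm, hence is zero.

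For (b): given $\phi\in W^{s+m_{k-1},p}\Gamma(\E_{k-1})$, the mapping property of $\bA_{k-1}\in\OP(\frakS^{m_{k-1},0})$ gives $\bA_{k-1}\phi\in W^{s,p}\Gamma(\E_k)$, so it suffices to show $\bA_{k-1}\phi\in\scrN^{0,p}(\bA_k)$. For smooth $\phi$ I would rearrange \eqref{eq:integration_by_parts_corrected_k} to
\[
\bra\bA_{k-1}\phi,\bA_k^*\eta\ket=\bra\bA_k\bA_{k-1}\phi,\eta\ket-\bra B_k\bA_{k-1}\phi,B_k^*\eta\ket,
\]
valid for every $\eta\in W^{m_k,q}\Gamma(\E_{k+1})$ by continuity of all terms in $\eta$; the right-hand side vanishes because $\bA_k\bA_{k-1}=0$ and $\eta\in\ker B_k^*$. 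Since $\scrN^{0,p}(\bA_k)$ is $L^p$-closed (an intersection of kernels of $L^p$-continuous functionals $\psi\mapsto\bra\psi,\bA_k^*\eta\ket$) and $\bA_{k-1}$ maps $W^{m_{k-1},p}\Gamma(\E_{k-1})$ continuously into $L^p\Gamma(\E_k)$, approximating $\phi$ (which lies in $W^{m_{k-1},p}\Gamma(\E_{k-1})$) by smooth sections yields $\bA_{k-1}\phi\in\scrN^{0,p}(\bA_k)$; combined with $\bA_{k-1}\phi\in W^{s,p}\Gamma(\E_k)$ this gives $\bA_{k-1}\phi\in\scrN^{s,p}(\bA_k)$.

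Finally (c) is immediate: by (b) $\scrR^{s,p}(\bA_{k-1})\subseteq\scrN^{s,p}(\bA_k)$, which by (a) is $L^2$-orthogonal to $\scrR^{s,p}(\bA_k^*;B_k^*)$ for $p\ge2$, whence the same orthogonality and trivial intersection for $\scrR^{s,p}(\bA_{k-1})$. The only genuine subtlety — and the one point where $p\ge2$ is unavoidable — is the Sobolev inclusion $W^{s+m_k,p}\subseteq W^{m_k,q}$ used in (a); alternatively one can obtain (a) in a single line from the closed-range identity \eqref{eq:range_null_bot2} applied to $\bA_k^*$, but that route still relies on the same inclusion to pass from $W^{s,p}$-regularity to the $L^p$–$L^q$ duality.
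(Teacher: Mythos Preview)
Your proof is correct and follows essentially the same approach as the paper's: part (a) via the very definition \eqref{eq:kerA*} of $\scrN^{0,p}(\bA_k)$ together with the Sobolev inclusion $W^{s+m_k,p}\subseteq W^{m_k,q}$ for $p\ge 2$, part (b) via $\bA_k\bA_{k-1}=0$ plus approximation, and part (c) as an immediate consequence of (a) and (b). The only cosmetic difference is in (b): the paper first argues at high regularity $s\ge m_{k-1}+m_k$ (where $\bA_k\bA_{k-1}\phi=0$ holds classically) and then descends by approximation, whereas you go straight to the weak characterization of $\scrN^{0,p}(\bA_k)$ via Green's formula for smooth $\phi$; both routes are equivalent and equally short.
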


\begin{proof}
The spaces $\scrN^{0,p}(\bA_k)$ and $\scrR^{0,q}(\bA_k^*;B_k^*) \supset \scrR^{0,p}(\bA_k^*;B_k^*)$ are $L^2$-orthogonal by the very definition \eqref{eq:kerA*} of $\scrN^{0,p}(\bA_k)$, with $\bA = \bA_k^*$, hence Item~(a) holds for every $s\in\Nzero$.
For $s\ge m_{k-1}+ m_k$, the second item follows from the property $\bA_k\bA_{k-1}=0$; the extension to every $s\in\Nzero$ follows from an approximation argument. The third item is an immediate consequence of the first two items.
\end{proof}

The auxiliary decomposition refines into a Hodge-like decomposition:   

\begin{theorem}[Hodge-like decomposition]
\label{thm:hodge_like_corrected_complex}
In the setting of \thmref{thm:corrected_complex},  there exists a $W^{s,p}$-direct  decomposition,
\beq
W^{s,p}\Gamma(\E_k) = \scrR^{s,p}(\bA_{k-1})\oplus\scrR^{s,p}(\bA_k^*;B_k^*)\oplus \module^k\Complex
\label{eq:Hodgelikesmooth}
\eeq
for every $s\in\Nzero$ and $1<p<\infty$, where the subspace
\[
\module^k\Complex=\ker(\bA_k\oplus\bA_{k-1}^*\oplus B_{k-1}^*)
\]
is finite-dimensional, independent of $s$ and $p$, and consists of smooth sections. 
In particular, comparing with the auxiliary decomposition \eqref{eq:Neumann_aux_dec},
\beq
\scrN^{s,p}(\bA_{k-1}^*,B_{k-1}^*)=\scrR^{s,p}(\bA_k^*;B_k^*)\oplus \module^k\Complex.
\label{eq:aux_refinmenetSmooth}
\eeq
\end{theorem}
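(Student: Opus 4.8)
The plan is to reduce the statement to the refinement \eqref{eq:aux_refinmenetSmooth} and then realize the three Hodge summands as ranges of explicit idempotents in $\frakG^0$. Indeed, \propref{prop:complex_aux_decomp} together with the $W^{s,p}$-form of the auxiliary decomposition (\eqref{eq:Wsp_aux_decomp} applied to $\bA_{k-1}$) already gives, for all $s\in\Nzero$ and $1<p<\infty$, the topologically-direct, $L^2$-orthogonal splitting $W^{s,p}\Gamma(\E_k)=\scrR^{s,p}(\bA_{k-1})\oplus\scrN^{s,p}(\bA_{k-1}^*,B_{k-1}^*)$ with both summands closed, so it suffices to prove $\scrN^{s,p}(\bA_{k-1}^*,B_{k-1}^*)=\scrR^{s,p}(\bA_k^*;B_k^*)\oplus\module^k\Complex$. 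I would set $\Pi^{R}=\bA_{k-1}\calP_{k-1}$ (the projection onto $\scrR(\bA_{k-1})$ furnished by \propref{prop:complex_aux_decomp}, which lies in $\frakG^0$ by \propref{prop:composition_Green_operators}), $\Pi^{H}=\calS_k$ (the $L^2$-orthogonal projection onto $\module^k\Complex$, a finite-rank smoothing operator — hence in $\OP(\frakS^{-\infty,0})\subset\frakG^0$ — since $(\bA_{k-1}^*\oplus\bA_k,B_{k-1}^*)$ is OD elliptic and \propref{prop:OD_varying_orders} applies), and $\Pi^{\delta}=\id-\Pi^{R}-\Pi^{H}$. Because $\module^k\Complex\subseteq\scrN(\bA_{k-1}^*,B_{k-1}^*)=\ker\Pi^{R}$ and $\scrR(\bA_{k-1})\perp\module^k\Complex$ in $L^2$ (Green formula \eqref{eq:integration_by_parts_corrected_k} applied to a smooth harmonic section), $\Pi^{R}$ and $\Pi^{H}$ kill each other's ranges, so $\Pi^{R},\Pi^{H},\Pi^{\delta}$ are mutually orthogonal idempotents in $\frakG^0$ summing to $\id$; the theorem follows once $\range(\Pi^{\delta})$ is identified with $\scrR(\bA_k^*;B_k^*)$ at every Sobolev level.

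The inclusion $\scrR^{s,p}(\bA_k^*;B_k^*)\subseteq\range(\Pi^{\delta})$ is the easy direction. If $\eta=\bA_k^*\zeta$ with $B_k^*\zeta=0$, then pairing $\bA_k\bA_{k-1}\phi$ with $\zeta$ via the Green formula for $\bA_k$ and using $\bA_k\bA_{k-1}=0$ (\thmref{thm:corrected_complex}(a)) gives $\bra\eta,\bA_{k-1}\phi\ket=0$ for all $\phi$, so $\eta\in\scrN^{s,p}(\bA_{k-1}^*,B_{k-1}^*)$ and $\eta\perp\scrR(\bA_{k-1})$ by \eqref{eq:range_null_bot1}; a second application of the Green formula to a smooth $h\in\module^k\Complex$ and $\zeta$, using $\bA_k h=0$ and $B_k^*\zeta=0$, gives $\bra h,\eta\ket=0$, so $\eta\perp\module^k\Complex$ as well. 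Taking $h=\eta$ shows $\scrR(\bA_k^*;B_k^*)\cap\module^k\Complex=\{0\}$, i.e.\ the sum in \eqref{eq:aux_refinmenetSmooth} is direct, and the two orthogonality relations show $\scrR^{s,p}(\bA_k^*;B_k^*)\subseteq\range(\Pi^{\delta})$.

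The reverse inclusion is the heart of the matter, and I would settle it first for $s=0,\ p=2$. The key intermediate fact is $\scrN^{0,2}(\bA_k)=\scrR^{0,2}(\bA_{k-1})\oplus\module^k\Complex$: both right-hand summands lie in $\scrN^{0,2}(\bA_k)$ (\lemref{lem:useful_neumann}(b), and $\module^k\Complex\subseteq\ker\bA_k$), they are $L^2$-orthogonal (Green formula), and a $v\in\scrN^{0,2}(\bA_k)$ orthogonal to both is a distributional solution of the OD elliptic system $(\bA_{k-1}^*\oplus\bA_k)v=0$, $B_{k-1}^*v=0$, hence smooth by \propref{prop:OD_varying_orders}, hence in $\module^k\Complex$, hence zero. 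Substituting this into \propref{prop:L2_aux_adapted_pair} (which gives $L^2\Gamma(\E_k)=\overline{\scrR^{0,2}(\bA_k^*;B_k^*)}\oplus\scrN^{0,2}(\bA_k)$) and intersecting with $\scrN^{0,2}(\bA_{k-1}^*,B_{k-1}^*)=(\scrR^{0,2}(\bA_{k-1}))^\perp$ yields $\scrN^{0,2}(\bA_{k-1}^*,B_{k-1}^*)=\overline{\scrR^{0,2}(\bA_k^*;B_k^*)}\oplus\module^k\Complex$. The main obstacle is the single remaining gap: the closure must be removed, i.e.\ $\scrR^{0,2}(\bA_k^*;B_k^*)$ must be shown to be closed. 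I would obtain this from a coercive estimate $\|\zeta\|_{m_k,2}\lesssim\|\bA_k^*\zeta\|_{0,2}$ valid for $\zeta$ ranging over $W^{m_k,2}\Gamma(\E_{k+1})\cap\ker B_k^*$ modulo $\ker(\bA_k^*\oplus B_k^*)$, extracted from the OD elliptic a priori estimate for the \emph{next} system $(\bA_k^*\oplus\bA_{k+1},B_k^*)$ — OD elliptic since $\bA_{k+1}-A_{k+1}\in\frakG^0$ — by using the auxiliary decomposition of $\bA_k$ to absorb the spurious $\|\bA_{k+1}\zeta\|$ term and the finite-rank correction. Granting this, $\range(\Pi^{\delta})|_{L^2}=\scrR^{0,2}(\bA_k^*;B_k^*)$ and \eqref{eq:aux_refinmenetSmooth} holds in $L^2$.

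Finally, the passage to general $s,p$. Since $\Pi^{R},\Pi^{H},\Pi^{\delta}\in\frakG^0$ act continuously on each $W^{s,p}\Gamma(\E_k)$ and restrict on $L^2$ to the Hodge projections just identified, and $\Gamma(\E_k)$ is dense in each $W^{s,p}$-space, the approximation/continuity argument already used after \defref{def:aux_decomposition} transfers the splitting to every $W^{s,p}\Gamma(\E_k)$, with $\range(\Pi^{R})=\scrR^{s,p}(\bA_{k-1})$ and $\range(\Pi^{H})=\module^k\Complex$; the latter is finite-dimensional, $s,p$-independent and smooth by \propref{prop:OD_varying_orders}. To identify $\range(\Pi^{\delta})$ with $\scrR^{s,p}(\bA_k^*;B_k^*)$ one solves $\bA_k^*\zeta=\psi$, $B_k^*\zeta=0$ for $\psi\in\range(\Pi^{\delta})$ and upgrades the $L^2$-solution to $W^{s+m_k,p}$ by elliptic regularity, again using the Green-operator solution operators built from $(\bA_k^*\oplus\bA_{k+1},B_k^*)$ and the auxiliary decomposition of $\bA_k$; this is the $W^{s,p}$ avatar of the same closed-range issue flagged above, which I expect to be the only genuinely delicate point of the argument.
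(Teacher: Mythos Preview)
Your overall architecture is correct and matches the paper: reduce to \eqref{eq:aux_refinmenetSmooth}, show $\scrR^{s,p}(\bA_k^*;B_k^*)\subseteq\scrN_\bot^{s,p}(\bA_{k-1}^*,B_{k-1}^*)$ via Green's formula, show the reverse inclusion first in $L^2$ using closedness of $\scrR^{0,2}(\bA_k^*;B_k^*)$, then transfer to all $s,p$ via the $\frakG^0$ projections. The paper proceeds through Propositions~4.11--4.13 in exactly this order.

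The one substantive difference is how closedness of $\scrR^{s,p}(\bA_k^*;B_k^*)$ is obtained. You invoke the OD estimate for the \emph{next} system $(\bA_k^*\oplus\bA_{k+1},B_k^*)$ and kill the $\bA_{k+1}$ and $\calS_{k+1}$ terms by choosing the representative $\zeta=\bA_k\psi$ from the auxiliary decomposition at level $k+1$. The paper instead parametrises $\scrR^{s,p}(\bA_k^*;B_k^*)$ directly as $\{\bA_k^*\bA_k\psi:\psi\in\scrN_\bot^{s+2m_k,p}(\bA_{k-1}^*,B_{k-1}^*),\,B_k^*\bA_k\psi=0\}$ and appeals to the already-established estimate \eqref{eq:AkstarAk_OD_elliptic_estimate} for the order-$2m_k$ system $(\bA_{k-1}\calP_{k-1}\oplus\bA_k^*\bA_k\oplus\calS_k,B_k^*\bA_k)$. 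Unpacked, both routes arrive at the same coercive inequality $\|\bA_k\psi\|_{s+m_k,p}\lesssim\|\bA_k^*\bA_k\psi\|_{s,p}$; the paper's version has the advantage of using only level-$k$ objects (no $\bA_{k+1}$, no $\module^{k+1}\Complex$), which is why it fits naturally into the inductive construction of \secref{sec:main_proof}, but since the Hodge decomposition is proved after the induction is complete your route is equally legitimate.

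One genuine imprecision: when you argue that $v\in\scrN^{0,2}(\bA_k)\cap\scrN_\bot^{0,2}(\bA_{k-1}^*,B_{k-1}^*)$ is smooth ``by \propref{prop:OD_varying_orders}'', that proposition only controls kernels in $W^{s,p}$ for $s$ above the class threshold, not in $L^2$. The correct tool is \propref{prop:aprioriAk}: $v\in\scrN_\bot^{0,2}(\bA_{k-1}^*,B_{k-1}^*)$ with $\bA_k v=0\in W^{s,2}$ for every $s$ forces $v\in\scrN_\bot^{s+m_k,2}$ for every $s$, hence $v$ is smooth and then $v\in\module^k\Complex$, so $v=0$. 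The paper invokes exactly this in the proof of Proposition~4.13.
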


The smooth version of this decomposition follows immediately:
\[
\Gamma(\E_k)=\scrR(\bA_{k-1})\oplus\scrR(\bA_k^*;B_k^*)\oplus\module^k\Complex.
\]

The proof of this theorem relies on some of the constructs developed in \secref{sec:main_proof}, hence we present it in that same section, after the construction of the induced elliptic complex.

The refinement of the auxiliary decomposition into a Hodge-like decomposition identifies $\module^k\Complex$ as the cohomology groups of the complex $\Complex$:

\begin{theorem}[Cohomology groups]
\label{thm:cohomology}
Let $\psi\in W^{s,p}\Gamma(\E_k)$, with $s\in\Nzero$ and $1<p<\infty$. Then, 
\[
\begin{gathered}
\psi\in \scrR^{s,p}(\bA_{k-1}) \\
\text{if and only if} \\
\psi\in \scrN^{s,p}(\bA_k) \textand \bra\psi,\zeta\ket=0 \qquad \text{for every }\zeta\in\module^k\Complex. 
\end{gathered}
\]
Equivalently,
\beq
\scrN^{s,p}(\bA_k)=\scrR^{s,p}(\bA_{k-1})\oplus\module^k\Complex,
\label{eq:cohomology_spaces}
\eeq
or in the smooth case,
\[
\scrN(\bA_k)=\scrR(\bA_{k-1})\oplus\module^k\Complex.
\]
\end{theorem}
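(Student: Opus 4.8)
The plan is to establish the equivalent statement \eqref{eq:cohomology_spaces}, namely the $W^{s,p}$-direct decomposition $\scrN^{s,p}(\bA_k)=\scrR^{s,p}(\bA_{k-1})\oplus\module^k\Complex$, and then read off the ``if and only if'' from it; I would argue uniformly in $1<p<\infty$ and treat the smooth case at the end by intersecting over $s$. The two main inputs are the Hodge-like decomposition \thmref{thm:hodge_like_corrected_complex} (together with its refinement \eqref{eq:aux_refinmenetSmooth}) and \propref{prop:OD_varying_orders} applied to the OD elliptic system $(\bA_{k-1}^*\oplus\bA_k,B_{k-1}^*)$, which is OD elliptic because, by \propref{prop:correction}, it differs from $(A_{k-1}^*\oplus A_k,B_{k-1}^*)$ by operators in $\frakG^0$.

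First I would record the easy inclusion and the directness. By \lemref{lem:useful_neumann}(b), $\scrR^{s,p}(\bA_{k-1})\subseteq\scrN^{s,p}(\bA_k)$, and since $\module^k\Complex=\ker(\bA_k\oplus\bA_{k-1}^*\oplus B_{k-1}^*)$ consists of smooth sections annihilated by $\bA_k$ it also lies in $\scrN^{s,p}(\bA_k)$. The sum is in fact $L^2$-orthogonal: for $\alpha=\bA_{k-1}v\in\scrR^{s,p}(\bA_{k-1})$ and $\zeta\in\module^k\Complex$, the integration by parts formula \eqref{eq:integration_by_parts_corrected_k} (extended from smooth $v$ to $v\in W^{s+m_{k-1},p}\Gamma(\E_{k-1})$ by density, the pairing against the \emph{smooth} section $\zeta$ being continuous and well-defined for $\alpha$ of any $W^{s,p}$-regularity, uniformly in $p$) gives
\[
\bra\alpha,\zeta\ket=\bra v,\bA_{k-1}^*\zeta\ket+\bra B_{k-1}v,B_{k-1}^*\zeta\ket=0 .
\]
In particular $\scrR^{s,p}(\bA_{k-1})\cap\module^k\Complex=\{0\}$, so the sum on the right of \eqref{eq:cohomology_spaces} is direct.

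For the reverse inclusion, let $\psi\in\scrN^{s,p}(\bA_k)$ and use \thmref{thm:hodge_like_corrected_complex} to write $\psi=\alpha+\beta+\gamma$ with $\alpha\in\scrR^{s,p}(\bA_{k-1})$, $\beta\in\scrR^{s,p}(\bA_k^*;B_k^*)$, $\gamma\in\module^k\Complex$; it suffices to show $\beta=0$. Since $\bA_k\alpha=0$ (\lemref{lem:useful_neumann}(b)), $\bA_k\gamma=0$, and $\bA_k\psi=0$, we get $\bA_k\beta=0$, i.e. $\beta\in\scrN^{s,p}(\bA_k)$. On the other hand, by \eqref{eq:aux_refinmenetSmooth} we have $\scrR^{s,p}(\bA_k^*;B_k^*)\subseteq\scrN^{s,p}(\bA_{k-1}^*,B_{k-1}^*)$, so $\beta$ lies in the weak kernel of $(\bA_{k-1}^*\oplus\bA_k,B_{k-1}^*)$. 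By \propref{prop:OD_varying_orders} this kernel is finite-dimensional, consists of smooth sections, and is independent of $s$ and $p$, hence coincides with $\module^k\Complex$; therefore $\beta\in\module^k\Complex\cap\scrR^{s,p}(\bA_k^*;B_k^*)=\{0\}$ by the directness of the sum in \eqref{eq:aux_refinmenetSmooth}. Thus $\psi=\alpha+\gamma$, which with the previous paragraph proves \eqref{eq:cohomology_spaces}. (For $p\ge2$ one could instead quote \lemref{lem:useful_neumann}(a) directly to conclude $\beta=0$; the argument above is what covers $p<2$.) The ``if and only if'' is now immediate: if $\psi\in\scrR^{s,p}(\bA_{k-1})$ then $\psi\in\scrN^{s,p}(\bA_k)$ and $\psi\perp\module^k\Complex$ by the second paragraph; conversely, if $\psi\in\scrN^{s,p}(\bA_k)$ is $L^2$-orthogonal to $\module^k\Complex$, decompose $\psi=\alpha+\gamma$ and note $0=\bra\psi,\gamma\ket=\bra\alpha,\gamma\ket+\|\gamma\|_{L^2}^2=\|\gamma\|_{L^2}^2$, so $\gamma=0$ and $\psi=\alpha\in\scrR^{s,p}(\bA_{k-1})$. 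Finally, the smooth statement follows by taking $p=2$ and arbitrary $s$: a smooth $\psi$ with $\bA_k\psi=0$ decomposes as $\psi=\alpha+\gamma$ with $\gamma\in\module^k\Complex$ smooth, so $\alpha=\psi-\gamma$ is smooth, and $\alpha$ lies in the smooth range $\scrR(\bA_{k-1})$ by the smooth version of \thmref{thm:hodge_like_corrected_complex}.

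The step I expect to be the crux is the identification, in the reverse inclusion, of the weak kernel $\scrN^{s,p}(\bA_k)\cap\scrN^{s,p}(\bA_{k-1}^*,B_{k-1}^*)$ with $\module^k\Complex$ for \emph{all} $s\in\Nzero$ and $1<p<\infty$: this is precisely the elliptic-regularity content of OD ellipticity, and it is here that the order/class bookkeeping of the Green calculus in \propref{prop:OD_varying_orders} is indispensable, since for $p<2$ the $L^2$-orthogonality of \lemref{lem:useful_neumann}(a) is not available.
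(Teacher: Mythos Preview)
Your argument is correct and follows essentially the same route as the paper's: both decompose $\psi$ via \thmref{thm:hodge_like_corrected_complex}, use \lemref{lem:useful_neumann}(b) for the easy inclusion, and then show that the $\scrR^{s,p}(\bA_k^*;B_k^*)$ component of any element of $\scrN^{s,p}(\bA_k)$ must vanish. The paper phrases this last step via the $L^2$-orthogonality of \lemref{lem:useful_neumann}(a) (with an unspelled-out approximation for $p<2$), whereas you argue that such a component lies in the kernel of the OD system, hence in $\module^k\Complex$, and is therefore zero by the directness in \eqref{eq:aux_refinmenetSmooth}.

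One attribution needs tightening. \propref{prop:OD_varying_orders} only identifies the kernel of $(\bA_{k-1}^*\oplus\bA_k,B_{k-1}^*)$ with $\module^k\Complex$ for $s$ above the class of the system, and here that class is $m_{k-1}$ (coming from the trace operator $B_{k-1}^*$); the proposition says nothing about the \emph{weak} kernel $\scrN^{s,p}(\bA_{k-1}^*,B_{k-1}^*)\cap\scrN^{s,p}(\bA_k)$ when $0\le s<m_{k-1}$. What actually closes this gap is the regularity machinery of \secref{sec:main_proof}: since $\beta\in\scrR^{s,p}(\bA_k^*;B_k^*)=\scrN^{s,p}_\bot(\bA_{k-1}^*,B_{k-1}^*)$ and $\bA_k\beta=0$, \propref{prop:aprioriAk} together with \eqref{eq:estimateNsp} bootstraps $\beta$ to smoothness, after which the high-regularity kernel identification applies. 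Equivalently, the fact that all Hodge projections lie in $\frakG^0$ propagates the smooth-level identity $\scrN(\bA_k)\cap\scrR(\bA_k^*;B_k^*)=\{0\}$ to every $W^{s,p}$. With this correction your argument is complete, and it has the merit of handling all $1<p<\infty$ uniformly rather than splitting into the cases $p\ge2$ and $p<2$.
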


\begin{proof}
First, note that since elements in $\module^k\Complex$ are smooth, the coupling $\bra \psi,\zeta\ket$ is well-defined for every $\psi\in \scrR^{s,p}(\bA_{k-1})$ and $\zeta\in \module^k\Complex$. 
Let $\psi=\bA_{k-1}\omega \in \scrR^{s,p}(\bA_{k-1})$. Then, $\psi\in\scrN^{s,p}(\bA_k)$ by \lemref{lem:useful_neumann}(b). Its $L^2$-orthogonality to $\module^k\Complex$ follows from the Hodge-like decomposition \eqref{eq:Hodgelikesmooth}.

In the other direction, let $\psi\in \scrN^{s,p}(\bA_k)$ be $L^2$-orthogonal to $\module^k\Complex$.
Decompose $\psi$ according to \eqref{eq:Hodgelikesmooth}. Its $\module^k\Complex$ component vanishes, 
whereas by \lemref{lem:useful_neumann}(a), its $\scrR^{s,p}(\bA_k^*;B_k^*)$ component vanishes as well, remaining with 
$\psi\in \scrR^{s,p}(\bA_{k-1})$ (for $p<2$, an additional approximation argument is needed, using the closedness of the subspaces).

To prove \eqref{eq:cohomology_spaces}, we note that the inclusion 
\[
\scrR^{s,p}(\bA_{k-1})\oplus\module^k\Complex \subseteq \scrN^{s,p}(\bA_k)
\]
is trivial. The reverse inclusion is an immediate corollary of the decomposition \eqref{eq:Hodgelikesmooth} and \lemref{lem:useful_neumann}(a) (see above comment for $p<2$).
\end{proof}

Combining Theorems~\ref{thm:hodge_like_corrected_complex} and \ref{thm:cohomology}, we obtain the following compound decompositions:
\[
W^{s,p}\Gamma(\E_k) = 
\lefteqn{\overbrace{\phantom{\scrR^{s,p}(\bA_{k-1})\oplus \module^k\Complex}}^{\scrN^{s,p}(\bA_k)}} \scrR^{s,p}(\bA_{k-1}) \oplus \underbrace{\module^k\Complex\oplus \scrR^{s,p}(\bA_k^*;B_k^*)}_{\scrN^{s,p}(\bA_{k-1}^*,B_{k-1}^*)}
\]

Generalizing the technique introduced in \cite{Sch95b}, Hodge-like decompositions bestow us with the ability to solve non-homogeneous, OD elliptic boundary-value problems:

\begin{theorem}[Overdetermined boundary-value problem]
Given an elliptic pre-complex $\PseudoComplex$, consider the list of data,
\[
\chi\in W^{s-m_k,p}\Gamma(\E_{k+1}) 
\qquad 
\xi\in W^{s-m_{k-1},p}(\E_{k-1})
\textand
\phi\in W^{s,p}\Gamma(\E_k),
\] 
where $s \ge m  = \max(m_k,m_{k-1})$.
There exists a solution $\psi\in W^{s,p}\Gamma(\E_k)$ to the boundary-value problem
\beq
(\bA_k\oplus \bA_{k-1}^* \oplus B_{k-1}^*) \psi = (\chi,\xi,B_{k-1}^*\phi),
\label{eq:boundary_value_problem_complex}
\eeq
if and only if the following integrability conditions are satisfied:
\begin{subequations}
\begin{gather}
\chi\in\scrN^{0,p}(\bA_{k+1}) \textand \bra\chi,\zeta\ket=0 \qquad \text{for every }\zeta\in\module^{k+1}\Complex
\label{eq:integrability_condition_1} \\
\xi - \bA_{k-1}^*\phi\in \scrN^{0,p}(\bA_{k-2}^*,B_{k-2}^*)
\label{eq:integrability_condition_2} \\
\bra \xi,\nu\ket =-\bra B_{k-1}^*\phi,B_{k-1}\nu\ket \qquad  \text{for every }\nu\in\module^{k-1}\Complex.
\label{eq:integrability_condition_3}
\end{gather}
\end{subequations}
The solution is unique up to an arbitrary $\lambda\in\module^k\Complex$. Moreover, there is an a priori estimate
\beq
\begin{split}
\|\psi\|_{s,p}&\lesssim \|\bA_k\psi\|_{s-m_k,p}+ \|\bA_{k-1}^*\psi\|_{s-m_{k-1},p} \\
&\quad+
\sum_{i=0}^{m_{k-1}-1} \|B_{i,k-1}^*\psi\|_{s-i-1/p,p} + 
\|\lambda_\psi\|_{0,p},
\end{split}
\label{eq:Bvp_estimate1}
\eeq
where $B_{i,k-1}^*$ are the components of the normal system of trace operators $B_{k-1}^*$, and 
$\lambda_\psi$ is the projection of $\psi$ onto $\module^k\Complex$.
\end{theorem}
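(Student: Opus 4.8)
The plan is to decouple the problem using the Hodge-like decomposition \eqref{eq:Hodgelikesmooth} applied at levels $k-1$, $k$, and $k+1$, and to solve each component of the system \eqref{eq:boundary_value_problem_complex} separately. First I would establish \emph{necessity} of the integrability conditions. If $\psi$ solves \eqref{eq:boundary_value_problem_complex}, then $\bA_{k+1}\chi=\bA_{k+1}\bA_k\psi=0$ by property (a) of \thmref{thm:corrected_complex}; the orthogonality $\bra\chi,\zeta\ket=0$ for $\zeta\in\module^{k+1}\Complex$ follows since $\chi=\bA_k\psi\in\scrR^{0,p}(\bA_k)$, which is $L^2$-orthogonal to $\module^{k+1}\Complex\subseteq\scrN^{0,p}(\bA_k^*,B_k^*)$ by \eqref{eq:aux_refinmenetSmooth} (and \lemref{lem:useful_neumann}, together with a density/duality argument to handle $p<2$). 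For \eqref{eq:integrability_condition_2}, $\bA_{k-2}^*(\xi-\bA_{k-1}^*\phi)=\bA_{k-2}^*\bA_{k-1}^*(\psi-\phi)=(\bA_{k-1}\bA_{k-2})^*(\psi-\phi)=0$, and the boundary part $B_{k-2}^*(\xi-\bA_{k-1}^*\phi)=0$ follows from the integration-by-parts formula \eqref{eq:integration_by_parts_corrected_k} at level $k-2$ together with the surjectivity of the trace operators (test against $\bA_{k-2}\omega$ for arbitrary $\omega$). Finally \eqref{eq:integrability_condition_3}: for $\nu\in\module^{k-1}\Complex$ we have $\bA_{k-1}\nu=0$ and $B_{k-1}\nu$ makes sense; pairing $\bra\bA_{k-1}\nu,\psi\ket=0$ and expanding via \eqref{eq:integration_by_parts_corrected_k} gives $\bra\nu,\bA_{k-1}^*\psi\ket + \bra B_{k-1}\nu,B_{k-1}^*\psi\ket = 0$, i.e.\ $\bra\xi,\nu\ket = -\bra B_{k-1}^*\phi,B_{k-1}\nu\ket$ since $B_{k-1}^*\psi=B_{k-1}^*\phi$.

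For \emph{sufficiency}, I would construct $\psi$ as a sum $\psi = \psi_1 + \psi_2$, where $\psi_1\in\scrR^{s,p}(\bA_{k-1})$ is responsible for matching $\bA_k\psi=\chi$ (using that $\bA_k$ vanishes on $\scrR(\bA_{k-1})$—wait, no: $\scrR(\bA_{k-1})\subset\scrN(\bA_k)$, so this component is $\bA_k$-closed). The correct split is: write the target using \eqref{eq:Hodgelikesmooth} at level $k$, namely $\psi\in\scrR^{s,p}(\bA_{k-1})\oplus\scrR^{s,p}(\bA_k^*;B_k^*)\oplus\module^k\Complex$. The $\scrR^{s,p}(\bA_k^*;B_k^*)$ component is determined by the condition $\bA_k\psi=\chi$: since $\bA_k$ restricted to $\scrR^{s,p}(\bA_k^*;B_k^*)$ is injective with image $\scrR^{s,p}(\bA_k)$, and the integrability condition \eqref{eq:integrability_condition_1} together with the cohomology characterization \thmref{thm:cohomology} (applied at level $k+1$) guarantees $\chi\in\scrR^{s,p}(\bA_k)$, we can invert. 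Explicitly, set this component to $\calP_k\chi$ using the operator $\calP_k$ from \propref{prop:complex_aux_decomp}, so that $\bA_k(\calP_k\chi)=\chi$ and $\calP_k\chi\in\scrR^{s,p}(\bA_k^*;B_k^*)$ (after projecting; one checks $\calP_k$ can be chosen to land there, or correct by an element of $\scrN(\bA_k)$). Symmetrically, the $\scrR^{s,p}(\bA_{k-1})$ component of $\psi$ is determined by the conditions $\bA_{k-1}^*\psi=\xi$ and $B_{k-1}^*\psi=B_{k-1}^*\phi$: these are a boundary-value problem for the adjoint complex, and \eqref{eq:integrability_condition_2} together with the cohomology theorem for the dual complex (the second statement in the Cohomology groups theorem from the introduction, or the dual of \thmref{thm:cohomology}), plus \eqref{eq:integrability_condition_3} handling the $\module^{k-1}\Complex$-orthogonality, places $\xi - \bA_{k-1}^*\phi$ in $\scrR^{0,p}(\bA_{k-1}^*;B_{k-1}^*)$ after accounting for the $\phi$ shift; then invert $\bA_{k-1}^*$ on this range using the analogue of $\calP$ for the adjoint. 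The $\module^k\Complex$ component is free, giving the uniqueness-modulo-$\module^k\Complex$ clause.

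The main obstacle I anticipate is the \textbf{bookkeeping at level $k-1$}: unlike the forward equation $\bA_k\psi=\chi$, which is a clean range condition, the backward equation $\bA_{k-1}^*\psi=\xi$ comes bundled with the inhomogeneous boundary condition $B_{k-1}^*\psi = B_{k-1}^*\phi$, and the natural object to invert is $\bA_{k-1}^*|_{\ker B_{k-1}^*}$ whose range is $\scrR^{0,p}(\bA_{k-1}^*;B_{k-1}^*)$—so one must first subtract off $\bA_{k-1}^*\phi$ and check that the remainder $\xi-\bA_{k-1}^*\phi$ genuinely lies in that range, which is \emph{not} simply \eqref{eq:integrability_condition_2} but requires combining it with the orthogonality \eqref{eq:integrability_condition_3} via the dual Hodge decomposition at level $k-1$. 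One must also verify these two constructed components do not interfere: the $\scrR^{s,p}(\bA_{k-1})$ piece is annihilated by $\bA_k$ (by \lemref{lem:useful_neumann}(b)) and has vanishing $B_{k-1}^*$-image only in the appropriate sense—actually $B_{k-1}^*$ need not vanish on it, so care is needed to absorb its boundary contribution into the $\phi$-matching. Once the decomposition is set up consistently, the \emph{a priori estimate} \eqref{eq:Bvp_estimate1} follows from OD ellipticity of the system $(\bA_{k-1}^*\oplus\bA_k,B_{k-1}^*)$—which holds since $\bA_j-A_j\in\frakG^0$ preserves OD ellipticity, as noted after \propref{prop:correction}—via \propref{prop:OD_varying_orders}, with the refinement replacing $\|\calS\psi\|_{0,p}$ by $\|\lambda_\psi\|_{0,p}$ since $\ker(\bA_{k-1}^*\oplus\bA_k\oplus B_{k-1}^*)=\module^k\Complex$ exactly. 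The Sobolev-regularity statements come for free from the fact that every operator in sight ($\bA_j$, $\calP_j$, the boundary operators) has the mapping properties \eqref{eq:Green_mapping_proprety}, \eqref{eq:Sobolev_trace} uniformly in $s,p$.
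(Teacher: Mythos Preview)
Your necessity argument is correct and essentially matches the paper's (the paper packages conditions \eqref{eq:integrability_condition_2} and \eqref{eq:integrability_condition_3} together by observing directly that $\xi-\bA_{k-1}^*\phi=\bA_{k-1}^*(\psi-\phi)\in\scrR(\bA_{k-1}^*;B_{k-1}^*)$, but your two-step version via integration by parts is fine). The a~priori estimate is also handled correctly.

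The gap is in the sufficiency construction, and it is precisely the one you flag without resolving. You want the $\scrR(\bA_{k-1})$-component of $\psi$ to satisfy $\bA_{k-1}^*\cdot=\xi$ and $B_{k-1}^*\cdot=B_{k-1}^*\phi$, and you propose to find it by inverting $\bA_{k-1}^*|_{\ker B_{k-1}^*}$ on $\xi-\bA_{k-1}^*\phi$. But the preimage $\alpha\in\ker B_{k-1}^*$ you obtain has no reason to lie in $\scrR(\bA_{k-1})$, so it is not the component you were looking for; and if you simply add $\alpha$ to $\calP_k\chi$ you have not accounted for the $\phi$-shift in the boundary condition. The paper sidesteps this entirely by \emph{not} insisting that $\psi$ be given in its Hodge decomposition. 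Instead it builds $\psi=\omega+\alpha+\phi$ sequentially: first $\phi$ is kept as is; then $\alpha\in\ker B_{k-1}^*$ is chosen (via conditions \eqref{eq:integrability_condition_2}--\eqref{eq:integrability_condition_3} and \eqref{eq:aux_refinmenetSmooth}) so that $\bA_{k-1}^*(\alpha+\phi)=\xi$ and $B_{k-1}^*(\alpha+\phi)=B_{k-1}^*\phi$; finally $\omega$ is chosen in $\scrR(\bA_k^*;B_k^*)\subset\scrN(\bA_{k-1}^*,B_{k-1}^*)$ so that $\bA_k\omega=\chi-\bA_k(\alpha+\phi)$, which is possible since $\chi\in\scrR(\bA_k)$ by \eqref{eq:integrability_condition_1} and \thmref{thm:cohomology}. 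Because $\omega\in\scrN(\bA_{k-1}^*,B_{k-1}^*)$, adding it does not disturb the already-satisfied $\bA_{k-1}^*$ and $B_{k-1}^*$ equations---this is the non-interference you were worried about, and it comes for free once you drop the requirement that the pieces be the Hodge components of $\psi$.

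Your approach can be salvaged: after finding $\alpha$ as above, project $\alpha+\phi$ onto $\scrR(\bA_{k-1})$ along $\scrN(\bA_{k-1}^*,B_{k-1}^*)$; the projection still satisfies the $\bA_{k-1}^*$ and $B_{k-1}^*$ conditions (the discarded part is in the kernel of both), and now genuinely lies in $\scrR(\bA_{k-1})$. But this extra step is exactly what makes the sequential ansatz cleaner.
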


\begin{proof}
We start by noting that for every $\nu\in\module^{k-1}\Complex$,
\[
\bra B_{k-1}^*\phi,B_{k-1}\nu\ket =  \bra \phi, \bA_{k-1}\nu\ket - \bra \bA_{k-1}^*\phi,\nu\ket = - \bra \bA_{k-1}^*\phi,\nu\ket ,
\]
hence the integrability condition \eqref{eq:integrability_condition_3} may take the alternative form
\beq
\bra \xi- \bA_{k-1}^*\phi,\nu\ket = 0 \qquad  \text{for every }\nu\in\module^{k-1}\Complex.
\tag{\ref{eq:integrability_condition_3}-2}
\label{eq:integrability_condition_3b}
\eeq

We first verify the necessity of the integrability conditions. 
Let $\psi\in W^{s,p}\Gamma(\E_k)$ be a solution to \eqref{eq:boundary_value_problem_complex}.
Since $\chi\in\scrR^{s-m_k,p}(\bA_k) \subset \scrR^{0,p}(\bA_k)$, then \eqref{eq:integrability_condition_1} follows from \thmref{thm:cohomology}.
Since $\psi - \phi\in\ker B_{k-1}^*$, then
\[
\xi - \bA_{k-1}^* \phi = \bA_{k-1}^*(\psi - \phi) \in \scrR^{s-m_{k-1},p}(\bA_{k-1}^*;B_{k-1}^*) \subset \scrR^{0,p}(\bA_{k-1}^*;B_{k-1}^*),
\]
which by \eqref{eq:aux_refinmenetSmooth} implies both \eqref{eq:integrability_condition_2} and  \eqref{eq:integrability_condition_3b}.

To prove sufficiency, it is enough to do it for smooth data, as the same claim in Sobolev regularity follows from the continuity of all the operators, along with an approximation argument.

The second integrability condition \eqref{eq:integrability_condition_2} asserts that 
\[
\xi - \bA_{k-1}^*\phi \in \scrN(\bA_{k-2}^*,B_{k-2}^*) = \scrR(\bA_{k-1}^*;B_{k-1}^*) \oplus \module^{k-1}\Complex,
\] 
where the equality follows from \eqref{eq:aux_refinmenetSmooth},
whereas the third integrability condition, in its form \eqref{eq:integrability_condition_3b} implies that $\xi - \bA_{k-1}^*\phi$ is $L^2$-orthogonal to $\module^{k-1}\Complex$, hence
\[
\xi - \bA_{k-1}^*\phi \in  \scrR(\bA_{k-1}^*;B_{k-1}^*).
\]
Let 
\[
\xi - \bA_{k-1}^*\phi =  \bA_{k-1}^*\alpha,
\qquad
\text{where $\alpha\in\ker B_{k-1}^*$}. 
\]
By \thmref{thm:cohomology}, the first integrability condition
\eqref{eq:integrability_condition_1} implies that $\chi\in\scrR(\bA_k)$.
By the Hodge-like decomposition for $\Gamma(\E_k)$, we may write
\[
\chi -  \bA_k(\alpha + \phi) = \bA_k \omega,
\qquad
\text{where $\omega\in \scrR(\bA_k^*; B_k^*) \subset \scrN(\bA_{k-1}^*,B_{k-1}^*)$},
\]
and the inclusion follows once again from \eqref{eq:aux_refinmenetSmooth}.
Let $\psi = \omega + \alpha + \phi$. Then,
\[
\begin{gathered}
\bA_k\psi = \bA_k \omega + \bA_k(\alpha + \phi) = \chi \\
\bA_{k-1}^*\psi = \bA_{k-1}^*\omega + \bA_{k-1}^*(\alpha + \phi)  = \xi \\
B_{k-1}^*\psi = B_{k-1}^*\omega + B_{k-1}^*(\alpha + \phi) = B_{k-1}^*\phi,
\end{gathered}
\]
i.e., $\psi$ is a solution to \eqref{eq:boundary_value_problem_complex}.
The uniqueness clause is immediate. 

Finally, the system $(\bA_{k-1}^*\oplus\bA_k,B_{k-1}^*)$ is OD elliptic, hence yields an a priori estimate  \eqref{eq:OD_a_priori}, which takes the form \eqref{eq:Bvp_estimate1}. 
\end{proof}

\section{Construction of the induced elliptic complex}
\label{sec:main_proof}

In this section we prove jointly \thmref{thm:corrected_complex}, \propref{prop:complex_aux_decomp} and \propref{prop:correction} by induction on $k$. The proof is partitioned into five stages. In the last subsection we prove the Hodge-like decomposition (\thmref{thm:hodge_like_corrected_complex}).

\subsection{Stage 1: Base and setup of induction step}
\label{sec:stage1}

For the base of the induction, it is convenient to append an extra level to the sequence $\PseudoComplex$, as in the diagram in \secref{sec:pre-complex}, setting $\E_{-1}= M \times \{0\}$,  $\bbG_{-1}= \dM \times \{0\}$ and $A_{-1}=\bA_{-1}=0$. The base of the induction requires that:
\begin{enumerate}[itemsep=0pt,label=(\alph*)]
\item $\bA_{-1}$ induces an auxiliary decomposition,
\[
\Gamma(\E_0) = \scrR(\bA_{-1})\oplus \scrN(\bA_{-1}^*,B_{-1}^*).
\]
\item $\bA_0\bA_{-1}=0$.
\item $\bA_0 = A_0$ on $\scrN(\bA_{-1}^*,B_{-1}^*)$.
\item $\bA_0 = A_0 + G_0$ is an adapted Green operators with $G_0$ satisfying the properties specified in \propref{prop:correction}.
\end{enumerate}
This is satisfied trivially by observing that $\scrR(\bA_{-1})=\{0\}$ and $\Gamma(\E_0)=\scrN(\bA_{-1}^*;B_{-1}^*)$, hence Conditions (a) and (b) are satisfied. Condition (c) determines $\bA_0 = A_0$ uniquely. 
Finally, Condition (d) is satisfied as $\bA_0$ has a Green part $G_0=0$. 

Induction step: we assume that $\bA_k$ and $\bA_{k-1}$ have been defined for some $k>0$, such that:

\begin{enumerate}[itemsep=0pt,label=(\alph*)]
\item $\bA_{k-1}$ induces an auxiliary decomposition, i.e., there is a map $\calP_{k-1}\in\OP(\frakS^{-m_{k-1},0})$, such that $\bA_{k-1}\calP_{k-1}\in\frakG^0$ is the projection onto $\scrR(\bA_{k-1})$ in the topologically-direct decomposition,
\beq
\Gamma(\E_k) = \scrR(\bA_{k-1})\oplus \scrN(\bA_{k-1}^*,B_{k-1}^*).
\label{eq:aux_induction}
\eeq
\item $\bA_k\bA_{k-1}=0$.
\item $\bA_k = A_k$ on $\scrN(\bA_{k-1}^*,B_{k-1}^*)$.
\item $\bA_k = A_k + G_k$ and $\bA_{k-1} = A_{k-1} + G_{k-1}$ are adapted Green operators with $G_k$ and $G_{k-1}$ in $\frakG^0$, satisfying the recursive formula
\[
G_k = -A_k A_{k-1}\calP_{k-1} - A_k G_{k-1}\calP_{k-1}.
\]
\end{enumerate}

\subsection{Stage 2: Additional elliptic estimates}

Since the system $(A_{k-1}^*\oplus A_k, B_{k-1}^*)$ is OD elliptic and  $\bA_{k-1}^*-\bA_{k-1}^*,\bA_k-A_k\in\frakG^0$, it follows that the system $(\bA_{k-1}^*\oplus\bA_k, B_{k-1}^*)$ is also OD elliptic (we use here the closure of $\frakG^0$ to adjoints). By \thmref{thm:adatped_OD_ellipticity_estiamte}, this implies the finite-dimensionality of the space
\[
\module^k\Complex=\ker(\bA_{k-1}^*\oplus\bA_k\oplus B_{k-1}^*),
\]
along with the a priori estimate,
\beq
\begin{split}
\|\psi\|_{s,p} &\lesssim 
\|\bA_{k-1}^*\psi\|_{s-m_{k-1},p} + \|\bA_k\psi\|_{s-m_k,p} \\
&\quad+ \sum_{i=0}^{m_{k-1}-1} \|B^*_{i,k-1}\psi\|_{s-i-1/p,p} + 
\|\bS_k\psi\|_{0,p},
\end{split}
\label{eq:OD_elliptic_estimate_sharp}
\eeq 
for every $\Nzero\ni s\ge m$, where $\bS_k\in \OPSii$ is the $L^2$-orthogonal projection onto $\module^k\Complex$, and
$m=\max{(m_{k-1},m_k)}$. As a consequence of \eqref{eq:OD_elliptic_estimate_sharp}, the system $(\bA_{k-1}^*\oplus\bA_k\oplus\calS_k, B_{k-1}^*)$ is OD elliptic and injective, hence, by \propref{prop:OD_varying_orders}, admits a left-inverse or order $-m$ and class $m_{k-1}-m$ within the calculus of Green operators.

In the sequel, we need to estimate $\|\psi\|_{m_k,p}$ in terms of $\|\bA_k\psi\|_{0,p}$ when $\psi$ is restricted to 
the subspace $\ker(\bA_{k-1}^*\oplus B_{k-1}^*\oplus \calS_k)$. While this seems to follow from  \eqref{eq:OD_elliptic_estimate_sharp},this estimate cannot be used when $m_k<m_{k-1}$, since in this case the 
left-inverse has positive class. To overcome this difficulty, we derive an additional elliptic estimate, which uses the inductive assumption regarding the auxiliary decomposition induced by $\bA_{k-1}$:

\begin{proposition}
\label{prop:new_estimate1}
The following estimate holds,
\beq
\|\psi\|_{s,p} \lesssim 
\|\bA_{k-1}\calP_{k-1}\psi\|_{s,p} + 
\|\bA_k\psi\|_{s-m_k,p} + 
\|\bS_k\psi\|_{0,p},
\label{eq:Ak_OD_elliptic_estimate}
\eeq 
valid for every $\bbZ\ni s\geq 1/p-1$ and $1<p<\infty$. (Note the absence of a boundary term, which is embodied in the projection $\calP_{k-1}$.) 
\end{proposition}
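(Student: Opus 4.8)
The plan is to trade the boundary operator $B_{k-1}^*$ for the projection $\calP_{k-1}$. Concretely, I would show that the system $(\calE,0)$ with
\[
\calE = \bA_k \oplus \bA_{k-1}\calP_{k-1} : \Gamma(\E_k) \to \Gamma(\E_{k+1}) \oplus \Gamma(\E_k),
\]
carrying \emph{no} trace operator, is OD elliptic of varying orders (\defref{def:OD_varying_order}) with maximal order $m_k$ and maximal class $0$ — these being the orders and classes of $\bA_k \in \OP(\frakS^{m_k,0})$ and of $\bA_{k-1}\calP_{k-1} \in \frakG^0 = \OP(\frakS^{0,0})$ — and that $\ker\calE = \module^k\Complex$. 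Granting this, \propref{prop:OD_varying_orders} applies verbatim, and its a priori estimate \eqref{eq:OD_a_priori} reads
\[
\|\psi\|_{s,p} \lesssim \|\bA_k\psi\|_{s-m_k,p} + \|\bA_{k-1}\calP_{k-1}\psi\|_{s,p} + \|\bS_k\psi\|_{0,p},
\]
for $\bbZ \ni s \geq 1/p - 1$: the middle term is the contribution of the order-$0$ component, there is no boundary term because $(\calE,0)$ has no trace component, and the last term is present because the $L^2$-projection onto $\ker\calE = \module^k\Complex$ is exactly the operator $\bS_k$ of Stage~2. This is precisely \eqref{eq:Ak_OD_elliptic_estimate}.

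The first thing to check is the kernel identity. By the inductive auxiliary decomposition \eqref{eq:aux_induction}, if $\bA_k\psi = 0$ and $\bA_{k-1}\calP_{k-1}\psi = 0$ then $\psi = (\id - \bA_{k-1}\calP_{k-1})\psi \in \scrN(\bA_{k-1}^*, B_{k-1}^*)$, so $\psi \in \ker(\bA_{k-1}^* \oplus \bA_k \oplus B_{k-1}^*) = \module^k\Complex$; conversely, on $\module^k\Complex \subseteq \scrN(\bA_{k-1}^*,B_{k-1}^*)$ the operator $\id - \bA_{k-1}\calP_{k-1}$ acts as the identity, so $\bA_{k-1}\calP_{k-1}$ vanishes there, while $\bA_k$ vanishes by the definition of $\module^k\Complex$. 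Hence $\ker\calE = \module^k\Complex$, whose $L^2$-projection is $\bS_k$.

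The core of the proof is the OD ellipticity of $(\calE, 0)$, and this is where the inductive auxiliary decomposition is used. Since $\id - \bA_{k-1}\calP_{k-1}$ projects onto $\scrN(\bA_{k-1}^*, B_{k-1}^*) = \ker \bA_{k-1}^* \cap \ker B_{k-1}^*$, one has the operator identities $\bA_{k-1}^* = \bA_{k-1}^*\,\bA_{k-1}\calP_{k-1}$ and $B_{k-1}^* = B_{k-1}^*\,\bA_{k-1}\calP_{k-1}$ within the calculus of Green operators. Applying the symbol homomorphism (\thmref{thm:full_Green_compositon_rules}, \propref{prop:composition_Green_operators}) to each — for the interior symbol and for the boundary symbol — yields the kernel inclusions
\[
\ker \sigma_M(\bA_{k-1}\calP_{k-1})(x,\xi) \subseteq \ker \sigma_{A_{k-1}^*}(x,\xi),
\]
\[
\ker \sigma_\dM(\bA_{k-1}\calP_{k-1})(x,\xi') \subseteq \ker \sigma_\dM(\bA_{k-1}^*)(x,\xi') \cap \ker\bigl( \sigma_{B_{k-1}^*}(x,\xi'+\iota\partial_s\,dr)|_{s=0} \bigr).
\]
I would then invoke the OD ellipticity of $(\bA_{k-1}^* \oplus \bA_k, B_{k-1}^*)$ — available in the induction step (Stage~2, since $\bA_{k-1}^* - A_{k-1}^*$ and $\bA_k - A_k$ lie in $\frakG^0$) — which asserts injectivity of $\sigma_{A_{k-1}^*}(x,\xi) \oplus \sigma_{A_k}(x,\xi)$ for $\xi \ne 0$ and injectivity of the boundary symbol of that triple. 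Combining: a vector annihilated by $\sigma_{A_k}(x,\xi) \oplus \sigma_M(\bA_{k-1}\calP_{k-1})(x,\xi)$ is also annihilated by $\sigma_{A_{k-1}^*}(x,\xi)$, hence is zero; and a Schwartz function annihilated by the boundary symbol of $\calE$ lies in the kernel of the boundary symbol of $(\bA_{k-1}^* \oplus \bA_k, B_{k-1}^*)$, hence is zero. These are exactly Conditions (a) and (b) of \defref{def:OD_varying_order} for $(\calE, 0)$.

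The step I expect to be the main obstacle is the boundary-symbol half of the OD-ellipticity verification: it relies on the boundary-symbol homomorphism of the full Boutet de Monvel calculus applied to the compositions $\bA_{k-1}^*\bA_{k-1}\calP_{k-1}$ and $B_{k-1}^*\bA_{k-1}\calP_{k-1}$ — in which $\calP_{k-1}$ genuinely contributes a singular-Green part, so this is not merely an ODE manipulation — and on correctly matching the resulting kernel inclusion with the (varying-order) boundary-symbol injectivity built into the OD ellipticity of $(\bA_{k-1}^* \oplus \bA_k, B_{k-1}^*)$. By comparison, the interior-symbol check, the kernel computation, and the final appeal to \propref{prop:OD_varying_orders} are routine.
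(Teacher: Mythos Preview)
Your approach is correct and rests on the same two operator identities as the paper's proof, namely $\bA_{k-1}^*=\bA_{k-1}^*\,\bA_{k-1}\calP_{k-1}$ and $B_{k-1}^*=B_{k-1}^*\,\bA_{k-1}\calP_{k-1}$, together with the same kernel identification $\ker(\bA_k\oplus\bA_{k-1}\calP_{k-1})=\module^k\Complex$. The difference is in how OD ellipticity of the new system is established.

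You propose to verify \defref{def:OD_varying_order} directly, by pushing the two identities through the symbol homomorphism and comparing kernels. This works, but---as you correctly flag---the boundary half forces you to handle the full Boutet de Monvel boundary symbol of $\bA_{k-1}\calP_{k-1}\in\frakG^0$, whose singular Green part genuinely contributes (so Condition~(b) of \defref{def:OD_varying_order} must be read with the full $\sigma_\dM$, not just the $\sigma_E(x,\xi'+\imath\partial_s\,dr)$ formula). The paper avoids this computation entirely by using the \emph{converse} clause of \propref{prop:OD_varying_orders}: since $(\bA_{k-1}^*\oplus\bA_k\oplus\calS_k,\,B_{k-1}^*)$ is OD elliptic and injective, it has a left-inverse in the calculus; the two identities then give the factorization
\[
\mymat{\bA_{k-1}^*\oplus\bA_k\oplus\calS_k\\B_{k-1}^*}
=
\mymat{\bA_{k-1}^*\oplus\id\oplus\id\\B_{k-1}^*+0+0}
\bigl(\bA_{k-1}\calP_{k-1}\oplus\bA_k\oplus\calS_k\bigr),
\]
so the right factor inherits a left-inverse and is therefore OD elliptic of order $m_k$ and class $0$. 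This factorization argument is shorter and sidesteps precisely the boundary-symbol obstacle you anticipated; your route trades that shortcut for a more hands-on symbol check.
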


\begin{proof}
By \propref{prop:OD_varying_orders}, 
if $(\calE,T)$ has a left-inverse of order $-m$ and class $r-m$ within the calculus of Green operators, then it is OD elliptic (of varying orders) and injective, with (maximal) order $m$ and class $r$.

The auxiliary decomposition \eqref{eq:aux_induction} induced by $\bA_{k-1}$ and the property of $\calP_k$ imply that $(\id-\bA_{k-1}\calP_{k-1})$ is the projection onto $\scrN(\bA_{k-1}^*,B_{k-1}^*)$, which implies that for every $\psi\in \Gamma(\bbE_k)$,
\[
\bA_{k-1}^*\bA_{k-1}\calP_{k-1}\psi=\bA_{k-1}^*\psi 
\Textand 
B_{k-1}^*\bA_{k-1}\calP_{k-1}\psi=B_{k-1}^*\psi.
\]
Hence,
\[
(\bA_{k-1}^*\oplus \bA_k\oplus \calS_k,B_{k-1}^*)=(\bA_{k-1}^*\bA_{k-1}\calP_{k-1}\oplus\bA_k\oplus \calS_k,B_{k-1}^*\bA_{k-1}\calP_{k-1}).
\]
As stated above,
the system on the left-hand side is injective and admits a left-inverse or order $-m$ and class $m_{k-1}-m$.
The system on the right-hand side can be rewritten in the form
\[
\mymat{\bA_{k-1}^*\bA_{k-1}\calP_{k-1}\oplus\bA_k\oplus \calS_k \\ B_{k-1}^*\bA_{k-1}\calP_{k-1}} = 
\mymat{\bA_{k-1}^*\oplus \id\oplus \id \\ B_{k-1}^*+ 0+  0} (\bA_{k-1}\calP_{k-1}\oplus\bA_k\oplus\calS_k),
\]
where we revert to the matrix notation for typographical reasons; here, $(A\oplus B)(C\oplus D) = AC \oplus BD$, and $(A+ B)(C\oplus D) = AC  + BD$.
Hence, the system $(\bA_{k-1}\calP_{k-1}\oplus\bA_k\oplus\calS_k)$ has a left-inverse, and is therefore OD elliptic. 
Since $\bA_{k-1}\calP_{k-1}\in\frakG^0$, it is
of order $m_k$ and class zero, yielding the elliptic estimate \eqref{eq:Ak_OD_elliptic_estimate}.
\end{proof}

We identify another OD elliptic system (of varying orders), which will be used to obtain yet another a priori estimate:

\begin{proposition}
\label{prop:AstarA_OD_system}
The system $(\bA_{k-1}^* \oplus \bA_k^*\bA_k , B_{k-1}^* \oplus B_k^*\bA_k)$ is OD elliptic of order and class 
$\max(m_{k-1},2m_k)$. 
\end{proposition}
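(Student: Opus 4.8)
The plan is to exhibit $(\bA_{k-1}^* \oplus \bA_k^*\bA_k,\, B_{k-1}^* \oplus B_k^*\bA_k)$ as an OD elliptic system of varying orders in the sense of \defref{def:OD_varying_order}, reducing the two symbol-injectivity requirements to the OD ellipticity of $(\bA_{k-1}^*\oplus\bA_k,B_{k-1}^*)$, which is available by hypothesis (condition (a) of an elliptic pre-complex, together with the remark in \secref{sec:main_theorem} that the correction $\bA_k-A_k\in\frakG^0$ preserves OD ellipticity). First I would settle the bookkeeping via the composition rules of \propref{prop:composition_Green_operators}: since $\bA_k,\bA_k^*\in\OP(\frakS^{m_k,0})$ we obtain $\bA_k^*\bA_k\in\OP(\frakS^{2m_k,m_k})$, and since $B_k^*$ is a normal system of trace operators associated with order $m_k$, hence $B_k^*\in\OP(\frakT^{m_k-1,m_k})$, we obtain $B_k^*\bA_k\in\OP(\frakT^{2m_k-1,2m_k})$; together with $\bA_{k-1}^*\in\OP(\frakS^{m_{k-1},0})$ and $B_{k-1}^*\in\OP(\frakT^{m_{k-1}-1,m_{k-1}})$, the maximal interior order is $\max(m_{k-1},2m_k)$ and the maximal class is $\max\{0,m_k,m_{k-1},2m_k\}=\max(m_{k-1},2m_k)$, which is the order and class claimed once ellipticity is established.

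For the interior symbol I would use the symbol-homomorphism property of Green operators (\thmref{thm:full_Green_compositon_rules}): $\sigma_M(\bA_k^*\bA_k)(x,\xi)=\sigma_M(\bA_k)(x,\xi)^*\,\sigma_M(\bA_k)(x,\xi)$. If $v$ lies in the kernel of $\sigma_M(\bA_{k-1}^*)(x,\xi)\oplus\sigma_M(\bA_k^*\bA_k)(x,\xi)$ for some $\xi\neq0$, then positivity gives $|\sigma_M(\bA_k)(x,\xi)v|^2=\langle\sigma_M(\bA_k)^*\sigma_M(\bA_k)v,v\rangle=0$, so $\sigma_M(\bA_k)(x,\xi)v=0$; combined with $\sigma_M(\bA_{k-1}^*)(x,\xi)v=0$, the injectivity of the interior symbol of $(\bA_{k-1}^*\oplus\bA_k,B_{k-1}^*)$ forces $v=0$.

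The boundary symbol is the crux. Fix $x\in\dM$ and $\xi'\in T_x^*\dM\setminus\{0\}$, and take $\psi\in\scrS(\overbar{\R}_+;\bbC\otimes\E_{k,x})$ in the kernel of the boundary symbol of the system, so that $\psi$ solves the ODEs $\sigma_\dM(\bA_{k-1}^*)(x,\xi')\psi=0$ and $\sigma_\dM(\bA_k^*\bA_k)(x,\xi')\psi=0$ and satisfies the initial conditions $\sigma_\dM(B_{k-1}^*)(x,\xi')\psi=0$, $\sigma_\dM(B_k^*\bA_k)(x,\xi')\psi=0$. Put $\zeta:=\sigma_\dM(\bA_k)(x,\xi')\psi$, again a one-sided Schwartz function; by the homomorphism property $\sigma_\dM(\bA_k^*)(x,\xi')\zeta=\sigma_\dM(\bA_k^*\bA_k)(x,\xi')\psi=0$ and $\sigma_\dM(B_k^*)(x,\xi')\zeta=\sigma_\dM(B_k^*\bA_k)(x,\xi')\psi=0$. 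The integration-by-parts formula \eqref{eq:integration_by_parts_corrected_k} for $\bA_k$, localized in a boundary collar, descends to the symbolic Green identity
\[
\bigl(\sigma_\dM(\bA_k)(x,\xi')u,\,v\bigr)_{L^2(\R_+)}=\bigl(u,\,\sigma_\dM(\bA_k^*)(x,\xi')v\bigr)_{L^2(\R_+)}+\bigl(\sigma_\dM(B_k)(x,\xi')u,\,\sigma_\dM(B_k^*)(x,\xi')v\bigr)
\]
for one-sided Schwartz $u,v$; evaluating at $u=\psi$, $v=\zeta$ gives $\|\zeta\|_{L^2(\R_+)}^2=0$, hence $\zeta=\sigma_\dM(\bA_k)(x,\xi')\psi=0$. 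Thus $\psi$ solves $\sigma_\dM(\bA_{k-1}^*)(x,\xi')\psi=0$, $\sigma_\dM(\bA_k)(x,\xi')\psi=0$ and $\sigma_\dM(B_{k-1}^*)(x,\xi')\psi=0$, i.e.\ $\psi$ lies in the kernel of the boundary symbol of $(\bA_{k-1}^*\oplus\bA_k,B_{k-1}^*)$; the OD ellipticity of that system gives $\psi=0$. Then \defref{def:OD_varying_order} holds, and by \propref{prop:OD_varying_orders} the system is OD elliptic of varying orders with maximal order and class $\max(m_{k-1},2m_k)$.

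I expect the main obstacle to be justifying the symbolic Green identity for the Green operator $\bA_k$ rather than for a differential operator. The clean way to stay within the excerpt is to observe that the Green corrections are of strictly lower order (since $G_k,G_{k-1}\in\frakG^0$ have order $0$ and $m_k,m_{k-1}\geq1$), so by \eqref{eq:lower_order_symbol} the symbol of the system coincides with that of the genuine differential system $(A_{k-1}^*\oplus A_k^*A_k,\,B_{k-1}^*\oplus B_k^*A_k)$, and then apply \thmref{thm:adatped_OD_ellipticity_estiamte}: the interior step is unchanged, and in the boundary step the symbolic Green identity becomes an elementary integration by parts on $[0,\infty)$ for the polynomial operator $\sigma_{A_k}(x,\xi'+\imath\partial_s\,dr)$, whose boundary terms at $s=0$ are exactly those seen by $\sigma_{B_k}$ and $\sigma_{B_k^*}$ — this being the content of the Green's formula \eqref{eq:integration_by_parts_convention} localized near $\dM$. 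Either route verifies \defref{def:OD_varying_order} and establishes the stated order and class.
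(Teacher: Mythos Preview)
Your proof is correct and follows a genuinely different route from the paper's.

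For the interior symbol, the paper exploits the full pre-complex structure: since $\ord(A_{k+1}A_k)\le\ord(A_k)$ is strictly less than $\ord(A_{k+1})+\ord(A_k)$, one has $\sigma_{A_{k+1}}(x,\xi)\sigma_{A_k}(x,\xi)=0$; hence for $\psi$ in the kernel of $\sigma_{A_{k-1}^*}\oplus\sigma_{A_k^*A_k}$ the element $\sigma_{A_k}(x,\xi)\psi$ lies in $\ker(\sigma_{A_{k+1}}(x,\xi)\oplus\sigma_{A_k^*}(x,\xi))$, which is trivial by the OD ellipticity hypothesis at level $k{+}1$. Your positivity argument ($\sigma_{A_k^*A_k}\psi=0\Rightarrow|\sigma_{A_k}\psi|^2=0$) is more direct, uses only the fiber metric, and never touches level $k{+}1$. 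For the boundary symbol the paper repeats exactly the same algebraic move (the homomorphism property for boundary symbols gives $\sigma_\dM(A_{k+1})\sigma_\dM(A_k)=0$ and $\sigma_\dM(A_k^*)\sigma_\dM(A_k)\psi=0$, $\sigma_\dM(B_k^*)\sigma_\dM(A_k)\psi=0$, whence $\sigma_\dM(A_k)\psi$ is killed by the injective boundary symbol of $(A_k^*\oplus A_{k+1},B_k^*)$). Your route via a symbolic Green identity on $[0,\infty)$ is sound once reduced to the differential operators $A_k,A_k^*,B_k,B_k^*$, as you correctly note; the one-dimensional integration-by-parts identity is precisely the principal part of \eqref{eq:integration_by_parts_convention} frozen at $x\in\dM$ with tangential variable $\xi'$.

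What your approach buys is locality in the index: only the data at level $k$ is used, and in particular the OD ellipticity of $(A_k^*\oplus A_{k+1},B_k^*)$ is never invoked. The price is the need to justify the symbolic Green identity, which is standard but not stated in the paper. The paper's approach is purely algebraic and needs no half-line analysis, at the cost of drawing on one more piece of the pre-complex hypotheses.
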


\begin{proof}
In the notation of \defref{def:OD_varying_order}, 
\[
\calE_1 = \bA_{k-1}^*
\qquad
\calE_2 = \bA_k^*\bA_k
\qquad
T_0 = B_{k-1}^*
\Textand
T_1 = B_k^*\bA_k.
\]
By the composition rules (\propref{prop:composition_Green_operators}), $\calE_1\in\OP(\frakS^{m_{k-1},0})$, $\calE_2\in\OP(\frakS^{2m_k,m_k})$, $T_0\in\OP(\frakT^{m_{k-1}-1,m_{k-1}})$ and $T_1\in\OP(\frakT^{2m_k-1,2m_k})$, thus satisfying the preamble to \defref{def:OD_varying_order}, with $m = r = \max(m_{k-1},2m_k)$. Moreover,  the leading $\OPA$ parts of $\bA_k^*\bA_k$ and $\bA_{k-1}^*$ are $A_k^*A_k$ and $A_{k-1}^*$ respectively, and the leading differential part of $B_k^*\bA_k$ is $B_k^*A_k$. 

It remains to show that Conditions (a) and (b) of \defref{def:OD_varying_order} are satisfied. We only verify Condition (a): given $x\in M$ and $\xi\in T_x^*M\setminus\{0\}$, we prove that the map
\[
\sigma_{A_{k-1}^*}(x,\xi) \oplus \sigma_{A_k^*A_k}(x,\xi) : (\E_k)_x \to (\E_{k-1})_x \oplus (\E_k)_x
\]
is injective. The verification of Condition (b) follows the same lines. 

Since the order of $A_{k+1}A_k$ is strictly less than the order of $A_k$, it follows from \eqref{eq:lower_order_symbol} that  
\beq
\sigma_{A_{k+1}}(x,\xi) \sigma_{A_k}(x,\xi)=0.
\label{eq:sigmasigma=0}
\eeq
Let $\psi\in (\bbE_k)_x$ satisfy
\[
\psi\in\ker(\sigma_{A_{k-1}^*}(x,\xi) \oplus \sigma_{A_k^*A_k}(x,\xi)),
\]
which implies, combining \eqref{eq:sigmasigma=0} and the homomorphism property of the symbols, that
\[
\psi\in\ker(\sigma_{A_{k+1}}(x,\xi)\sigma_{A_k}(x,\xi) \oplus \sigma_{A_k^*}(x,\xi)\sigma_{A_k}(x,\xi)),
\]
i.e.,
\[
\sigma_{A_k}(x,\xi) \psi\in\ker(\sigma_{A_{k+1}}(x,\xi) \oplus \sigma_{A_k^*}(x,\xi)).
\]
Since, by assumption, $\sigma_{A_{k+1}}(x,\xi) \oplus \sigma_{A_k^*}(x,\xi)$ is injective, it follows that $\sigma_{A_k}(x,\xi) \psi=0$,
hence,
\[
\psi\in\ker(\sigma_{A_{k-1}^*}(x,\xi) \oplus \sigma_{A_k}(x,\xi)).
\]
Since $\sigma_{A_{k-1}^*}(x,\xi) \oplus \sigma_{A_k}(x,\xi)$ is injective,  it follows that $\psi=0$, as required.
\end{proof}

\begin{proposition}
\label{prop:same_kernels}
The kernel of the OD elliptic system $(\bA_{k-1}^*\oplus \bA_k^*\bA_k ,B_{k-1}^*\oplus B_k^*\bA_k)$ coincides with the kernel $\module^k\Complex$ of the OD elliptic system $(\bA_{k-1}^*\oplus \bA_k, B_{k-1}^*)$. 
Moreover, the system  $(\bA_{k-1}^* \oplus \bA_k^*\bA_k \oplus \calS_k , B_{k-1}^* \oplus B_k^*\bA_k)$ is OD elliptic and injective, hence admits a left-inverse of order $-\max(m_{k-1},2m_k)$ and class $0$.
\end{proposition}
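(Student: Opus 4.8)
The plan is to establish the kernel equality first and then deduce the remaining assertions from it together with \propref{prop:AstarA_OD_system}, \propref{prop:OD_tools} and \propref{prop:OD_varying_orders}. The inclusion $\module^k\Complex\subseteq\ker(\bA_{k-1}^*\oplus \bA_k^*\bA_k ,B_{k-1}^*\oplus B_k^*\bA_k)$ is immediate, since $\bA_k\psi=0$ forces $\bA_k^*\bA_k\psi=0$ and $B_k^*\bA_k\psi=0$, while $\bA_{k-1}^*\psi=0$ and $B_{k-1}^*\psi=0$ by definition of $\module^k\Complex$. For the reverse inclusion, I would first invoke \propref{prop:AstarA_OD_system} together with \propref{prop:OD_tools} to conclude that the kernel of $(\bA_{k-1}^*\oplus \bA_k^*\bA_k ,B_{k-1}^*\oplus B_k^*\bA_k)$ consists of smooth sections. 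For such a $\psi$, the integration by parts formula \eqref{eq:integration_by_parts_corrected_k} applies with $\eta=\bA_k\psi\in\Gamma(\E_{k+1})$, giving
\[
\|\bA_k\psi\|_{0,2}^2=\bra\psi,\bA_k^*\bA_k\psi\ket+\bra B_k\psi,B_k^*\bA_k\psi\ket=0,
\]
so $\bA_k\psi=0$; since also $\bA_{k-1}^*\psi=0$ and $B_{k-1}^*\psi=0$, we obtain $\psi\in\module^k\Complex$. This settles the first assertion.

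For the ``moreover'' part, I would argue as follows. Since $\calS_k\in\OPSii$ is a smoothing operator, its symbol vanishes, so adjoining $\calS_k$ to the interior part changes nothing at the level of symbols: the symbol of $(\bA_{k-1}^* \oplus \bA_k^*\bA_k \oplus \calS_k , B_{k-1}^* \oplus B_k^*\bA_k)$ is that of $(\bA_{k-1}^* \oplus \bA_k^*\bA_k , B_{k-1}^* \oplus B_k^*\bA_k)$ with an extra zero block, hence still injective both in the interior and on the boundary. Therefore the augmented system is OD elliptic of varying orders, with maximal order and class still $\max(m_{k-1},2m_k)$ as in \propref{prop:AstarA_OD_system}. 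For injectivity: if $\psi$ lies in the kernel of the augmented system, then $\calS_k\psi=0$, i.e., $\psi$ is $L^2$-orthogonal to $\module^k\Complex$; but $\psi$ also lies in the kernel of $(\bA_{k-1}^* \oplus \bA_k^*\bA_k , B_{k-1}^* \oplus B_k^*\bA_k)$, which by the first part equals $\module^k\Complex$, so $\psi\in\module^k\Complex$ as well. Hence $\psi=0$.

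Finally, \propref{prop:OD_varying_orders} applies to the augmented system, which is OD elliptic of varying orders and injective: it admits a left-inverse within the calculus of Green operators, of order $-m$ and class $r-m$, where $m$ is the maximal order and $r$ the maximal class. Bookkeeping via the composition rules (\propref{prop:composition_Green_operators}) gives, exactly as in \propref{prop:AstarA_OD_system}, $m=r=\max(m_{k-1},2m_k)$ (the summand $\calS_k$ contributes order $-\infty$ and class $0$ and thus does not affect these maxima), so the left-inverse has order $-\max(m_{k-1},2m_k)$ and class $0$, as claimed. The only delicate point in this argument is legitimizing the integration by parts in the first step, i.e., knowing that elements of the kernel of the $\bA_k^*\bA_k$-system are smooth; this is precisely what \propref{prop:AstarA_OD_system} combined with \propref{prop:OD_tools} supplies, and the remainder is routine symbol and order/class bookkeeping.
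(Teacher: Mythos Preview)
Your proof is correct and follows essentially the same route as the paper: both establish the kernel equality via the integration-by-parts identity $\bra\bA_k\psi,\bA_k\psi\ket=\bra\bA_k^*\bA_k\psi,\psi\ket+\bra B_k^*\bA_k\psi,B_k\psi\ket$, and then deduce the ``moreover'' clause from the fact that $\calS_k$ projects onto the (now identified) kernel. You are somewhat more explicit than the paper in justifying smoothness of kernel elements and in spelling out why adjoining the smoothing projector $\calS_k$ does not alter the symbol, but these are the same ingredients the paper relies on implicitly.
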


\begin{proof}
The inclusion 
\[
\module^k\Complex \subset \ker(\bA_{k-1}^*\oplus \bA_k^*\bA_k \oplus B_{k-1}^*\oplus B_k^*\bA_k)
\]
is immediate. In the reverse direction, let $\psi\in \ker(\bA_{k-1}^*\oplus \bA_k^*\bA_k \oplus B_{k-1}^*\oplus B_k^*\bA_k)$.
Applying
\eqref{eq:integration_by_parts_corrected_k},
\[
\bra \bA_k\psi,\bA_k\psi\ket   = \bra \bA_k^*\bA_k\psi,\psi\ket + \bra B_k^*\bA_k\psi,B_k\psi\ket = 0,
\]
which implies that $\psi\in\ker(\bA_k)$, hence $\psi\in \module^k\Complex$. The second clause is immediate once we established that $\calS_k$ is the projection onto $\ker(\bA_{k-1}^*\oplus \bA_k^*\bA_k \oplus B_{k-1}^*\oplus B_k^*\bA_k)$.
\end{proof}

Similarly to \propref{prop:new_estimate1}: 

\begin{proposition}
\label{prop:new_estimate2}
The following estimate holds,
\beq
\begin{split}
\|\psi\|_{s,p} &\lesssim 
\|\bA_{k-1}\calP_{k-1}\psi\|_{s,p} +
\|\bA_k^*\bA_k\psi\|_{s-2m_k,p} \\ 
&\quad +
 \sum_{j=0}^{m_k-1}\|B_{j,k}^*\bA_k\psi \|_{s-m_k-j-1/p,p}+ 
\|\bS_k\psi\|_{0,p},
\end{split}
\label{eq:AkstarAk_OD_elliptic_estimate}
\eeq 
valid for every $\bbZ\ni s\ge 2m_k$ and $1<p<\infty$.  
\end{proposition}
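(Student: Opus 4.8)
The proof runs parallel to that of \propref{prop:new_estimate1}, with the overdetermined elliptic system of \propref{prop:AstarA_OD_system} and \propref{prop:same_kernels} taking the place there played by $(\bA_{k-1}^*\oplus\bA_k\oplus\bS_k,B_{k-1}^*)$. The plan is as follows. First I recall from \propref{prop:same_kernels} that the system $(\bA_{k-1}^*\oplus\bA_k^*\bA_k\oplus\bS_k,\;B_{k-1}^*\oplus B_k^*\bA_k)$ is injective and admits a left-inverse of order $-\max(m_{k-1},2m_k)$ and class $0$ within the calculus of Green operators. Next I invoke the inductive auxiliary decomposition \eqref{eq:aux_induction}: since $(\id-\bA_{k-1}\calP_{k-1})$ is the projection onto $\scrN(\bA_{k-1}^*,B_{k-1}^*)$, one has $\bA_{k-1}^*\bA_{k-1}\calP_{k-1}\psi=\bA_{k-1}^*\psi$ and $B_{k-1}^*\bA_{k-1}\calP_{k-1}\psi=B_{k-1}^*\psi$ for every $\psi\in\Gamma(\E_k)$. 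Consequently that system factors as $L\circ R$, where
\[
R=(\bA_{k-1}\calP_{k-1}\oplus\bA_k^*\bA_k\oplus\bS_k,\;B_k^*\bA_k)
\]
and $L$ is the Green operator carrying an interior datum $(\alpha,\beta,\gamma)$ and a boundary datum $\rho$ to the interior datum $(\bA_{k-1}^*\alpha,\beta,\gamma)$ and the boundary datum $(B_{k-1}^*\alpha,\rho)$; $L$ lies in the Boutet de Monvel calculus, being assembled from the block-diagonal $\OPS$-operator $\bA_{k-1}^*\oplus\id\oplus\id\in\OP(\frakS^{m_{k-1},0})$, the trace operator $B_{k-1}^*\in\OP(\frakT^{m_{k-1}-1,m_{k-1}})$, and identity operators on interior and boundary sections (so $K_1=0$ and $K_2$ an identity). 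Precomposing the left-inverse of $L\circ R$ with $L$ then gives a left-inverse of $R$ within the calculus.

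The conclusion follows in two moves. By the converse part of \propref{prop:OD_varying_orders}, the existence of this left-inverse forces $R$ to be injective and overdetermined elliptic of varying orders; in particular its operator kernel is $\{0\}$ (one checks directly, via the integration-by-parts identity \eqref{eq:integration_by_parts_corrected_k} exactly as in \propref{prop:same_kernels}, that $R\psi=0$ together with the auxiliary decomposition forces $\psi\in\module^k\Complex$ and then, via $\bS_k\psi=0$, $\psi=0$). I then read off the orders and classes of the components of $R$ through the composition rules of \propref{prop:composition_Green_operators}: $\bA_{k-1}\calP_{k-1}\in\frakG^0$ has order $0$ and class $0$ (the boundary operator $B_{k-1}^*$ being absorbed into it, exactly as in \propref{prop:new_estimate1}), $\bA_k^*\bA_k\in\OP(\frakS^{2m_k,m_k})$, $\bS_k\in\OPSii$ is smoothing of class $0$, and the trace system $B_k^*\bA_k$ has $j$-th component $B_{j,k}^*\bA_k$ of order $m_k+j$ for $j=0,\dots,m_k-1$; the maximal order and class are both $2m_k$. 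Applying the forward part of \propref{prop:OD_varying_orders} to $R$ then yields, for every $\bbZ\ni s\ge 2m_k$ and $1<p<\infty$, the estimate \eqref{eq:AkstarAk_OD_elliptic_estimate}, the term $\|\bS_k\psi\|_{0,p}$ arising from the smoothing component $\bS_k$ of $R$ (with the finite-dimensionality of $\module^k\Complex$ used to replace the $W^{s,p}$-norm of $\bS_k\psi$ by its $L^p$-norm).

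The step I expect to require the most care is the factorization $L\circ R$: one must check that $L$ is a genuine morphism in the Boutet de Monvel calculus and that $L\circ R$ is the legitimate composition of Green operators described by \thmref{thm:full_Green_compositon_rules}, so that a left-inverse of $L\circ R$ transports to one of $R$. Conceptually the only delicacy is the same one already present in \propref{prop:new_estimate1}: although $R$ carries no $B_{k-1}^*$-boundary operator, it is still OD elliptic, because that boundary information has been repackaged into the order-zero operator $\bA_{k-1}\calP_{k-1}$ via the cancellation of $\bA_{k-1}$ (order $m_{k-1}$) against $\calP_{k-1}$ (order $-m_{k-1}$). The remaining order/class accounting is routine bookkeeping with \propref{prop:composition_Green_operators}.
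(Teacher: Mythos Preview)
Your proof is correct and follows essentially the same route as the paper's: the paper writes the factorization
\[
\begin{pmatrix}\bA_{k-1}^* \oplus \bA_k^*\bA_k \oplus \calS_k \\ B_{k-1}^* \oplus B_k^*\bA_k\end{pmatrix} =
\begin{pmatrix}\bA_{k-1}^* \oplus \id \oplus \id & 0 \\ B_{k-1}^*+ 0+ 0 & \id\end{pmatrix}
\begin{pmatrix}\bA_{k-1}\calP_{k-1} \oplus \bA_k^* \bA_k \oplus \calS_k \\ B_k^*\bA_k\end{pmatrix},
\]
which is exactly your $L\circ R$, and then concludes in one line that $R$ inherits a left-inverse and hence is OD elliptic of order and class $2m_k$. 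Your write-up simply unpacks each of these steps (the identities $\bA_{k-1}^*\bA_{k-1}\calP_{k-1}=\bA_{k-1}^*$ and $B_{k-1}^*\bA_{k-1}\calP_{k-1}=B_{k-1}^*$, the verification that $L$ is a Green operator, the order/class bookkeeping, and the norm equivalence on the finite-dimensional range of $\bS_k$) with more care than the paper does.
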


\begin{proof}
We write 
\[
\mymat{\bA_{k-1}^* \oplus \bA_k^*\bA_k \oplus \calS_k \\ B_{k-1}^* \oplus B_k^*\bA_k} = 
\mymat{\bA_{k-1}^* \oplus \id \oplus \id & 0 \\ B_{k-1}^*+ 0+ 0 & \id}
\mymat{\bA_{k-1}\calP_{k-1} \oplus \bA_k^* \bA_k \oplus \calS_k \\ B_k^*\bA_k}.
\]
Since the left-hand side has a left-inverse, it follows
that the system $(\bA_{k-1}\calP_{k-1}\oplus \bA_k^*\bA_k\oplus \calS_k, B_k^*\bA_k)$ has a left-inverse, hence is  OD elliptic. By the composition rules, it has order and class $2m_k$, yielding the elliptic estimate \eqref{eq:AkstarAk_OD_elliptic_estimate}.
\end{proof}

\subsection{Stage 3: Closed range argument and a priori estimates for $\bA_k$}

The space $\module^k\Complex$ is a finite-dimensional subspace of $\scrN^{s,p}(\bA_{k-1}^*,B_{k-1}^*)$. Writing $\id=(\id-\bS_k)+\bS_k$ when restricted to $\scrN^{s,p}(\bA_{k-1}^*,B_{k-1}^*)$, yields for every $s\in\Nzero$ and $1<p<\infty$ the topologically-direct splitting,
\[
\scrN^{s,p}(\bA_{k-1}^*,B_{k-1}^*)=\scrN_\bot^{s,p}(\bA_{k-1}^*,B_{k-1}^*)\oplus \module^k\Complex,
\]
which in turn yields a direct decomposition of Fréchet  spaces,
\[
\scrN(\bA_{k-1}^*,B_{k-1}^*)=\scrN_\bot(\bA_{k-1}^*,B_{k-1}^*)\oplus \module^k\Complex,
\]
where $\scrN_\bot(\bA_{k-1}^*,B_{k-1}^*)$ is the complement of $\module^k\Complex$ in $\scrN(\bA_{k-1}^*,B_{k-1}^*)$ with respect to the $L^2$-orthogonal  projection $\bS_k$. 
Combined with the auxiliary decomposition \eqref{eq:aux_induction} induced by $\bA_{k-1}$, we obtain the topologically-direct decomposition,
\beq
\Gamma(\bbE_k)=\scrR(\bA_{k-1})\oplus \scrN_\bot(\bA_{k-1}^*,B_{k-1}^*)\oplus \module^k\Complex .
\label{eq:N0spdecomp}
\eeq
The projection onto $\scrN_\bot(\bA_{k-1}^*,B_{k-1}^*)$ is the map $\calP_\bot = (\id-\bS_k)(\id-\calA_{k-1}\bP_{k-1})$. By the composition rules,   $\calP_\bot\in\frakG^0$. Since all projections on the closed subspaces in \eqref{eq:N0spdecomp} are in $\frakG^0$,  it follows from a density/continuity argument that
\beq
W^{s,p}\Gamma(\bbE_k)=\scrR^{s,p}(\bA_{k-1})\oplus\scrN_\bot^{s,p}(\bA_{k-1}^*,B_{k-1}^*)\oplus\module^k\Complex. 
\label{eq:WspN0}
\eeq
By definition, for $s\in \Nzero$,
\[
\scrN(\bA_{k-1}^*,B_{k-1}^*)\hookrightarrow \scrN^{s,p}(\bA_{k-1}^*,B_{k-1}^*).
\]
Applying the projection $\id-\bS_k$ on both sides,
\[
\scrN_\bot(\bA_{k-1}^*,B_{k-1}^*)\hookrightarrow \scrN^{s,p}_\bot(\bA_{k-1}^*,B_{k-1}^*).
\]

\begin{lemma}
\label{lem:density_Nsp}
For all $1<p<\infty$ and $s\in\Nzero$, the continuous inclusion,
\[
\scrN_\bot(\bA_{k-1}^*,B_{k-1}^*)\hookrightarrow \scrN^{s,p}_\bot(\bA_{k-1}^*,B_{k-1}^*)
\]
is dense. 
\end{lemma}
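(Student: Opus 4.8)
The plan is to use the $\frakG^0$-projection $\calP_\bot=(\id-\bS_k)(\id-\bA_{k-1}\calP_{k-1})$ that was just produced as an approximation device. Three properties of $\calP_\bot$ are needed: (i) it maps $\Gamma(\bbE_k)$ \emph{into} $\scrN_\bot(\bA_{k-1}^*,B_{k-1}^*)$, being the projection onto that summand of the topologically-direct decomposition \eqref{eq:N0spdecomp} (in particular it takes smooth sections to smooth sections); (ii) it is continuous on $W^{s,p}\Gamma(\bbE_k)$ for every $s\in\Nzero$ and $1<p<\infty$, since $\bS_k\in\OPSii$ and, by \propref{prop:composition_Green_operators}, $\bA_{k-1}\calP_{k-1}\in\frakG^0=\OP(\frakS^{0,0})$ (recall $\calP_{k-1}\in\OP(\frakS^{-m_{k-1},0})$ and $\bA_{k-1}\in\OP(\frakS^{m_{k-1},0})$), both of which are $W^{s,p}$-continuous by \eqref{eq:Green_mapping_proprety}; and (iii) the continuous extension of $\calP_\bot$ to $W^{s,p}\Gamma(\bbE_k)$ is exactly the projection onto the middle summand $\scrN_\bot^{s,p}(\bA_{k-1}^*,B_{k-1}^*)$ of the decomposition \eqref{eq:WspN0}, so that $\calP_\bot$ restricts to the identity there.

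Granting these, the proof is a one-line density argument. Fix $\eta\in\scrN_\bot^{s,p}(\bA_{k-1}^*,B_{k-1}^*)$. By density of $\Gamma(\bbE_k)$ in $W^{s,p}\Gamma(\bbE_k)$, choose $\zeta_n\in\Gamma(\bbE_k)$ with $\zeta_n\to\eta$ in $W^{s,p}$, and set $\eta_n=\calP_\bot\zeta_n$. By (i) each $\eta_n$ lies in $\scrN_\bot(\bA_{k-1}^*,B_{k-1}^*)$, and by (ii)--(iii), $\eta_n=\calP_\bot\zeta_n\to\calP_\bot\eta=\eta$ in $W^{s,p}$. This is precisely the asserted density of the inclusion.

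The only point requiring more than a sentence is (iii): one must verify that the complementary idempotents $\bA_{k-1}\calP_{k-1}$, $\calP_\bot$, $\bS_k$ on $\Gamma(\bbE_k)$, whose ranges realize \eqref{eq:N0spdecomp}, remain complementary idempotents after continuous extension to $W^{s,p}\Gamma(\bbE_k)$. This is exactly the density/continuity argument already used to pass from \eqref{eq:N0spdecomp} to \eqref{eq:WspN0}: each operator is $W^{s,p}$-continuous by \secref{sec:pseudo_section}, $\Gamma(\bbE_k)$ is dense in $W^{s,p}\Gamma(\bbE_k)$, and the relations ``square equals self'', ``pairwise products vanish'' and ``sum equals the identity'' are closed conditions, hence persist on $W^{s,p}\Gamma(\bbE_k)$. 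Consequently the range of the extension of $\calP_\bot$ is the summand $\scrN_\bot^{s,p}(\bA_{k-1}^*,B_{k-1}^*)$ of \eqref{eq:WspN0}, on which $\calP_\bot$ acts as the identity. There is no analytic obstacle: everything is a formal consequence of the mapping properties of $\frakG^0$ and $\OPSii$ operators recorded in \secref{sec:pseudo_section} together with the decompositions \eqref{eq:N0spdecomp} and \eqref{eq:WspN0}.
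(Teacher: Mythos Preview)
Your proof is correct and follows essentially the same route as the paper: approximate $\eta$ by smooth sections and push the approximants through the $\frakG^0$-projection $\calP_\bot$, using its $W^{s,p}$-continuity and the fact that it fixes $\scrN_\bot^{s,p}(\bA_{k-1}^*,B_{k-1}^*)$. Your justification of point (iii) via persistence of the idempotent relations under density/continuity is slightly more explicit than the paper's, which simply asserts $\calP_\bot\mu=\mu$.
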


\begin{proof}
Let $\mu\in \scrN_\bot^{s,p}(\bA_{k-1}^*,B_{k-1}^*)$ be given. 
Since $\Gamma(\E_k)$ is dense in $W^{s,p}\Gamma(\E_k)$, there exists a sequence $\psi_n\in\Gamma(\E_k)$, such that
\[
\psi_n \to \mu \qquad\text{in $W^{s,p}$}.
\]
Since $\calP_\bot \in\frakG^0$, it extends to a continuous map $\calP_\bot:W^{s,p}\Gamma(\bbE_k)\to W^{s,p}\Gamma(\bbE_k)$ for every $s$ and $p$. Hence,   
\[
\scrN_\bot(\bA_{k-1}^*,B_{k-1}^*) \ni \calP_\bot \psi_n \to \calP_\bot  \mu = \mu \qquad\text{in $W^{s,p}$},
\]
which completes the proof.
\end{proof}

\begin{proposition}
\label{prop:Ak_closed_range}
For every $s\in\Nzero$ and $1<p<\infty$, $\scrR^{s,p}(\bA_k)$ is a closed subspace of $W^{s,p}\Gamma(\bbE_{k+1})$. Moreover,
\beq
\|\psi\|_{s+m_k,p} \lesssim 
\|\bA_k\psi\|_{s,p},
\label{eq:estimateNsp}
\eeq
for every $\psi\in \scrN^{s+m_k,p}_\bot(\bA_{k-1}^*,B_{k-1}^*)$.
\end{proposition}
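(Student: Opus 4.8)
The plan is to establish the a priori estimate first, and then derive closedness of the range as a routine consequence; all of the analytic content is already contained in \propref{prop:new_estimate1}, so I do not expect a genuine obstacle here.

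First I would prove the estimate \eqref{eq:estimateNsp} by invoking \propref{prop:new_estimate1} at Sobolev regularity $s+m_k$. This is legitimate because $s+m_k$ is an integer with $s+m_k\ge 0>1/p-1$, so the hypothesis of that proposition is met; it yields
\[
\|\psi\|_{s+m_k,p}\lesssim \|\bA_{k-1}\calP_{k-1}\psi\|_{s+m_k,p}+\|\bA_k\psi\|_{s,p}+\|\bS_k\psi\|_{0,p}.
\]
For $\psi\in\scrN^{s+m_k,p}_\bot(\bA_{k-1}^*,B_{k-1}^*)$ the third term vanishes by the defining property of the subspace with subscript $\bot$, and the first term vanishes because $\bA_{k-1}\calP_{k-1}$, extended to $W^{s+m_k,p}$ as the projection onto $\scrR^{s+m_k,p}(\bA_{k-1})$ in the $W^{s,p}$-version \eqref{eq:Wsp_aux_decomp} of the auxiliary decomposition at level $k-1$, annihilates the complementary summand $\scrN^{s+m_k,p}(\bA_{k-1}^*,B_{k-1}^*)$, which contains $\scrN^{s+m_k,p}_\bot(\bA_{k-1}^*,B_{k-1}^*)$. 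What remains is exactly \eqref{eq:estimateNsp}.

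Then I would deduce closedness of $\scrR^{s,p}(\bA_k)$. Using the decomposition \eqref{eq:WspN0} at regularity $s+m_k$, note that $\bA_k$ annihilates $\scrR^{s+m_k,p}(\bA_{k-1})$ (since $\bA_k\bA_{k-1}=0$, which persists in the Sobolev setting by continuity) and $\module^k\Complex$ (by its definition as $\ker(\bA_k\oplus\bA_{k-1}^*\oplus B_{k-1}^*)$), so $\scrR^{s,p}(\bA_k)=\bA_k\bigl(\scrN^{s+m_k,p}_\bot(\bA_{k-1}^*,B_{k-1}^*)\bigr)$. Now if $\bA_k\psi_n\to\chi$ in $W^{s,p}$ with $\psi_n\in\scrN^{s+m_k,p}_\bot(\bA_{k-1}^*,B_{k-1}^*)$, the estimate just proved gives $\|\psi_n-\psi_l\|_{s+m_k,p}\lesssim\|\bA_k(\psi_n-\psi_l)\|_{s,p}$, so $(\psi_n)$ is Cauchy and converges to some $\psi$; since $\scrN^{s+m_k,p}_\bot(\bA_{k-1}^*,B_{k-1}^*)$ is the image of the continuous projection $\calP_\bot\in\frakG^0$ it is closed, hence $\psi$ lies in it, and continuity of $\bA_k\colon W^{s+m_k,p}\to W^{s,p}$ yields $\bA_k\psi=\chi\in\scrR^{s,p}(\bA_k)$.

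The only point that requires care is the passage between the smooth auxiliary decomposition, built inductively in the preceding stages, and its $W^{s,p}$-extension, invoked both when discarding the $\bA_{k-1}\calP_{k-1}$ term and when using \eqref{eq:WspN0}; I would settle this by citing \eqref{eq:Wsp_aux_decomp} and \eqref{eq:WspN0} directly. Beyond that, this is the standard ``a priori estimate on a complementary subspace $\Rightarrow$ closed range'' argument, with all the real work already carried out in \propref{prop:new_estimate1}.
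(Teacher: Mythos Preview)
Your proof is correct and follows essentially the same approach as the paper: reduce the estimate \eqref{eq:Ak_OD_elliptic_estimate} from \propref{prop:new_estimate1} to \eqref{eq:estimateNsp} by restricting to $\scrN^{s+m_k,p}_\bot(\bA_{k-1}^*,B_{k-1}^*)$, identify $\scrR^{s,p}(\bA_k)$ with the image of this subspace via \eqref{eq:WspN0}, and conclude closedness. The paper simply cites \cite[Prop.~6.7, p.~583]{Tay11a} for the last step where you spell out the Cauchy-sequence argument, and your explicit mention that $\bA_k$ also annihilates $\module^k\Complex$ is a detail the paper leaves implicit.
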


\begin{proof}
For $s\in\Nzero$,
\[
\scrR^{s,p}(\bA_k) = \bA_k(W^{s+m_k,p}\Gamma(\bbE_k))=\bA_k(\scrN_\bot^{s+m_k,p}(\bA_{k-1}^*,B_{k-1}^*)),
\]
where in the last passage we substituted the 
$W^{s+m_k,p}$ version of \eqref{eq:WspN0} and the fact that $\bA_k\bA_{k-1}=0$.
For $\psi\in \scrN_\bot^{s+m_k,p}(\bA_{k-1}^*,B_{k-1}^*)$, the elliptic estimate \eqref{eq:Ak_OD_elliptic_estimate} reduces to \eqref{eq:estimateNsp}. This in turn implies that $\bA_k(\scrN_\bot^{s+m_k,p}(\bA_{k-1}^*,B_{k-1}^*))$ is a closed subspace due to \cite[Prop.~6.7, p.~583]{Tay11a}. 
\end{proof}

Since $\bA_k\in \OP(\frakS^{m_k,0})$, it operates continuously as $\bA_k:L^p\Gamma(\bbE_k)\to W^{-m_k,p}\Gamma(\bbE_{k+1})$. The following proposition addresses the case where
$\psi\in \scrN_{\bot}^{0,p}(\bA_{k-1}^*,B_{k-1}^*)$, however $\bA_k\psi$ is of higher regularity than guaranteed by this mapping property:

\begin{proposition}
\label{prop:aprioriAk}
Let $\psi\in\scrN^{0,p}_\bot(\bA_{k-1}^*,B_{k-1}^*)$ for $1<p<\infty$. For every $s\in\Nzero$,
\[
\bA_k\psi \in W^{s,p}\Gamma(\E_{k+1})
\qquad\text{implies}\qquad
\psi\in \scrN^{s+m_k,p}_\bot(\bA_{k-1}^*,B_{k-1}^*).
\]
\end{proposition}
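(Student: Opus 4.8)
The plan is to upgrade the regularity of $\psi$ by applying the left-inverse of the system underlying \propref{prop:new_estimate1}, rather than by invoking the a priori estimate \eqref{eq:Ak_OD_elliptic_estimate} directly: the left-inverse keeps track of orders and classes precisely enough to bootstrap regularity starting from any $s\in\Nzero$. The first point is that the system $(\bA_{k-1}\calP_{k-1}\oplus\bA_k\oplus\calS_k)$, shown to be OD elliptic in the proof of \propref{prop:new_estimate1}, is in fact \emph{injective}: if it annihilates $\psi$, then $\bA_{k-1}\calP_{k-1}\psi=0$ places $\psi$ in $\scrN(\bA_{k-1}^*,B_{k-1}^*)$ by the inductive auxiliary decomposition \eqref{eq:aux_induction}, and together with $\bA_k\psi=0$ this gives $\psi\in\module^k\Complex$, whence $\calS_k\psi=0$ forces $\psi=0$. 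Hence, by \propref{prop:OD_varying_orders}, the system admits a left-inverse $\calR$ within the calculus of Green operators; since $\bA_{k-1}\calP_{k-1}\in\frakG^0$ and $\calS_k\in\OPSii$ both have class $\le 0$ while $\bA_k$ has order $m_k$ and class $0$, the system has maximal order $m_k$ and class $0$, so $\calR$ has order $-m_k$ and class $-m_k\le 0$, and $\calR\circ(\bA_{k-1}\calP_{k-1}\oplus\bA_k\oplus\calS_k)=\id + S$ with $S$ a Green operator of order $-\infty$ and class $\le 0$ by the composition rules (\propref{prop:composition_Green_operators}).

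Next I would apply this operator identity to $\psi\in W^{0,p}\Gamma(\E_k)$, which is legitimate because all operators involved act on distributional sections. Since the condition $\psi\in\scrN_\bot^{0,p}(\bA_{k-1}^*,B_{k-1}^*)$ is exactly $\bA_{k-1}\calP_{k-1}\psi=0$ together with $\calS_k\psi=0$ — these being the two projections complementary to $\calP_\bot$ in the decomposition \eqref{eq:WspN0} — the identity collapses to $\psi=\calR_\bullet(\bA_k\psi)-S\psi$, where $\calR_\bullet$ is the component of $\calR$ acting on the $\bA_k$-slot, of order $-m_k$ and class $\le 0$. Feeding in the hypothesis $\bA_k\psi\in W^{s,p}\Gamma(\E_{k+1})$, the Sobolev mapping property of $\OPS$-operators yields $\calR_\bullet(\bA_k\psi)\in W^{s+m_k,p}\Gamma(\E_k)$, while $S$, having order $-\infty$ and nonpositive class, maps $W^{0,p}$ into $\bigcap_{t} W^{t,p}$, so $S\psi$ is smooth; therefore $\psi\in W^{s+m_k,p}\Gamma(\E_k)$.

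To conclude, I note that the identities $\bA_{k-1}\calP_{k-1}\psi=0$ and $\calS_k\psi=0$ persist once $\psi$ is regarded in $W^{s+m_k,p}$, so $\calP_\bot\psi=\psi$ there as well, i.e. $\psi\in\scrN_\bot^{s+m_k,p}(\bA_{k-1}^*,B_{k-1}^*)$, which is the claim. The only step requiring genuine care is the class bookkeeping in the first paragraph: one must check that the left-inverse and the smoothing remainder have class $\le 0$, so that their mapping properties are available all the way down to $L^p$ with no artificial lower bound on the Sobolev index. This is precisely the reason \propref{prop:new_estimate1} was set up through the order-reduced, class-zero system $(\bA_{k-1}\calP_{k-1}\oplus\bA_k\oplus\calS_k)$ rather than through \eqref{eq:OD_elliptic_estimate_sharp}, so the needed structure is already in hand and no new difficulty arises.
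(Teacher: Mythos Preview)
Your proof is correct and takes a genuinely different route from the paper's. The paper argues via approximation and weak compactness: it picks a smooth sequence $\psi_n\in\scrN_\bot(\bA_{k-1}^*,B_{k-1}^*)$ converging to $\psi$ in $L^p$ (\lemref{lem:density_Nsp}), shows that $\nabla^s\bA_k\psi_n$ converges weakly in $L^p$ and is therefore bounded, feeds this into the a~priori estimate \eqref{eq:Ak_OD_elliptic_estimate} to bound $\psi_n$ in $W^{s+m_k,p}$, and concludes by passing to a weak limit. You instead exploit the \emph{exact} left-inverse of the injective OD system $(\bA_{k-1}\calP_{k-1}\oplus\bA_k\oplus\calS_k)$ directly, reading off the regularity of $\psi$ from that of $\bA_k\psi$ through the mapping properties of the $\bA_k$-component of the inverse.

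Your approach is cleaner: it avoids the density lemma, the dualization of $\nabla^s\bA_k$, and any weak-limit argument, and it makes explicit that the regularity gain is a pure parametrix phenomenon. The one place where you must be precise is exactly the one you flag: the component of $\calR$ acting on the $\bA_k$-slot must have order $-m_k$ and class $\le 0$. This does hold --- after order-reduction the system sits in $\frakG^0$, its left-inverse is in $\frakG^0$, and undoing the reduction gives the $\bA_k$-component order $-m_k$ and class $m_k-m_k=0$ (since $m_k$ is the maximal order in the system). The paper's approach, by contrast, never needs to isolate a component of the left-inverse; it only uses the scalar estimate \eqref{eq:Ak_OD_elliptic_estimate}, which is perhaps why the authors chose it. Both arguments are valid; yours is more structural.
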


\begin{proof}
Consider the operator $\nabla^s:\Gamma(\bbE_{k+1})\to\Gamma(\otimes_{i=1}^sT^*M\otimes\bbE_{k+1})$, which is a differential operator of order $s$, hence has an adjoint $\nabla^{s*}:\Gamma(\otimes_{i=1}^sT^*M\otimes\bbE_{k+1})\to \Gamma(\bbE_{k+1})$ which is also a differential operator of order $s$. 

By iterating the integration by parts formulas for $\nabla^s$ and $\bA_k$, using the $L^p$--$L^q$ duality we may extend the operation of $\nabla^s\bA_k$ to a continuous map,
\[
\nabla^s\bA_k:L^p\Gamma(\bbE_k)\to W^{-s-m_k,p}\Gamma(\otimes_{i=1}^sT^*M\otimes\bbE_{k+1}),
\] 
defined by
\[
\bra \nabla^s\bA_k\lambda,\eta\ket =\bra \lambda,\bA_k^*\nabla^{s*}\eta\ket \qquad \eta\in W^{s+m_k,q}_0\Gamma(\otimes_{i=1}^sT^*M\otimes\bbE_{k+1}).
\]
Since $M$ is compact, the $W^{s,p}$ norm is equivalent to the sum of the $L^p$ norm and the $L^p$ norm of the $s$-derivative,
\beq
\|\nabla^s\bA_k\psi\|_{0,p} + \|\bA_k\psi\|_{0,p}\lesssim\|\bA_k\psi\|_{s,p}\lesssim\|\nabla^s\bA_k\psi\|_{0,p}+\|\bA_k\psi\|_{0,p}.
\label{eq:eq_norms}
\eeq

Let $\psi\in\scrN^{0,p}_\bot(\bA_{k-1}^*,B_{k-1}^*)$ satisfy $\bA_k\psi\in W^{s,p}\Gamma(\bbE_{k+1})$. Using \lemref{lem:density_Nsp}, let $\psi_n\in \scrN_\bot(\bA_{k-1}^*,B_{k-1}^*)$ converge to $\psi$ in $L^p$. For all $\eta\in W^{s+m_k,q}_0\Gamma(\otimes_{i=1}^sT^*M\otimes\bbE_{k+1})$,  
\[
\bra \nabla^s\bA_k\psi_n,\eta\ket =\bra \psi_n,\bA_k^*\nabla^{s*}\eta\ket\to \bra \psi,\bA_k^*\nabla^{s*}\eta\ket = \bra \nabla^s\bA_k\psi,\eta\ket.
\]
By assumption, $\nabla^s\bA_k\psi\in L^p\Gamma(\otimes_{i=1}^sT^*M\otimes\bbE_{k+1})$, hence 
$\nabla^s\bA_k\psi_n$ weakly converges to $\nabla^s\bA_k\psi$ in $L^p$. It follows that $\nabla^s\bA_k\psi_n$ is $L^p$-bounded, and from \eqref{eq:eq_norms}, $\bA_k\psi_n$ is $W^{s,p}$-bounded. It follows from \eqref{eq:estimateNsp} that $\psi_n$ is a bounded sequence in $W^{s+m_k,p}$, hence has a weakly converging subsequence. By the uniqueness of the limit, $\psi\in W^{s+m_k,p}\Gamma(\E_k)\cap\scrN^{0,p}_\bot(\bA_{k-1}^*,B_{k-1}^*)$, i.e., $\psi\in \scrN^{s+m_k,p}_\bot(\bA_{k-1}^*,B_{k-1}^*)$. 
\end{proof}

By invoking a slightly modified procedure for the OD elliptic system $(\bA_k^*\bA_k\oplus \bA_{k-1}\calP_{k-1}\oplus\calS_k, B_k^*\bA_k)$ and using the estimate \eqref{eq:AkstarAk_OD_elliptic_estimate},  \propref{prop:AstarA_OD_system} yields the following analog of \propref{prop:aprioriAk}:

\begin{proposition}
\label{prop:AprioriAstarAk} 
Let  $\psi\in\scrN^{0,p}_\bot(\bA_{k-1}^*,B_{k-1}^*)$, $1<p<\infty$, and let $s\in\Nzero$.
Suppose that  there exists an $\eta \in W^{s+m_k,p}\Gamma(\bbE_{k+1})$ such that $\bA_k\psi-\eta\in \scrN^{0,p}(\bA_k^*,B_k^*)$. Then $\psi\in \scrN^{s+2m_k,p}_\bot(\bA_{k-1}^*,B_{k-1}^*)$, and 
\beq
\|\psi\|_{s+2m_k,p} \lesssim 
\|\bA_k^*\bA_k\psi\|_{s,p} + 
\sum_{j=0}^{m_{k-1}}\|B_{j,k}^*\bA_k\psi\|_{s+m_k-j-1/p,p}.
\label{eq:estimateN0p2}
\eeq
\end{proposition}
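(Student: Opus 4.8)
The argument is the exact analogue, one order higher, of the proof of \propref{prop:aprioriAk}: the role played there by the first–order operator $\bA_k$ is played here by the second–order composite $\bA_k^*\bA_k$, the a priori estimate \eqref{eq:estimateNsp} is replaced by the reduction of \eqref{eq:AkstarAk_OD_elliptic_estimate} to the subspace $\scrN_\bot^{s+2m_k,p}(\bA_{k-1}^*,B_{k-1}^*)$, and one extra boundary datum, $B_k^*\bA_k\psi$, must be carried along. The ellipticity input is \propref{prop:new_estimate2} together with \propref{prop:AstarA_OD_system} and \propref{prop:same_kernels}, which already supply \eqref{eq:AkstarAk_OD_elliptic_estimate}.

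The first step is to reduce the estimate. For $\psi\in\scrN_\bot^{s+2m_k,p}(\bA_{k-1}^*,B_{k-1}^*)$ the inductive assumption that $(\id-\bA_{k-1}\calP_{k-1})$ is the projection onto $\scrN(\bA_{k-1}^*,B_{k-1}^*)$ gives $\bA_{k-1}\calP_{k-1}\psi=0$, while $\bS_k\psi=0$ by the definition of $\scrN_\bot$. Hence \eqref{eq:AkstarAk_OD_elliptic_estimate}, read with $s$ shifted to $s+2m_k$, collapses to precisely \eqref{eq:estimateN0p2}. It therefore remains to prove the membership $\psi\in\scrN^{s+2m_k,p}_\bot(\bA_{k-1}^*,B_{k-1}^*)$, the asserted estimate then being a consequence.

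The second step converts the hypothesis into regularity of the interior and boundary data attached to $\bA_k\psi$. By \eqref{eq:range_null_bot1}, $\scrN^{0,p}(\bA_k^*,B_k^*)$ is the $L^q$-annihilator of $\scrR^{0,q}(\bA_k)$, so $\bA_k\psi-\eta\in\scrN^{0,p}(\bA_k^*,B_k^*)$ amounts to $\bra\bA_k\psi-\eta,\bA_k\omega\ket=0$ for every $\omega\in W^{m_k,q}\Gamma(\bbE_k)$. Testing against compactly supported $\omega$ and using the integration-by-parts formula \eqref{eq:integration_by_parts_corrected_k} (all boundary terms drop out) identifies the distributional section $\bA_k^*\bA_k\psi$ with $\bA_k^*\eta\in W^{s,p}\Gamma(\bbE_k)$; testing against $\omega$ with prescribed $B_k\omega$ identifies the weak boundary datum $B_k^*\bA_k\psi$ — interpreted through the dual extension of the composite trace operator $B_k^*\bA_k\in\OPT$ — with $B_k^*\eta$, which lies in $\bigoplus_j W^{s+m_k-j-1/p,p}\Gamma(\bbG_k)$ since $\eta\in W^{s+m_k,p}$.

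The third step runs the density/compactness argument of \propref{prop:aprioriAk} one order up. Extend $\nabla^s\bA_k^*\bA_k$ to a continuous map on $L^p\Gamma(\bbE_k)$ by the pairing $\bra\nabla^s\bA_k^*\bA_k\lambda,\zeta\ket=\bra\lambda,\bA_k^*\bA_k\nabla^{s*}\zeta\ket$ for $\zeta\in W^{s+2m_k,q}_0$, and extend each $\nabla^{s+m_k-j}B_{j,k}^*\bA_k$ analogously by duality through \eqref{eq:integration_by_parts_corrected_k}. Using \lemref{lem:density_Nsp}, choose smooth $\psi_n\in\scrN_\bot(\bA_{k-1}^*,B_{k-1}^*)$ with $\psi_n\to\psi$ in $L^p$. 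By the previous step the extended data evaluated at $\psi_n$ converge weakly to $\bA_k^*\eta$ and to $B_k^*\eta$, hence are bounded in $W^{s,p}$ and in the boundary $W^{s+m_k-j-1/p,p}$ norms; inserting this into \eqref{eq:estimateN0p2} applied to the smooth $\psi_n$ bounds $\psi_n$ in $W^{s+2m_k,p}$, and passing to a weakly convergent subsequence together with uniqueness of the limit yields $\psi\in W^{s+2m_k,p}\Gamma(\bbE_k)\cap\scrN^{0,p}_\bot(\bA_{k-1}^*,B_{k-1}^*)=\scrN^{s+2m_k,p}_\bot(\bA_{k-1}^*,B_{k-1}^*)$, whence \eqref{eq:estimateN0p2} by the first step. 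The main obstacle is the bookkeeping for the positive-class trace operator $B_k^*\bA_k$: the trace of a merely $L^p$ section is not classically defined, so every boundary quantity has to be handled through its weak (dual) formulation, and one must verify that this dual extension agrees with the classical trace on smooth sections and that its weak limit along $\psi_n$ is indeed the smoother section $B_k^*\eta$; with that in place the remaining steps transcribe those of \propref{prop:aprioriAk} verbatim.
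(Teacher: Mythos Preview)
Your overall strategy is the paper's, but there is a gap in the approximation step that matters precisely at the ``main obstacle'' you flag. You take $\psi_n\to\psi$ in $L^p$ only and then try to make sense of a weak limit for $B_k^*\bA_k\psi_n$ by ``extending $\nabla^{s+m_k-j}B_{j,k}^*\bA_k$ by duality through \eqref{eq:integration_by_parts_corrected_k}''. A trace operator of positive class has no such dual extension to $L^p$: the integration-by-parts formula produces a \emph{boundary} pairing on one side, not an interior one, so there is no analogue of the $\bra\lambda,\bA_k^*\nabla^{s*}\zeta\ket$ trick that worked for the interior term in \propref{prop:aprioriAk}. With only $L^p$ convergence of $\psi_n$ you cannot control $\bA_k\psi_n$ in $L^p$, and without that the boundary bookkeeping does not close.

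The paper supplies the missing ingredient in two parts. First, the hypothesis already forces $\bA_k\psi\in L^p$ (both $\eta$ and $\bA_k\psi-\eta$ lie in $L^p$), so \propref{prop:aprioriAk} with $s=0$ gives $\psi\in\scrN^{m_k,p}_\bot(\bA_{k-1}^*,B_{k-1}^*)$; one then approximates in $W^{m_k,p}$, so that $\bA_k\psi_n\to\bA_k\psi$ in $L^p$. Second, instead of dualizing the trace, the paper recasts the boundary pairing as a difference of interior pairings: using the boundary order-reduction isomorphism $\calL_k$ and the surjectivity of $B_k$, for each test section $\lambda$ choose $\xi$ with $B_k\xi=(\calL_k)^*\lambda$ and write, via \eqref{eq:integration_by_parts_corrected_k},
\[
\bra\calL_kB_k^*\bA_k\psi_n,\lambda\ket=\bra\bA_k\psi_n,\bA_k\xi\ket-\bra\bA_k^*\bA_k\psi_n,\xi\ket.
\]
The first term converges because $\bA_k\psi_n\to\bA_k\psi$ in $L^p$; the second converges weakly by the interior argument you already have. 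The limit is identified with $\bra\calL_kB_k^*\eta,\lambda\ket$, giving uniform $L^p$-boundedness of $\calL_kB_k^*\bA_k\psi_n$, hence the required boundary bounds. Once you insert this preliminary bootstrap and the Green's-formula rewriting of the boundary term, the rest of your outline goes through verbatim.
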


\begin{proof}
First, it follows from \propref{prop:aprioriAk} that $\psi\in\scrN^{m_k,p}_\bot(\bA_{k-1}^*,B_{k-1}^*)$.
It suffices to show that if
\beq
\bra \bA_k\psi - \eta, \bA_k\lambda\ket = 0
\qquad
\text{for every $\lambda\in W^{m_k,q}\Gamma(\E_k)$},
\label{eq:condition_Ak*Ak}
\eeq
then for every $W^{m_k,p}$-approximating sequence $\psi_n\in\Gamma(\E_k)$ for $\psi$,
\[
\sup_n \|\bA_k^* \bA_k\psi_n\|_{s,p} < \infty
\Textand
\sup_n \max_j \|B_{j,k}^*\bA_k\psi_n\|_{s + m_k - j - 1/p,p} < \infty,
\]
as \eqref{eq:AkstarAk_OD_elliptic_estimate} and the density of $\scrN_\bot(\bA_{k-1}^*,B_{k-1}^*)$ in $\scrN^{m_k,p}_\bot(\bA_{k-1}^*,B_{k-1}^*)$ then imply that $\psi\in W^{s + 2m_k,p}\Gamma(\E_k)$ along with the estimate \eqref{eq:estimateN0p2}. The uniform boundedness of the boundary sections can be replaced by the equivalent
\[
\sup_n \|\calL_k  B_k^*\bA_k\psi_n\|_{0,p} < \infty,
\]
where
\[
\calL_k : \bigoplus_{j=0}^{m_k-1} W^{s + m_k - j - 1/p,p}\Gamma(\bbJ_{j,k}) \to L^p\Gamma(\bbG_k)
\]
is the isomorphism given by
\[
\calL_k = \bigoplus_{j=0}^{m_k-1} \calL_{\bbJ_{j,k}}^{s + m_k - j - 1/p},
\]
and $\calL_{\bbJ_{j,k}}^t$ are the boundary order-reduction operators.

Thus, assume that \eqref{eq:condition_Ak*Ak} holds and let $\psi_n$ be an $W^{m_k,p}$-approximating sequence for $\psi$.
Recall that $\bA_k^*:L^p\Gamma(\bbE_{k+1})\to W^{-m_k,p}\Gamma(\bbE_k)$ continuously as elements in $\OP(\frakS^{m_k,0})$ by
\[
\bra \bA_k^*\omega, \lambda \ket = \bra \omega, \bA_k \lambda \ket
\qquad
\text{for every $\lambda\in W_0^{m_k,q}\Gamma(\E_k)$}.
\]
Comparing with \eqref{eq:condition_Ak*Ak}, 
\[
\bra \bA_k^*\bA_k\psi - \bA_k^* \eta, \lambda\ket = 0
\qquad
\text{for every $\lambda\in W_0^{m_k,q}\Gamma(\E_k)$},
\]
which implies that
\beq
\bA_k^*\bA_k\psi = \bA_k^* \eta \in W^{s,p}\Gamma(\E_k).
\label{eq:A*Apsi=A*eta}
\eeq

As in the proof of \propref{prop:aprioriAk}, the continuous map
\[
\nabla^s\bA_k^*\bA_k:W^{m_k,p}\Gamma(\bbE_k)\to W^{-m_k-s,p}\Gamma(\otimes_{i=1}^sT^*M\otimes\bbE_k)
\] 
is defined by 
\[
\bra \nabla^s\bA_k^*\bA_k\psi,\eta\ket =\bra \bA_k\psi,\bA_k\nabla^{s*}\eta\ket \qquad \forall\eta\in W^{s+m_k,q}_0\Gamma(\otimes_{i=1}^sT^*M\otimes\bbE_k).
\]
By the same argument as in \propref{prop:aprioriAk}, $\nabla^s\bA_k^*\bA_k\psi_n$ weakly converges in $L^p$ to $\nabla^s\bA_k^*\bA_k\psi$. Thus, $\nabla^s\bA_k^*\bA_k\psi_n$ is $L^p$-bounded, which by an equivalence analogous to \eqref{eq:eq_norms} translates into the $W^{s,p}$-boundedness of $\bA_k^*\bA_k\psi_n$. 

Next, for every $\lambda\in \Gamma(\bbG_k)$, since $B_k$ is surjective, there exists a $\xi\in\Gamma(\E_k)$, such that
\[
B_k \xi = (\calL_k)^* \lambda,
\]
where $(\calL_k)^*$ is the adjoint of the pseudodifferential operator $\calL_k\in L_{\mathrm{cl}}(\dM;\bbG_k,\bbG_k)$. Then,
\[
\begin{split}
\bra \calL_k B_k^*\bA_k\psi_n,\lambda\ket &=
\bra B_k^*\bA_k\psi_n, (\calL_k)^* \lambda\ket 
= \bra \bA_k\psi_n, \bA_k \xi\ket - \bra \bA_k^*\bA_k\psi_n, \xi\ket .
\end{split}
\]
Since $\bA_k\psi_n$ converges to $\bA_k\psi$ in $L^p$ and $ \bA_k^*\bA_k\psi_n$ weakly converges in $L^p$ to $\bA_k^*\bA_k\psi$, it follows that
\[
\begin{split}
\limn \bra \calL_k B_k^*\bA_k\psi_n,\lambda\ket &= \bra \bA_k\psi, \bA_k \xi\ket - \bra \bA_k^*\bA_k\psi, \xi\ket \\
&=  \bra \eta, \bA_k \xi\ket - \bra \bA_k^*\bA_k\psi, \xi\ket \\
&=  \bra \bA_k^* \eta,  \xi\ket + \bra B_k^* \eta, B_k \xi\ket - \bra \bA_k^*\bA_k\psi, \xi\ket \\
&=  \bra \calL_k B_k^* \eta,   \lambda \ket - \bra \bA_k^*(\bA_k\psi - \eta) , \xi\ket.
\end{split} 
\]
The second term on the right-hand side vanishes by \eqref{eq:A*Apsi=A*eta}, hence $\calL_k B_k^*\bA_k\psi_n$ converges weakly in $L^p$ to $\calL_k B_k^* \eta$, hence is uniformly $L^p$-bounded. This completes the proof.
\end{proof}

\subsection{Stage 4: Auxiliary decomposition induced by $\bA_k$}

We first establish Condition (a) in \defref{def:aux_decomposition} of an auxiliary decomposition: 

\begin{proposition}
\label{prop:aux_first_step}
There exists a topologically-direct decomposition of Fr\'echet spaces, 
\beq
\Gamma(\bbE_{k+1})=\scrR(\bA_k)\oplus\scrN(\bA_k^*,B_k^*).
\label{eq:aux_decompstionAksmooth} 
\eeq
The decomposition is $L^2$-orthogonal, and the smooth projection onto $\scrR(\bA_k)$ continuously extends to the $L^2$-orthogonal projection onto $\scrR^{0,2}(\bA_k)$.
\end{proposition}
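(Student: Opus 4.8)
The plan is to build the $L^2$-orthogonal projection onto the range of $\bA_k$ and then upgrade it to a projection on smooth sections; this will simultaneously produce the topologically-direct smooth splitting and the claimed extension property.

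First I would set up the $L^2$ picture. Since $\bA_k = A_k + G_k$ is an adapted Green operator by the inductive hypothesis, Proposition~\ref{prop:L2_aux_adapted_pair} gives the $L^2$-orthogonal splitting $L^2\Gamma(\bbE_{k+1}) = \overline{\scrR^{0,2}(\bA_k)}\oplus\scrN^{0,2}(\bA_k^*,B_k^*)$, while Proposition~\ref{prop:Ak_closed_range} (with $s=0$, $p=2$) shows that $\scrR^{0,2}(\bA_k)$ is already closed, so the overline is superfluous. Let $\Pi$ be the associated $L^2$-orthogonal projection onto $\scrR^{0,2}(\bA_k)$.

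The main step is to show that $\Pi$ preserves smoothness, i.e. $\Pi(\Gamma(\bbE_{k+1}))\subseteq\Gamma(\bbE_{k+1})$. Given $\eta\in\Gamma(\bbE_{k+1})$, write $\Pi\eta\in\scrR^{0,2}(\bA_k) = \bA_k\bigl(W^{m_k,2}\Gamma(\E_k)\bigr)$; using the $W^{m_k,2}$-version of the decomposition \eqref{eq:WspN0} together with $\bA_k\bA_{k-1}=0$ and $\module^k\Complex\subseteq\ker\bA_k$, we may choose $\psi\in\scrN_\bot^{m_k,2}(\bA_{k-1}^*,B_{k-1}^*)$ with $\bA_k\psi = \Pi\eta$. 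Then $\bA_k\psi - \eta = -(\eta - \Pi\eta)\in\scrN^{0,2}(\bA_k^*,B_k^*)$, and $\eta\in W^{s+m_k,2}\Gamma(\bbE_{k+1})$ for every $s\in\Nzero$, so Proposition~\ref{prop:AprioriAstarAk} applies for every $s$ and yields $\psi\in\scrN_\bot^{s+2m_k,2}(\bA_{k-1}^*,B_{k-1}^*)$ for all $s$; hence $\psi\in\Gamma(\E_k)$ by Sobolev embedding, and therefore $\Pi\eta = \bA_k\psi\in\Gamma(\bbE_{k+1})$. I expect this to be the crux: it is exactly the role of Proposition~\ref{prop:AprioriAstarAk} to compensate for the fact that $\bA_k$ itself is not elliptic, by running elliptic regularity for the auxiliary system $(\bA_{k-1}^*\oplus\bA_k^*\bA_k,\,B_{k-1}^*\oplus B_k^*\bA_k)$ along the slice $\scrN_\bot(\bA_{k-1}^*,B_{k-1}^*)$, where the inductive auxiliary decomposition of $\bA_{k-1}$ is precisely what makes the argument close.

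Finally I would assemble the splitting. The restriction $\Pi|_{\Gamma(\bbE_{k+1})}$ is $L^2$-continuous, defined on a dense subspace, and valued in $\Gamma(\bbE_{k+1})$; by the closed graph theorem for Fréchet spaces (or, quantitatively, by feeding $\bA_k^*\bA_k\psi = \bA_k^*\eta$ and $B_k^*\bA_k\psi = B_k^*\eta$ into \eqref{eq:estimateN0p2}, which gives $\|\Pi\eta\|_{s,2}\lesssim\|\eta\|_{s+m_k,2}$), it is continuous for the $C^\infty$-topology. Being a continuous idempotent, it induces the topologically-direct decomposition $\Gamma(\bbE_{k+1}) = \image(\Pi|_\Gamma)\oplus\ker(\Pi|_\Gamma)$; one checks $\image(\Pi|_\Gamma) = \scrR(\bA_k)$ (the inclusion $\subseteq$ is the previous step, and $\supseteq$ holds because any $\bA_k\omega$ with $\omega\in\Gamma(\E_k)$ lies in $\scrR^{0,2}(\bA_k)$, hence is $\Pi$-fixed) and $\ker(\Pi|_\Gamma) = \Gamma(\bbE_{k+1})\cap\scrN^{0,2}(\bA_k^*,B_k^*) = \scrN(\bA_k^*,B_k^*)$, using the identification of the smooth sections of $\scrN^{0,2}(\bA_k^*,B_k^*)$ with $\ker(\bA_k^*\oplus B_k^*)$ from \S\ref{sec:adapting}. $L^2$-orthogonality is inherited from the first step since $\scrR(\bA_k)\subseteq\scrR^{0,2}(\bA_k)$ and $\scrN(\bA_k^*,B_k^*)\subseteq\scrN^{0,2}(\bA_k^*,B_k^*)$; and since the projection onto $\scrR(\bA_k)$ along $\scrN(\bA_k^*,B_k^*)$ is $\Pi|_\Gamma$, its continuous $L^2$-extension is precisely $\Pi$, the orthogonal projection onto $\scrR^{0,2}(\bA_k)$, as required.
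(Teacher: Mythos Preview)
Your proof is correct and follows essentially the same approach as the paper: both start from the $L^2$-orthogonal splitting obtained by combining \propref{prop:L2_aux_adapted_pair} with the closed-range result \propref{prop:Ak_closed_range}, and both use \propref{prop:AprioriAstarAk} as the key regularity bootstrap to show that the $L^2$-projection preserves higher regularity. The only organizational difference is that the paper proves the decomposition at each fixed Sobolev level $W^{s+m_k,2}$ and then intersects over $s$, whereas you take $\eta$ smooth from the start and run the bootstrap for all $s$ at once; the content is identical.
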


\begin{proof}
\propref{prop:Ak_closed_range} implies, in particular, that $\scrR^{0,2}(\bA_k)$ is a closed subspace of $L^2\Gamma(\bbE_{k+1})$. By \propref{prop:L2_aux_adapted_pair} 
\beq
L^2\Gamma(\E_{k+1}) = \scrR^{0,2}(\bA_k) \oplus \scrN^{0,2}(\bA_k^*,B_k^*)
\label{eq:L2_splitting_proof}
\eeq 
is an $L^2$-orthogonal decomposition.

Let $s\in\Nzero$.  On the one hand, $\scrR^{s+m_k,2}(\bA_k)$ 
is a closed subspace of $W^{s+m_k,2}\Gamma(\E_{k+1})$ by \propref{prop:Ak_closed_range}. On the other hand, $\scrN^{s+m_k,2}(\bA_k^*,B_k^*)$ is a closed subspace as the kernel of the  continuous linear operator $\bA_k^*\oplus B_k^*$. By \eqref{eq:L2_splitting_proof}, these two closed subspaces intersect trivially and are mutually $L^2$-orthogonal. Thus, in order to prove that,
\beq
W^{s+m_k,2}\Gamma(\bbE_{k+1})=\scrR^{s+m_k,2}(\bA_k)\oplus \scrN^{s+m_k,2}(\bA_k^*,B_k^*),
\label{eq:this_holds}
\eeq
it remains to prove that the sum $\scrR^{s+m_k,2}(\bA_k)+ \scrN^{s+m_k,2}(\bA_k^*,B_k^*)$ exhausts the whole of $W^{s+m_k,2}\Gamma(\bbE_{k+1})$. If \eqref{eq:this_holds} holds for every $s\in\Nzero$, then \eqref{eq:aux_decompstionAksmooth} holds. 

Let $\eta \in W^{s+m_k,2}\Gamma(\bbE_{k+1})$ be given. Decompose it as an element in $L^2\Gamma(\bbE_{k+1})$ according to \eqref{eq:L2_splitting_proof},
\[
\eta= \bA_k\psi+\mu,
\]
where $\bA_k\psi\in \scrR^{0,2}(\bA_k)$ and $\mu\in \scrN^{0,2}(\bA_k^*,B_k^*)$. Since $\bA_k\bA_{k-1}=0$, it follows from \eqref{eq:N0spdecomp} that
$\psi\in W^{m_k,2}\Gamma(\bbE_k)$ can be chosen in $\scrN^{m_k,2}_{\bot}(\bA_{k-1}^*,B_{k-1}^*)$.

Since $\eta-\bA_k\psi\in\scrN^{0,2}(\bA_k^*,B_k^*)$, \propref{prop:AprioriAstarAk} implies that $\psi\in \scrN^{s+2m_k,2}_{\bot}(\bA_{k-1}^*,B_{k-1}^*)$, hence $\bA_k\psi\in \scrR^{s+m_k,2}(\bA_k)$. Thus, $\mu\in \scrN^{0,2}(\bA_k^*;B_k^*)\cap W^{s+m_k,2}\Gamma(\bbE_k)=\scrN^{s+m_k,2}(\bA_k^*;B_k^*)$. This completes the proof. 
The $L^2$-continuity clause regarding the projection is apparent from the construction. 
\end{proof}

\begin{theorem}
$\bA_k$ induces an auxiliary decomposition. 
\end{theorem}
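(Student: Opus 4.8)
The statement to establish is that $\bA_k$ satisfies both clauses of \defref{def:aux_decomposition}: (a) the $L^2$-orthogonal, topologically-direct decomposition of Fréchet spaces $\Gamma(\bbE_{k+1})=\scrR(\bA_k)\oplus\scrN(\bA_k^*,B_k^*)$, and (b) the existence of an operator $\calP_k\in\OP(\frakS^{-m_k,0})$ for which $\bA_k\calP_k$ is the projection onto $\scrR(\bA_k)$. Clause (a) is nothing other than \propref{prop:aux_first_step}, already proved above, so the entire remaining burden is the construction of $\calP_k$ together with the control of its order and class.

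The plan for clause (b) is to realize $\calP_k$ as a composition built from a left-inverse, within the Boutet de Monvel calculus, of an injective OD elliptic system. The natural candidate is the system $\bS:=(\bA_{k-1}^*\oplus\bA_k^*\bA_k\oplus\calS_k,\ B_{k-1}^*\oplus B_k^*\bA_k)$ furnished by \propref{prop:same_kernels}: it is OD elliptic, injective, and therefore admits a genuine left-inverse $\calR$ of order $-\max(m_{k-1},2m_k)$ and class $0$. First I would check that $\calR$ recovers the correct Hodge-type representative. Given $\eta\in\Gamma(\bbE_{k+1})$, \propref{prop:aux_first_step} lets me write $\eta=\bA_k\psi+\mu$ with $\psi\in\scrN_\bot(\bA_{k-1}^*,B_{k-1}^*)$ and $\mu\in\scrN(\bA_k^*,B_k^*)$; then $\bA_{k-1}^*\psi=0$, $B_{k-1}^*\psi=0$, $\calS_k\psi=0$, while $\bA_k^*\mu=0$ and $B_k^*\mu=0$, so that $\bS\psi=(0,\bA_k^*\eta,0,0,B_k^*\eta)$ and hence $\psi=\calR\,(0,\bA_k^*\eta,0,0,B_k^*\eta)$. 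The key point is that the right-hand side is assembled only from $\bA_k^*\eta$ and $B_k^*\eta$, both formed directly from $\eta$, so there is no circularity. I would then \emph{define} $\calP_k:=\calR\circ\mathbf{J}$, where $\mathbf{J}\colon\eta\mapsto(0,\bA_k^*\eta,0,0,B_k^*\eta)$. By construction $\calP_k\eta=\psi$, whence $\bA_k\calP_k\eta=\bA_k\psi$ is exactly the $\scrR(\bA_k)$-component of $\eta$ in the decomposition of clause (a). Idempotency is then immediate ($\bA_k\calP_k$ restricted to $\scrR(\bA_k)$ is the identity, since for $\mu=0$ the representative of $\bA_k\psi$ is $\psi$ itself, and $\bA_k\calP_k$ annihilates $\scrN(\bA_k^*,B_k^*)$), so $\bA_k\calP_k$ is precisely the projection onto $\scrR(\bA_k)$ along $\scrN(\bA_k^*,B_k^*)$, and in particular its smooth action extends to $\frakG^0$.

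The remaining step, and the one I expect to be the genuine obstacle, is to show $\calP_k\in\OP(\frakS^{-m_k,0})$ --- and in particular that its \emph{class} is $0$. The inputs are $\bA_k^*\in\OP(\frakS^{m_k,0})$ (with singular Green part of class $0$, by \defref{def:adapting_operator} and the inductive hypothesis on $\bA_k$) and $B_k^*\in\OP(\frakT^{m_k-1,m_k})$, and one composes them through $\calR$ using the composition rules of \thmref{thm:full_Green_compositon_rules} and \propref{prop:composition_Green_operators}. The delicate point is the Douglis--Nirenberg bookkeeping inside $\calR$: the block of $\calR$ pairing with the order-$2m_k$ operator $\bA_k^*\bA_k$ carries order $-2m_k$, so that after composing with $\bA_k^*$ the interior part of $\calP_k$ has order $-m_k$, while the potential part of $\calR$ composed with the trace operator $B_k^*$ produces a singular Green operator of order $\le-m_k$. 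The naive composition estimate alone only yields class $m_k$, because of the class of $B_k^*$; what actually forces the class of $\calP_k$ down to $0$ is the class-$0$ property of $\calR$ guaranteed by \propref{prop:same_kernels}, traced through the order-reduction exactly as in the proof of \propref{prop:OD_varying_orders} (carrying out the composition on the order-reduced system, where all components have class $0$, and then un-reducing). Verifying this reduction carefully is where the real work lies; everything else is formal, and with $\calP_k$ in hand both clauses of \defref{def:aux_decomposition} hold, completing the induction step.
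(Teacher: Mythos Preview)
Your construction of $\calP_k$ as $\calR\circ\mathbf{J}$ is correct and essentially equivalent to the paper's: both express $\calP_k$ via a left-inverse of an injective OD elliptic system applied to $(0,\bA_k^*,0,B_k^*)$-type data. The only cosmetic difference is that the paper uses the system of \propref{prop:new_estimate2}, namely $(\bA_{k-1}\calP_{k-1}\oplus\bA_k^*\bA_k\oplus\calS_k,\ B_k^*\bA_k)$, rather than the one from \propref{prop:same_kernels} that you chose; either works.

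The genuine gap is your class-zero argument. You correctly identify that the naive composition rule (\thmref{thm:full_Green_compositon_rules}) only yields class $m_k$ for $\calP_k=\calR\circ\mathbf{J}$, because $B_k^*$ has class $m_k$ and the potential block of $\calR$ composed with $B_k^*$ produces a singular Green operator of that class. Your proposed remedy---``tracing through the order-reduction''---does not do what you claim. Order-reduction is a device for handling Douglis--Nirenberg systems of mixed orders (it establishes OD ellipticity and supplies the a priori estimate and left-inverse), but it does not improve class estimates under composition: after un-reducing, the class-$m_k$ contribution from $B_k^*$ is still there. The paper closes this gap differently. It first notes that $\calP_k$ is $L^2\to W^{m_k,2}$ continuous---this follows because the projection onto $\scrR^{0,2}(\bA_k)$ is $L^2$-continuous (the last clause of \propref{prop:aux_first_step}) and $\bA_k:\scrN_\bot^{m_k,2}(\bA_{k-1}^*,B_{k-1}^*)\to\scrR^{0,2}(\bA_k)$ is a Banach isomorphism by the estimate \eqref{eq:estimateNsp}---and then invokes \propref{prop:L2_continuity} to force the class down to zero. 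Your construction gives the same $\calP_k$, so the same $L^2$-continuity argument applies; you should replace the order-reduction sketch with this.
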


\begin{proof} 
In view of \propref{prop:aux_first_step}, it remains to show that
there exists an operator $\calP_k \in \OP(\frakS^{-m_k,0}) : \Gamma(\E_{k+1}) \to \Gamma(\E_k)$, such that $\bA_k \calP_k : \Gamma(\E_{k+1}) \to \Gamma(\E_{k+1})$ is the projection onto $\scrR(\bA_k)$.

The decomposition \eqref{eq:aux_decompstionAksmooth} implies the existence of a projection  $\tilde{\bP}_k:\Gamma(\bbE_{k+1})\to \Gamma(\bbE_{k+1})$ onto $\scrR(\bA_k)$, which is continuous in the Fr\'echet  topology. By 
\propref{prop:aux_first_step}, it continuously extends into $\tilde{\calP}_k:L^2\Gamma(\bbE_{k+1})\to L^2\Gamma(\bbE_k)$. 
Since $\bA_k\bA_{k-1}=0$, and in view of \eqref{eq:N0spdecomp},
\[
\scrR^{s,p}(\bA_k)=\bA_k(\scrN_{\bot}^{s+m_k,p}(\bA_{k-1}^*,B_{k-1}^*)).
\]
The estimate \eqref{eq:estimateNsp} implies that the continuous map $\bA_k:\scrN_{\bot}^{s+m_k,p}(\bA_{k-1}^*,B_{k-1}^*)\to \scrR^{s,p}(\bA_k)$ is a bijection. By the open mapping theorem it is an isomorphism of Banach spaces \cite[p.~574]{Tay11a}. Since this is true for every $s\in\Nzero$ and $1<p<\infty$, this implies that  $\bA_k:\scrN_{\bot}(\bA_{k-1}^*,B_{k-1}^*)\to \scrR(\bA_k)$ is an isomorphism of Fréchet spaces. Let $(\bA_k)^{-1}:\scrR(\bA_k)\to \scrN_{\bot}(\bA_{k-1}^*,B_{k-1}^*)$ be the continuous inverse of this isomorphism, and define $\calP_k:\Gamma(\bbE_{k+1})\to \Gamma(\bbE_k)$ by
\[
\calP_k\psi=(\bA_k)^{-1}\tilde{\calP}_k\psi,
\]
which is a continuous map as the composition of continuous maps. Also $\bA_k\calP_k\psi=\tilde{\calP}_k\psi$, hence $\bA_k\calP_k$ is indeed the smooth projection onto $\scrR(\bA_k)$ as required by Condition (b) in \defref{def:aux_decomposition}. It remains to prove that $\calP_k\in\OP(\frakS^{-m_k,0})$. 

Since $(\bA_k)^{-1}:\scrR^{0,2}(\bA_k)\to \scrN_{\bot}^{m_k,2}(\bA_{k-1}^*,B_{k-1}^*)$, and since $\tilde{\calP}_k$ extends to an $L^2$-continuous map, then $\calP_k:L^2\Gamma(\bbE_k)\to W^{m_k,2}\Gamma(\bbE_k)$ continuously. 
By the decomposition \eqref{eq:aux_decompstionAksmooth} and the fact that $\bA_k\calP_k$ is the projection onto $\scrR(\bA_k)$,
\[
\bA_k^*\bA_k\calP_k=\bA_k^* 
\Textand 
B_k^*\bA_k\calP_k=B_k^*.
\]
On the other hand, since $\calP_k$ takes its values in $\scrN_{\bot}(\bA_{k-1}^*,B_{k-1}^*)$,
\[
\bA_{k-1}\calP_{k-1}\calP_k=0 
\Textand 
\bS_k\calP_k=0.
\]
Summarizing,
\[
\mymat{\bA_{k-1}\calP_{k-1}\oplus\bA_k^*\bA_k\oplus\calS_k\\B_k^*\bA_k}\calP_k=\mymat{0\oplus \bA_k^*\oplus 0 \\ B_k^*}. 
\]
By \propref{prop:new_estimate2}, the system
$(\bA_{k-1}\calP_{k-1}\oplus\bA_k^*\bA_k\oplus\calS_k,B_k^*\bA_k)$ is OD elliptic and injective of order and class $2m_k$. Hence, it admits a left-inverse of order $-2m_k$ and class $0$ within the calculus. 
Thus, $\calP_k$ is the composition of a Green operator of class $-2m_k$ and class $0$, and a Green operator of order and class $m_k$. By the composition rules, $\calP_k$ is a Green operator of order $-m_k$ and class at most $m_k$. Finally, since $\calP_k$ is in particular $L^2\rightarrow W^{m_k,2}$ continuous, it follows from \propref{prop:L2_continuity} that it is of class zero. This completes the proof.
\end{proof}

\subsection{Stage 5: Construction of $\bA_{k+1}$} 

We complete the induction step by proving that there exists a unique operator $\bA_{k+1}$ with the properties outlined in \thmref{thm:corrected_complex} and \propref{prop:correction}. We define $\bA_{k+1}:\Gamma(\E_{k+1})\to \Gamma(\E_{k+2})$ by
\[
\bA_{k+1} = A_{k+1}(\id - \bA_k\calP_k).
\]
We need to show that
\begin{enumerate}[itemsep=0pt,label=(\alph*)]
\item $\bA_{k+1}\bA_k = 0$.
\item The restriction of $\bA_{k+1}$ to $\scrN(\bA_k^*,B_k^*)$ acts as $A_{k+1}$.
\item The operator $G_{k+1} = \bA_{k+1} - A_{k+1}$ belongs to $\frakG^0$ and is given by
\[
G_{k+1} = -A_{k+1} A_k \calP_k - A_{k+1} G_k \calP_k. 
\]
\end{enumerate}

The first two items follow from the auxiliary decomposition
\[
\Gamma(\E_{k+1})=\scrR(\bA_k)\oplus\scrN(\bA_k^*,B_k^*),
\]
and the fact that $\bA_k\calP_k$ is the projection onto $\scrR(\bA_k)$, or equivalently, $\id - \bA_k\calP_k$ is the projection onto $\scrN(\bA_k^*,B_k^*)$. The uniqueness of $\bA_{k+1}$ as an operator satisfying these two properties is evident from the direct decomposition.  

For the third item, by the definition of $\bA_{k+1}$, 
\[
G_{k+1} = \bA_{k+1} - A_{k+1} = -A_{k+1}\bA_k\calP_k=-A_{k+1}A_k\calP_k-A_{k+1}G_k\calP_k.
\]
By the definition of an elliptic pre-complex, $A_{k+1}A_k$ is a differential operator of order $\leq m_k$. Since $\calP_k$ is of order $-m_k$ and class $0$, by the composition rules, $A_{k+1}A_k\calP_k\in\frakG^0$. In the same way, by the induction hypothesis \eqref{eq:recursive_correction} on $G_k$, 
\[
A_{k+1}G_k\calP_k =-
\underbrace{A_{k+1}A_k}_{\le m_k} \underbrace{A_{k-1}\calP_{k-1}}_0 \underbrace{\calP_k}_{-m_k} -
\underbrace{A_{k+1}A_k}_{\le m_k}
\underbrace{G_{k-1}}_{\le 0}
\underbrace{\calP_{k-1}}_{-m_{k-1}}
\underbrace{\calP_k}_{-m_k},
\]
where the expressions under the braces represent the orders the operators, whereas all the classes are zero. 
By the composition rules, $A_{k+1}G_k\calP_k\in\frakG^0$, hence so is $G_{k+1}$.

\subsection{The Hodge-like decomposition}

In view of \eqref{eq:WspN0}, \thmref{thm:hodge_like_corrected_complex} asserts that
\[
\scrN^{s,p}_{\bot}(\bA_{k-1}^*,B_{k-1}^*)=\scrR^{s,p}(\bA_k^*;B_{k-1}^*)
\]
for every $s\in\Nzero$ and $1<p<\infty$.
We prove it in several steps.

\begin{proposition}
\label{prop:correction_final_dual_new}
For every $s\in\Nzero$ and $1<p<\infty$,
\[
\scrR^{s,p}{(\bA_k^*;B_k^*)}\subseteq\scrN_{\bot}^{s,p}(\bA_{k-1}^*,B_{k-1}^*).
\]
\end{proposition}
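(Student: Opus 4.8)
The plan is to show directly that every element of $\scrR^{s,p}(\bA_k^*;B_k^*)$ lies in $\scrN^{s,p}(\bA_{k-1}^*,B_{k-1}^*)$ and is $L^2$-orthogonal to $\module^k\Complex$; by the very definition of $\scrN_{\bot}^{s,p}$ this is precisely the claimed inclusion. I would first establish everything for smooth sections and then pass to $W^{s,p}$-regularity by a density/continuity argument.

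\textbf{Smooth case.} Take $\eta\in\Gamma(\E_{k+1})$ with $B_k^*\eta=0$ and set $\psi=\bA_k^*\eta\in\Gamma(\E_k)$. The first step is the computation $\bra\bA_{k-1}\lambda,\psi\ket=0$ for every $\lambda\in\Gamma(\E_{k-1})$: applying the Green formula \eqref{eq:integration_by_parts_corrected_k} for $\bA_k$ to the pair $(\bA_{k-1}\lambda,\eta)$ gives $\bra\bA_{k-1}\lambda,\bA_k^*\eta\ket=\bra\bA_k\bA_{k-1}\lambda,\eta\ket-\bra B_k\bA_{k-1}\lambda,B_k^*\eta\ket$, and both terms on the right vanish, the first by $\bA_k\bA_{k-1}=0$ and the second by $B_k^*\eta=0$. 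Next, apply the Green formula for $\bA_{k-1}$ to $(\lambda,\psi)$ to rewrite this as $\bra\lambda,\bA_{k-1}^*\psi\ket+\bra B_{k-1}\lambda,B_{k-1}^*\psi\ket=0$ for all $\lambda$. Testing against $\lambda$ compactly supported in the interior, for which $B_{k-1}\lambda=0$ since $B_{k-1}$ is a differential boundary operator, and using density of compactly supported sections in $L^2$, yields $\bA_{k-1}^*\psi=0$; substituting this back and invoking the surjectivity of the normal system $B_{k-1}$ yields $B_{k-1}^*\psi=0$. Finally, for $\zeta\in\module^k\Complex$ we have $\bA_k\zeta=0$, so \eqref{eq:integration_by_parts_corrected_k} together with $B_k^*\eta=0$ gives $\bra\psi,\zeta\ket=\bra\zeta,\bA_k^*\eta\ket=\bra\bA_k\zeta,\eta\ket-\bra B_k\zeta,B_k^*\eta\ket=0$. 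Hence $\psi\in\scrN_{\bot}(\bA_{k-1}^*,B_{k-1}^*)$.

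\textbf{Sobolev case.} Let $\eta\in W^{s+m_k,p}\Gamma(\E_{k+1})$ with $B_k^*\eta=0$. Since $B_k^*$ is a normal system of trace operators, smooth sections in $\ker B_k^*$ are dense in $W^{s+m_k,p}\Gamma(\E_{k+1})\cap\ker B_k^*$ (subtract from a smooth approximating sequence the image of its boundary data under a continuous right inverse preserving smoothness). The three identities just established, $\bA_{k-1}^*\bA_k^*\eta=0$, $B_{k-1}^*\bA_k^*\eta=0$ and $\bS_k\bA_k^*\eta=0$, are equalities of the form $L\bA_k^*\eta=0$ with $L\bA_k^*$ a composition of operators continuous on the relevant Sobolev scales (here $\bA_k^*\in\OP(\frakS^{m_k,0})$, and $\bA_{k-1}^*$, $B_{k-1}^*$, $\bS_k$ are likewise continuous); being valid on the dense set of smooth sections of $\ker B_k^*$, they extend to all of $W^{s+m_k,p}\Gamma(\E_{k+1})\cap\ker B_k^*$. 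Therefore $\psi=\bA_k^*\eta\in\scrN_{\bot}^{s,p}(\bA_{k-1}^*,B_{k-1}^*)$, which is the assertion.

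\textbf{Where the difficulty lies.} The only delicate point is the opening computation. One is tempted to deduce $\bA_{k-1}^*\bA_k^*=0$ from the fact that $\Complex$ is a complex, but this is generally false: $\bA_k^*$ and $\bA_{k-1}^*$ are not genuine adjoints, only Green-formula adjoints, and the boundary term $\bra B_k\bA_{k-1}\lambda,B_k^*\eta\ket$ is a real obstruction. It is precisely the constraint $B_k^*\eta=0$ built into $\scrR^{s,p}(\bA_k^*;B_k^*)$ that annihilates it. Note that one cannot instead kill $B_k\bA_{k-1}\lambda$ by localizing $\lambda$: the correction $G_{k-1}$ in $\bA_{k-1}=A_{k-1}+G_{k-1}$ is nonlocal, so $\bA_{k-1}\lambda$ need not vanish near $\dM$ even when $\lambda$ is interior-supported. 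Thus the boundary condition $B_k^*\eta=0$ is essential here, not a convenience.
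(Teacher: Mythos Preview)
Your proof is correct and follows essentially the same route as the paper's: two applications of the Green formula \eqref{eq:integration_by_parts_corrected_k} combined with $\bA_k\bA_{k-1}=0$ and $B_k^*\eta=0$ to obtain $\bA_{k-1}^*\psi=0$ and $B_{k-1}^*\psi=0$, then a further integration by parts against $\zeta\in\module^k\Complex$ for the orthogonality. The paper's write-up differs only in that it swaps the roles of $\psi$ and $\eta$ and compresses the Sobolev extension into a single sentence, whereas you spell out the density argument; your commentary on why the constraint $B_k^*\eta=0$ is essential is a nice addition.
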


\begin{proof}
Let $\psi\in\Gamma(\E_{k+1})$ satisfy $B_k^*\psi=0$ and let $\eta\in\Gamma(\E_{k-1})$. Iterating twice the integration by parts formula \eqref{eq:integration_by_parts_corrected_k}, 
using the fact that $\bA_k\bA_{k-1}=0$,
\[
\bra \bA_{k-1}^* \bA_k^* \psi,\eta\ket =-\bra B_{k-1}^* \bA_k^* \psi,B_{k-1}\eta\ket. 
\] 
Taking $B_{k-1}\eta=0$, using the density of such elements in $L^2\Gamma(\E_{k-1})$, it follows that $\bA_{k-1}^* \bA_k^* \psi=0$. Prescribing $B_{k-1}\eta$ arbitrarily, as $B_{k-1}$ is surjective, it follows that $B_{k-1}^* \bA_k^* \psi=0$. Hence, $\scrR(\bA_k^*;B_k^*)\subseteq \scrN(\bA_{k-1}^*,B_{k-1}^*)$. 

To show that $\scrR(\bA_k^*;B_k^*)\,\bot\, \module^k\Complex$, let $\bA_k^*\psi\in \scrR(\bA_k^*;B_k^*)$ and $\eta\in \module^k\Complex$. Integrating by parts,
\[
\bra \bA_k^*\psi,\eta\ket = \bra \psi,\bA_k \eta\ket - \bra B_k^*\psi, B_k\eta\ket = 0,
\]
where we used the fact that $\module^k\Complex \subset \ker\bA_k$ and $B_k^*\psi=0$.
The proof easily generalizes to arbitrary $\psi\in W^{s,p}_{A_k^*}\Gamma(\E_{k+1})$.
\end{proof}

\begin{proposition}
For all $s\in\Nzero$ and $1<p<\infty$, the space $\scrR^{s,p}{(\bA_k^*;B_k^*)}$ is a closed subspace of $W^{s,p}\Gamma(\E_k)$.
Moreover,
\[
\scrR^{s,p}(\bA_k^*;B_k^*) = \scrR^{0,p}(\bA_k^*;B_k^*)\cap W^{s,p}\Gamma(\bbE_k). 
\]
\end{proposition}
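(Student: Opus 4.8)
The plan is to mimic the structure already used for $\scrR^{s,p}(\bA_k)$ in \propref{prop:Ak_closed_range} and \propref{prop:aprioriAk}, but now applied to the adjoint operator $\bA_k^*$ restricted to $\ker B_k^*$. The starting observation is that $\bA_k^*$ is itself an adapted Green operator of order $m_k$, with integration by parts formula \eqref{eq:integration_by_parts_corrected_k} (read from right to left), so the roles of $(\bA_k,B_k,B_k^*)$ are played by $(\bA_k^*,B_k^*,B_k)$. Since the pre-complex condition and the corrections $G_k\in\frakG^0$ make $(\bA_k^*\oplus\bA_{k+1},B_k)$ OD elliptic, and in fact (by the same argument proving \propref{prop:AstarA_OD_system}) $(\bA_k\oplus\bA_k^*\bA_k^{**}\dots)$—more precisely the system built from $\bA_k^*$ and $\bA_{k+1}^*\bA_{k+1}$ analogously—one obtains the analog of \propref{prop:new_estimate1}: there is a constant such that
\[
\|\psi\|_{s+m_k,p}\lesssim \|\bA_k^*\psi\|_{s,p}
\]
for $\psi$ in the $L^2$-complement of $\scrN(\bA_{k+1},B_{k+1})$ inside $\scrN(\bA_k,B_k)$, say $\scrN_\bot^{s+m_k,p}(\bA_k,B_k)$. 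This Korn-type estimate is what drives everything.

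The proof then proceeds in the same three beats as Stage 3. First, closedness: using the auxiliary decomposition \eqref{eq:Neumann_aux_dec} for $\bA_k$ applied one level down, i.e.\ $\Gamma(\E_k)=\scrR(\bA_{k-1})\oplus\scrN(\bA_{k-1}^*,B_{k-1}^*)$, together with the Hodge-like refinement \eqref{eq:aux_refinmenetSmooth} (whose proof is the subject of this subsection, so I must be careful about logical order—see below), decompose $W^{s+m_k,p}_{A_k^*}\Gamma(\E_{k+1})$ and use $\bA_{k+1}\bA_k=0$, equivalently here $\bA_k^*\bA_{k+1}^*=0$, to reduce the range of $\bA_k^*|_{\ker B_k^*}$ to the image of a subspace on which the estimate above holds. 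Then \cite[Prop.~6.7, p.~583]{Tay11a} gives closedness of $\scrR^{s,p}(\bA_k^*;B_k^*)$ in $W^{s,p}\Gamma(\E_k)$. Second, the elliptic-regularity/bootstrap step: if $\psi\in\scrN_\bot^{0,p}(\bA_k,B_k)$ and $\bA_k^*\psi\in W^{s,p}$, then $\psi\in W^{s+m_k,p}$, proved verbatim as \propref{prop:aprioriAk} by inserting $\nabla^s$, extending $\nabla^s\bA_k^*$ to a continuous map on $L^p$ via $L^p$–$L^q$ duality, approximating, and invoking weak compactness plus the Korn estimate. Third, conclude the identity $\scrR^{s,p}(\bA_k^*;B_k^*)=\scrR^{0,p}(\bA_k^*;B_k^*)\cap W^{s,p}\Gamma(\E_k)$: the inclusion $\subseteq$ is immediate from the mapping properties of $\bA_k^*\in\OP(\frakS^{m_k,0})$, and $\supseteq$ is exactly the content of the bootstrap step once one writes an element of $\scrR^{0,p}(\bA_k^*;B_k^*)$ as $\bA_k^*\psi$ with $\psi$ chosen in $\scrN_\bot^{0,p}(\bA_k,B_k)$ (possible because $\bA_k^*\bA_{k+1}^*=0$ allows subtracting off the $\scrR(\bA_{k+1}^*;B_{k+1}^*)$ and harmonic parts without changing $\bA_k^*\psi$).

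The delicate point, and the one I expect to be the main obstacle, is the logical ordering: the Hodge-like decomposition \thmref{thm:hodge_like_corrected_complex}—in particular the refinement \eqref{eq:aux_refinmenetSmooth} identifying $\scrN(\bA_{k-1}^*,B_{k-1}^*)$ with $\scrR(\bA_k^*;B_k^*)\oplus\module^k\Complex$—is what is \emph{being proved} in this subsection, so I cannot cite it while establishing this proposition. The resolution is that this proposition should be stated and proved using only what is available at this stage: \propref{prop:correction_final_dual_new} (the inclusion $\scrR^{s,p}(\bA_k^*;B_k^*)\subseteq\scrN_\bot^{s,p}(\bA_{k-1}^*,B_{k-1}^*)$), the auxiliary decompositions already constructed in Stage 4, and the $L^2$ closed-range machinery of \propref{prop:L2_aux_adapted_pair}. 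Concretely: get the $L^2$ ($s=0$, $p=2$) version first from closedness in $L^2$ and \eqref{eq:range_null_bot2}; then upgrade to all $s\in\Nzero$, $1<p<\infty$ by the bootstrap argument of the previous paragraph, which is self-contained. The closedness in $W^{s,p}$ and the equality of the two descriptions of $\scrR^{s,p}(\bA_k^*;B_k^*)$ then follow, and only \emph{afterward} is the Hodge-like decomposition assembled by intersecting with \eqref{eq:WspN0}. So the real work is checking that the analog of the Korn estimate for $\bA_k^*$ on $\scrN_\bot(\bA_k,B_k)$ is genuinely available from the OD-elliptic systems already built in Stages 2–4 applied to the adjoint chain—this is routine given the symmetry of the setup but must be spelled out, since $\bA_k^*$ and $B_k$ play the structural role of $\bA_k$ and $B_k$ one index higher.
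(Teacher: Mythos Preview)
Your approach rests on a symmetry that is not available. You assert that ``the pre-complex condition and the corrections $G_k\in\frakG^0$ make $(\bA_k^*\oplus\bA_{k+1},B_k)$ OD elliptic'' and that ``$\bA_k^*$ and $B_k$ play the structural role of $\bA_k$ and $B_k$ one index higher''. But the definition of an elliptic pre-complex is one-sided: it gives only the Neumann-type condition that $(A_{k-1}^*\oplus A_k,B_{k-1}^*)$ is OD elliptic. No Dirichlet-type hypothesis such as $(A_{k-1}^*\oplus A_k,B_k)$ OD elliptic is assumed, and it does not follow from the Neumann one. Consequently there is no Korn-type estimate of the form $\|\eta\|_{s+m_k,p}\lesssim\|\bA_k^*\eta\|_{s,p}$ on any complement built from the adjoint chain; the system you would need for this simply is not among the OD-elliptic systems constructed in Stages~2--4. (Your indexing also drifts: $\scrN(\bA_k,B_k)$ is a subspace of $\Gamma(\E_k)$, not of the domain $\Gamma(\E_{k+1})$ of $\bA_k^*$, so the complement you describe does not parse.)

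The paper avoids this obstacle by never estimating $\bA_k^*$ directly. Instead it uses the auxiliary decomposition for $\bA_k$ (already established in Stage~4) to rewrite every $\bA_k^*\eta$ with $B_k^*\eta=0$ as $\bA_k^*\bA_k\psi$ with $\psi\in\scrN_\bot^{s+2m_k,p}(\bA_{k-1}^*,B_{k-1}^*)$ and $B_k^*\bA_k\psi=0$: one decomposes $\eta=\bA_k\psi_0+\mu$ with $\mu\in\scrN(\bA_k^*,B_k^*)$, notes that $\bA_k^*\mu=0$ and $B_k^*\mu=0$, and then chooses $\psi$ in the correct complement at level $k$. The closedness and the regularity statement then both follow from the single estimate \eqref{eq:AkstarAk_OD_elliptic_estimate} of \propref{prop:new_estimate2}, namely $\|\psi\|_{s+2m_k,p}\lesssim\|\bA_k^*\bA_k\psi\|_{s,p}$ on that complement, which \emph{is} a consequence of the Neumann-type OD ellipticity alone. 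The point is that the second-order operator $\bA_k^*\bA_k$ lives entirely within the Neumann framework; no dual boundary condition is needed.
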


\begin{proof}
By definition,
\[
\scrR^{s,p}{(\bA_k^*;B_k^*)} = \{\bA_k^*\eta ~:~ \eta\in W^{s+m_k,p}\Gamma(\E_{k+1}), \,\, B_k^*\eta=0\}.
\]
In view of the decomposition \eqref{eq:WspN0},
\[
\begin{aligned}
& W^{s+m_k,p}\Gamma(\bbE_{k+1}) = \scrR^{s+m_k,p}(\bA_k)\oplus\scrN_\bot^{s+m_k,p}(\bA_k^*,B_k^*)\oplus\module^{k+1}\Complex \\ 
& W^{s+2m_k,p}\Gamma(\bbE_k) = \scrR^{s+2m_k,p}(\bA_{k-1})\oplus\scrN_\bot^{s+2m_k,p}(\bA_{k-1}^*,B_{k-1}^*)\oplus\module^k\Complex,
\end{aligned}
\]
and the fact that $\bA_k \bA_{k-1}=0$, it follows that 
\[
\begin{split}
\scrR^{s,p}{(\bA_k^*;B_k^*)} &= \{\bA_k^* \bA_k \psi ~:~ \psi\in W^{s+2m_k,p}\Gamma(\E_k), \,\, B_k^* \bA_k \psi=0\} \\
&= \{\bA_k^* \bA_k \psi ~:~ \psi\in \scrN_\perp^{s+2m_k,p}(\bA_{k-1}^*,B_{k-1}^*), \,\, B_k^* \bA_k \psi=0\} .
\end{split}
\]
By the estimate \eqref{eq:AkstarAk_OD_elliptic_estimate}, for every $\psi\in \scrN_\perp^{s+2m_k,p}(\bA_{k-1}^*,B_{k-1}^*)$ satisfying $B_k^* \bA_k \psi=0$,
\[
\|\psi\|_{s + 2m_k,p} \lesssim \|\bA_k^* \bA_k \psi\|_{s,p}, 
\]
which by \cite[p.~583]{Tay11a}, considering the map
\[
\bA_k^* \bA_k : \scrN_\perp^{s+2m_k,p}(\bA_{k-1}^*,B_{k-1}^*) \cap \ker B_k^* \bA_k \to W^{s,p}\Gamma(\E_k),
\]
 implies that $\scrR^{s,p}{(\bA_k^*;B_k^*)}$ is a closed subspace of $W^{s,p}\Gamma(\E_k)$.

For the second clause, we need to show that
\[
\psi\in  \scrN_\perp^{2m_k,p}(\bA_k^*,B_k^*)
\qquad 
B_k^* \bA_k \psi=0
\Textand
\bA_k^* \bA_k \psi\in  W^{s,p}\Gamma(\E_k)
\]
implies that
\[
\psi\in  \scrN_\perp^{s+2m_k,p}(\bA_k^*,B_k^*),
\]
which follows from that same inequality.
\end{proof}

\begin{proposition}
For every $s\in\Nzero$ and $1<p<\infty$,
\[
\scrR^{s,p}{(\bA_k^*;B_k^*)} = \scrN_{\bot}^{s,p}(\bA_{k-1}^*,B_{k-1}^*).
\]
\end{proposition}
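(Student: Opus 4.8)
The plan is to prove the two inclusions separately, exploiting the machinery already assembled. One inclusion, namely $\scrR^{s,p}(\bA_k^*;B_k^*) \subseteq \scrN_\bot^{s,p}(\bA_{k-1}^*,B_{k-1}^*)$, is precisely \propref{prop:correction_final_dual_new}, so nothing more is needed there. The whole content is therefore the reverse inclusion: every $\psi\in\scrN_\bot^{s,p}(\bA_{k-1}^*,B_{k-1}^*)$ that is $L^2$-orthogonal to $\module^k\Complex$ lies in $\scrR^{s,p}(\bA_k^*;B_k^*)$, i.e.\ can be written as $\bA_k^*\eta$ with $B_k^*\eta=0$ and $\eta$ of the appropriate Sobolev regularity.

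First I would reduce to the case $s=0$, $p=2$: given $\psi\in\scrN_\bot^{0,2}(\bA_{k-1}^*,B_{k-1}^*)$, produce $\eta\in\ker B_k^*$ with $\bA_k^*\eta=\psi$, and then invoke the preceding proposition (that $\scrR^{s,p}(\bA_k^*;B_k^*)=\scrR^{0,p}(\bA_k^*;B_k^*)\cap W^{s,p}\Gamma(\E_k)$, together with the regularity statement in \propref{prop:AprioriAstarAk}) to promote the regularity of $\psi$ to $W^{s,p}$ and the conclusion to general $s,p$. To construct $\eta$ at the $L^2$-level, I would use the auxiliary decomposition \eqref{eq:aux_decompstionAksmooth} induced by $\bA_k$ in the space $L^2\Gamma(\E_{k+1})$ and the operator $\calP_k\in\OP(\frakS^{-m_k,0})$: the natural candidate is to solve the equation $\bA_k^*\bA_k\omega=\psi$ (modulo the harmonic part) among $\omega\in\scrN_\bot(\bA_{k-1}^*,B_{k-1}^*)\cap\ker(B_k^*\bA_k)$ and set $\eta=\bA_k\omega$; then $B_k^*\eta=0$ by construction and $\bA_k^*\eta=\psi$. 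Solvability of this equation is exactly where the OD ellipticity and left-invertibility of the system $(\bA_{k-1}\calP_{k-1}\oplus\bA_k^*\bA_k\oplus\calS_k,\,B_k^*\bA_k)$ from Stage 4 (via \propref{prop:new_estimate2} and \propref{prop:same_kernels}) enters, giving both the a priori estimate \eqref{eq:AkstarAk_OD_elliptic_estimate} and a closed range; one then adjusts $\psi$ by its $\module^k\Complex$-component (which vanishes since $\psi\,\bot\,\module^k\Complex$) and by an element of $\scrR(\bA_{k-1})$ (which is harmless since $\psi\in\scrN_\bot(\bA_{k-1}^*,B_{k-1}^*)$) so that the compatibility condition is met.

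The key steps, in order, are: (1) quote \propref{prop:correction_final_dual_new} for ``$\subseteq$''; (2) take $\psi\in\scrN_\bot^{0,2}(\bA_{k-1}^*,B_{k-1}^*)$ and show $\psi$ is orthogonal to the range of $\bA_k^*\bA_k$ restricted appropriately iff $\psi=0$, equivalently that $\bA_k^*\bA_k$ is, up to the finite-dimensional module $\module^k\Complex$, an isomorphism onto $\scrN_\bot^{0,2}(\bA_{k-1}^*,B_{k-1}^*)\cap\ker B_k^*\bA_k$ — this uses \propref{prop:same_kernels} to identify the kernel with $\module^k\Complex$, \propref{prop:AstarA_OD_system} for OD ellipticity, and the closed-range estimate \eqref{eq:AkstarAk_OD_elliptic_estimate}; (3) solve $\bA_k^*\bA_k\omega=\psi$ for such $\omega$, set $\eta=\bA_k\omega$, and verify $B_k^*\eta=0$ and $\bA_k^*\eta=\psi$; (4) bootstrap regularity via the previous proposition to get $\scrR^{s,p}(\bA_k^*;B_k^*)\supseteq\scrN_\bot^{s,p}(\bA_{k-1}^*,B_{k-1}^*)$ for all admissible $s,p$. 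The main obstacle I anticipate is step (3): one must make sure the element $\psi$ actually lies in the range of the (left-invertible but not necessarily surjective) operator $\bA_k^*\bA_k$ on the constrained space $\scrN_\bot(\bA_{k-1}^*,B_{k-1}^*)\cap\ker(B_k^*\bA_k)$ — i.e.\ verify the integrability/orthogonality conditions coming from the cokernel — which forces a careful bookkeeping of how the boundary condition $B_k^*\bA_k\omega=0$ interacts with the integration-by-parts formula \eqref{eq:integration_by_parts_corrected_k} and with the hypothesis $\psi\,\bot\,\module^k\Complex$. Once that is in place, the Sobolev regularity upgrade is routine given the a priori estimates already established.
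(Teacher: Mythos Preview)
Your plan is essentially correct, and the inclusion $\subseteq$ is indeed just \propref{prop:correction_final_dual_new}. However, your route to the reverse inclusion is more laborious than the paper's, and your step~(3) is redundant once step~(2) is done properly.

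The paper bypasses the explicit solution of $\bA_k^*\bA_k\omega=\psi$ entirely. Instead it compares two $L^2$-orthogonal decompositions of $L^2\Gamma(\E_k)$: the second decomposition in \propref{prop:L2_aux_adapted_pair} applied to $\bA=\bA_k^*$ (using that $\scrR^{0,2}(\bA_k^*;B_k^*)$ is closed, from the preceding proposition) gives
\[
L^2\Gamma(\E_k)=\scrR^{0,2}(\bA_k^*;B_k^*)\oplus\scrN^{0,2}(\bA_k),
\]
while the auxiliary decomposition gives
\[
L^2\Gamma(\E_k)=\scrR^{0,2}(\bA_{k-1})\oplus\scrN_\bot^{0,2}(\bA_{k-1}^*,B_{k-1}^*)\oplus\module^k\Complex.
\]
The key observation is then purely set-theoretic: $\scrN^{0,2}(\bA_k)\cap\scrN_\bot^{0,2}(\bA_{k-1}^*,B_{k-1}^*)=\{0\}$, which follows from \propref{prop:aprioriAk} (to gain regularity) and the injectivity of $(\bA_k\oplus\bA_{k-1}^*\oplus\calS_k,B_{k-1}^*)$. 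Since $\scrR^{0,2}(\bA_k^*;B_k^*)\subseteq\scrN_\bot^{0,2}$ is already known, any $\psi\in\scrN_\bot^{0,2}$ decomposed via the first display has its $\scrN^{0,2}(\bA_k)$-component lying in this trivial intersection, hence $\psi\in\scrR^{0,2}(\bA_k^*;B_k^*)$.

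Your step~(2) contains this idea in disguise --- ``$\psi$ orthogonal to the range'' is precisely ``$\psi\in\scrN^{0,2}(\bA_k)$'' once one invokes \propref{prop:L2_aux_adapted_pair} --- but you do not cite that proposition, and without it the cokernel identification in~(2) is not justified by the OD-ellipticity ingredients you list. Once~(2) is carried out this way, the equation in~(3) is automatically solvable and need not be addressed separately. For the passage to general $s,p$, the paper first intersects with $W^{s,2}$ (using the second clause of the preceding proposition), gets the smooth equality, and then transfers to $W^{s,p}$ via the fact that all projections lie in $\frakG^0$; your ``bootstrap'' sketch is compatible with this but less explicit.
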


\begin{proof}
By \propref{prop:L2_aux_adapted_pair}, with $(\bA,B_A) = (\bA_k^*;B_k^*)$, and since $\scrR^{0,2}(\bA_k^*;B_k^*)$ is closed,
\[
L^2\Gamma(\bbE_k) =  \scrR^{0,2}(\bA_k^*;B_k^*) \oplus \scrN^{0,2}(\bA_k),
\]
whereas from the auxiliary decomposition
\[
L^2\Gamma(\bbE_k) =  \scrR^{0,2}(\bA_{k-1}) \oplus \scrN_\bot^{0,2}(\bA_{k-1}^*,B_{k-1}^*) \oplus \module^k\Complex.
\]
Both decompositions are $L^2$-orthogonal. We note that from \propref{prop:aprioriAk} and \eqref{eq:estimateNsp},
\[
\scrN^{0,2}(\bA_k) \cap \scrN_{\bot}^{0,2}(\bA_{k-1}^*,B_{k-1}^*) = \scrN(\bA_k) \cap \scrN_{\bot}(\bA_{k-1}^*,B_{k-1}^*)=\{0\},
\]
By the injectivity of the system $(\bA_k\oplus \bA_{k-1}^* \oplus \calS_k, B_{k-1}^*)$,
which implies that
\[
\scrN_{\bot}^{0,2}(\bA_{k-1}^*,B_{k-1}^*) \subseteq \scrR^{0,2}(\bA_k^*;B_k^*),
\]
and together with \propref{prop:correction_final_dual_new} we obtain an equality. Intersecting both sides with $W^{s,p}\Gamma(E_k)$, 
using the second clause of \propref{prop:correction_final_dual_new}, we obtain
\[
\scrR^{s,2}(\bA_k^*;B_k^*)=\scrN_{\bot}^{s,2}(\bA_{k-1}^*;B_{k-1}^*)
\]
for every $s\in\Nzero$.
Since this holds every $s\in\Nzero$,
\[
\scrR(\bA_k^*;B_k^*)=\scrN_{\bot}(\bA_{k-1}^*;B_{k-1}^*). 
\] 
The general $W^{s,p}$ version follows from the fact that the projections onto the various subspaces all belong to $\frakG^0$.
\end{proof}

\section{Bianchi complexes}
\label{sec:bianchi}

\subsection{The bundle of Bianchi covectors}

Let $(M,\g)$ be a $d$-dimensional Riemannian manifold with smooth boundary. We denote by 
\[
\Lkm{k}{m} = \Lambda^k T^*M \otimes \Lambda^m T^*M
\]
the vector bundle of \emph{$(k,m)$-covectors} (i.e., $k$-covectors taking values in the bundle of $m$-covectors), and by 
\[
\LkmAll = \bigoplus_{k,m} \Lkm{k}{m}
\]
the graded vector bundle of \emph{double-covectors}. The bundle $\LkmAll$ is a graded algebra, endowed with a  graded wedge-product, 
\[
\wedge : \Lkm{k}{m} \times \Lkm{\ell}{n} \to \Lkm{k+\ell}{m+n},
\]
and a graded involution,
\[
(\cdot)^T : \Lkm{k}{m} \to \Lkm{m}{k},
\]
obtained by switching the form and vector parts.
A $(k,k)$-covector $\psi$ satisfying $\psi^T = \psi$ is called \emph{symmetric}. The vector bundle $\LkmAll$ is equipped with a graded \emph{Hodge-dual} isomorphism,
\[
\starG : \Lkm{k}{m} \to \Lkm{d-k}{m},
\]
defined by its action on the form part. To every operation on the form part corresponds an operation on the vector part, via   involution; in this case,
\[
\starGV : \Lkm{k}{m} \to \Lkm{k}{d-m},
\]
is defined by $\starGV\psi = (\starG\psi^T)^T$. Additional graded bundle maps are the \emph{interior products}
\[
i_X  : \Lkm{k}{m} \to  \Lkm{k-1}{m}
\Textand
i^V_X : \Lkm{k}{m} \to  \Lkm{k}{m-1},
\]
where $X$ is a tangent vector, $i_X$ is defined as usual via its action on the form part, and $i^V_X\psi = (i_X \psi^T)^T$, and the \emph{metric trace},
\[
\trace_\g : \Lkm{k}{m} \to  \Lkm{k-1}{m-1}
\qquad\text{defined by}\qquad
\trace_\g\psi = \sum_{i=1}^d i_{E_i} i_{E_i}^V\psi,
\]
where $\{E_i\}_{i=1}^d$ is an orthonormal basis. 

The \emph{Bianchi sum} $\G:\Lkm{k}{m}\to \Lkm{k+1}{m-1}$ is a smooth bundle map given by \cite{Kul72,Gra70},
\[
\G = \sum_{i=1}^d \vartheta^i\wedge i_{E_i}^V,
\]
where $\{\vartheta^i\}_{i=1}^d$ is the basis of covectors dual to $\{E_i\}_{i=1}^d$.
For $\psi\in \Lkm{k}{m}$ and $\eta\in \LkmAll$, the Bianchi sum satisfies the product rule 
\[
\G(\psi\wedge\eta) = \G\psi\wedge\eta + (-1)^{k+m}\psi\wedge \G\eta.
\]
The operator $\GV:\Lkm{k}{m}\to \Lkm{k-1}{m+1}$ is the smooth bundle map $\GV\psi=(\G\psi^T)^T$. The operators $\G$ and $\GV$ are mutually dual with respect to the fiber metric,
\[
(\G\psi,\eta)_\g = (\psi,\GV\eta)_\g.
\] 
The following algebraic commutation and anti-commutation relations are readily verifiable from the definitions:
\[
\begin{aligned}
&\Brk{\G,\GV}|_{\Lkm{k}{m}} = (k-m)\id \\ 
&[\G,\g\wedge] = 0 
&\quad
&[\GV,\g\wedge] = 0 \\
&[\G,\trace_\g] = 0 
&\quad
&[\GV,\trace_\g] = 0 \\
&\{\G,i_X\} = i_X^V 
&\quad 
&\{\G,i_X^V\} = 0 \\
&\{\GV,i_X^V\} = i_X 
&\quad
&\{\GV,i_X\} = 0,
\end{aligned}
\]
where $[A,B] = AB - BA$ and $\{A,B\} = AB+BA$. The tensorial operators $\G$, $\GV$, $\g\wedge$ and $\trace_\g$ are related via the Hodge duals $\starG$ and $\starG^V$ \cite{KL21a}.
The following orthogonal decompositions are established in \cite{Cal61},
\[
\LkmAll = \ker\G\oplus\image\GV = \ker\GV\oplus\image\G,
\]
with $\ker\G=\{0\}$ when $\G$ is restricted to $\Lkm{k}{m}$ for $k<m$ and $\ker\GV=\{0\}$  when $\GV$ is restricted to $\Lkm{k}{m}$ for $k>m$. 
That is, $\G$ is injective and $\GV$ is surjective on $\Lkm{k}{m}$ for $k<m$ and $\GV$ is injective and $\G$ is surjective on $\Lkm{k}{m}$ for $k>m$.

\begin{definition}
\label{def:Bianchi_forms}
We define the vector bundles of \emph{Bianchi $(k,m)$-covectors}, 
\[
\Gkm{k}{m} = \Cases{
\Lkm{k}{m}\cap\ker\GV & k\le m \\
\Lkm{k}{m}\cap\ker\G & k\ge m ,
}
\]
along with the graded \emph{bundle of Bianchi coverctors}, 
\[
\GkmAll = \bigoplus_{k,m=0}^d \Gkm{k}{m}.
\]
\end{definition}

For $k=m$, the kernels of $\G$ and $\GV$ coincide, and consist of symmetric double-covectors \cite[Prop.~2.2]{Gra70}. In particular, $\Gkm{1}{1}$ coincides with the bundle of symmetric $(1,1)$-covectors and $\Gkm{2}{2}$ is the bundle of $(2,2)$-covectors satisfying the \emph{algebraic Bianchi identities} (also known as \emph{algebraic curvature tensors}). 

We denote by $\calPG:\Lkm{k}{m}\to\Gkm{k}{m}$ the orthogonal projection of a double-covector on $\Gkm{k}{m}$; it has an explicit representation which will not be needed. Since $\GV\psi = (\G\psi^T)^T$, it follows that $\calPG$ commutes with the involution, i.e., $(\calPG\psi)^T = \calPG\psi^T$.

Let $\xi\in\Lkm10$. 
The operators $\ixi$ and $\psi\mapsto \xi\wedge\psi$, which are dual with respect to the fiber metric $(\cdot,\cdot)_\g$, can be restricted to Bianchi forms. Since the first commutes with $\GV$ and the second commutes with $\G$,
\[
\begin{aligned}
& \ixi : \Gkm{k}{m} \to\Gkm{k-1}{m} & \qquad\qquad & k\le m \\
& \xi\wedge : \Gkm{k}{m} \to\Gkm{k+1}{m} & \qquad\qquad & k\ge m.
\end{aligned}
\]
The Bianchi symmetry is however not preserved for arbitrary $k,m$. 
We introduce the \emph{Bianchi wedge-product} and the corresponding \emph{Bianchi interior product}:
\[
\calPG (\xi\wedge):\Gkm{k}{m}\to \Gkm{k+1}{m}
\Textand
\calPG \ixi:\Gkm{k}{m}\to \Gkm{k-1}{m}.
\]

For values of $k,m$ for which a projection is needed, we obtain the following explicit formulas:

\begin{proposition}
\label{prop:W_explicit}
Let $\psi\in\Gkm{k}{m}$. Then,
\[
\begin{aligned}
&\calPG(\xi\wedge\psi) =  \xi\wedge \psi - \frac{1}{\alpha(m,k)}\G(\xi_V\wedge\psi)
&\qquad\qquad & k<m  \\
&\calPG\ixi\psi = \ixi\psi - \frac{1}{\alpha(k,m)}\GV\ixiV\psi,
&\qquad\qquad & k>m ,
\end{aligned}
\]
where $\alpha(k,m) = k-m+1$.
\end{proposition}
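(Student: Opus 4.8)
The plan is to identify $\calPG$, restricted to the relevant graded summand of $\LkmAll$, as a projection \emph{along} $\image\G$ (in the case $k<m$) or \emph{along} $\image\GV$ (in the case $k>m$), and then to verify the stated identities by a short computation with the algebraic (anti)commutation relations collected above. I carry out the case $k<m$ in detail; the case $k>m$ is completely parallel, using $\{\G,i_X\}=i_X^V$ and $\{\G,i_X^V\}=0$ in place of the transposed relations used below.

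\textbf{Step 1 (reduction).} Since $\G:\Lkm{k}{m+1}\to\Lkm{k+1}{m}$ and both $\ker\GV$ and $\image\G$ are \emph{graded} sub-bundles of $\LkmAll$, the orthogonal decomposition $\LkmAll=\ker\GV\oplus\image\G$ established in \cite{Cal61} restricts in bidegree $(k+1,m)$ — where, since $k<m$, we have $k+1\le m$ and $\Gkm{k+1}{m}=\Lkm{k+1}{m}\cap\ker\GV$ — to the orthogonal splitting $\Lkm{k+1}{m}=\Gkm{k+1}{m}\oplus\G(\Lkm{k}{m+1})$. Hence $\calPG=\id-\Pi$ on $\Lkm{k+1}{m}$, where $\Pi$ is the orthogonal projection onto $\G(\Lkm{k}{m+1})$. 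Because $\xi_V\wedge\psi\in\Lkm{k}{m+1}$, the element $\tfrac{1}{\alpha(m,k)}\G(\xi_V\wedge\psi)$ automatically lies in $\image\G=(\ker\GV)^{\bot}$, so it remains only to verify that $\xi\wedge\psi-\tfrac{1}{\alpha(m,k)}\G(\xi_V\wedge\psi)\in\ker\GV$, i.e. that $\GV(\xi\wedge\psi)=\tfrac{1}{\alpha(m,k)}\,\GV\G(\xi_V\wedge\psi)$.

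\textbf{Step 2 (the computation).} Writing $\GV(\cdot)=\bigl(\G(\cdot)^{T}\bigr)^{T}$, expanding $\G=\sum_i\vartheta^i\wedge i_{E_i}^V$, and using that form-slot wedging anticommutes with $\vartheta^i\wedge$ while vector-slot operations commute with it, one obtains from $\GV\psi=0$ the two identities $\GV(\xi\wedge\psi)=\xi_V\wedge\psi$ and $\GV(\xi_V\wedge\psi)=0$. Combining the latter with $[\G,\GV]|_{\Lkm{k}{m+1}}=(k-m-1)\,\id$ gives $\GV\G(\xi_V\wedge\psi)=\G\GV(\xi_V\wedge\psi)+(m+1-k)\,\xi_V\wedge\psi=(m+1-k)\,\xi_V\wedge\psi$. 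Since $\alpha(m,k)=m-k+1$, these two computations yield exactly the required identity, proving the first formula. For $k>m$, the decomposition $\LkmAll=\ker\G\oplus\image\GV$ gives $\Lkm{k-1}{m}=\Gkm{k-1}{m}\oplus\GV(\Lkm{k}{m-1})$ (here $k-1\ge m$), so $\calPG=\id-\Pi'$ with $\Pi'$ the projection onto $\GV(\Lkm{k}{m-1})$, and it suffices to check $\G(\ixi\psi)=\tfrac{1}{\alpha(k,m)}\,\G\GV(\ixiV\psi)$; from $\{\G,i_X\}=i_X^V$ and $\G\psi=0$ one gets $\G(\ixi\psi)=\ixiV\psi$, from $\{\G,i_X^V\}=0$ one gets $\G(\ixiV\psi)=0$, and $[\G,\GV]|_{\Lkm{k}{m-1}}=(k-m+1)\,\id$ then gives $\G\GV(\ixiV\psi)=(k-m+1)\,\ixiV\psi$, which is the claim.

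The only genuinely delicate aspect is the bookkeeping in Step~2: keeping track of whether each wedge or contraction acts on the form slot or the vector slot, how each behaves under $(\cdot)^{T}$, and the resulting signs — together with the (routine) observation in Step~1 that $\ker\GV$, $\ker\G$, $\image\G$ and $\image\GV$ are graded, so that the ambient orthogonal decompositions descend to each bidegree. Once these are settled, the proposition reduces to the listed identities plus a trivial scalar match.
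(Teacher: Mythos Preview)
Your proof is correct and follows essentially the same approach as the paper's: show that the correction term lies in $\image\G$ (resp.\ $\image\GV$), verify via the (anti)commutation relations that the full right-hand side lies in $\ker\GV$ (resp.\ $\ker\G$), and conclude. The only cosmetic difference is that the paper carries out the $k>m$ case explicitly and invokes duality for $k<m$, whereas you do the opposite; the computations are otherwise identical.
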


\begin{proof}
We prove the second statement, as the first follows then from duality.
Let $\psi\in\Gkm{k}{m}$ for $k>m$, i.e., $\psi\in\ker\G$.
Consider the right-hand side. 
We start by verifying that it is in $\ker\G$, i.e., it is a Bianchi form,
\[
\begin{split}
\G\brk{\ixi\psi - \frac{1}{\alpha(k,m)}\GV\ixiV\psi} &=
\G\ixi\psi - \frac{1}{\alpha(k,m)}\brk{\GV\G + \alpha(k,m)\id}\ixiV\psi \\
&= \G\ixi\psi - \ixiV\psi \\
&= - \ixi\G\psi \\
&= 0,
\end{split} 
\]
where in the first equality we used the commutation relation of $\G$ and $\GV$, in the passage to the second line we used the fact that $\G$ commutes with $\ixiV$, and in the passage to the third line we used the commutation relation between $\G$ and $\ixi$.
Since $\image\GV$ is orthogonal to $\ker\G$, the application of $\calPG$ on right-hand side equals $\calPG\ixi\psi$, which completes the proof.
\end{proof}

\subsection{First-order differential operators}

We denote by $\Wkm{k}{m} = \Gamma(\Lkm{k}{m})$ the space of \emph{$(k,m)$-forms}, 
endowed with the inner-product
\beq
\bra\psi,\eta\ket= \int_M (\psi,\eta)_\g \,d\Volume_\g.
\label{eq:L2_double_forms}
\eeq
All the bundle maps defined on $\Lkm{k}{m}$ extend into tensorial operations on $\Wkm{k}{m}$. We denote by $\Ckm{k}{m} = \Gamma(\Gkm{k}{m})$ the space of  \emph{Bianchi $(k,m)$-forms}, and by 
\[
\CkmAll = \bigoplus_{k,m} \Ckm{k}{m}
\]
the graded space of \emph{Bianchi forms}.

We denote by
\[
\dg : \Wkm{k}{m} \to \Wkm{k+1}{m}
\Textand
\dgV : \Wkm{k}{m} \to \Wkm{k}{m+1}
\]
the exterior covariant derivative (defined in the same way as for any bundle-valued form) and 
its vectorial counterpart, $\dgV\psi = (\dg\psi^T)^T$. We denote by 
\[
\deltag : \Wkm{k+1}{m} \to \Wkm{k}{m}
\Textand
\deltagV : \Wkm{k}{m+1} \to \Wkm{k}{m}
\]
the respective formal $L^2$-adjoint of $\dg$ and $\dgV$, where 
$\deltagV\psi = (\deltag\psi^T)^T$.

These first-order operators satisfy the following commutation and anti-commutation relations with the tensorial operators:
\[
\begin{aligned}
&\{\dg,\g\wedge\} = 0 
&\quad
&\{\dgV,\g\wedge\} = 0 
&\quad
&\{\deltag,\g\wedge\} = -\dgV 
&\quad
&\{\deltagV,\g\wedge\} = - \dg \\
&\{\dg,\trace_\g\} = -\deltagV
&\quad
&\{\dgV,\trace_\g\} = -\deltag 
&\quad
&\{\deltag,\trace_\g\} = 0
&\quad
&\{\deltagV,\trace_\g\} = 0 \\ 
&\{\dg,\G\} = 0
&\quad
&\{\dgV,\G\} = \dg 
&\quad
&\{\deltag,\G\} = \deltagV
&\quad
&\{\deltagV,\G\} = 0 \\
&\{\dgV,\GV\} = 0
&\quad
&\{\dg,\GV\} = \dgV 
&\quad
&\{\deltagV,\GV\} = \deltag
&\quad
&\{\deltag,\GV\} = 0 \\
&[\dg,\starG^V] = 0 
&\quad
&[\dgV,\starG] = 0.
\end{aligned}
\]

The operators $\dg$ and $\deltag$ can be restricted to Bianchi forms.  Due to the commutation relations $\{\G,\dg\}=0$ and $\{\GV,\deltag\}=0$, 
\[
\begin{gathered}
\dg:\Ckm{k}{m}\to\Ckm{k+1}{m} \qquad\text{for $k\ge m$} \\
\deltag:\Ckm{k}{m}\to\Ckm{k-1}{m} \qquad\text{for $k\le m$}.
\end{gathered}
\]
The Bianchi symmetry is however not preserved by $\dg$ and $\deltag$ for every $(k,m)$-form. This can be rectified by projecting their image onto the Bianchi bundle.

\begin{definition}
The \emph{Bianchi derivative}, $\dBianchi:\Ckm{k}{m}\to\Ckm{k+1}{m}$, the \emph{Bianchi coderivative}, $\delBianchi:\Ckm{k+1}{m}\to\Ckm{k}{m}$, and their vectorial counterparts, $\dBianchiV:\Ckm{k}{m}\to\Ckm{k}{m+1}$ and $\delBianchiV:\Ckm{k}{m+1}\to\Ckm{k}{m}$ are given by
\beq
\dBianchi\psi =  \calPG \dg\psi
\Textand
\delBianchi\psi =
\calPG \deltag\psi,
\label{eq:Bianchi_codifferential} 
\eeq
along with $\dBianchiV\psi =(\dBianchi\psi^T)^T$ and $\delBianchiV\psi = (\delBianchi\psi^T)^T$.
\end{definition}

The Bianchi derivative $\dBianchi$ and the Bianchi coderivative $\delBianchi$ (and likewise $\dBianchiV$ and $\delBianchiV$) are mutually adjoint with respect to the $L^2$-inner-product \eqref{eq:L2_double_forms}.

The following is proved in a similar way as \propref{prop:W_explicit}:

\begin{proposition}
\label{prop:Bianchi_derivative_explicit}
For $\psi\in\Ckm{k}{m}$,
\[
\begin{aligned}
& \dBianchi\psi =  
\dg\psi - \frac{1}{\alpha(m,k)}\G \dgV\psi
&\qquad\qquad
& k<m 
\\
&\delBianchi\psi = 
\deltag\psi - \frac{1}{\alpha(k,m)}\GV\deltagV\psi 
&\qquad\qquad
& k>m. 
\end{aligned}
\]
\end{proposition}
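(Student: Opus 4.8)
The plan is to follow the pattern of \propref{prop:W_explicit}: write down the candidate right-hand side, verify by purely algebraic manipulations that it already lies in the appropriate Bianchi bundle, and then invoke orthogonality to identify it with $\calPG$ applied to $\dg\psi$ (resp.\ $\deltag\psi$).

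First I would treat the case $k<m$, so that $\psi\in\Ckm{k}{m}\subseteq\ker\GV$. Then $\dg\psi\in\Wkm{k+1}{m}$ with $k+1\le m$, so in this bidegree the Bianchi bundle is $\ker\GV$ and $\calPG$ is the $L^2$-orthogonal projection onto it. Setting $\Phi=\dg\psi-\tfrac{1}{\alpha(m,k)}\G\dgV\psi$ (note that $\G\dgV\psi$ is again of bidegree $(k+1,m)$ and that $\alpha(m,k)=m-k+1\ge 2$), the crux is the identity $\GV\Phi=0$. This I would obtain from $\{\dg,\GV\}=\dgV$ together with $\GV\psi=0$, which give $\GV\dg\psi=\dgV\psi$, and from $[\G,\GV]|_{\Lkm{k}{m+1}}=-\alpha(m,k)\,\id$ together with $\{\dgV,\GV\}=0$ and $\GV\psi=0$, which give $\GV\G\dgV\psi=\alpha(m,k)\,\dgV\psi$; substituting yields $\GV\Phi=\dgV\psi-\dgV\psi=0$. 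Since $\image\G$ is $L^2$-orthogonal to $\ker\GV$, it follows that $\calPG(\G\dgV\psi)=0$, hence $\dBianchi\psi=\calPG\dg\psi=\calPG\Phi=\Phi$, which is the stated formula.

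The case $k>m$ is entirely parallel (and could also be deduced from the first one by taking $L^2$-adjoints). Here $\psi\in\Ckm{k}{m}\subseteq\ker\G$ and $\deltag\psi\in\Wkm{k-1}{m}$ with $k-1\ge m$, so $\calPG$ projects onto $\ker\G$; with $\Psi=\deltag\psi-\tfrac{1}{\alpha(k,m)}\GV\deltagV\psi$ I would verify $\G\Psi=0$ using $\{\deltag,\G\}=\deltagV$, $[\G,\GV]|_{\Lkm{k}{m-1}}=\alpha(k,m)\,\id$, $\{\deltagV,\G\}=0$ and $\G\psi=0$, and then conclude via $\image\GV\perp\ker\G$ that $\delBianchi\psi=\Psi$.

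I do not expect a genuine obstacle; the argument is a short algebraic verification. The points that require care are purely bookkeeping: the relation $[\G,\GV]|_{\Lkm{a}{b}}=(a-b)\,\id$ must be applied at the bidegree of $\dgV\psi$, namely $(k,m+1)$, rather than at that of $\psi$; one must note $k+1\le m$ (resp.\ $k-1\ge m$) so that the Bianchi condition in the target bidegree is indeed $\ker\GV$ (resp.\ $\ker\G$), the borderline bidegree $(m,m)$ being harmless since there $\ker\G=\ker\GV$; and $\alpha(m,k)$ (resp.\ $\alpha(k,m)$) is nonzero throughout the relevant range, so the formula is well defined.
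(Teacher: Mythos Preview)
Your proposal is correct and is precisely the argument the paper intends: it states that the proposition ``is proved in a similar way as \propref{prop:W_explicit}'', and your verification that the candidate lies in $\ker\GV$ (resp.\ $\ker\G$) via the anticommutation relations, followed by the orthogonality $\image\G\perp\ker\GV$, mirrors that proof exactly. Your bookkeeping on the bidegree at which $[\G,\GV]$ is evaluated is also correct.
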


The fact that $\dg\dg$ is a tensorial operator yields the following:

\begin{proposition}
\label{prop:DstarDstar}
The maps $\dBianchi\dBianchi:\Ckm{k}{m}\to\Ckm{k+2}{m}$ and  $\delBianchi\delBianchi:\Ckm{k+2}{m}\to\Ckm{k}{m}$ are tensorial for every $k,m$, except when $k=m-1$.  
\end{proposition}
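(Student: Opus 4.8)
The plan is to split by the position of $k$ relative to $m$ and to reduce everything to the fact that $\dg\dg=R^{\nabg}\wedge$ (and likewise $\dgV\dgV$, the transpose of $\dg\dg$ acting on $\psi^T$) is tensorial. First I would dispose of the range $k\ge m$: here $\dg$ anticommutes with $\G$, hence preserves $\ker\G=\Gkm{\bullet}{m}$, so $\dBianchi=\dg$ both on $\Ckm{k}{m}$ and on $\Ckm{k+1}{m}$, giving $\dBianchi\dBianchi=\dg\dg$, which is tensorial. The case $k=m-1$ is exactly the one excluded from the claim: there \propref{prop:Bianchi_derivative_explicit} gives $\dBianchi|_{\Ckm{m-1}{m}}=\dg-\tfrac12\G\dgV$ while $\dBianchi|_{\Ckm{m}{m}}=\dg$, so $\dBianchi\dBianchi=\dg\dg+\tfrac12\G\dg\dgV$, whose $\G\dg\dgV$ term is genuinely second order.

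The substantive case is $k\le m-2$, where both $\Ckm{k}{m}$ and $\Ckm{k+1}{m}$ fall in the regime $k<m$, so \propref{prop:Bianchi_derivative_explicit} gives $\dBianchi=\dg-c_j\,\G\dgV$ on $\Ckm{j}{m}$ with $c_j=\tfrac1{m-j+1}$. Expanding $\dBianchi\dBianchi\psi=(\dg-c_{k+1}\G\dgV)(\dg-c_k\G\dgV)\psi$ and repeatedly using the algebraic relations $\dg\G=-\G\dg$ and $\dgV\G=\dg-\G\dgV$ (the latter from $\{\dgV,\G\}=\dg$), together with the tensoriality of $\dg\dg$ and $\dgV\dgV$, collapses the expression to
\[
\dBianchi\dBianchi\psi=\dg\dg\psi-c_kc_{k+1}\,\G^2\dgV\dgV\psi+c_k(1+c_{k+1})\,\G\dg\dgV\psi-c_{k+1}\,\G\dgV\dg\psi.
\]
The first two terms are tensorial. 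For the remaining two I would use two observations: the coefficient identity $c_k(1+c_{k+1})=c_{k+1}$, immediate from $c_k=\tfrac1{m-k+1}$ and $c_{k+1}=\tfrac1{m-k}$, which reduces them to $c_{k+1}\,\G(\dg\dgV-\dgV\dg)\psi$; and the identity $\dg\dgV-\dgV\dg=[\dg\dg,\GV]$, obtained by substituting $\dgV=\{\dg,\GV\}=\dg\GV+\GV\dg$ and cancelling, which is tensorial since $\dg\dg=R^{\nabg}\wedge$ and $\GV$ are. Hence $\dBianchi\dBianchi\psi$ is tensorial.

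The statement for $\delBianchi\delBianchi$ then follows by adjunction: $\delBianchi\delBianchi$ is the formal $L^2$-adjoint of $\dBianchi\dBianchi$, and the adjoint of a tensorial operator is tensorial; the excluded map $\delBianchi\delBianchi:\Ckm{m+1}{m}\to\Ckm{m-1}{m}$ is precisely the adjoint of the excluded $\dBianchi\dBianchi:\Ckm{m-1}{m}\to\Ckm{m+1}{m}$, so the exception transfers verbatim. The only real obstacle is the bookkeeping in the $k\le m-2$ case; the crucial point—and the step I would verify most carefully—is that the identity $\dg\dgV-\dgV\dg=[\dg\dg,\GV]$ combines with the exact matching of the coefficients $c_k$ to annihilate the potentially second-order cross term, which is what forces tensoriality.
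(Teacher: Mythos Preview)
Your proof is correct, but it takes a substantially more computational route than the paper's in the $k\le m-2$ case. The paper observes that for $j\le m$ one has $\delBianchi=\deltag$ on $\Ckm{j}{m}$ (since $\{\deltag,\GV\}=0$), so when $k\le m-2$ both applications of $\delBianchi$ in $\delBianchi\delBianchi:\Ckm{k+2}{m}\to\Ckm{k}{m}$ are just $\deltag$, giving $\delBianchi\delBianchi=\deltag\deltag$, which is tensorial; the tensoriality of $\dBianchi\dBianchi$ then follows by duality. In other words, the paper uses the same adjunction trick you invoke for $\delBianchi\delBianchi$, but in the opposite direction, thereby avoiding any expansion of products at all.

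What your approach buys is an explicit formula for $\dBianchi\dBianchi$ in terms of curvature (via $\dg\dg$, $\dgV\dgV$, and $[\dg\dg,\GV]$), and it exhibits the mechanism by which the second-order cross terms cancel---namely the coefficient identity $c_k(1+c_{k+1})=c_{k+1}$ together with the commutator identity $\dg\dgV-\dgV\dg=[\dg\dg,\GV]$. The paper's approach is shorter and more conceptual, but gives less information about what the resulting tensor actually is.
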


\begin{proof}
When restricted to $\Ckm{k}{m}$ for $k\ge m$, $\dBianchi\dBianchi = \dg\dg$, hence is tensorial. When restricted to $\Ckm{k}{m}$ for $k< m-1$, $\dBianchi\dBianchi$ is dual to  $\deltag\deltag$, and since the latter is tensorial, so is the former.

\end{proof}

Let $\jmath:\dM\to M$ denote as before the inclusion map of the boundary.
We
introduce mixed projections of tangential and normal boundary components,
\[
\begin{aligned}
&\PttD:\Wkm{k}{m}\to \plWkm{k}{m}
&\qquad 
&\PntD:\Wkm{k}{m}\to \plWkm{k-1}{m} \\ 
&\PtnD:\Wkm{k}{m}\to \plWkm{k}{m-1} 
&\qquad 
&\PnnD:\Wkm{k}{m}\to \plWkm{k-1}{m-1}.
\end{aligned}
\]
The first superscript in $\frakt\frakt,\frakt\frakn,\frakn\frakt,\frakn\frakn$ refers to the projection of the form part, whereas the second superscript refers to the projection of the vector part. 
Specifically,
\[
\begin{gathered}
\PttD\psi = \jmath^*\psi
\qquad
\PntD\psi = \jmath^*\idr \psi 
\qquad
\PtnD\psi = \jmath^*\idr^T \psi
\textand
\PnnD\psi = \jmath^*\idr^T \idr \psi, 
\end{gathered}
\]
where $\partial_r$ is the unit vector field normal to the level-sets of the distance from the boundary, which is defined in a collar neighborhood of $\dM$, and $\jmath^*$ pulls back to the boundary both the form and vector parts. For $\psi\in\Wkm{k}{m}$ and $\eta\in\Wkm{k+1}{m}$,
\beq
\bra \dg\psi,\eta\ket = \bra\psi,\deltag\eta\ket + \bra (\PttD\oplus\PtnD)\psi, (\PntD\oplus\PnnD)\eta\ket.
\label{eq:dg_by_parts}
\eeq
The definition of the Bianchi sum implies that the pullback $\jmath^*$ commutes with both $\G$ and $\GV$. Furthermore, $\idr$ anti-commutes with $\GV$ and  $\idr^V$ anti-commutes with $\G$. A direct calculation gives the following commutation and anti-commutation relations,
\[
\begin{aligned}
&[\PttD,\G] = 0 
&\quad
&\{\PtnD,\G\} = 0 
&\quad
&\{\PntD,\G\} = \PtnD 
&\quad
&[\PnnD,\G] = 0 \\
&[\PttD,\GV] = 0 
&\quad
&\{\PtnD,\GV\} = \PntD 
&\quad
&\{\PntD,\GV\} = 0 
&\quad
&[\PnnD,\GV] =0 .
\end{aligned}
\] 
As a result,
\[
\begin{aligned}
& \PttD : \Ckm{k}{m} \to \plCkm{k}{m} &\qquad & \text{for every $k,m$} \\
& \PnnD :  \Ckm{k}{m} \to  \plCkm{k-1}{m-1} &\qquad & \text{for every $k,m$} \\
& \PtnD : \Ckm{k}{m}\to \plCkm{k}{m-1} &\qquad & \text{for $k\ge m$} \\
& \PntD : \Ckm{k}{m}\to \plCkm{k-1}{m} &\qquad & \text{for $k\le m$}.
\end{aligned}
\] 
For $k<m$, $\PtnD:\Ckm{k}{m}\to \plWkm{k}{m-1}$ does not yield a Bianchi form, since $\idr^V$ does not commute with $\GV$. The same is true for $\PntD:\Ckm{k}{m}\to \plWkm{k-1}{m}$ when $k>m$. In the same spirit as in formula \eqref{eq:Bianchi_codifferential} for the Bianchi derivatives, we define:

\begin{definition}
The \emph{Bianchi boundary operators} 
\[
\begin{aligned}
& \PttG : \Ckm{k}{m} \to \plCkm{k}{m} 
&\qquad\qquad
& \PnnG :  \Ckm{k}{m} \to  \plCkm{k-1}{m-1} \\
& \PtnG : \Ckm{k}{m}\to \plCkm{k}{m-1} 
&\qquad\qquad
& \PntG : \Ckm{k}{m}\to \plCkm{k-1}{m}
\end{aligned}
\] 
are given by 
\[
\begin{gathered}
\PttG = \PttD
\qquad
\PntG = \calPG\PntD
\qquad
\PtnG = \calPG\PtnD
\textand
\PnnG = \PnnD,
\end{gathered}
\]
where $\calPG:\plLkm{k}{m}\to \plLkm{k}{m}$ denotes here the projection on Bianchi boundary forms. 
\end{definition}

Similarly to \propref{prop:Bianchi_derivative_explicit}, we have:

\begin{proposition}
For $\psi\in\Ckm{k}{m}$,
\[
\begin{aligned}
& \PtnG\psi = 
\PtnD\psi - \frac{1}{\alpha(m,k)} \G\PntD \psi
&\qquad\qquad
& k< m
\\
& \PntG\psi 
= \PntD\psi - \frac{1}{\alpha(k,m)} \GV\PtnD \psi
&\qquad\qquad
& k> m.
\end{aligned}
\]
\end{proposition}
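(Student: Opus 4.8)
The plan is to imitate the proofs of \propref{prop:W_explicit} and \propref{prop:Bianchi_derivative_explicit}. By definition $\PtnG=\calPG\PtnD$ and $\PntG=\calPG\PntD$, and since the fiberwise orthogonal splitting $\LkmAll=\ker\GV\oplus\image\G$ is pointwise, it descends to the pulled-back bundle $\plLkmAll$ over $\dM$; hence $\calPG$ on boundary forms annihilates $\image\G$ and fixes $\ker\GV$. Consequently it suffices to show that the asserted right-hand sides are Bianchi boundary forms differing from $\PtnD\psi$, resp.\ $\PntD\psi$, by an element of $\image\G$, resp.\ $\image\GV$.

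For the case $k<m$, fix $\psi\in\Ckm{k}{m}$, so that $\GV\psi=0$, and put $\Phi=\PtnD\psi-\alpha(m,k)^{-1}\G\PntD\psi$. Applying $\GV$ to $\Phi$ and using the relations $\{\PtnD,\GV\}=\PntD$, $\{\PntD,\GV\}=0$, and $[\G,\GV]|_{\Lkm{k-1}{m}}=((k-1)-m)\id=-\alpha(m,k)\id$ (the shift $k\mapsto k-1$ coming from $\PntD$, which is exactly what produces $\alpha(m,k)=m-k+1$ rather than $m-k$), one computes $\GV\PtnD\psi=\PntD\psi$ and $\GV\G\PntD\psi=\alpha(m,k)\PntD\psi$, whence $\GV\Phi=0$. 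Thus $\Phi\in\ker\GV$ while $\PtnD\psi-\Phi=\alpha(m,k)^{-1}\G\PntD\psi\in\image\G$, so that $\PtnG\psi=\calPG\PtnD\psi=\Phi$, as claimed.

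The case $k>m$ I would obtain by the involution rather than a second computation: from the definitions one has $\PntD\psi=(\PtnD\psi^T)^T$, $(\G\eta)^T=\GV\eta^T$, and $(\calPG\psi)^T=\calPG\psi^T$. Applying the already-established first case to $\psi^T\in\Ckm{m}{k}$ (where $\alpha(k,m)$ now plays the role of $\alpha(m',k')$ with $k'=m$, $m'=k$) and transposing the resulting identity yields $\PntG\psi=\PntD\psi-\alpha(k,m)^{-1}\GV\PtnD\psi$.

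I do not anticipate a substantive obstacle; the only care needed is the degree bookkeeping that pins down which instance of $[\G,\GV]=(k-m)\id$ is in force, and the observation that the decomposition $\ker\GV\oplus\image\G$ persists fiberwise on $\dM$. The diagonal edge case $k=m-1$, where $(k,m-1)$ is symmetric so that $\ker\GV=\ker\G$, needs no separate treatment, since $\alpha(m,m-1)=2\neq 0$ and the computation above is unchanged.
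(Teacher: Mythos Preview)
Your proof is correct and follows essentially the same strategy the paper indicates: the paper states that this proposition is proved ``similarly to \propref{prop:Bianchi_derivative_explicit}'' (which in turn mirrors \propref{prop:W_explicit}), namely, verify directly that the claimed right-hand side lies in the appropriate kernel ($\ker\GV$ or $\ker\G$) using the commutation relations, observe that it differs from the unprojected term by an element of the complementary image, and deduce the second formula from the first by a symmetry (duality/involution). Your bookkeeping with $[\G,\GV]|_{\Lkm{k-1}{m}}=-\alpha(m,k)\id$ and the use of $\{\PntD,\GV\}=0$ to kill $\GV\PntD\psi$ are exactly the needed ingredients.
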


\begin{proposition}
For all $\eta\in W^{1,p}\CkmAll$ and $\sigma\in W^{1,q}\CkmAll$ (the precise class determined by the context), with $1/p+1/q=1$, 
\beq
\bra\dBianchi\eta,\sigma\ket = \bra\eta,\delBianchi\sigma\ket +
\bra B_\calG \eta,B_\calG^* \sigma\ket,
\label{eq:integration_by_parts_DV}
\eeq
where 
\[
B_\calG = \PttG\oplus\PtnG
\Textand
B_\calG^* = \PntG\oplus\PnnG.
\]
Moreover, $\dBianchi$ and $\delBianchi$ are both adapted differential operators. 
\end{proposition}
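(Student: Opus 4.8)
The plan is to deduce both claims from the Green's formula \eqref{eq:dg_by_parts} for $\dg$ and $\deltag$, using only that $\calPG$ is, fiberwise over $M$ and over $\dM$, the \emph{orthogonal} projection onto the Bianchi sub-bundles, hence self-adjoint and idempotent for the $L^2$-pairings involved (interior and boundary). Since $\dBianchi,\delBianchi,\dg,\deltag$ and all four boundary projections respect bidegrees, and distinct bidegrees are $L^2$-orthogonal both over $M$ and over $\dM$, it suffices to prove \eqref{eq:integration_by_parts_DV} for homogeneous smooth $\eta\in\Ckm{k}{m}$ and $\sigma\in\Ckm{k+1}{m}$; the $W^{1,p}\times W^{1,q}$ statement then follows from the density of $\CkmAll$ in the relevant Sobolev spaces and the continuity of $\dg,\deltag$ (mapping $W^{1,p}\Gamma$ into $L^p\Gamma$), of the trace operators $\PttD,\PtnD,\PntD,\PnnD$ (mapping $W^{1,p}\Gamma$ into the corresponding boundary Sobolev spaces), and of the tensorial map $\calPG$, together with Hölder's inequality for the resulting interior and boundary pairings.

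For smooth homogeneous forms I would argue as follows. Since $\dBianchi\eta=\calPG\dg\eta$ by \eqref{eq:Bianchi_codifferential} and $\calPG\sigma=\sigma$, self-adjointness of $\calPG$ gives $\bra\dBianchi\eta,\sigma\ket=\bra\dg\eta,\calPG\sigma\ket=\bra\dg\eta,\sigma\ket$. Now apply \eqref{eq:dg_by_parts}:
\[ \bra\dg\eta,\sigma\ket=\bra\eta,\deltag\sigma\ket+\bra\PttD\eta,\PntD\sigma\ket+\bra\PtnD\eta,\PnnD\sigma\ket . \]
In the interior term insert $\calPG\eta=\eta$ and move $\calPG$ across the pairing to obtain $\bra\eta,\calPG\deltag\sigma\ket=\bra\eta,\delBianchi\sigma\ket$. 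In the two boundary terms, recall from the listed mapping properties of the projections that $\PttD\eta\in\plCkm{k}{m}$ and $\PnnD\sigma\in\plCkm{k}{m-1}$ are already Bianchi boundary forms, so $\PttD\eta=\calPG\PttD\eta=\PttG\eta$ and $\PnnD\sigma=\calPG\PnnD\sigma=\PnnG\sigma$; moving $\calPG$ to the other factor then replaces $\PntD\sigma$ by $\calPG\PntD\sigma=\PntG\sigma$ and $\PtnD\eta$ by $\calPG\PtnD\eta=\PtnG\eta$. Summing the two boundary contributions yields $\bra(\PttG\oplus\PtnG)\eta,(\PntG\oplus\PnnG)\sigma\ket=\bra B_\calG\eta,B_\calG^*\sigma\ket$, which is \eqref{eq:integration_by_parts_DV}.

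It then remains to check that $\dBianchi$ and $\delBianchi$ are adapted differential operators in the sense of \defref{def:adapting_operator}. Both are first-order differential operators, being $\calPG$ (tensorial, order zero) composed with $\dg$ resp.\ $\deltag$, and they carry no singular Green part, so Item~(a) holds trivially; Item~(b)'s integration-by-parts formula is \eqref{eq:integration_by_parts_DV} itself (for $\delBianchi$, read it with $\eta$ and $\sigma$ exchanged, the boundary pairing becoming $\bra B_\calG^*\sigma,-B_\calG\eta\ket$, which is of the required form once the sign is absorbed into the boundary operator). The substantive point, and the main obstacle, is that $B_\calG=\PttG\oplus\PtnG$ and $B_\calG^*=\PntG\oplus\PnnG$ must be \emph{normal} systems of trace operators associated with order $1$. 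Each of $\PttG,\PtnG,\PntG,\PnnG$ is $\jmath^*$ precomposed with the algebraic operations $i_{\partial_r}$, $i_{\partial_r}^V$ and $\calPG$, hence an order-$0$, class-$1$ trace operator; thus $B_\calG,B_\calG^*\in\OP(\frakT^{0,1})$, i.e.\ the single component of a system associated with order $1$. To establish normality one must verify that the zeroth-order (pure-trace) part of each is surjective onto the relevant Bianchi boundary bundle. I would reduce this to the surjectivity of $\jmath^*$ and of the contractions $i_{\partial_r}$, $i_{\partial_r}^V$ onto the respective graded pieces --- which makes $\PttD\oplus\PtnD$ and $\PntD\oplus\PnnD$ normal --- together with the fact that $\calPG$ is a surjective orthogonal projection onto precisely the target bundles $\plCkm{k}{m}$ and $\plCkm{k}{m-1}$; concretely one inspects the boundary symbols, using the explicit formulas of \propref{prop:W_explicit} type for $\PtnG$ and $\PntG$ in the ranges $k<m$ and $k>m$, bidegree by bidegree. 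This last verification is routine but is where the remaining work lies.
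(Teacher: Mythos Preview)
Your proof is correct and follows essentially the same approach as the paper's: the integration-by-parts identity is deduced from \eqref{eq:dg_by_parts} by exploiting that $\calPG$ is a self-adjoint idempotent and that $\PttD$, $\PnnD$ already land in Bianchi boundary bundles, and the normality of $B_\calG$, $B_\calG^*$ is reduced to surjectivity of the constituent boundary projections composed with $\calPG$. The paper's proof is terser---it records only that \eqref{eq:integration_by_parts_DV} ``is an immediate consequence of \eqref{eq:dg_by_parts}, along with the properties of $\calPG$'' and that $B_\calG$, $B_\calG^*$ are ``surjective, as direct sums of compositions of surjective operators''---but your unpacking of which factor in each boundary pairing is already Bianchi, and how $\calPG$ is shuttled across, is exactly the content behind that sentence.
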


\begin{proof}
Eq.~\eqref{eq:integration_by_parts_DV} is an immediate consequence of \eqref{eq:dg_by_parts}, along with the properties of $\calPG$. Finally, $B_\calG$ and $B_\calG^*$ are both systems of trace operators associated with order 1; both are surjective, as direct sums of compositions of surjective operators.  
\end{proof}

\subsection{Second-order differential operators}

In \cite{KL21a} we introduced the \emph{covariant curl-curl} operator, $H:\Wkm{k}{m}\to \Wkm{k+1}{m+1}$,
and its $L^2$-dual, $H^*: \Wkm{k+1}{m+1}\to\Wkm{k}{m}$,
\[
H=\tfrac12(\dg\dgV+\dgV\dg) 
\Textand 
H^*=\tfrac12(\deltag\deltagV+\deltagV\deltag).
\]
These second-order operators satisfy integration by part formulas involving both tensorial and first-order boundary operators. 
We also defined the first-order boundary operators, 
\[
\frakT : \Wkm{k}{m}\to\plWkm{k}{m} 
\Textand
\frakT^* : \Wkm{k}{m}\to\plWkm{k-1}{m-1},
\]
given by 
\[
\begin{aligned}
\frakT\psi &= \tfrac12 \brk{\PntD \dg\psi-\dg \PntD\psi}+\tfrac12 \brk{\PtnD\dgV\psi- \dgV \PtnD\psi} \\
\frakT^*\psi &=  -\tfrac12\brk{ \PtnD\deltag\psi+\deltag \PtnD\psi} - \tfrac12\brk{\PntD\deltagV\psi+\deltagV\PntD\psi},
\end{aligned}
\]
such that 
\[
\bra H\psi,\eta\ket=\bra \psi,H^*\eta\ket +\bra B_H\psi,B_H^*\eta\ket ,
\]
where 
\[
B_H : \Wkm{k}{m}\to (\plWkm{k}{m})^2 
\Textand
B_H^* : \Wkm{k}{m}\to(\plWkm{k-1}{m-1})^2
\]
are given by
\[
B_H=\PttD\oplus(-\frakT)
\Textand
B_H^*=\frakT^*\oplus\PnnD.
\] 

The operators $H$ and $H^*$ both commute with the Bianchi sums $\GV$, $\G$ \cite[Prop.~3.10]{KL21a}, which implies that for every $k,m$,
\[
H:\Ckm{k}{m}\rightarrow \Ckm{k+1}{m+1} 
\Textand
H^*:\Ckm{k+1}{m+1}\rightarrow \Ckm{k}{m}. 
\]
A similar calculation shows that the boundary operators also preserve the Bianchi structure:
\[
B_H : \Ckm{k}{m}\to (\plCkm{k}{m})^2 
\Textand
B_H^* : \Ckm{k}{m}\to(\plCkm{k-1}{m-1})^2.
\]

The fact that $B_H$ and $B_H^*$ are normal systems of trace operators associated with order $2$ is implied by the calculation in the proof of \cite[Lemma.~5.1]{KL21a}. Thus, 

\begin{proposition}
The operators $H$ and $H^*$ are second-order adapted differential operators with respect to the boundary operators $B_H$ and $B_H^*$, which are systems of trace operators associated with order 2. 
\end{proposition}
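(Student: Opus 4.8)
The plan is to verify the two conditions of \defref{def:adapting_operator} directly. For Item~(a), I would note that $H=\tfrac12(\dg\dgV+\dgV\dg)$ and $H^*=\tfrac12(\deltag\deltagV+\deltagV\deltag)$ are finite sums of compositions of first-order covariant differential operators, hence bona fide second-order differential operators, and that by \cite[Prop.~3.10]{KL21a} they restrict to $H:\Ckm{k}{m}\to\Ckm{k+1}{m+1}$ and $H^*:\Ckm{k+1}{m+1}\to\Ckm{k}{m}$; in particular each lies in $\OP(\frakS^{2,0})$ with vanishing singular Green part, so Item~(a) holds with $G=0$. For Item~(b), I would invoke the Green's formula $\bra H\psi,\eta\ket=\bra\psi,H^*\eta\ket+\bra B_H\psi,B_H^*\eta\ket$ of \cite{KL21a}, in which $H^*$ is the formal $L^2$-adjoint of $H$, and note that it restricts to Bianchi forms since $B_H$ and $B_H^*$ carry Bianchi forms to Bianchi boundary forms, as recorded above. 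Since $H^*=(H)^*$ and the hypotheses imposed on $B_A$ and $B_{A^*}$ in \defref{def:adapting_operator} are symmetric in the two boundary operators, it will suffice to treat $H$; the claim for $H^*$ follows with $B_H$ and $B_H^*$ interchanged, an overall sign being immaterial.

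The remaining task is to check that $B_H=\PttD\oplus(-\frakT)$ and $B_H^*=\frakT^*\oplus\PnnD$ are \emph{normal systems of trace operators associated with order $2$}. First I would record the orders and classes: $\PttD=\jmath^*$ and $\PnnD=\jmath^*\idr^T\idr$ are tensorial trace operators, hence of order $0$ and class $1$, so belong to $\OP(\frakT^{0,2})$; while $\frakT$ and $\frakT^*$, read off their defining formulas, are first-order operators each of whose summands applies to its argument either a tangential derivative of a boundary trace (no normal derivative) or a single normal derivative followed by a boundary projection, hence each lies in $\OP(\frakT^{1,2})$. Thus $B_H$ and $B_H^*$ both have the structure $T_0\oplus T_1$ with $T_0\in\OP(\frakT^{0,2})$ and $T_1\in\OP(\frakT^{1,2})$ demanded by the definition.

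The one genuinely computational point, and the main obstacle, is \emph{normality}: writing each component $T_i$ in the canonical form \eqref{eq:components_normal}, I must show the coefficient $S_{ii}$ of $D_\frakn^i$ is a surjective bundle map onto the relevant bundle of Bianchi boundary covectors. For the order-$0$ components $\PttD$ and $\PnnD$ this is immediate, since there $S_{00}$ is a component of the pullback $\jmath^*$, which is fiberwise onto. For the order-$1$ components $-\frakT$ and $\frakT^*$ the coefficient $S_{11}$ is obtained by collecting exactly the summands that carry one normal derivative of the argument: in $\frakT$ these come only from $\PntD\dg$ and $\PtnD\dgV$, the terms $\dg\PntD$ and $\dgV\PtnD$ differentiating only boundary traces, and a short computation using $\dg=\sum_i\vartheta^i\wedge\nabla_{E_i}$, $\idr\,dr=1$, and the (anti)commutation of $\idr$ and $\idr^T$ with $\jmath^*$ identifies $S_{11}$, up to a nonzero scalar, with a projection onto the bundle of Bianchi boundary covectors, hence a surjection; the statement for $\frakT^*$ follows by duality. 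This is precisely the content of the computation in the proof of \cite[Lemma~5.1]{KL21a}, which I would invoke rather than reproduce. With normality established, \defref{def:adapting_operator} applies and $H$, $H^*$ are adapted differential operators of order $2$.
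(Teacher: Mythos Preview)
Your proposal is correct and follows essentially the same approach as the paper: the paper's argument reduces the entire claim to the fact that $B_H$ and $B_H^*$ are normal systems of trace operators associated with order $2$, and for this it simply invokes the calculation in the proof of \cite[Lemma~5.1]{KL21a}. You supply more of the intermediate verification (that $H$, $H^*$ are genuine second-order differential operators in $\OP(\frakS^{2,0})$ with $G=0$, that the order and class of each boundary component match the requirements of the definition, and a sketch of why the diagonal coefficients $S_{ii}$ are surjective), but you end up at the same citation for the one nontrivial computation, so the two arguments coincide.
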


\subsection{Bianchi complexes}

Let $1\le m\le d$, and consider the following diagram, which we break into two lines:
\[
\begin{xy}
(-20,0)*+{0}="Em1";
(0,0)*+{\Ckm{0}{m}}="E0";
(30,0)*+{\Ckm{1}{m}}="E1";
(60,0)*+{\cdots}="E2";
(90,0)*+{\Ckm{m}{m}}="E3";
(-20,-20)*+{}="Gm1";
(0,-20)*+{\plCkm{0}{m}\oplus\plCkm{0}{m-1}}="G0";
(30,-20)*+{\plCkm{1}{m}\oplus\plCkm{1}{m-1}}="G1";
(60,-20)*+{\cdots}="G2";
(90,-20)*+{(\plCkm{m}{m})^2}="G3";
{\ar@{->}@/^{1pc}/^{\dBianchi}"E0";"E1"};
{\ar@{->}@/^{1pc}/^{\delBianchi}"E1";"E0"};
{\ar@{->}@/^{1pc}/^{\dBianchi}"E1";"E2"};
{\ar@{->}@/^{1pc}/^{\delBianchi}"E2";"E1"};
{\ar@{->}@/^{1pc}/^{\dBianchi}"E2";"E3"};
{\ar@{->}@/^{1pc}/^{\delBianchi}"E3";"E2"};
{\ar@{->}@/^{1pc}/^0"Em1";"E0"};
{\ar@{->}@/^{1pc}/^0"E0";"Em1"};
{\ar@{->}@/_{0pc}/^{B_\calG}"E0";"G0"};
{\ar@{->}@/_{0pc}/^{B_\calG}"E1";"G1"};
{\ar@{->}@/_{0pc}/^{B_\calG}"E2";"G2"};
{\ar@{->}@/^{1pc}/^{B_\calG^*}"E1";"G0"};
{\ar@{->}@/^{1pc}/^{B_\calG^*}"E2";"G1"};
{\ar@{->}@/^{1pc}/^{B_\calG^*}"E3";"G2"};
{\ar@{->}@/_{0pc}/^{B_H}"E3";"G3"};
\end{xy}
\]
\[
\begin{xy}
(-30,0)*+{\Ckm{m}{m}}="Em1";
(0,0)*+{\Ckm{m+1}{m+1}}="E0";
(30,0)*+{\cdots}="E1";
(60,0)*+{\Ckm{d}{m+1}}="E2";
(80,0)*+{0}="E3";
(-30,-20)*+{(\plCkm{m}{m})^2}="Gm1";
(0,-20)*+{\plCkm{m+1}{m+1}\oplus\plCkm{m+1}{m}}="G0";
(30,-20)*+{\cdots}="G1";
(60,-20)*+{\plCkm{d}{m+1}\oplus\plCkm{d}{m}}="G2";
(80,-20)*+{}="G3";
{\ar@{->}@/^{1pc}/^{\dBianchi}"E0";"E1"};
{\ar@{->}@/^{1pc}/^{\delBianchi}"E1";"E0"};
{\ar@{->}@/^{1pc}/^{\dBianchi}"E1";"E2"};
{\ar@{->}@/^{1pc}/^{\delBianchi}"E2";"E1"};
{\ar@{->}@/^{1pc}/^0"E2";"E3"};
{\ar@{->}@/^{1pc}/^0"E3";"E2"};
{\ar@{->}@/^{1pc}/^{H}"Em1";"E0"};
{\ar@{->}@/^{1pc}/^{H^*}"E0";"Em1"};
{\ar@{->}@/_{0pc}/^{B_H}"Em1";"Gm1"};
{\ar@{->}@/_{0pc}/^{B_\calG}"E0";"G0"};
{\ar@{->}@/_{0pc}/^{B_\calG}"E1";"G1"};
{\ar@{->}@/_{0pc}/^{B_\calG}"E2";"G2"};
{\ar@{->}@/^{1pc}/^{B_\calG^*}"E1";"G0"};
{\ar@{->}@/^{1pc}/^{B_\calG^*}"E2";"G1"};
{\ar@{->}@/^{1pc}/^{B_H^*}"E0";"Gm1"};
\end{xy}
\]

\begin{theorem}
\label{thm:SVcomplex}
This sequence forms an elliptic pre-complex; we call the induced complex a \emph{Bianchi complex}.
\end{theorem}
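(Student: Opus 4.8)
The plan is to verify the three defining conditions of an elliptic pre-complex for the displayed sequence, reading off each from the structural results already assembled in this section. First I would fix notation: for $m\le d$ the sequence runs $\dBianchi:\Ckm{k}{m}\to\Ckm{k+1}{m}$ for $k=0,\dots,m-1$, then $H:\Ckm{m}{m}\to\Ckm{m+1}{m+1}$, then $\dBianchi:\Ckm{k}{m+1}\to\Ckm{k+1}{m+1}$ for $k=m+1,\dots,d-1$, with boundary operators $B_\calG$, $B_\calG^*$ on the $\dBianchi$-levels and $B_H$, $B_H^*$ at the $H$-level. We already know from the propositions above that each $\dBianchi$ (and each $\delBianchi$) is an adapted differential operator with respect to $B_\calG$, $B_\calG^*$ via \eqref{eq:integration_by_parts_DV}, and that $H$, $H^*$ are second-order adapted differential operators with respect to $B_H$, $B_H^*$; this takes care of the ambient ``adapted'' hypothesis in the definition of an elliptic pre-complex. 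So it remains to check (a) overdetermined ellipticity of the systems $(A_{k-1}^*\oplus A_k, B_{k-1}^*)$ and (b) the order-reduction $\ord(A_{k+1}A_k)\le\ord(A_k)$ at every level, including the two transitional levels where an $H$ meets a $\dBianchi$.

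For condition (b), I would argue level by level. On the pure-$\dBianchi$ stretches, $\dBianchi\dBianchi:\Ckm{k}{m}\to\Ckm{k+2}{m}$ is tensorial by \propref{prop:DstarDstar} except when $k=m-1$ — but that exceptional index is precisely where the chain switches from $\dBianchi$ to $H$, so within the pure-$\dBianchi$ parts $\dBianchi\dBianchi$ is order $0<1=\ord(\dBianchi)$. At the transitional levels one must examine $H\dBianchi:\Ckm{m-1}{m}\to\Ckm{m+1}{m+1}$ and $\dBianchi H:\Ckm{m}{m}\to\Ckm{m+2}{m+1}$; the introduction already records that "an explicit calculation shows that $H\dBianchi$ and $\dBianchi H$ are differential operators of order $1$,'' which is $\le\max(\ord(H),\ord(\dBianchi))$ in the appropriate sense (the order of the first factor in each composition: $\ord(\dBianchi)=1$ for $H\dBianchi$, and for $\dBianchi H$ one uses that the order of $\dBianchi H$ does not exceed $\ord(H)=2$). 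I would spell out the $H\dBianchi$ computation using $H=\tfrac12(\dg\dgV+\dgV\dg)$ and $\dBianchi=\calPG\dg$ together with the commutation/anticommutation tables for $\dg,\dgV$ with $\G,\GV,\calPG$: the leading (third-order) symbol terms cancel because $\dg\dg$ and $\dgV\dgV$ are tensorial and because $\dg$ anticommutes with $\G$, leaving a first-order operator; symmetrically for $\dBianchi H$.

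For condition (a), OD ellipticity of $(A_{k-1}^*\oplus A_k, B_{k-1}^*)$, I would invoke \thmref{thm:adatped_OD_ellipticity_estiamte}: it suffices to check (i) that $\bigoplus\sigma_{E_i}(x,\xi)$ is injective on the interior, i.e. $\sigma_{A_{k-1}^*}(x,\xi)\oplus\sigma_{A_k}(x,\xi)$ is injective for $\xi\ne 0$, and (ii) the Lopatinskii–Shapiro-type injectivity of the boundary-initial-value map $\Xi_{x,\xi'}$ restricted to decaying solutions. The model case is the de Rham situation: on Bianchi forms, $\dg\oplus\deltag$ has injective symbol off the zero section (this is the standard elliptic estimate underlying Hodge theory), and $\dBianchi\oplus\delBianchi$ inherits it because $\calPG$ is a (fiberwise orthogonal) projection commuting with the relevant symbols — the symbol of $\dBianchi$ is $\calPG(\imath\,\xi\wedge\cdot)$ and of $\delBianchi$ is $\calPG(\imath\,\ixi\,\cdot)$, restricted to the Bianchi bundle, and injectivity of $\xi\wedge\oplus\,\ixi$ on the full exterior algebra restricts to injectivity on any invariant subbundle. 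For the levels involving $H$, one checks that $\sigma_H(x,\xi)=-\tfrac12|\xi|^2(\text{something})$ has the form of a (scalarized) Laplacian-type symbol and that $\sigma_{\delBianchi}\oplus\sigma_H$, respectively $\sigma_{H^*}\oplus\sigma_{\dBianchi}$, are injective — this is essentially the content of the ellipticity of the curl-curl boundary-value problem established in \cite{KL21a}, which I would cite and then restrict to the Bianchi subbundle. The boundary condition (ii) is then verified by the same explicit ODE analysis as for the Neumann de Rham / curl-curl problems, which is known; here $B_\calG^*$ restricts to $\PnD$ (for $k\le m$) or to a Bianchization of $\PnD$ (for $k>m$) and $B_H^*$ to $\frakT^*\oplus\PnnD$, and in each case normality (already established) plus the classical Lopatinskii check gives injectivity on $\bbM_{x,\xi'}^+$.

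The main obstacle I anticipate is condition (a) at the two transitional levels, i.e. showing $(H^*\oplus\dBianchi, B_H^*)$ and $(\delBianchi\oplus H, B_\calG^*)$ are OD elliptic of (mixed) orders $1$ and $2$. The interior symbol injectivity amounts to: if $\sigma_H(x,\xi)\psi=0$ and $\sigma_{\dBianchi}(x,\xi)\psi=0$ then $\psi=0$ (and the dual statement) — here one cannot simply quote scalar Hodge theory because $H$ is a genuinely second-order operator coupling form and vector degrees, so one must use the algebraic identities relating $H$, $\trace_\g$, $\g\wedge$, $\G$, $\GV$ and $\starG$ to reduce $\sigma_H$ to a manageable normal form, and then use that a nonzero $\psi$ annihilated by $\sigma_{\dBianchi}(x,\xi)$ is, up to the Bianchi projection, of the form $\xi\wedge(\text{lower})$, for which $\sigma_H$ is nonzero. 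The boundary symbol injectivity at these levels is the corresponding Lopatinskii–Shapiro check for the mixed-order system, which I would handle by the order-reduction device of \propref{prop:OD_varying_orders} (homogenizing all operators to order zero via the operators $\calL$) and then reducing to the already-established boundary ellipticity of the curl-curl problem in \cite{KL21a}. Everything else — the adapted-operator structure, surjectivity of $B_\calG,B_\calG^*,B_H,B_H^*$, and the pure-$\dBianchi$ parts of (a) and (b) — follows directly from the propositions already proved in this section, so the proof reduces to these two transitional-level symbol computations plus a citation to \cite{KL21a}.
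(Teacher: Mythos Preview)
Your overall structure is right—verify the order-reduction condition and the OD ellipticity level by level—and your treatment of condition (b) matches the paper's (the paper likewise computes $H\dBianchi$ and $\dBianchi H$ explicitly and invokes \propref{prop:DstarDstar} for the pure-$\dBianchi$ stretches). But there is a genuine gap in your OD-ellipticity argument.

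For the interior symbol on the $\dBianchi$-levels you argue that ``injectivity of $\xi\wedge\oplus\,\ixi$ on the full exterior algebra restricts to injectivity on any invariant subbundle.'' This does not apply: the Bianchi bundle $\Gkm{k}{m}=\ker\GV$ (for $k<m$) is \emph{not} invariant under $\xi\wedge$, which is exactly why $\dBianchi\ne\dg$ there and why $\calPG$ appears. The symbol of $\dBianchi$ is $\calPG(\xi\wedge\cdot)=\xi\wedge\cdot-\tfrac{1}{\alpha(m,k)}\G(\xi_V\wedge\cdot)$ by \propref{prop:Bianchi_derivative_explicit}, and from $\calPG(\xi\wedge\psi)=0$ you cannot conclude $\xi\wedge\psi=0$; the kernel of a projection composed with an injective map need not be trivial. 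The paper instead applies $\ixi$ to the explicit formula, uses $\{\G,\ixi\}=\ixiV$ and $\ixi\psi=0$ to extract an identity of the form $|\psi|_\g^2+\tfrac{1}{\alpha}|\xi_V\wedge\psi|_\g^2=0$. This is a short computation, but it genuinely uses the specific algebraic form of the Bianchi projection and does not follow from scalar Hodge theory by restriction.

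The boundary-symbol step is a larger gap. You write that it is ``the same explicit ODE analysis as for the Neumann de Rham / curl-curl problems, which is known,'' but the paper carries this out from scratch: it decomposes $\psi$ into sixteen components with respect to $\xi$, $dr$, $\xi_V$, $dr^T$, writes down the coupled first-order ODE system coming from $\sigma_{\dBianchi}$ and $\sigma_{\delBianchi}$ (respectively from $\sigma_H$ and $\sigma_{\deltag}$), and checks that the initial conditions from $B_\calG^*$, $B_\calG$, $B_H^*$, $B_H$ each force the decaying solution to vanish. The presence of the $\G$-correction in $\sigma_{\dBianchi}$ couples components that are decoupled in the de Rham case, so this is not a citation but a computation. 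Note also that the paper uses Hodge duality $\starG\starGV$ to reduce cases (c) and (d) of the ellipticity statement to cases (a) and (b) with the complementary boundary operator ($B_\calG$ in place of $B_\calG^*$, $B_H$ in place of $B_H^*$); this is why both boundary conditions must be checked in \propref{prop:ellipticity_bianchi} and \propref{prop:ellipticity_H}, and it replaces your appeal to order-reduction and \cite{KL21a} at the transitional levels.
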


\thmref{thm:SVcomplex} is proved in the next section. In the remaining part of this section we examine its implications.

\thmref{thm:corrected_complex} yields the existence of a unique chain of operators 
\[
\begin{aligned}
&\DBianchi: \Ckm{k}{m} \to \Ckm{k+1}{m}  
&\qquad\qquad
&k=0,\dots,m-1 \\
&\bHg: \Ckm{m}{m} \to \Ckm{m+1}{m+1} \\
&\DBianchi: \Ckm{k}{m+1} \to \Ckm{k+1}{m+1}  
&\qquad\qquad
&k=m+1,\dots,d-1,
\end{aligned}
\]
satisfying,
\[
\begin{aligned}
&\Ckm{k}{m}, k<m &:
&\quad 
&\DBianchi \DBianchi = 0
&\Textand&
&\DBianchi = \dBianchi \quad \text{on $\scrN(\DelBianchi,B_\calG^*)$} \\
&\Ckm{m}{m} &:
&\quad 
&\bHg \DBianchi = 0
&\Textand&
&\bHg = H \quad \text{on $\scrN(\DelBianchi,B_\calG^*)$} \\
&\Ckm{m+1}{m+1} &:
&\quad 
&\DBianchi\bHg  = 0
&\Textand&
&\DBianchi = \dBianchi \quad \text{on $\scrN(\bHg^*,B_H^*)$} \\
&\Ckm{k}{m+1}, k>m &:
&\quad 
&\DBianchi \DBianchi = 0
&\Textand&
&\DBianchi = \dBianchi \quad \text{on $\scrN(\DelBianchi,B_\calG^*)$},
\end{aligned}
\]
where $\DelBianchi = (\DBianchi)^*$. 

The Bianchi complex induces the following (smooth versions of) Hodge-like decompositions:
\[
\begin{aligned}
&\Ckm{k}{m} = 
\lefteqn{\overbrace{\phantom{\scrR(\DBianchi)\oplus \ker(\DBianchi\oplus\DelBianchi\oplus B_\calG^*)}}^{\scrN(\DBianchi)}} 
\scrR(\DBianchi) \oplus \underbrace{\ker(\DBianchi\oplus\DelBianchi\oplus B_\calG^*)\oplus \scrR(\DelBianchi;B_\calG^*)}_{\scrN(\DelBianchi,B_\calG^*)} 
&\qquad\qquad
&k<m \\
&\Ckm{m}{m} = 
\lefteqn{\overbrace{\phantom{\scrR(\DBianchi)\oplus \ker(\DelBianchi\oplus\bHg\oplus B_\calG^*)}}^{\scrN(\bHg)}} 
\scrR(\DBianchi) \oplus \underbrace{\ker(\DelBianchi\oplus\bHg\oplus B_\calG^*)\oplus \scrR(\bHg^*;B_H^*)}_{\scrN(\DelBianchi,B_\calG^*)}   \\
&\Ckm{m+1}{m+1} = 
\lefteqn{\overbrace{\phantom{\scrR(\bHg)\oplus \ker(\bHg^*\oplus\DBianchi\oplus B_H^*)}}^{\scrN(\DBianchi)}} 
\scrR(\bHg) \oplus \underbrace{\ker(\bHg^*\oplus\DBianchi\oplus B_H^*)\oplus \scrR(\DelBianchi;B_\calG^*)}_{\scrN(\bHg^*,B_H^*)}  \\
&\Ckm{k}{m+1} = 
\lefteqn{\overbrace{\phantom{\scrR(\DBianchi)\oplus \ker(\DBianchi\oplus\DelBianchi\oplus B_\calG^*)}}^{\scrN(\DBianchi)}} 
\scrR(\DBianchi) \oplus \underbrace{\ker(\DBianchi\oplus\DelBianchi\oplus B_\calG^*)\oplus \scrR(\DelBianchi;B_\calG^*)}_{\scrN(\DelBianchi,B_\calG^*)} 
&\qquad\qquad
&k>m.
\end{aligned}
\]
where the middle term of every direct sum is finite-dimensional, and consists of smooth sections, even in Sobolev regularity. 
Implications of some of these decompositions are detailed in the introduction.

Finally, we note that the theory holds verbatim if $H$ is altered by any tensorial operator preserving the Bianchi symmetry. This is important, since if we replace $H$ by $H+D$, then by the uniqueness of the correction, $\bHg = H+D$ in constant sectional curvature.  

\subsection{Proof of \thmref{thm:SVcomplex}}

To prove \thmref{thm:SVcomplex}, we need to prove the following two propositions:

\begin{proposition}\ 
\begin{enumerate}[itemsep=0pt,label=(\alph*)]
\item For $k\le m-2$, $\dBianchi\dBianchi :\Ckm{k}{m}\to \Ckm{k+2}{m}$ is of order 0. 
\item $H\dBianchi :\Ckm{m-1}{m}\to \Ckm{m+1}{m+1}$ is of order 1. 
\item $\dBianchi H :\Ckm{m}{m}\to \Ckm{m+2}{m+1}$ is of order 1. 
\item For $k\ge m+1$, $\dBianchi\dBianchi :\Ckm{k}{m+1}\to \Ckm{k+2}{m+1}$ is of order 0. 
\end{enumerate}
\end{proposition}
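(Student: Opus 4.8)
\subsection*{Proof proposal}

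The plan is to dispatch (a) and (d) as immediate corollaries of \propref{prop:DstarDstar}, and to prove (b) and (c) by a direct symbolic computation in which every composition is reduced, modulo operators of order $\le 1$, by exploiting the tensoriality of $\dg\dg$ (hence also of $\dgV\dgV$, by involution) together with the anti-commutation relations between $\dg,\dgV$ and $\G,\GV$ tabulated above. For (a), since $k\le m-2$ we have $k\ne m-1$, so \propref{prop:DstarDstar} gives directly that $\dBianchi\dBianchi$ is tensorial on $\Ckm{k}{m}$, i.e.\ a differential operator of order $0$. For (d), one applies \propref{prop:DstarDstar} to the bundle $\Lkm{k}{m+1}$, whose ``vector degree'' is $m+1$; the excluded case there is $k=(m+1)-1=m$, and since $k\ge m+1$ this case does not occur, so $\dBianchi\dBianchi$ is again tensorial, hence of order $0$.

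For (b), let $\psi\in\Ckm{m-1}{m}\subseteq\ker\GV$. By \propref{prop:Bianchi_derivative_explicit} (with $\alpha(m,m-1)=2$), $\dBianchi\psi=\dg\psi-\tfrac12\G\dgV\psi$; applying $H$ and using $[H,\G]=0$ gives $H\dBianchi\psi=H\dg\psi-\tfrac12\G H\dgV\psi$. Expanding $H=\tfrac12(\dg\dgV+\dgV\dg)$ and discarding all summands that contain the tensorial factor $\dg\dg$ or $\dgV\dgV$ (each such summand is of order $\le 1$), one is left with $H\dBianchi\psi=\tfrac12\dg\dgV\dg\psi-\tfrac14\G\dgV\dg\dgV\psi$ plus terms of order $\le 1$. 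The crucial reduction is the identity $\dgV\dg=\dg\dgV+\GV\dg\dg$, valid on $\ker\GV$: it follows by applying $\{\dg,\GV\}=\dgV$ to $\dg\psi$ and using $\GV\dg\psi=\dgV\psi$ (which holds since $\GV\psi=0$). Since moreover $\dgV\psi\in\ker\GV$ (because $\{\dgV,\GV\}=0$), this identity applies both to $\psi$ and to $\dgV\psi$, and each application rewrites one of the two remaining terms as a sum of terms again carrying a factor $\dg\dg$ or $\dgV\dgV$ (possibly conjugated by tensorial operators), hence of order $\le 1$. Thus $H\dBianchi$ has order $\le 1$.

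For (c), note first that on $\Ckm{m+1}{m+1}$ the exterior covariant derivative $\dg$ preserves the Bianchi condition (since $\{\dg,\G\}=0$ and the target $\Ckm{m+2}{m+1}$ is $\ker\G$), so $\dBianchi=\dg$ there and $\dBianchi H=\dg H=\tfrac12(\dg\dg\dgV+\dg\dgV\dg)$. The first summand is of order $\le 1$ because $\dg\dg$ is tensorial; for the second, $\Ckm{m}{m}\subseteq\ker\GV$, so the same identity $\dgV\dg=\dg\dgV+\GV\dg\dg$ gives $\dg\dgV\dg=\dg\dg\dgV+\dg\,\GV\,\dg\dg$, both of order $\le 1$, whence $\dBianchi H$ has order $\le 1$. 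The hard part will be the bookkeeping in (b): one must keep careful track of which auxiliary subspace ($\ker\G$ or $\ker\GV$) each intermediate double form lies in, so that the correct anti-commutation relation, and hence the correct order-reducing identity, is available, and one must verify that every term produced by the expansion genuinely contains a tensorial second-order factor $\dg\dg$ or $\dgV\dgV$ up to tensorial pre- and post-composition. No algebraic identities beyond those already collected in this section are needed.
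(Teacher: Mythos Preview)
Your proposal is correct and follows essentially the same route as the paper: parts (a) and (d) are dispatched via \propref{prop:DstarDstar}, and for (b) and (c) you expand $H\dBianchi$ and $\dBianchi H$, commute $\G$ past $H$, and reduce every triple composite of $\dg,\dgV$ to a term containing a tensorial factor $\dg\dg$ or $\dgV\dgV$.

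The one genuine difference is in the order-reducing step. The paper simply invokes that the commutator $[\dg,\dgV]$ is tensorial (a fact established in \cite{KL21a}), which immediately lets one reorder $\dg\dgV\dg$ and $\dgV\dg\dgV$ at the cost of order-$\le 1$ terms. You instead derive the needed special case from the anti-commutation relation $\{\dg,\GV\}=\dgV$: on $\ker\GV$ this gives $\dgV\dg=\dg\dgV+\GV\dg\dg$, i.e.\ $[\dg,\dgV]=-\GV\dg\dg$ there, which is manifestly tensorial. Your route is slightly more self-contained (it uses only the identities tabulated in the section) at the price of having to track that the intermediate forms $\psi$ and $\dgV\psi$ remain in $\ker\GV$; the paper's route is shorter but imports an extra fact. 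Both are equally valid.
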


\begin{proof}
Items (a) and (d) were proved  in \propref{prop:DstarDstar}. For Item (b) take for example, for $\psi\in \Ckm{m-1}{m}$,
\[
\begin{split}
H\dBianchi\psi &= \tfrac12(\dg \dgV + \dgV\dg) \brk{\dg - \tfrac12 \G\dgV} \psi \\
&= \tfrac12(\dg \dgV + \dgV\dg) \dg  \psi - \tfrac14 \G(\dg \dgV + \dgV\dg) \dgV \psi \\ 
\end{split}
\]
where in the passage to the second line we used the commutation of $\G$ with $H$. Since $\dg\dg$ and $\dgV\dgV$ are tensorial, and so is the commutator of $\dg$ and $\dgV$, it follows that $H\dBianchi$ is a first-order operator.
Item (c) follows similarly.
\end{proof}

\begin{proposition}\ 
\begin{enumerate}[itemsep=0pt,label=(\alph*)]
\item $(\delBianchi\oplus\dBianchi,B_\calG^*)$ with domain $\Ckm{k}{m}$, $k<m$,  is OD elliptic.
\item $(\delBianchi\oplus H,B_\calG^*)$ with domain $\Ckm{m}{m}$ is OD elliptic.
\item $(H^*\oplus\dBianchi,B_H^*)$ with domain $\Ckm{m+1}{m+1}$ is OD elliptic.
\item $(\delBianchi\oplus\dBianchi,B_\calG^*)$ with domain $\Ckm{k}{m}$, $k>m$, is OD elliptic.
\end{enumerate}
\end{proposition}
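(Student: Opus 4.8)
For each of the four systems I will invoke \thmref{thm:adatped_OD_ellipticity_estiamte}, so the task splits into verifying two pointwise conditions: injectivity of the direct sum of interior symbols on the fibres of the Bianchi bundle, and injectivity of the boundary initial-value map $\Xi_{x,\xi'}$ on the space of decaying solutions of the associated constant-coefficient ordinary differential system. The symbols needed are read off the operators constructed above. On $\Ckm{k}{m}$ one has $\sigma_{\dBianchi}(x,\xi)=\calPG(\xi\wedge\,\cdot\,)$, which reduces to $\xi\wedge\,\cdot\,$ when $k\ge m$; $\sigma_{\delBianchi}(x,\xi)=\calPG\ixi$, which reduces to $\ixi$ when $k\le m$; $\sigma_H(x,\xi)$ is the composition of exterior multiplication by $\xi$ on the form slot with exterior multiplication by $\xi$ on the vector slot (these commute); and $\sigma_{H^*}(x,\xi)=\ixi\circ\ixiV$. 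The symbols of $B_\calG^*=\PntG\oplus\PnnG$ and $B_H^*=\frakT^*\oplus\PnnD$ are obtained by composing $\calPG$ with the algebraic (trace and contraction) symbols of $\PntD,\PnnD$, and, for $\frakT^*$, with the first-order boundary symbol already computed in \cite{KL21a}.

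\textbf{Interior symbols (Condition (a) of \thmref{thm:adatped_OD_ellipticity_estiamte}).} All four cases follow one template. Let $x\in M$, $\xi\ne 0$, and let $\psi$ be a Bianchi covector in the kernel of the direct sum of interior symbols. Using the anticommutator $\{\ixi,\xi\wedge\}=|\xi|^2\id$ on the form slot, its mirror $\{\ixiV,\xi_V\wedge\}=|\xi|^2\id$ on the vector slot (here $\xi_V$ denotes $\xi$ placed in the vector slot, as in \propref{prop:W_explicit}), the explicit formulas of \propref{prop:W_explicit} and \propref{prop:Bianchi_derivative_explicit} to unfold the projections $\calPG$, and the listed (anti)commutation relations of $\G,\GV$ with $\ixi$ and $\xi\wedge$, the kernel conditions reduce to an identity of the shape $c\,|\xi|^2\psi=-\,\xi_V\wedge\ixiV\psi$ with $c>0$ (case (a)), or to relations forcing $\xi_V\wedge\psi=0$ and $\ixi\psi=0$ together with the double relation $\ixi\ixiV\psi=0$ (cases (b), (c)). Pairing with $\psi$ in the fibre metric and using that $\psi$ is Bianchi (so $\G\psi=0$ or $\GV\psi=0$, and $\psi^T=\psi$ whenever the form and vector degrees agree, which kills one slot's condition once the other is known) yields $|\psi|^2\le 0$, hence $\psi=0$. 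Cases (a) and (d) are mirror images of one another under the form–vector involution $(\cdot)^T$; (b) and (c) are short variants of the same manipulation.

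\textbf{Boundary symbols (Condition (b) of \thmref{thm:adatped_OD_ellipticity_estiamte}).} Fix $x\in\dM$ and $\xi'\in T^*_x\dM\setminus\{0\}$, split covectors as $\xi'+\tau\,dr$, and decompose the fibre $(\Gkm{k}{m})_x$ into its tangential/normal components in each of the two slots. For the pair $(\deltag,\dg)$ the equation $\bigoplus_i\sigma_{E_i}(x,\xi'+\iota\partial_s\,dr)\psi(s)=0$ is the classical constant-coefficient system whose space $\bbM^+_{x,\xi'}\subset\scrS(\overbar{\bbR}_+;\bbC\otimes(\Gkm{k}{m})_x)$ of decaying solutions is known (cf. \cite{Sch95b}); the Bianchization changes both the interior operators and the boundary operators only tensorially and commutes with $\jmath^*,\G,\GV$, so the constrained problem restricts cleanly to the $\calPG$-invariant subbundle. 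I then evaluate $\bigoplus_j\sigma_{T_j}(x,\xi'+\iota\partial_s\,dr)|_{s=0}$ on $\bbM^+_{x,\xi'}$ and check injectivity, which by \corrref{corrr:OD_elliptic_differntial} (extended to direct sums as in \thmref{thm:adatped_OD_ellipticity_estiamte}) gives OD ellipticity. In cases (b), (c) the symbol equation couples a first-order and a second-order constraint ($\sigma_{\delBianchi}\psi=0$ and $\sigma_H\psi=0$, resp.\ $\sigma_{H^*}\psi=0$ and $\sigma_{\dBianchi}\psi=0$), which trims $\bbM^+_{x,\xi'}$ enough for the single boundary operator $B_\calG^*$ (resp.\ $B_H^*$) to be injective on it.

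\textbf{Main obstacle.} I expect the difficulty to be concentrated in the boundary-symbol analysis for the $H$ and $H^*$ systems (items (b), (c)): there the underlying ordinary differential system is of higher order, the boundary operators $B_H,B_H^*$ carry the first-order operators $\frakT,\frakT^*$, and one must track carefully how the simultaneous constraints $\sigma_{\delBianchi}\psi=0,\ \sigma_H\psi=0$ (resp.\ $\sigma_{H^*}\psi=0,\ \sigma_{\dBianchi}\psi=0$) intersect the decaying-solution space. My plan to control the bookkeeping is to first carry out the computation for the unconstrained operators $H,H^*$ — essentially the calculation already performed in \cite{KL21a} — and then descend to the Bianchi subbundle via $\calPG$, exploiting that $\calPG$ commutes with $\jmath^*$, $\G$ and $\GV$, and that the tangential/normal splitting at a boundary point is $\calPG$-compatible.
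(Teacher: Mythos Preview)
Your reduction of the four cases to two via the form--vector involution $(\cdot)^T$ does not work as stated, and this is the principal gap. Transposition sends $\dBianchi\mapsto\dBianchiV$, $\delBianchi\mapsto\delBianchiV$ and $\PntG\mapsto\PtnG$, so applying $(\cdot)^T$ to the system $(\delBianchi\oplus\dBianchi,B_\calG^*)$ on $\Ckm{k}{m}$ with $k>m$ produces $(\delBianchiV\oplus\dBianchiV,\PtnG\oplus\PnnG)$ on $\Ckm{m}{k}$, which is \emph{not} the system in item~(a); likewise $(\cdot)^T$ does not carry (c) to (b). The paper instead uses the double Hodge star $\starG\starGV$, which intertwines $(H^*\oplus\dBianchi,B_H^*)$ with $(H\oplus\delBianchi,B_H)$ and $(\delBianchi\oplus\dBianchi,B_\calG^*)$ on $\Ckm{k}{m}$, $k>m$, with $(\dBianchi\oplus\delBianchi,B_\calG)$ on $\Ckm{d-k}{d-m}$, $d-k<d-m$. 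Crucially, the Hodge-dual boundary operators are $B_\calG$ and $B_H$, not $B_\calG^*$ and $B_H^*$, so the reduced problems are genuinely new and must be proved alongside the original ones; the paper does exactly this in \propref{prop:ellipticity_bianchi} and \propref{prop:ellipticity_H}, each of which handles a pair of boundary conditions simultaneously.

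Your boundary-symbol discussion is also too schematic to be a proof. The claim that ``the constrained problem restricts cleanly to the $\calPG$-invariant subbundle'' and that one can then quote the scalar-valued Hodge computation from \cite{Sch95b} glosses over real work: the Bianchi projection modifies both the interior symbol (via the $\G$ or $\GV$ correction terms in \propref{prop:Bianchi_derivative_explicit}) and the boundary symbols, and the resulting ODE system couples tangential and normal components in a way that does not simply decouple under $\calPG$. The paper carries this out by writing $\psi(s)$ in a sixteen-component tangential/normal decomposition with respect to both $\xi'$ and $dr$ in each slot, extracting an explicit system of ODEs for the components, and checking that the given initial conditions force the decaying solution to vanish. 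For item~(b) the paper further simplifies by noting that on $\Ckm{m}{m}$ one has $\delBianchi=\deltag$ and $B_\calG^*=\PnnD\oplus\PntD$, and works with symmetric $(m,m)$-forms rather than Bianchi forms; your plan to ``descend to the Bianchi subbundle via $\calPG$'' after an unconstrained computation would have to reproduce this.
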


\begin{proof}
Noting the following Hodge-dualities (up to multiplicative constants),
\[
\begin{aligned}
& \starG\starGV(H^*\oplus\dBianchi,B_H^*)\starG\starGV = (H\oplus\delBianchi,B_H) \\
& \starG\starGV(\delBianchi\oplus\dBianchi,B_\calG^*)\starG\starGV = (\dBianchi\oplus\delBianchi,B_\calG),
\end{aligned}
\]
we may equivalently prove that 
\begin{enumerate}[itemsep=0pt,label=(\alph*)]
\item $(\delBianchi\oplus\dBianchi,B_\calG^*)$ and $(\delBianchi\oplus\dBianchi,B_\calG)$ with domain $\Ckm{k}{m}$, $k<m$,  are OD elliptic.
\item $(\delBianchi\oplus H,B_\calG^*)$ and $(\delBianchi\oplus H,B_H)$ with domain $\Ckm{m}{m}$ are OD elliptic.
\end{enumerate}
Statement (a) is proved in \propref{prop:ellipticity_bianchi} below.
As for Statement (b), we note that $\delBianchi = \deltag$ and $B_\calG^* = \PnnD\oplus\PntD$, when restricted to $\Ckm{m}{m}$, and that $\Ckm{m}{m}$ is a subspace of the symmetric $(m,m)$-forms, hence (b) can be replaced by showing that  
\[
\text{
$(\deltag\oplus H,\PnnD\oplus\PntD)$ and $(\deltag\oplus H,B_H)$ restricted to symmetric $\Wkm{m}{m}$ forms are OD elliptic.
}
\]
This is proved in \propref{prop:ellipticity_H}.
\end{proof}

\begin{proposition}
\label{prop:ellipticity_bianchi}
The systems 
\[
(\delBianchi\oplus\dBianchi,B_\calG^*)
\Textand
(\delBianchi\oplus\dBianchi, B_\calG),
\] 
restricted to $\Ckm{k}{m}$, $k< m$, are OD elliptic.
\end{proposition}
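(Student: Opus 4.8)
The plan is to verify, using \corrref{corrr:OD_elliptic_differntial} (or its direct-sum extension in \thmref{thm:adatped_OD_ellipticity_estiamte}), the two defining conditions of OD ellipticity for a first-order differential system $(\calE, T)$ with $\calE = \delBianchi \oplus \dBianchi$ and $T$ either $B_\calG^*$ or $B_\calG$: injectivity of the interior symbol, and injectivity of the boundary symbol on the space of decaying solutions. First I would compute the principal symbols. Fix $x\in M$ and $\xi\in T_x^*M\setminus\{0\}$; since $\dBianchi = \calPG\dg$ and $\delBianchi = \calPG\deltag$ (with the explicit correction $\delBianchi = \deltag - \tfrac{1}{\alpha(k,m)}\GV\deltagV$ for $k>m$, but here $k<m$ so $\delBianchi = \deltag$ and $\dBianchi$ carries the $\G\dgV$ correction per \propref{prop:Bianchi_derivative_explicit}), the symbols are, up to constants, $\sigma_{\dBianchi}(x,\xi) = \calPG(\xi\wedge\cdot)$ and $\sigma_{\delBianchi}(x,\xi) = \idr[\xi^\sharp]$ restricted to $\Gkm{k}{m}$. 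The key algebraic input is the standard Hodge-type identity $\{i_{\xi^\sharp}, \xi\wedge\} = |\xi|^2\id$ on ordinary forms: if $\psi\in\Gkm{k}{m}$ lies in the kernel of both symbols, then $0 = i_{\xi^\sharp}(\xi\wedge\psi) + \xi\wedge i_{\xi^\sharp}\psi - (\text{correction terms involving }\G) = |\xi|^2\psi - \G(\cdots)$; projecting by $\calPG$ and using that $\calPG$ annihilates $\image\G$ and fixes $\psi$ kills the correction and forces $|\xi|^2\psi = 0$, hence $\psi = 0$. This gives interior injectivity; I would present this computation carefully since the $\calPG$-bookkeeping is the only subtle point.

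Next I would handle the boundary symbol. Here I invoke \corrref{corrr:OD_elliptic_differntial}: the boundary symbol is injective on $\bbM^+_{x,\xi'}$ provided $\sigma_E(x,\xi'+\imath\partial_s\,dr)$ is injective off $\xi'=0$ (already shown) and the initial-condition map $\Xi_{x,\xi'}$ given by $\sigma_{B_\calG^*}$ (resp.\ $\sigma_{B_\calG}$) is injective on the space of decaying solutions of the ODE $\sigma_{\delBianchi}(x,\xi'+\imath\partial_s\,dr)\psi = 0$, $\sigma_{\dBianchi}(x,\xi'+\imath\partial_s\,dr)\psi = 0$. One computes that decaying solutions of this overdetermined first-order ODE system are of the form $\psi(s) = e^{-|\xi'|s}\psi_0$ with $\psi_0$ in a specific subspace of $\bbC\otimes\Gkm{k}{m}_x$ determined by the decomposition of $\psi_0$ into components parallel/normal to $\xi'$ and to $dr$; then $\sigma_{B_\calG^*} = \sigma_{\PntG}\oplus\sigma_{\PnnG}$ (resp.\ $\sigma_{\PttG}\oplus\sigma_{\PtnG}$) acts on $\psi_0$ by the appropriate pullback-and-contraction, and one checks directly that the combined map $\psi_0\mapsto(\text{normal-normal and tangent-normal pieces of }\psi_0)$ is injective on the solution space. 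This is the classical Lopatinskii–Shapiro verification for the de Rham complex (as in \cite{Sch95b}), now carried along the Bianchi-projected operators; the commutation relations of $\calPG$, $\idr$, $\G$, $\GV$ established earlier ensure that the correction terms do not spoil injectivity. For the $B_\calG$ case one does the dual computation with tangential rather than normal traces.

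The main obstacle I expect is the boundary-symbol computation for $k<m$: unlike the interior symbol, where the Hodge identity gives an immediate contradiction, here one must actually parametrize $\bbM^+_{x,\xi'}$, i.e.\ solve the symbol-level ODE, and then track how the $\calPG$-corrections in $\dBianchi$, $\delBianchi$, $\PntG$, $\PtnG$ interact with the splitting of a Bianchi covector into $dr$- and $\xi'$-graded pieces. I anticipate organizing this by choosing an adapted orthonormal coframe $(\vartheta^1,\dots,\vartheta^{d-1}, dr)$ with $\xi' \parallel \vartheta^1$, decomposing $\psi_0 \in \Gkm{k}{m}_x$ along this coframe, and reducing the ODE to a scalar one in $s$; the decaying-solution space then sits inside the joint kernel of $\sigma_{\dg}$ and $\sigma_{\deltag}$ modulo $\image\G$, on which the boundary traces are readily seen to be injective by the same argument as in the scalar-valued Hodge setting. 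I would state the detailed frame computation as a lemma (or defer the fully explicit version) and emphasize that the essential point—injectivity of $\Xi_{x,\xi'}|_{\bbM^+_{x,\xi'}}$—follows from the de Rham case together with the $\calPG$-equivariance relations, so no genuinely new analytic phenomenon arises beyond careful bookkeeping.
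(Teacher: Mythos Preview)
Your interior-symbol argument has a genuine gap. After applying $\ixi$ to the equation $\xi\wedge\psi - \tfrac{1}{\alpha(m,k)}\G(\xi_V\wedge\psi)=0$, the anticommutator $\{\G,\ixi\}=\ixiV$ produces \emph{two} correction terms: one of the form $\G(\cdots)$, which $\calPG$ does kill, and a second term $\tfrac{1}{\alpha(m,k)}\ixiV(\xi_V\wedge\psi)$, which it does not. Indeed $\ixiV(\xi_V\wedge\psi)=\psi-\xi_V\wedge\ixiV\psi$ has $\psi$ itself as its Bianchi component, so projecting by $\calPG$ cannot force $\psi=0$. The paper instead takes the inner product with $\psi$, turning the surviving term into $|\xi_V\wedge\psi|_\g^2$ (via the adjunction of $\ixiV$ and $\xi_V\wedge$), and concludes $\psi=0$ by positivity since $\alpha(m,k)>1$. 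Your projection idea is not wrong in spirit, but the step ``$\calPG$ kills the correction'' fails exactly here; you need the inner-product/positivity argument.

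Your boundary-symbol sketch is too optimistic. The claim that decaying solutions are simply $e^{-|\xi'|s}\psi_0$ and that the verification ``reduces to the de Rham case via $\calPG$-equivariance'' does not survive contact with the actual ODE. The $\G$-correction in $\sigma_{\dBianchi}$ genuinely couples components across the $(\xi',dr,\xi'_V,dr^T)$-grading, and the coefficient $\alpha(m,k+1)$ enters the system explicitly; the resulting equations (the paper's (5.5)--(5.8)) are not diagonal and not obtained from the de Rham ones by a projection. Likewise, the boundary operators $\PntG,\PtnG$ carry their own $\G$/$\GV$ corrections, so the initial-condition map $\Xi_{x,\xi'}$ mixes components in a way that is not simply ``tangential/normal pieces of $\psi_0$''. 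The paper carries this out by a full sixteen-component decomposition of $\psi$ along $\xi,dr,\xi_V,dr^T$, writes down and solves the coupled first-order ODEs block by block, and then checks case by case that either set of initial conditions ($B_\calG^*$ or $B_\calG$) forces all components to vanish. There is no shortcut via equivariance; the bookkeeping you defer to a lemma \emph{is} the proof.
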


\begin{proof}
Since all the operators, including the boundary operators are differential operators,  we need to verify that the criteria in \thmref{thm:adatped_OD_ellipticity_estiamte} are satisfied: we need to show that for every $x\in M$ and $\xi\in T_x^*M\setminus\{0\}$, 
\[
\sigma_{\delBianchi}(x,\xi) \oplus \sigma_{\dBianchi}(x,\xi) : \Gkm{k}{m} \to \Gkm{k-1}{m} \oplus  \Gkm{k+1}{m}
\]
is injective, and so are the maps
\[
\begin{aligned}
\Xi^1_{x,\xi'} &= \sigma_{\PnnG}(x,\xi'+ \imath\,\partial_s dr)\oplus \sigma_{\PntG}(x,\xi'+ \imath\,\partial_s dr)|_{s=0} \\
\Xi^2_{x,\xi'} &= \sigma_{\PttG}(x,\xi'+ \imath\,\partial_s dr)\oplus \sigma_{\PtnG}(x,\xi'+ \imath\,\partial_s dr)|_{s=0}
\end{aligned}
\]
for $x\in\dM$ and $\xi'\in T_x^*\dM\setminus\{0\}$, when restricted to the space $\bbM_{x,\xi'}^+$ of decaying solutions to the ordinary differential system
\beq
\brk{\sigma_{\delBianchi}(x,\xi' + \imath\,\partial_s dr) \oplus \sigma_{\dBianchi}(x\xi' ,+ \imath\,\partial_s dr)}\psi(s) = 0.
\label{eq:ODE_ellipticity_bianchi}
\eeq

Let $x\in M$ and let $\xi\in T^*_xM$; we denote $\xi_V = \xi^T = \GV\xi$; without loss of generality, we may assume that $|\xi|_\g=1$.
For $\psi\in\Gkm{k}{m}|_x$, $k<m$,
\[
\begin{aligned}
-\imath \sigma_{\dBianchi}(x,\xi) \psi &= \xi\wedge\psi - \frac{1}{\alpha(m,k)} \G(\xi_V\wedge \psi)
\\
\imath \sigma_{\delBianchi}(x,\xi) \psi &= \ixi\psi.
\end{aligned}
\]
Suppose that $(\sigma_{\delBianchi}(x,\xi) \oplus \sigma_{\dBianchi}(x,\xi)) \psi=0$, then
\[
\ixi\brk{\xi\wedge\psi - \frac{1}{\alpha(m,k)} \G(\xi_V\wedge \psi)} = 0.
\]
Using the fact that $\sigma_{\delBianchi}(x,\xi) \psi=0$, $\ixi (\xi\wedge) + \xi\wedge\ixi = \id$ and the anti-commutation relation between $\ixi$ and $\G$, we obtain
\[
\psi + \frac{1}{\alpha(m,k)} \ixiV(\xi_V\wedge \psi) = 0.
\]
Taking an inner-product with $\psi$,
\[
|\psi|_\g^2 + \frac{1}{\alpha(m,k)} |\xi_V\wedge\psi|_\g^2 = 0, 
\]
which implies that $\psi=0$, i.e., $\sigma_{\delBianchi}(x,\xi) \oplus \sigma_{\dBianchi}(x,\xi)$ is injective.

We proceed to establish the injectivity of the boundary symbols.
Let $x\in\dM$ and $\xi \in T_x^*\dM$. 
The ordinary differential equation \eqref{eq:ODE_ellipticity_bianchi} takes the form of a system,
\begin{gather}
\xi\wedge \psi - \frac{1}{\alpha(m,k)} \G(\xi_V\wedge \psi) + \imath \brk{dr\wedge \dot{\psi} - \frac{1}{\alpha(m,k)} \G(dr_V\wedge \dot{\psi})} = 0
\label{eq:ODE1} \\ 
\ixi\psi + \imath i_{\dr} \dot{\psi} =0.
\label{eq:ODE2} 
\end{gather}
To solve it, we decompose $\psi$ orthogonally (as an element in $\Lkm{k}{m}|_x$)
\[
\begin{split}
\psi &= \brk{\psi_{00} + \xi\wedge \psi_{01} + dr\wedge \psi_{02} + \xi\wedge dr \wedge \psi_{03}} \\
&+ \xi_V\wedge \brk{\psi_{10} + \xi\wedge \psi_{11} + dr\wedge \psi_{12} + \xi\wedge dr \wedge \psi_{13}} \\
&+ dr^T\wedge \brk{\psi_{20} + \xi\wedge \psi_{21} + dr\wedge \psi_{22} + \xi\wedge dr \wedge \psi_{23}} \\
&+ \xi_V\wedge dr^T\wedge \brk{\psi_{30} + \xi\wedge \psi_{31} + dr\wedge \psi_{32} + \xi\wedge dr \wedge \psi_{33}},
\end{split}
\]
where 
\[
\ixi\psi_{ij}=0
\qquad  
\ixiV\psi_{ij}=0
\qquad  
i_{\dr}\psi_{ij}=0
\Textand
i_{\dr}^V\psi_{ij}=0
\]
for every $i,j=0,1,2,3$.
Substituting into \eqref{eq:ODE1} and \eqref{eq:ODE2}, equating like terms we obtain the following equations:
\beq
\psi_{00}  =  0
\label{eq:00}
\eeq
\beq
\Cases{
\psi_{01} + \imath \dot\psi_{02} = 0 \\
\psi_{02} - \imath \dot\psi_{01} = 0 
}
\qquad\qquad
\Cases{
\psi_{10} + \tfrac{1}{\alpha(m,k+1)}(-\G\psi_{01} + \psi_{10}) = 0 \\
\psi_{20} + \tfrac{\imath}{\alpha(m,k+1)}(-\G\dot\psi_{01} + \dot\psi_{10}) = 0
}
\label{eq:01}
\eeq
\beq
\Cases{
\psi_{11} + \imath \dot\psi_{12}  = 0 \\
\psi_{21} + \imath \dot\psi_{22}  = 0
}
\qquad\qquad
\Cases{
\psi_{12} - \imath\dot\psi_{11} + \frac{1}{\alpha(m,k+1)}(\psi_{12} - \psi_{21})  = 0\\
\psi_{22} - \imath\dot\psi_{21} + \frac{\imath}{\alpha(m,k+1)}(\dot\psi_{12} - \dot\psi_{21}) = 0
}
\label{eq:11}
\eeq
\beq
\Cases{
\psi_{31} + \imath \dot\psi_{32}  = 0 \\
\psi_{32} - \imath\dot\psi_{31} = 0
}
\label{eq:31}
\eeq
and
\beq
\psi_{30} + \tfrac{1}{\alpha(m,k+1)} (\G\psi_{21} + \psi_{30}) - \tfrac{\imath}{\alpha(m,k+1)} \G\dot\psi_{11} = 0,
\label{eq:30}
\eeq
and
\[
\psi_{i3}=0
\qquad\qquad
\text{for $i=0,1,2,3$}. 
\]

The initial conditions, can be orthogonally decomposed similarly (noting that terms including $dr$ and $dr^T$ are annihilated by the pullback to boundary).

Let $\psi\in \bbM_{x,\xi'}^+$. The condition that $\Xi^1_{x,\xi'}\psi=0$ results in
\[
\psi_{02}(0) = 0
\qquad
\psi_{12}(0) = 0
\qquad
\psi_{22}(0) = 0
\Textand
\psi_{32}(0) = 0.
\]
Substituting $\psi_{01}(0)=0$ into \eqref{eq:01}, along with the condition the solution is decaying at infinity, yields
\[
\psi_{01}(s) = 0
\qquad
\psi_{02}(s) = 0
\qquad
\psi_{10}(s) = 0
\Textand
\psi_{20}(s) = 0.
\]
Substituting $\psi_{12}(0) = 0$ and 
$\psi_{22}(0) = 0$ into \eqref{eq:11} yields
\[
\psi_{11}(s) = 0
\qquad
\psi_{12}(s) = 0
\qquad
\psi_{21}(s) = 0
\Textand
\psi_{22}(s) = 0.
\]
Substituting $\psi_{32}(0) = 0$ into \eqref{eq:31} yields
\[
\psi_{31}(s) = 0
\Textand
\psi_{32}(s) = 0.
\]
Finally, \eqref{eq:30} yield that
\[
\psi_{30}(s) = 0, 
\]
thus $\psi(s) = 0$, proving the injectivity of $\Xi^1_{x,\xi'}$ when restricted to $\bbM^+_{x,\xi'}$.

The condition that $\Xi^2_{x,\xi'}\psi=0$ results in
\[
\psi_{01}(0) = 0
\qquad
\psi_{11}(0) = 0 
\qquad
\psi_{21}(0) + \tfrac{1}{\alpha(m,k+1)}(\psi_{21}(0)  - \psi_{12}(0)) = 0 
\textand
\psi_{31}(0) = 0.
\]
Substituting $\psi_{01}(0) = 0$ into \eqref{eq:01} yields,
\[
\psi_{01}(s) = 0
\qquad
\psi_{02}(s) = 0
\qquad
\psi_{10}(s) = 0
\Textand
\psi_{20}(s) = 0.
\]
Substituting $\psi_{11}(0) = 0$ and 
$\psi_{21}(0) + \frac{1}{\alpha(m,k+1)}(\psi_{21}(0)  - \psi_{12}(0)) = 0$ into \eqref{eq:11}
yields
\[
\psi_{11}(s) = 0
\qquad
\psi_{12}(s) = 0
\qquad
\psi_{21}(s) = 0
\Textand
\psi_{22}(s) = 0.
\]
Substituting $\psi_{31}(0) = 0$ into \eqref{eq:31} yields
\[
\psi_{31}(s) = 0
\Textand
\psi_{32}(s) = 0.
\]
Finally, \eqref{eq:30} yield that
\[
\psi_{30}(s) = 0, 
\]
thus $\psi(s) = 0$, proving the injectivity of $\Xi^2_{x,\xi'}$ when restricted to $\bbM^+_{x,\xi'}$.
\end{proof}

\begin{proposition}
\label{prop:ellipticity_H}
The systems 
\[
(\deltag\oplus H,\PnnD\oplus\PntD)
\Textand
(\deltag\oplus H, \PttD\oplus\frakT),
\] 
restricted to the symmetric elements of $\Wkm{m}{m}$ are OD elliptic.
\end{proposition}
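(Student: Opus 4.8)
The plan is to verify, for each of the two differential systems, the two criteria of \thmref{thm:adatped_OD_ellipticity_estiamte}, exactly in the spirit of the proof of \propref{prop:ellipticity_bianchi}; the one new feature is that $H$ is \emph{second order}, so the half-line ODE system defining $\bbM_{x,\xi'}^+$ is of order two rather than one. Throughout, since $H$, $\deltag$ and all the boundary operators $\PnnD,\PntD,\PttD,\frakT$ preserve the symmetry $\psi=\psi^T$, I restrict attention to the symmetric subspace of $\Lkm{m}{m}$ and of $\Wkm{m}{m}$, and I write $\xi_V=\xi^T=\GV\xi$.

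First the interior symbol. On symmetric $(m,m)$-covectors one has $\sigma_{\deltag}(x,\xi)\psi=\ixi\psi$ and $\sigma_H(x,\xi)\psi=\xi\wedge\xi_V\wedge\psi$, each up to a nonzero constant (the two summands of $H=\tfrac12(\dg\dgV+\dgV\dg)$ act on complementary tensor slots and hence have the same principal symbol, namely the covariant version of the Euclidean curl-curl symbol recorded in the introduction). To see injectivity of $\sigma_{\deltag}(x,\xi)\oplus\sigma_H(x,\xi)$ for $\xi\ne0$: if $\ixi\psi\ne0$ there is nothing to prove; otherwise $\ixi\psi=0$, whence $\ixiV\psi=0$ by symmetry, and a short computation using $\{\ixi,\xi\wedge\}=|\xi|_\g^2\,\id$, $\{\ixiV,\xi_V\wedge\}=|\xi|_\g^2\,\id$, and the fact that $\ixi$ (resp.\ $\ixiV$) commutes up to sign with $\xi_V\wedge$ (resp.\ $\xi\wedge$) gives $\ixi\ixiV(\xi\wedge\xi_V\wedge\psi)=\pm|\xi|_\g^4\,\psi$, so $\sigma_H(x,\xi)\psi\ne0$ unless $\psi=0$.

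Next the boundary symbol. Fix $x\in\dM$ and $\xi'\in T_x^*\dM\setminus\{0\}$, normalized to $|\xi'|_\g=1$, and put $\xi=\xi'+\imath\partial_s\,dr$. Then $\bbM_{x,\xi'}^+$ consists of the decaying $\psi(s)\in\Lkm{m}{m}|_x$ solving the coupled system $\ixi\psi=0$ (first order in $s$) and $\xi\wedge\xi_V\wedge\psi=0$ (second order in $s$), with $\xi$ read as above. As in \propref{prop:ellipticity_bianchi}, I would decompose $\psi(s)$ orthogonally in $\Lkm{m}{m}|_x$ into wedge products of the four covectors $\xi'$, $\xi'_V$, $dr$, $dr^V$ with ``primitive'' coefficient fields annihilated by all of $\ixi,\ixiV,\idr,\idr^V$, using $\psi=\psi^T$ to identify each coefficient with its transpose; substituting and equating like terms yields a decoupled list of constant-coefficient scalar ODEs of order at most two, whose decaying solutions span the $e^{-s}$- and $s\,e^{-s}$-modes, thereby pinning down $\bbM_{x,\xi'}^+$. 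For the system with boundary operator $\PnnD\oplus\PntD$ one then imposes $\Xi^1_{x,\xi'}\psi=0$, i.e.\ $\idr^V\idr\psi(0)=0$ and $\idr\psi(0)=0$; for the system with boundary operator $\PttD\oplus\frakT$ one imposes $\Xi^2_{x,\xi'}\psi=0$, i.e.\ the vanishing of the pullback of $\psi$ together with the first-order combination $\frakT$ at $s=0$, whose symbol is the explicit expression from \cite{KL21a}. Feeding these initial values back through the solved ODEs forces every primitive coefficient to vanish identically, hence $\psi\equiv0$, which is the required injectivity. Note that OD ellipticity asks only for injectivity of the boundary symbol on $\bbM_{x,\xi'}^+$, not bijectivity, so no dimension count is needed; this is what makes the ``few'' boundary conditions of the first system ($\PnnD\oplus\PntD$, both of order zero) still sufficient.

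The main obstacle is this last step. Because $H$ is second order, the half-line system carries modes $e^{-s}$ and $s\,e^{-s}$, roughly doubling the number of primitive coefficient fields to track relative to \propref{prop:ellipticity_bianchi}, and because $\frakT$ is a genuine first-order boundary operator the conditions $\Xi^2_{x,\xi'}\psi=0$ couple $\psi(0)$ with $\dot\psi(0)$; one must organize the elimination so that the decay constraints and the boundary constraints together leave only the zero solution, invoking $\psi=\psi^T$ at precisely the point where it collapses the transpose-paired coefficients. The computation is long but purely mechanical; the only real risk is a miscount of modes, against which a useful check is that the zeroth- and first-order sub-blocks of $(\deltag\oplus H,\,\cdot\,)$ must reproduce the boundary symbols already analyzed for $\Ckm{m}{m}$ with $k=m$ in \propref{prop:ellipticity_bianchi}, with which the symbols of $\deltag$ and of the tangential boundary data agree. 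This proposition, together with \propref{prop:ellipticity_bianchi} and the order-reduction estimates, completes the verification that the Bianchi sequence is an elliptic pre-complex, i.e.\ \thmref{thm:SVcomplex}.
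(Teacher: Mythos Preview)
Your approach is correct and matches the paper's own proof: verify the interior-symbol injectivity via $\ixiV\ixi(\xi\wedge\xi_V\wedge\psi)=\psi$ (using $\ixi\psi=\ixiV\psi=0$), then decompose $\psi(s)$ orthogonally with respect to $\xi',\xi'_V,dr,dr^V$, extract the resulting constant-coefficient ODE blocks, and feed in the boundary data.

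The one point worth correcting is your complexity estimate. The computation is in fact \emph{shorter} than in \propref{prop:ellipticity_bianchi}, not longer. The symmetry $\psi=\psi^T$ together with the first-order constraint $\sigma_{\deltag}\psi=0$ kill the components $\psi_{00}$, $\psi_{i3}$ and $\psi_{3i}$ outright, leaving only a first-order $2\times 2$ block in $(\psi_{01},\psi_{02})$ and a single block in $(\psi_{11},\psi_{12},\psi_{22})$; only the latter is genuinely second order (it reduces to $(\partial_s^2-1)^2\psi_{22}=0$, whence the $se^{-s}$ mode you anticipated). For the boundary operator $\PnnD\oplus\PntD$ the vanishing of $\psi_{02}(0),\psi_{12}(0),\psi_{22}(0)$ suffices; for $\PttD\oplus\frakT$ the conditions $\psi_{01}(0)=\psi_{11}(0)=0$ together with the first-order relation $\imath\dot\psi_{11}(0)-2\psi_{12}(0)=0$ coming from $\sigma_\frakT$ suffice. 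So the ``long but mechanical'' warning and the ``roughly doubling'' remark are overcautious: the elimination is a handful of lines.
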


\begin{proof}
Since all the operators, including the boundary operators are differential operators,  we need to verify that the criteria in \thmref{thm:adatped_OD_ellipticity_estiamte} are satisfied:  we need to show that for every $x\in M$ and $\xi\in T_x^*M\setminus\{0\}$, 
\[
\sigma_{\deltag}(x,\xi) \oplus \sigma_{H}(x,\xi) : \Gkm{m}{m} \to \Gkm{m-1}{m} \oplus  \Gkm{m+1}{m+1}
\]
is injective, and so are the maps
\[
\begin{aligned}
\Xi^1_{x,\xi'} &= \sigma_{\PnnD}(x,\xi'+ \imath\,\partial_s dr)\oplus \sigma_{\PntD}(x,\xi'+ \imath\,\partial_s dr)|_{s=0} \\
\Xi^2_{x,\xi'} &= \sigma_{\PttD}(x,\xi'+ \imath\,\partial_s dr)\oplus \sigma_{\frakT}(x,\xi'+ \imath\,\partial_s dr)|_{s=0}
\end{aligned}
\]
for $x\in\dM$ and $\xi'\in T_x^*\dM\setminus\{0\}$, when restricted to the space $\bbM_{x,\xi'}^+$ of decaying solutions to the ordinary differential system
\beq
\brk{\sigma_{\deltag}(x,\xi'+ \imath\,\partial_s dr) \oplus \sigma_{H}(x,\xi'+ \imath\,\partial_s dr)}\psi(s) = 0.
\label{eq:ODE_H}
\eeq

Let $x\in M$ and $\xi\in T_x^*M\setminus\{0\}$; once again, we assume  without loss of generality that $|\xi|_\g^2=1$.  
The symbols of $H$ and $\deltag$ are
\[
\sigma_{\deltag}(x,\xi)\psi = \ixi \psi
\Textand
\sigma_H(x,\xi)\psi = \xi\wedge\xi_V\wedge \psi .
\]
Suppose that $\psi\in\Gkm{m}{m}|_x$ is symmetric and satisfies
\[
\brk{\sigma_{\deltag}(x,\xi)\oplus\sigma_H(x,\xi)}\psi = 0. 
\]
Then,
\[
\ixiV \ixi(\xi\wedge\xi_V\wedge \psi) = \psi = 0,
\]
proving the injectivity of  $\sigma_{\deltag}(x,\xi)\oplus\sigma_H(x,\xi)$  when restricted to symmetric $(m,m)$-covectors. 

We proceed with the boundary symbols. Let $x\in\dM$ and $\xi'\in T_x^*\dM\setminus\{0\}$. The ordinary differential system \eqref{eq:ODE_H} takes the explicit form
\beq
\begin{aligned}
& \xi\wedge\xi_V\wedge \psi + \imath dr\wedge\xi_V\wedge \dot{\psi} + \imath \xi\wedge dr^T\wedge \dot{\psi} - dr\wedge dr^T \wedge \ddot{\psi} = 0 \\
& \ixi \psi  + \imath i_{\dr} \dot{\psi} = 0.
\end{aligned}
\label{eq:eqsH}
\eeq
As in the proof of \propref{prop:ellipticity_bianchi}, we decompose $\psi$ orthogonally. Substituting into \eqref{eq:ODE_H} and equaling like terms, we obtain
\beq
\psi_{00} = 0,
\label{EQ:00}
\eeq
\beq
\Cases{
\psi_{02} - \imath \dot\psi_{01}  = 0 \\
\psi_{01} + \imath \dot\psi_{02}  = 0
}
\label{EQ:01}
\eeq
and
\beq
\Cases{\psi_{11} + \imath \dot\psi_{12} = 0\\
\psi_{12} + \imath \dot\psi_{22} = 0  \\
\psi_{22} - 2\imath\dot\psi_{12}   - \ddot\psi_{11} = 0
}
\label{EQ:11}
\eeq
and
\[
\psi_{i3}=0
\Textand
\psi_{3i}=0
\qquad
\text{for $i=0,1,2,3$}.
\]

Let $\psi\in \bbM_{x,\xi'}^+$ satisfy  $\Xi^1_{x,\xi'}\psi=0$. This results in 
\[
\psi_{02}(0)  = 0 
\qquad
\psi_{12}(0)  = 0 
\Textand
\psi_{22}(0)  = 0.
\]
Substituting $\psi_{02}(0)  = 0$  into \eqref{EQ:01} yields
\[
\psi_{01}(s)  = 0 
\Textand
\psi_{02}(s)  = 0.
\]
Substituting $\psi_{12}(0)  = 0$ and $\psi_{22}(0)  = 0$ into \eqref{EQ:11} yields
\[
\psi_{11}(s)  = 0 
\qquad
\psi_{12}(s)  = 0 
\Textand
\psi_{22}(s)  = 0.
\]
This proves the injectivity of $\Xi^1_{x,\xi'}\psi=0$ when restricted to $\bbM_{x,\xi'}^+$.

Let $\psi\in \bbM_{x,\xi'}^+$ satisfy  $\Xi^2_{x,\xi'}\psi=0$. 
The condition $\PttD\psi(0)=0$ yields
\[
\psi_{01}(0) = 0
\Textand
\psi_{11}(0) = 0.
\]
Substituting the first into \eqref{EQ:01} yields.
\[
\psi_{01}(s)  = 0 
\Textand
\psi_{02}(s)  = 0.
\]
We then note that,
\[
-\imath \sigma_{\frakT}(x,\xi + \imath\,\partial_s dr) \psi|_{s=0} = \imath \PttD\dot{\psi}(0) -  \xi\wedge \PntD\psi(0)  -  \xi_V\wedge \PtnD\psi(0) = 0,
\]
yielding the additional initial condition
\[
\imath\dot\psi_{11}(0) - 2\psi_{12}(0) = 0,
\]
which substituted together with $\psi_{11}(0)=0$ into \eqref{EQ:11}  yields
\[
\psi_{11}(s)  = 0 
\qquad
\psi_{12}(s)  = 0 
\Textand
\psi_{22}(s)  = 0.
\]
This proves the injectivity of $\Xi^2_{x,\xi'}\psi=0$ when restricted to $\bbM_{x,\xi'}^+$.
\end{proof}

\bibliographystyle{amsalpha}

\begin{thebibliography}{CCGK07}

\bibitem[BE69]{BE69}
M.~Berger and D.~Ebin, \emph{Some decompositions of the space of symmetric
  tensors on a Riemannian manifold.}, J. Diff. Geom. \textbf{3} (1969),
  379--392.
  
 \bibitem[Bou71]{Bou71}
  L.~Boutet de Monvel, \emph{Boundary value problems for pseudo-differential operators}, Acta. Math. \textbf{126} (1971), 11--51.

\bibitem[Bre11]{Bre11}
H.~Brezis, \emph{Functional analysis, Sobolev spaces and partial differential
  equations}, Springer New York, 2011.

\bibitem[Bry13]{Bry13}
R.~Bryant, \emph{Characterizing {H}essians among symmetric bilinear tensors},
  URL:https://mathoverflow.net/q/123644, 2013.

\bibitem[Cal61]{Cal61}
E.~Calabi, \emph{On compact, {Riemannian} manifolds with constant curvature.
  {I}}, Proc. Sympos. Pure Math, vol. III, American Mathematical Society, 1961,
  pp.~155--180.

\bibitem[CCGK07]{CCGK07}
P.G. Ciarlet, P.~Ciarlet, G.~Geymonat, and F.~Karuscki, \emph{Characterization
  of the kernel of the operator curl-curl}, C.R. Acad. Sci. Paris, Ser. I
  \textbf{344} (2007), 305--308.

\bibitem[CELM21]{CELM21}
F.~Costanza, M.~Eastwood, T.~Leistner, and B.~McMillan, \emph{A {C}alabi
  operator for {R}iemannian locally symmetric spaces}, 
  arXiv:2112.00841, 2021.

\bibitem[CELM23]{CELM23}
\bysame, \emph{The range of a connection and a {C}alabi operator for
  {L}orentzian locally symmetric spaces}, arXiv:2302.04480,
  2023.

\bibitem[dR84]{deR84}
G.~de~Rham, \emph{Differentiable manifolds}, Springer, 1984.

\bibitem[DS96]{DS96}
P.~I. Dudnikov and S.~N. Samborski, \emph{Partial differential equations
  {VIII}}, ch.~Linear Overdetermined Systems of Partial Differential Equations,
  Initial and Initial-Boundary Value Problems, pp.~1--86, Springer Berlin
  Heidelberg, 1996.

\bibitem[Eas00]{Eas00}
M.~Eastwood, \emph{A complex from linear elasticity}, Proceedings of the 19th
  Winter School ``Geometry and Physics", Circolo Matematico di Palermo, 2000,
  pp.~23--29.

\bibitem[GG88a]{GG88A}
J.~Gasqui and H.~Goldschmidt, \emph{Complexes of differential operators and
  symmetric spaces}, Def. Th. Alg. Struct. Appl. \textbf{3} (1988), 797--827.

\bibitem[GG88b]{GG88B}
\bysame, \emph{Some rigidity results in the deformation theory of symmetric
  spaces}, Def. Th. Alg. Struct. Appl.
  \textbf{3} (1988), 839--851.

\bibitem[GK09]{GK09}
G.~Geymonat and F.~Krasucki, \emph{Hodge decomposition for symmetric matrix
  fields and the elasticity complex in {Lipschitz} domains}, Comm. Pure Appl.
  Anal. \textbf{8} (2009), 295--309.

\bibitem[Gol67]{Gol67}
H.~Goldschmidt, \emph{Existence theorems for analytic linear partial
  differential equations}, Ann. Math. \textbf{86} (1967), 246--270.

\bibitem[Gra70]{Gra70}
A.~Gray, \emph{Some relations between curvature and characteristic classes},
  Math. Ann. \textbf{184} (1970), 257--267.

\bibitem[Gru90]{Gru90}
G.~Grubb, \emph{Pseudo-differential boundary problems in {L}p spaces}, Comm.
  Partial Diff. Eq. \textbf{15} (1990), no.~3, 289--340.

\bibitem[Gru96]{Gru96}
\bysame, \emph{Functional calculus of pseudodifferential boundary problems}, 2
  Ed., Birkh\"auser Boston, 1996.

\bibitem[Gur72]{Gur72}
M.E. Gurtin, \emph{The linear theory of elasticity}, Mech. Solids
  (C.~Truesdell, ed.), vol.~II, Springer Verlag, 1972.

\bibitem[Hin23]{Hin23}
P.~Hintz, \emph{Underdetermined elliptic {PDE} on asymptotically {E}uclidean
  manifolds, and generalizations}, arXiv:2302.12904, 2023.

\bibitem[H\"or94]{Hor03}
L.~H\"ormander, \emph{The analysis of linear partial differential operators},
  Vol. III, Springer-Verlag, 1994.

\bibitem[Kha19]{Kha19}
I.~Khavkine, \emph{Compatibility complexes of overdetermined {PDE}s of finite
  type, with applications to the {K}illing equation}, Class. Quantum Grav.
  \textbf{36} (2019), 321--326.

\bibitem[KL21]{KL21a}
R.~Kupferman and R.~Leder, \emph{Double forms: {Regular} elliptic bilaplacian
operators}, J. Anal. Math. (in press).

\bibitem[KL22]{KL21b}
\bysame, \emph{On {S}aint-{V}enant compatibility and stress potentials in
  manifolds with boundary and constant sectional curvature}, SIAM J. Math.
  Anal. \textbf{54} (2022), 4625--4657.

\bibitem[Kul72]{Kul72}
R.S. Kulkarni, \emph{On the {Bianchi} identities}, Math. Ann. \textbf{199}
  (1972), 175--204.

\bibitem[Lee12]{Lee12}
J.M. Lee, \emph{Introduction to smooth manifolds}, 2nd Ed.,
  Springer, 2012.

\bibitem[Lee18]{Lee18}
\bysame, \emph{Introduction to {Riemannian} manifolds}, 2nd Ed.,
  Graduate Texts in Mathematics, Springer, 2018.

\bibitem[Pet16]{Pet16}
P.~Petersen, \emph{Riemannian geometry}, 3rd Ed., Springer, 2016.

\bibitem[Pom15]{Pom15}
J.F. Pommaret, \emph{Airy, {Beltrami}, {Maxwell}, {Morera}, {Einstein} and
  {Lanczos} potentials revisited}, Preprint, 2015.

\bibitem[Pom18]{Pom18}
\bysame, \emph{Generating compatibility conditions in mathematical physics},
   arXiv:1811.12186, 2018.

\bibitem[Pom22]{Pom22}
\bysame, \emph{{K}illing operator for the {K}err metric}, 
  arXiv.2211.00064, 10 2022.

\bibitem[RS82]{RS82}
S.~Rempel and B.W.~Schulze, \emph{Index theory of elliptic boundary value
  problems}, North Oxford Academic, 1982.

\bibitem[Sch95]{Sch95b}
G.~Schwarz, \emph{Hodge decomposition -- a method for solving boundary value
  problems}, Lecture notes in mathematics, Springer, 1995.

\bibitem[Sch23]{Van23}
J.~{van Schaftingen}, \emph{Injective ellipticity, cancelling operators, and
  endpoint {G}agliardo-{N}irenberg-{S}obolev inequalities for vector fields},
   arXiv:2302.01201, 2023.

\bibitem[Spe69]{Spe69}
D.C.~Spencer, \emph{Overdetermined systems of linear partial differential equations},
Bull. Amer. Math. Soc. \textbf{75} (1969), 179--239.

\bibitem[SS19]{SS19}
B.W.~Schulze and J. Seiler, \emph{Elliptic complexes on manifolds with boundary},
J. Geom. Anal. \textbf{29} (2019), 656--706.

\bibitem[SS87]{SS87}
H.C. Simpson and S.J. Spector, \emph{On the positivity of the second variation
  in finite elasticity}, Arch. Rat. Mech. Anal. \textbf{98} (1987), 1--30.

\bibitem[Tay11a]{Tay11a}
M.E. Taylor, \emph{Partial differential equations}, Vol.~I, Springer, 2011.

\bibitem[Tay11b]{Tay11b}
\bysame, \emph{Partial differential equations}, Vol.~II, Springer, 2011.

\bibitem[Tay11c]{Tay11c}
\bysame, \emph{Partial differential equations}, Vol. III, Springer, 2011.

\bibitem[WRL95]{WRL95}
J.~Wloka, B.~Rowley, and B.~Lawruk, \emph{Boundary value problems for elliptic
  systems}, Cambridge University Press, 1995.

\bibitem[YA16]{YA16}
A.~Yavari and A.~Angoshtari, \emph{Hilbert complexes of nonlinear elasticity},
  Z. Angew. Math. Phys. \textbf{67} (2016), 1--30.

\bibitem[Yav13]{Yav13}
A.~Yavari, \emph{A. compatibility equations of nonlinear elasticity for
  non-simply-connected bodies}, Arch. Rat. Mech. Anal. \textbf{209}
  (2013), 237–253.

\end{thebibliography}

\providecommand{\bysame}{\leavevmode\hbox to3em{\hrulefill}\thinspace}
\providecommand{\MR}{\relax\ifhmode\unskip\space\fi MR }
\providecommand{\MRhref}[2]{%
  \href{http://www.ams.org/mathscinet-getitem?mr=#1}{#2}
}
\providecommand{\href}[2]{#2}

\end{document}